\pdfoutput=1





\documentclass{amsart}

\usepackage[mathscr]{eucal}
\usepackage{amssymb}
\usepackage[usenames,dvipsnames]{xcolor} 
\usepackage[normalem]{ulem}
\usepackage{amsthm}
\usepackage{bbold}
\usepackage{comment}
\usepackage{enumitem}
\usepackage{amsmath}
\usepackage{wasysym}
\usepackage{tikz-cd}
\usepackage{calc}

\usepackage{etoolbox} 

\usepackage{stmaryrd}


\usepackage[unicode]{hyperref} 

\definecolor{dark-red}{rgb}{0.5,0.15,0.15}
\definecolor{dark-blue}{rgb}{0.15,0.15,0.6}
\definecolor{dark-green}{rgb}{0.15,0.6,0.15}
\hypersetup{
    colorlinks, linkcolor=Blue,
    citecolor=Blue, urlcolor=Blue
}

\usepackage[nameinlink,capitalise,noabbrev]{cleveref}
\usepackage[hyphenbreaks]{breakurl}

\overfullrule=1mm


\numberwithin{equation}{section}
\setcounter{tocdepth}{1}


\usepackage[all]{xy}
\xyoption{line}
\usepackage{graphicx}
\usepackage{mathtools}
\SelectTips{cm}{10}



\newtheorem{Thm}[subsection]{Theorem}
\newtheorem*{Thm*}{Theorem}
\newtheorem{Prop}[subsection]{Proposition}
\newtheorem{Lem}[subsection]{Lemma}
\newtheorem{Cor}[subsection]{Corollary}

\newtheorem{thmx}{Theorem}

\theoremstyle{remark}
\newtheorem{Def}[subsection]{Definition}
\newtheorem{Ter}[subsection]{Terminology}
\newtheorem{Rec}[subsection]{Recollection}
\newtheorem{Not}[subsection]{Notation}
\newtheorem{Exa}[subsection]{Example}

\newtheorem{Rem}[subsection]{Remark}
\newtheorem{Par}[subsection]{}
\newtheorem{Que}[subsection]{Question}

\let\realequation\equation
\def\equation{\setcounter{equation}{\arabic{subsection}}%
   \refstepcounter{subsection}%
   \realequation}


\usepackage{tikz}
\usepackage{tikz-cd}
\usetikzlibrary{trees}
\usetikzlibrary[shapes]
\usetikzlibrary[arrows]
\usetikzlibrary{patterns}
\usetikzlibrary{fadings}
\usetikzlibrary{backgrounds}
\usetikzlibrary{decorations.pathreplacing}
\usetikzlibrary{decorations.pathmorphing}
\usetikzlibrary{positioning}
\usetikzlibrary{shapes.geometric}

\tikzstyle{category} = [rectangle, rounded corners, minimum width=3cm, minimum height=1cm, text centered, text width=2.5cm, draw=black]
\tikzstyle{area} = [rectangle, minimum width=4cm, minimum height=1cm, text centered, text width=3.8cm, draw=black]

\tikzstyle{arrow} = [thick,->,>=stealth]
\tikzstyle{arrow2} = [thick,->,>=stealth,dotted]

\newcommand{\nc}{\newcommand}
\nc{\dmo}{\DeclareMathOperator}

\usepackage{todonotes}

\renewcommand{\emptyset}{\varnothing}
\nc{\eqname}[2][0pt]{\makebox[#1][r]{(#2)}}

\nc{\overbar}[1]{\mkern 1.5mu\overline{\mkern-1.5mu#1\mkern-1.5mu}\mkern 1.5mu}

\nc{\kappaaux}{g}
\nc{\kappam}{{\kappaaux({\frak m})}}
\nc{\kappaP}{{\kappaaux(\cat P)}}
\nc{\kappaQ}{{\kappaaux(\cat Q)}}
\nc{\kappaSP}{{\kappaaux_{\cat S}(\cat P)}}
\nc{\kappaTP}{{\kappaaux_{\cat T}(\cat P)}}
\nc{\kappaSQ}{{\kappaaux_{\cat S}(\cat Q)}}
\nc{\kappaTQ}{{\kappaaux_{\cat T}(\cat Q)}}
\nc{\kappaphiB}{{\kappaaux(\varphi(\cat B))}}
\nc{\kappaphiQ}{{\kappaaux(\varphi(\cat Q))}}

\dmo{\Sub}{Sub}
\nc{\SpEn}{\cat S_{E(n)}}
\nc{\SpEnf}{\cat S_n}
\dmo{\Loc}{Loc}
\dmo{\Locideal}{Locid}
\dmo{\Colocideal}{Colocid}
\dmo{\Thickideal}{Thickid}
\nc{\Locs}[1]{\Loc\langle #1 \rangle}
\nc{\Loco}[1]{\Locideal\langle #1 \rangle}
\nc{\Coloco}[1]{\Colocideal\langle #1 \rangle}
\nc{\bbullet}{{\scriptscriptstyle\hspace{-1pt}\bullet}}
\nc{\bullett}{{\scriptscriptstyle\bullet}\hspace{-1pt}}
\nc{\LF}{L\hspace{-0.2ex}F}
\nc{\SpG}{\Sp_G}
\nc{\SpGn}{\Sp_{G,n}}
\nc{\EG}{\bbE_G}
\nc{\EH}{\bbE_H}
\nc{\DEG}{\Der(\EG)}
\nc{\DEH}{\Der(\EH)}
\nc{\DE}{\Der(\bbE)}
\nc{\Prst}{{\cat P}\mathrm{r^{st}}}
\nc{\Mack}[2]{\mathrm{Mack}_{#1}(#2)}
\nc{\SC}{S\cat C}
\dmo{\fin}{{fin}}
\dmo{\DM}{DM}
\dmo{\fp}{fp}
\nc{\DMQ}{\DM_Q}
\dmo{\DerKal}{DMack}
\dmo{\Der}{D}
\dmo{\DMot}{DMot}
\dmo{\rmH}{H}
\dmo{\piu}{\underline{\pi}}
\dmo{\Sphere}{\mathbb{S}}
\nc{\HA}{{\rmH \hspace{-0.2em}\bbA}}
\nc{\HZ}{{\rmH \hspace{-0.2em}\bbZ}}
\nc{\HZbar}{{\rmH \hspace{-0.2em}\underline{\bbZ}}}
\nc{\Fp}{{\bbF_{\hspace{-0.1em}p}}}
\nc{\HFp}{{\rmH \hspace{-0.15em}\bbF_{\hspace{-0.1em}p}}}
\nc{\DHZG}{\Der(\HZ_G)}
\nc{\DHZH}{\Der(\HZ_H)}
\nc{\DHZK}{\Der(\HZ_K)}
\nc{\DHZGN}{\Der(\HZ_{G/N})}
\nc{\DHZGG}{\Der(\HZ_{G/G})}
\nc{\DHZCp}{\Der(\HZ_{C_p})}
\nc{\DHZGprime}{\Der(\HZ_{G'})}
\nc{\DHZ}{\Der(\HZ)}
\nc{\frakp}{\mathfrak{p}}
\nc{\frakq}{\mathfrak{q}}
\nc{\Z}{\mathbb{Z}}
\nc{\F}{\mathbb{F}}
\nc{\SSG}{\text{sSet}_*^G}
\nc{\sSet}{\text{sSet}}

\dmo{\csupp}{supp_{coh}}
\dmo{\Id}{Id}
\dmo{\rmK}{\textrm{\rm K}}
\dmo{\Spc}{Spc}
\dmo{\thick}{thick}
\dmo{\Thick}{Thick}
\nc{\Thicks}[1]{\Thick\langle #1 \rangle}
\nc{\Thickid}[1]{\Thickideal\langle #1 \rangle}
\dmo{\cone}{cone}
\dmo{\End}{End}
\dmo{\Mor}{Mor}
\dmo{\Hom}{Hom}
\dmo{\id}{id}
\dmo{\incl}{incl}
\dmo{\Img}{Im}
\dmo{\im}{im}
\dmo{\Ker}{Ker}
\dmo{\ind}{ind}
\dmo{\CoInd}{coind}
\dmo{\res}{res}
\dmo{\infl}{infl}
\dmo{\triv}{triv}
\dmo{\Tel}{Tel} 
\dmo{\grMod}{grMod}%
\dmo{\Mod}{Mod}%
\dmo{\opname}{op}
\dmo{\SH}{SH}
\dmo{\smallb}{b}
\dmo{\Spec}{Spec}
\dmo{\supp}{supp}
\dmo{\Supp}{Supp}
\nc{\Supph}{\Supp^h}
\nc{\Supphnaive}{\Supp^n}
\dmo{\cosupp}{cosupp}
\dmo{\Cosupp}{Cosupp}
\nc{\Cosupph}{\Cosupp^h}
\nc{\SHc}{{\SH^c}}
\nc{\SHp}{{\SH_{(p)}}}
\nc{\SHcp}{{\SH^c_{(p)}}}
\nc{\SHG}{\SH(G)}
\nc{\SHGp}{\SH(G)_{(p)}}
\nc{\SHGc}{\SHG^c}
\nc{\SHGcp}{\SHG^c_{(p)}}
\nc{\quadtext}[1]{\quad\textrm{#1}\quad}
\nc{\qquadtext}[1]{\qquad\textrm{#1}\qquad}
\nc{\adj}{\dashv}
\nc{\adjto}{\rightleftarrows}
\nc{\bbL}{\mathbb{L}}
\nc{\bbA}{\mathbb{A}}
\nc{\bbE}{\mathbb{E}}
\nc{\bbN}{\mathbb{N}}
\nc{\bbQ}{\mathbb{Q}}
\nc{\bbZ}{\mathbb{Z}}
\nc{\bbF}{\mathbb{F}}
\nc{\cat}[1]{\mathscr{#1}}
\nc{\ie}{{\sl i.e.}, }
\nc{\into}{\mathop{\rightarrowtail}}
\nc{\inv}{^{-1}}
\nc{\isoto}{\mathop{\overset{\sim}\to}}
\nc{\isotoo}{\mathop{\overset{\sim}\too}}
\nc{\onto}{\mathop{\twoheadrightarrow}}
\nc{\too}{\mathop{\longrightarrow}\limits}
\nc{\mapstoo}{\longmapsto}
\nc{\adh}[1]{\overline{#1}}
\nc{\adhpt}[1]{\adh{\{#1\}}}
\nc{\aka}{{a.\,k.\,a.}\ }
\nc{\calF}{\mathcal{F}}
\nc{\eg}{{\sl e.\,g.}}
\nc{\Homcat}[1]{\Hom_{\cat #1}}
\nc{\hook}{\hookrightarrow}
\nc{\ideal}[1]{\langle #1\rangle}
\nc{\ihom}[1]{\mathsf{hom}(#1)}
\nc{\ihomT}[1]{\mathsf{hom}_{\cat T}(#1)}
\nc{\ihomSi}[1]{\mathsf{hom}_{{\cat S}_i}(#1)}
\nc{\Mid}{\,\big|\,}
\nc{\MMod}{\,\text{-}\Mod}%
\nc{\op}{^{\opname}}
\nc{\oto}[1]{\overset{#1}\to}
\nc{\otoo}[1]{\overset{#1}{\,\too\,}}
\nc{\sminus}{\!\smallsetminus\!}
\nc{\poplus}[1]{^{\oplus #1}}%
\nc{\potimes}[1]{^{\otimes #1}}
\nc{\sbull}{{\scriptscriptstyle\bullet}}
\nc{\SET}[2]{\big\{\,#1\Mid#2\,\big\}}
\nc{\SpcK}{\Spc(\cat K)}
\nc{\then}{\Rightarrow}
\nc{\unit}{\mathbb{1}}
\nc{\unitS}{\unit_{\cat S}}
\nc{\unitT}{\unit_{\cat T}}
\nc{\xra}{\xrightarrow}
\nc{\phigeom}[1]{\widetilde{\Phi}^{#1}}
\nc{\phigeomb}[1]{\Phi^{#1}}
\dmo{\Oname}{O}
\dmo{\proper}{proper}
\dmo{\lenormal}{\unlhd}
\dmo{\lnormal}{\lhd}
\nc{\normal}{\trianglelefteq}
\nc{\Op}{\Oname^p}
\nc{\Oq}{\Oname^q}
\dmo{\Sp}{Sp}
\dmo{\Ho}{Ho}
\dmo{\Fin}{Fin}
\dmo{\add}{add}
\dmo{\Fun}{Fun}
\dmo{\Ext}{Ext}
\dmo{\CAlg}{CAlg}
\dmo{\CMon}{CMon}
\dmo{\CC}{\cat C}
\dmo{\DD}{\cat D}
\dmo{\OO}{\mathcal{O}}
\dmo{\Map}{Map}
\dmo{\Span}{Span}
\dmo{\N}{N}
\dmo{\Cat}{Cat}
\dmo{\colim}{colim}
\dmo{\hocolim}{hocolim}
\dmo{\holim}{holim}
\dmo{\Ch}{Ch}
\dmo{\A}{\mathbb{A}^{eff}}
\nc{\AGeff}{\mathbb{A}_G^{\mathrm{eff}}}
\nc{\BGeff}{\mathcal{B}_G^{\mathrm{eff}}}
\nc{\BG}{{\mathcal{B}_G}}
\nc{\NBGeff}{{\N}{\BGeff}}
\dmo{\Ab}{Ab}
\dmo{\Set}{Set}
\dmo{\ev}{ev}
\dmo{\Spcl}{Spcl}
\nc{\Funadd}{\Fun_{\add}}
\dmo{\proj}{proj}
\dmo{\cof}{cof}

\dmo{\Coideal}{Coideal}
\dmo{\gen}{gen}
\dmo{\StMod}{StMod}

\dmo{\projmod}{Lat}
\dmo{\lat}{lat}
\dmo{\Lat}{Lat}
\dmo{\rep}{rep}
\dmo{\Rep}{Rep}
\dmo{\Perf}{Perf}
\dmo{\stmod}{stmod}
\dmo{\Ind}{Ind}
\nc{\borel}[1]{\underline{#1}}
\dmo{\coind}{coind}
\dmo{\rank}{rank}
\nc{\tH}{\hat{H}}
\dmo{\Nm}{Nm}
\dmo{\Proj}{Proj}
\dmo{\Inj}{Inj}
\dmo{\dual}{dual}
\dmo{\fg}{fg}
\nc{\cdvr}[2]{{#1}_{#2}^{\wedge}}
\nc{\cA}{\mathcal{A}}
\dmo{\orbit}{Or}

\nc{\mT}{\kern-0.5em\mod\kern-0.1em\text{-}\cat{T}^c}
\nc{\MT}{\Mod\kern-0.1em\text{-}\cat{T}}
\newcounter{enum-resume-hack}

\dmo{\bP}{\mathbb{P}}
\dmo{\bT}{\mathbb{T}}
\nc{\LOCO}{\mathcal{L}\mathrm{oc}_{\otimes}}

\dmo{\Ob}{Ob}
\nc{\cP}{\mathcal{P}}


\begin{document}


\title[Homological stratification and descent]{Homological stratification and descent}

\author{Tobias Barthel}
\author{Drew Heard}
\author{Beren Sanders}
\author{Changhan Zou}

\date{\today}

\makeatletter
\patchcmd{\@setaddresses}{\indent}{\noindent}{}{}
\patchcmd{\@setaddresses}{\indent}{\noindent}{}{}
\patchcmd{\@setaddresses}{\indent}{\noindent}{}{}
\patchcmd{\@setaddresses}{\indent}{\noindent}{}{}
\makeatother

\address{Tobias Barthel, Max Planck Institute for Mathematics, Vivatsgasse 7, 53111 Bonn, Germany}
\email{tbarthel@mpim-bonn.mpg.de}
\urladdr{\href{https://sites.google.com/view/tobiasbarthel/home}{https://sites.google.com/view/tobiasbarthel/home}}

\address{Drew Heard, Department of Mathematical Sciences, Norwegian University of Science and Technology, Trondheim}
\email{drew.k.heard@ntnu.no}
\urladdr{\href{https://folk.ntnu.no/drewkh/}{https://folk.ntnu.no/drewkh/}}

\address{Beren Sanders, Mathematics Department, UC Santa Cruz, 95064 CA, USA}
\email{beren@ucsc.edu}
\urladdr{\href{https://people.ucsc.edu/~beren/}{http://people.ucsc.edu/$\sim$beren/}}

\address{Changhan Zou, Mathematics Department, UC Santa Cruz, 95064 CA, USA}
\email{czou3@ucsc.edu}
\urladdr{\href{https://people.ucsc.edu/~czou3/}{http://people.ucsc.edu/$\sim$czou3/}}

\begin{abstract}
We introduce a notion of stratification  for rigidly-compactly generated tensor-triangulated categories relative to the homological spectrum and develop the fundamental  features of this theory. In particular, we demonstrate that it exhibits excellent descent properties. In conjunction with Balmer's Nerves of Steel conjecture, we  conclude  that classical stratification also admits a general form of descent. This gives a uniform  treatment of several recent stratification results and provides a complete answer to the question: \textit{When does stratification descend?}  As a new application, we extend earlier work  on the  tensor triangular geometry  of equivariant module spectra from finite groups to compact Lie groups.
\end{abstract}

\subjclass[2020]{18F99, 18G65, 18G80, 20C10, 55P91, 55U35}

\maketitle
{
\hypersetup{linkcolor=black}
\tableofcontents
}

\section*{Introduction}\label{sec:introduction}

Tensor triangular geometry, classically construed, studies the global and local geometry of a \emph{small} tensor-triangulated category $\cat K$  through its spectrum $\Spc(\cat K)$. This topological space, introduced by Balmer~\cite{Balmer05a}, comes equipped with the universal theory of support for the objects of $\cat K$, and yields a classification of the thick ideals of $\cat K$ in terms of certain subsets of~$\Spc(\cat K)$, the so-called Thomason subsets. An important direction in recent years has been to extend this theory to the context of `big'\footnote{that is, rigidly-compactly generated; see below for details.} tensor-triangulated categories~$\cat T$; here, $\cat K$ arises as the full subcategory $\cat T^c$ of compact-dualizable objects in $\cat T$.

En route to the computation of the spectrum of the  compact-dualizable objects in the derived category $\Der(R)$ of a noetherian commutative ring $R$, Neeman~\cite{Neeman92a}  also classifies the localizing ideals of $\Der(R)$ by showing that they correspond to arbitrary subsets of $\Spec(R)$. A systematic approach to such classification results for big tensor-triangulated categories was then introduced by Hovey, Palmieri and Strickland \cite{HoveyPalmieriStrickland97} and further developed by Benson, Iyengar and Krause \cite{BensonIyengarKrause08,BensonIyengarKrause11b}. The resulting notion of \emph{cohomological stratification} for a big tensor-triangulated category $\cat T$ relies on the action of a noetherian ring $R$ on~$\cat T$ in order to construct a suitable support theory. Cohomological stratification then asserts that this support theory induces a bijection 
    \[
        \Supp_R\colon \big\{ \text{localizing ideals of $\cat T$} \big\} \xra{\cong} \big\{ \text{subsets of $\Spec(R)$}\big\}.
    \]
In practice, $R$ is often taken to be the (graded) endomorphism ring $\End_{\cat T}^{\bullet}(\unit)$ of the unit in $\cat T$.\footnote{More generally, cohomological stratification operates in the setting of arbitrary compactly generated triangulated categories, i.e., it does not require a tensor structure on $\cat T$.} A prominent  application of this theory is the stratification of the stable module category of a finite group in modular characteristic  due to Benson, Iyengar and Krause  \cite{BensonIyengarKrause11a}. Key to their proof is a descent result for cohomological stratification that provides a reduction from finite groups to elementary abelian groups, which can then be tackled via Neeman's theorem. 

A significant drawback of cohomological stratification is that it can only apply in situations where the parametrizing object is \emph{affine} and \emph{noetherian}. Addressing both issues simultaneously, in \cite{bhs1} the three first-named authors introduced a notion of \emph{tensor-triangular stratification} which is instead based on the Balmer--Favi notion of support \cite{BalmerFavi11}. The latter takes values in  $\Spc(\cat T^c)$ and tensor-triangular stratification asserts  that it provides  a bijection
    \[
        \Supp\colon \big\{ \text{localizing ideals of $\cat T$} \big\} \xra{\cong} \big\{ \text{subsets of $\Spc(\cat T^c)$}\big\}.
    \]
	The resulting theory has both theoretical and practical advantages, relates to \mbox{cohomological} stratification through Balmer's comparison map 
    \[
        \rho\colon \Spc(\cat T^c) \to \Spec(\End_{\cat T}^{\bullet}(\unit))
    \]
and applies to numerous new examples. However, two  serious issues remain: 
Firstly, it still requires a topological assumption on the spectrum, namely the hypothesis of being ``weakly noetherian''.
Secondly, while the permanence of stratification has been established via  many instances of descent (e.g., Zariski descent, finite \'etale descent, nil descent, etc; see \cite{bhs1,BCHS1,BCHNPS_descent}), a general descent statement remains elusive. 

In this paper, we introduce a theory of stratification  based on the ``homological'' support theory $\Supph$ introduced in \cite{Balmer20_bigsupport} which takes values in the \emph{homological spectrum} $\smash{\Spc^h(\cat T^c)}$. This is a variant of the usual spectrum (also introduced by Balmer \cite{Balmer20_nilpotence}) whose points  correspond to the \emph{homological residue fields} of $\cat T^c$. The key features of our theory of \emph{homological stratification} may be summarized informally as follows:

\begin{itemize}
    \item (Generality) Homological stratification works without any point-set topological restrictions on the spectrum, thus avoiding the (weakly) noetherian assumptions required for tensor-triangular and cohomological stratification. 
    \item (Descendability) Homological stratification satisfies a very general form of descent. This recovers, unifies and extends  all known descent results for stratification in the literature.
    \item (Refinement) Homological stratification in general refines the notion of tensor-triangular stratification and coincides with it in cases where Balmer's Nerves of Steel conjecture holds.
\end{itemize}

The last point requires some explanation. There is a canonical continuous and surjective  map
\begin{equation}\label{eq:intro-pi}\tag{$\dagger$}
        \begin{tikzcd}
		\Spc^h(\cat T^c) \ar[r,->>,"\pi"] &\Spc(\cat T^c)
        \end{tikzcd}
	\end{equation}
and the Nerves of Steel conjecture states that $\pi$ is a bijection \cite{Balmer20_nilpotence}. It is known  to hold in numerous examples. When this is the case, the homological theory of stratification coincides with its tensor-triangular counterpart.  On the other hand,  any counterexample to the Nerves of Steel conjecture would have  the property that the homological spectrum contains more points than the usual one, so homological stratification would have access to  more refined information about~$\cat T$.

\begin{figure}[h!]
    \[
    \xymatrix@R=3em{
        \fbox{\text{\parbox{0.8in}{\centering homological stratification}}} \ar@{~>}[r] \ar[d] &\text{\parbox{1.2in}{\centering tensor-triangular stratification}} \ar@{~>}[r] \ar[d]& \text{\parbox{1in}{\centering cohomological stratification}} \ar[d] \\
        \Spc^h(\cat T^c) \ar@{->>}[r]^-{\pi} &  \Spc(\cat T^c) \ar[r]^-{\rho} & \Spec(\End_{\cat T}^{\bullet}(\unit))
    }
    \]
    \caption{Schematic overview of the relations between the various notions of stratification.}\label{fig:overview}
\end{figure}
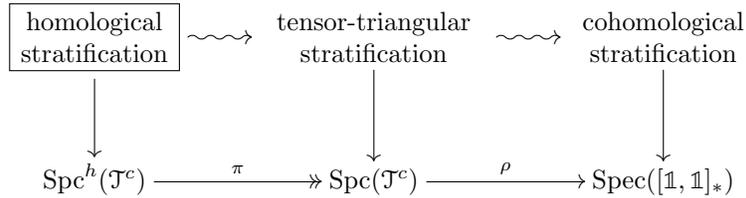

The rich interplay between the notions of homological and tensor-triangular stratification is one of the themes of this paper. Each theory has certain advantages over the other; up to the Nerves of Steel conjecture, we can combine both features. Collecting several of our results, we can characterize when tensor-triangular stratification descends along a jointly conservative geometric family of functors, under the assumption that the Nerves of Steel conjecture holds: 

\begin{thmx}\label{thmx:a}
    Let $(f_i^*\colon \cat T \to \cat S_i)_{i \in I}$ be a family of geometric functors that jointly detect when an object of $\cat T$ is zero. Suppose that $\cat S_i$ is tt-stratified for all $i \in I$. If~$\cat T$ satisfies the Nerves of Steel conjecture and has a weakly noetherian spectrum, then the following conditions are equivalent:
        \begin{enumerate}
            \item $\cat T$ is tt-stratified;
            \item $\cat T$ is h-stratified;
            \item $\cat T$ is generated by the images of the right adjoints $(f_i)_*$ for all $i \in I$.
        \end{enumerate}
\end{thmx}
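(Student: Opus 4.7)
The plan is to deduce the three-way equivalence by combining two separate inputs: the coincidence of homological and tensor-triangular stratification under the Nerves of Steel assumption (which handles (a)$\Leftrightarrow$(b)), and the general descent theorem for homological stratification developed in the paper (which handles (b)$\Leftrightarrow$(c)). The weakly noetherian hypothesis on $\Spc(\cat T^c)$ enters only through condition (a), to ensure that tt-stratification is well-posed.

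For (a)$\Leftrightarrow$(b), the Nerves of Steel hypothesis for $\cat T$ is precisely the statement that the canonical surjection $\pi\colon \Spc^h(\cat T^c) \twoheadrightarrow \Spc(\cat T^c)$ is a bijection. Under $\pi$, the homological support $\Supph$ and the Balmer--Favi support $\Supp$ correspond, so the bijections that define h- and tt-stratification are identified; the equivalence then follows from the coincidence theorem summarised in the third bullet of the introduction.

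For (b)$\Rightarrow$(c), assume $\cat T$ is h-stratified and let $\cat L := \Loc\langle \bigcup_{i \in I} \im((f_i)_*) \rangle$ be the localizing ideal generated by the images of the right adjoints. By h-stratification, the equality $\cat L = \cat T$ is equivalent to $\Supph(\cat L) = \Spc^h(\cat T^c)$. Given $\mathfrak{p} \in \Spc^h(\cat T^c)$, pick $X \in \cat T$ with $\mathfrak{p} \in \Supph(X)$; joint conservativity produces some $i \in I$ with $f_i^*(X) \neq 0$, and the unit map $X \to (f_i)_*f_i^*(X)$ together with the standard properties of $\Supph$ places $\mathfrak{p}$ in $\Supph((f_i)_*f_i^*(X)) \subseteq \Supph(\cat L)$, as required.

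The direction (c)$\Rightarrow$(b) is the substantive step and is a direct application of the general descent theorem for h-stratification: given a jointly conservative geometric family $(f_i^*)$ with each $\cat S_i$ h-stratified and with (c) holding, the theorem concludes that $\cat T$ is h-stratified. The main obstacle here is bridging the assumed tt-stratification of each $\cat S_i$ to the h-stratification required as input to the descent theorem. When the $\cat S_i$ satisfy Nerves of Steel this is immediate; in general, the descent machinery is designed to be robust enough to handle the bridge by passing through the comparison maps $\pi_i\colon \Spc^h(\cat S_i^c) \twoheadrightarrow \Spc(\cat S_i^c)$ and exploiting the fact that tt-stratification and h-stratification agree on the subsets of $\Spc^h(\cat S_i^c)$ that are pulled back from the tt-side.
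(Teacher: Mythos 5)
Your overall decomposition matches the paper's: the equivalence $(a)\Leftrightarrow(b)$ comes from the comparison theorem between tt- and h-stratification under Nerves of Steel (\cref{cor:tt-stratified-iff-h-strified}, derived from \cref{thm:tt=h+NS}), the implication $(b)\Rightarrow(c)$ is \cref{cor:hstrat_surjectivity}, and $(c)\Rightarrow(b)$ is the descent theorem \cref{thm:h-descent}. Two points in your write-up need attention.

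First, your argument for $(b)\Rightarrow(c)$ has a real gap as stated. You pick $X$ with $\mathfrak p\in\Supph(X)$, use joint conservativity to find some $i$ with $f_i^*X\neq0$, and then assert that ``the unit map $X\to (f_i)_*f_i^*(X)$ together with the standard properties of $\Supph$'' forces $\mathfrak p\in\Supph((f_i)_*f_i^*(X))$. There is no support-theoretic inclusion along arbitrary nonzero morphisms, and the index $i$ you find by conservativity need not be the one for which the image captures $\mathfrak p$. The step can be repaired by specializing to $X=E_{\mathfrak p}$: this is a weak ring, $f_i^*E_{\mathfrak p}\neq0$ for some $i$ implies $(f_i)_*f_i^*E_{\mathfrak p}\neq0$ (since $f_i^*$ of the unit map is a split mono), the projection formula gives $(f_i)_*f_i^*E_{\mathfrak p}\simeq (f_i)_*\unit\otimes E_{\mathfrak p}$, and then h-detection (which h-stratification provides) plus the tensor product formula and $\Supph(E_{\mathfrak p})=\{\mathfrak p\}$ give $\mathfrak p\in\Supph((f_i)_*\unit)$. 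This is, in effect, an inline derivation of the identity $\im\varphi_i^h=\Supph((f_i)_*\unit_{\cat S_i})$ that \cref{cor:hstrat_surjectivity} cites from Balmer.

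Second, your closing worry about ``bridging'' tt-stratification of $\cat S_i$ to the h-stratification needed as input to \cref{thm:h-descent} is unfounded, and the vague appeal to the descent machinery being ``robust enough'' does not describe a real argument. The implication $(a)\Rightarrow(b)$ of \cref{thm:tt=h+NS} shows unconditionally that a tt-stratified $\cat S_i$ (with weakly noetherian spectrum, as is implicit in being tt-stratified) both satisfies the Nerves of Steel conjecture and is h-stratified; no extra hypothesis on the $\cat S_i$ and no passage through the comparison maps $\pi_i$ is needed. This is precisely what makes the descent chain clean.
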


\noindent In other words, this result provides one answer to the question: 
    \[
    \emph{When does stratification descend?}
    \]
Its proof is assembled at the end of \cref{sec:comp-strat}. In addition, as we will demonstrate, it encompasses all known descent results for stratification and also gives rise to new ones.

\subsection*{Main results}\label{ssec:mainresults}

We now give a more detailed overview of the main results of  the paper. Throughout, $\cat T = (\cat T, \otimes, \mathsf{hom}, \unit)$ denotes a rigidly-compactly generated  tensor-triangulated category, that is,  a tensor-triangulated (``tt'') category  that  is compactly generated as a triangulated category and has the property that the compact objects and the dualizable objects in $\cat T$ coincide: $\cat T^c=\cat T^d$. A geometric functor $f^*\colon \cat T \to \cat S$ between such categories is  an exact and symmetric monoidal functor  that preserves arbitrary set-indexed coproducts. In particular, $f^*$ admits a right adjoint $f_*$ which  itself admits a right adjoint $f^!$.

We say that $\cat T$ is \emph{homologically stratified} (or \emph{h-stratified}, for short) if homological support induces a bijection
    \[ 
        \Supph\colon \big\{ \text{localizing ideals of $\cat T$} \big\} \xra{\cong} \big\{ \text{subsets of $\Spc^h(\cat T^c)$}\big\}.
    \]
We  emphasize that  we are making no point-set topological assumptions on the homological spectrum; see \cref{def:hstratification}. Our first result provides a concrete  criterion for  homological stratification:

\begin{thmx}[\cref{thm:hstratfundamental} and \cref{thm:hstratcosupp}]
    For a rigidly-compactly generated tt-category $\cat T$, the following are equivalent:
    \begin{enumerate}
        \item $\cat T$ is homologically stratified;
        \item $\cat T$ satisfies the following two conditions:
			\begin{enumerate}[label=(\roman*)]
                \item the homological local-to-global principle holds,
					that is,
            \[  
                \Loco{t} = \Loco{t \otimes E_{\cat B} \mid \cat B \in \Supph(t)}
            \]
					for all $t \in \cat T$;
        \item $\Loco{E_{\cat B}}$ is a minimal localizing ideal for all $\cat B \in \Spc^h(\cat T^c)$;
            \end{enumerate}
        \item $\ihom{t_1,t_2} = 0$ if and only if $\Supph(t_1) \cap \Cosupph(t_2) = \emptyset$ for all $t_1,t_2 \in \cat T$.
    \end{enumerate}
\end{thmx}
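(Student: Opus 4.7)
The plan is to follow the template established for tensor-triangular stratification (cf.\ \cite{bhs1}), but adapted to the homological setting via the homological residue field objects $E_{\cat B}$. The essential prerequisites, presumably established earlier in the paper, are that $E_{\cat B}$ has $\Supph(E_{\cat B}) = \{\cat B\}$, that homological support detects vanishing, and that $\Supph/\Cosupph$ satisfy the expected tensor/hom interactions with $E_{\cat B}$. With these in hand, the proof splits into two equivalences.

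First I would prove (1) $\Leftrightarrow$ (2) by the standard Hovey--Palmieri--Strickland-style bookkeeping. For (1) $\Rightarrow$ (2)(i): the localizing ideals $\Loco{t}$ and $\Loco{t \otimes E_{\cat B} \mid \cat B \in \Supph(t)}$ have the same homological support, so stratification forces them to agree. For (1) $\Rightarrow$ (2)(ii): under stratification, $\Loco{E_{\cat B}}$ corresponds to the singleton $\{\cat B\}$, whose only proper subset is $\emptyset$, hence the ideal is minimal. For the converse (2) $\Rightarrow$ (1), I would prove injectivity of $\Supph$ on $\Locideal(\cat T)$ by observing that the local-to-global principle rewrites any localizing ideal $\cat L$ as $\Loco{t \otimes E_{\cat B} \mid t \in \cat L,\ \cat B \in \Supph(t)}$, which depends only on $\Supph(\cat L)$; and I would prove surjectivity by showing that for any $S \subseteq \Spc^h(\cat T^c)$, the ideal $\Loco{E_{\cat B} \mid \cat B \in S}$ has support exactly $S$, using minimality to identify its support with $S$ rather than a proper subset.

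Next I would prove (2) $\Leftrightarrow$ (3), which ports the ``tensor-product theorem'' perspective to the homological setting. For the forward direction, under (2) every localizing ideal is determined by its support; combined with the adjunction $\ihom{E_{\cat B} \otimes t_1,t_2} \simeq \ihom{E_{\cat B},\ihom{t_1,t_2}}$, the vanishing of $\ihom{t_1,t_2}$ can be tested pointwise on $\Spc^h(\cat T^c)$, and this pointwise condition exactly reads as the emptiness of $\Supph(t_1) \cap \Cosupph(t_2)$. For the reverse direction, the hom-vanishing criterion implies the local-to-global principle (since $t$ and $\{t \otimes E_{\cat B}\}_{\cat B \in \Supph(t)}$ have identical internal-hom-vanishing behavior against arbitrary $t' \in \cat T$, forcing them to generate the same localizing ideal) and implies minimality (any nonzero $s \in \Loco{E_{\cat B}}$ has $\Supph(s) = \{\cat B\}$, and then for any other nonzero $s' \in \Loco{E_{\cat B}}$ one uses (3) to deduce that $s$ and $s'$ cannot be separated by internal homs, placing $s'$ inside $\Loco{s}$).

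The main obstacle will be the equivalence (2) $\Leftrightarrow$ (3). Unlike the tt-stratification analog, here one must work with the homological residue fields $E_{\cat B}$, which are pure injective and not in general compact, so the manipulations of $\Supph$, $\Cosupph$, $\otimes$, and $\mathsf{hom}$ are more delicate than in the weakly noetherian tt setting. In particular, one must carefully verify an ``$E_{\cat B}$-local'' Hom--support--cosupport identity that identifies when $E_{\cat B}\otimes t_1 = 0$ or $\ihom{E_{\cat B},t_2} = 0$ in terms of $\Supph(t_1)$ and $\Cosupph(t_2)$, without invoking any noetherian hypothesis on the spectrum. Once this foundational identity and the basic properties of the $E_{\cat B}$ have been installed from the earlier sections, the remainder of the proof reduces to organizing the standard equivalences.
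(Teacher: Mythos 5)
Your outline of (1) $\Leftrightarrow$ (2) follows the paper's argument for Theorem~\ref{thm:hstratfundamental} closely (in fact surjectivity is even easier than you suggest: $\Supph(\Loco{E_{\cat B} \mid \cat B \in S}) = S$ follows directly from $\Supph(E_{\cat B}) = \{\cat B\}$, no minimality needed; minimality enters only in the injectivity step, where you must replace $t \otimes E_{\cat B}$ by $E_{\cat B}$ to make the description depend only on $\Supph(\cat L)$). Your direct right-orthogonal argument for (2) $\Rightarrow$ (3) is actually slicker than the paper, which routes through the intermediate cosupport formula $\Cosupph(\ihom{t_1,t_2}) = \Supph(t_1) \cap \Cosupph(t_2)$ of Proposition~\ref{prop:hstratcosupp} before specializing.

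However, there is a genuine problem in your framing that infects the hard direction (3) $\Rightarrow$ (2). You list ``homological support detects vanishing'' among the prerequisites ``presumably established earlier in the paper.'' This is false and is precisely the kind of assumption the theorem is designed to eliminate: the paper emphasizes (Remark~\ref{rem:hsuppdetectsweakrings}, Example~\ref{exa:truncatedpolyring}) that there exist nonzero tensor-nilpotent objects $t$ with $\Supph(t) = \emptyset$, so the h-detection property is a substantive hypothesis that can fail. Consequently, in (3) $\Rightarrow$ (2)(ii), when you assert that a nonzero $s \in \Loco{E_{\cat B}}$ has $\Supph(s) = \{\cat B\}$, the inclusion $\Supph(s) \subseteq \{\cat B\}$ is fine but the nonemptiness $\Supph(s) \neq \emptyset$ is exactly h-detection, which must be \emph{derived} from (3). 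Your final paragraph identifying an ``$E_{\cat B}$-local Hom--support--cosupport identity'' as the main obstacle misplaces the difficulty --- the statements $\ihom{E_{\cat B},t_2} = 0 \iff \cat B \notin \Cosupph(t_2)$ and $E_{\cat B} \otimes t_1 = 0 \iff \cat B \notin \Supphnaive(t_1)$ are definitions; the real issue is the gap between $\Supph$ and $\Supphnaive$, which is governed precisely by h-detection.

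The paper's key device for deriving h-detection (and h-codetection) from (3) is Brown--Comenetz duality: by Remark~\ref{rem:BCdual}, $\Cosupph(I_{\unit}) = \Supph(\unit) = \Spc^h(\cat T^c)$, while $\ihom{t_1,I_{\unit}} = 0 \iff t_1 = 0$ by the universal property of $I_{\unit}$; plugging $t_2 = I_{\unit}$ into (3) gives $t_1 = 0 \iff \Supph(t_1) = \emptyset$, which is h-detection, and plugging $t_1 = \unit$ gives h-codetection. Your right-orthogonal argument that (3) implies the h-LGP can in principle be salvaged to yield h-detection as a byproduct (the h-LGP implies h-detection, Remark~\ref{rem:hlgphdetection}), but you would then need to explicitly observe this dependency and only afterwards invoke h-detection in the minimality step --- and even then, establishing h-codetection and the identification $\Supph = \Supphnaive$ (via Lemma~\ref{lem:minimalityhsuppidentification} and Corollary~\ref{cor:hLGP}) require separate care. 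As written, the proposal reads as if these detection properties are free, which misrepresents the content of the theorem.
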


Here $E_{\cat B}$ is a pure-injective object in $\cat T$ associated to each  $\cat B \in \Spc^h(\cat T^c)$; see \cref{rem:support-and-cosupports}. The equivalence of the first two conditions  is completely analogous to the characterization of cohomological stratification and tt-stratification established in \cite[Theorem 4.2]{BensonIyengarKrause11b} and \cite[Theorem 4.1]{bhs1}, respectively. The third characterization is new and involves a notion of homological cosupport $\Cosupp^h$ which we introduce and study in this paper, motivated by its tt-theoretic counterpart studied in \cite{BCHS1}.

When establishing that a given category is tt-stratified by checking the conditions analogous to part (b) of the above theorem, the bulk of the work goes into establishing minimality. In fact, for cohomological stratification --- where the Zariski spectrum is  noetherian --- stratification is equivalent to minimality. In sharp contrast, the homological minimality condition should --- heuristically speaking --- be detected in the residue fields of $\cat T$ and thereby be much easier to verify in practice.
This is the underlying reason why the theory of homological stratification tends to be better behaved under descent than its cousins.

For instance, in favourable situations, the points of $\Spc^h(\cat T^c)$ are detected not only by homological residue fields, but by tt-residue fields of $\cat T$, i.e., for each $\cat B \in \Spc^h(\cat T)$ there exists a geometric functor $\cat T \to \cat F$ to a tt-field  which detects $\cat B$. If this is the case, we say that $\cat T$ \emph{admits enough tt-fields} (\cref{def:enough-tt-fields}). As demonstrated in \cite{BalmerCameron2021}, examples abound. Under the assumption that $\cat T$ admits enough residue fields, homological stratification reduces to the homological local-to-global principle: 

\begin{thmx}[\cref{cor:ttfields1,prop:ttfields2}]
    Suppose that $\cat T$ admits enough tt-fields $(f_{\cat B}^*\colon \cat T \to \cat F_{\cat B})$. Then the following are equivalent:
        \begin{enumerate}
            \item $\cat T$ is h-stratified;
            \item $\cat T$ satisfies the homological local-to-global principle;
            \item $\cat T$ is generated as a localizing ideal by the $(f_{\cat B})_*(\cat F_{\cat B})$.
        \end{enumerate}
\end{thmx}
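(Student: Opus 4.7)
The plan is to deduce this from the two-part criterion for h-stratification established in the preceding theorem: namely, that h-stratification is equivalent to the combination of the homological local-to-global principle with minimality of $\Loco{E_{\cat B}}$ for every $\cat B \in \Spc^h(\cat T^c)$. Given this criterion, the implication $(1) \Rightarrow (2)$ is immediate. I would prove the equivalences by showing $(2) \Rightarrow (1)$ directly and then establishing $(1) \Leftrightarrow (3)$ as a simpler companion.

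For $(2) \Rightarrow (1)$, the task reduces to proving minimality of $\Loco{E_{\cat B}}$ under the enough-tt-fields hypothesis. Fix a homological point $\cat B$ and write $f^* := f_{\cat B}^*\colon \cat T \to \cat F$ for the associated tt-field functor, with right adjoint $f_*$. The driving idea is that a tt-field has only trivial localizing ideals, so that every nonzero object of $\cat F$ generates all of $\cat F$. Given a nonzero $t \in \Loco{E_{\cat B}}$, I would first verify $f^*(t) \neq 0$: here one uses that $f^*$ detects $\cat B$ (so $f^*(E_{\cat B}) \neq 0$) together with a faithfulness argument for $f^*$ on the subcategory $\Loco{E_{\cat B}}$, whose objects have homological support concentrated at $\cat B$. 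The tt-field property then gives $f^*(E_{\cat B}) \in \Loco{f^*(t)}$. Applying $f_*$ and using a projection-formula/adjunction argument to the effect that $E_{\cat B}$ lies in the localizing ideal generated by $f_* f^*(E_{\cat B})$, one concludes $E_{\cat B} \in \Loco{t}$.

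For the implication $(1) \Rightarrow (3)$, if $\cat T$ is h-stratified then the localizing ideal $\cat L := \Loco{(f_{\cat B})_* \cat F_{\cat B} \mid \cat B \in \Spc^h(\cat T^c)}$ corresponds to its homological support under the stratification bijection; since each $f_{\cat B}^*$ detects $\cat B$, the object $(f_{\cat B})_* \unit_{\cat F_{\cat B}}$ carries $\cat B$ in $\Supph$, so $\Supph(\cat L) = \Spc^h(\cat T^c)$ and hence $\cat L = \cat T$. Conversely, for $(3) \Rightarrow (2)$, I would observe that the collection of objects satisfying the local-to-global formula is closed under coproducts, cones, and tensor products with arbitrary objects, hence forms a localizing ideal. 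It then suffices to verify the principle on generators $(f_{\cat B})_* x$ for $x \in \cat F_{\cat B}$: such objects have $\Supph \subseteq \{\cat B\}$, and a projection-formula computation together with the fact that $\unit_{\cat F_{\cat B}}$ and $f_{\cat B}^*(E_{\cat B})$ generate the same (everything) localizing ideal in the tt-field $\cat F_{\cat B}$ shows that tensoring $(f_{\cat B})_* x$ with $E_{\cat B}$ preserves the generated localizing ideal.

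The main obstacle is the minimality step in $(2) \Rightarrow (1)$: precisely establishing faithfulness of $f_{\cat B}^*$ on $\Loco{E_{\cat B}}$ (using the local-to-global principle and the detection property of the tt-field) and pinning down the projection-formula identity that transports minimality of $\unit_{\cat F_{\cat B}}$ in $\cat F_{\cat B}$ back to minimality of $\Loco{E_{\cat B}}$ in $\cat T$. Everything else is formal manipulation of support, localizing ideals, and the characterization theorem.
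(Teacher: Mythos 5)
Your outline for the equivalence $(1)\Leftrightarrow(2)$ is essentially the paper's proof: $(1)\Rightarrow(2)$ is trivial from the characterization $\text{h-stratified}\Leftrightarrow\text{h-LGP}+\text{h-minimality}$, and for $(2)\Rightarrow(1)$ you reduce to minimality of $\Loco{E_{\cat B}}$ using the tt-field, which is exactly \cref{prop:forced-minimal}/\cref{cor:tt-fields-implies-h-minimality}. One cosmetic difference: to see $f^*(t)\neq 0$ for nonzero $t\in\Loco{E_{\cat B}}$, the paper does \emph{not} argue via ``faithfulness on $\Loco{E_{\cat B}}$''; instead it observes that $f^*(t)=0$ would force $t\otimes t=0$ (since $t\in\Loco{f_*\unit}$ and the projection formula kills $f_*\unit\otimes t$), and rules this out because h-LGP $\Rightarrow$ h-detection $\Rightarrow$ no nontrivial $\otimes$-nilpotents. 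Your $(1)\Rightarrow(3)$ via $\Supph(\cat L)=\Spc^h(\cat T^c)$ is a valid, slightly different route from the paper, which instead factors $(1)\Leftrightarrow(3)$ through $(2)\Leftrightarrow(3)$.

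The genuine gap is in $(3)\Rightarrow(2)$. You assert that the collection of objects satisfying the homological local-to-global formula is ``closed under coproducts, cones, and tensor products with arbitrary objects, hence forms a localizing ideal.'' Closure under cones and under $-\otimes s$ is \emph{not} obvious, and I do not think it can be proved at this stage. The h-LGP formula restricts the generating set to $\cat B\in\Supph(t)$, and this set is not monotone in $t$: for a triangle $a\to b\to c$ with $a,b$ satisfying the formula, one only knows $\Supph(c)\subseteq\Supph(a)\cup\Supph(b)$, and there is no visible way to pass from $\Loco{a\otimes E_{\cat B}\mid \cat B\in\Supph(a)}$ and $\Loco{b\otimes E_{\cat B}\mid\cat B\in\Supph(b)}$ to $\Loco{c\otimes E_{\cat B}\mid\cat B\in\Supph(c)}$. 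Similarly, for the tensor-ideal property one needs to discard $\cat B\notin\Supph(s)$ after tensoring with $s$, which requires knowing $s\otimes E_{\cat B}=0$; that is the \emph{naive} support condition, not the genuine one. In short, what you really need before this closure argument can be run is $\Supph=\Supphnaive$, i.e.\ the h-detection property. You could also simply invoke \cref{prop:hLGP}, which already shows that $\unit\in\Loco{E_{\cat B}\mid\cat B}$ (an obvious consequence of (3) once one knows $\Loco{E_{\cat B}}=\Loco{(f_{\cat B})_*\unit}$) is equivalent to the \emph{naive} LGP formula; but passing to the genuine h-LGP still needs $\Supph=\Supphnaive$. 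The paper resolves this by first deducing h-detection from (3): weak descendability $\Rightarrow$ joint conservativity of the family, and since each $\cat F_{\cat B}$ is h-stratified, the Avrunin--Scott identity $(\varphi^h_{\cat B})^{-1}(\Supph t)=\Supph(f^*_{\cat B}t)$ translates joint conservativity into the h-detection property, after which $\Supph=\Supphnaive$ and everything goes through. You should incorporate that step — as written, your $(3)\Rightarrow(2)$ does not go through.
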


The last condition  says that the homological local-to-global principle holds in $\cat T$ whenever its tt-residue fields cover the entire category. It is possible to substantially generalize this to a relative context. To this end, we isolate a very general descent condition: A collection of geometric functors $(f_i^*\colon \cat T \to \cat S_i)_{i\in I}$ is said to be \emph{weakly descendable} (\cref{def:weaklydescendable}) if the essential images of the corresponding right adjoints $(f_i)_*$ generate $\cat T$ as a localizing ideal:
    \[
        \cat T = \Loco{(f_i)_*(\unit_{\cat S_i})}.
    \]
Note that we do not  require any smallness conditions on $(f_i)_*(\unit_{\cat S_i}) \in \cat T$. This notion of weak descendability covers all examples of descent previously considered in the tt-literature, and is arguably the most general setting in which one could expect descent to hold (\cref{rem:weak-descendable-best}). With this in mind, the following result establishes that h-stratification \emph{essentially always} satisfies descent.

\begin{thmx}[\cref{thm:h-descent}]
    Let $(f_i^*\colon \cat T \to \cat S_i)_{i \in I}$ be a weakly descendable family of geometric functors. If $\cat S_i$ is h-stratified for all $i \in I$, then $\cat T$ is h-stratified. 
\end{thmx}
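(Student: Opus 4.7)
The plan is to verify the two-condition criterion from the preceding theorem: $\cat T$ is h-stratified if and only if it satisfies the homological local-to-global principle \emph{and} $\Loco{E_{\cat B}}$ is a minimal localizing ideal for every $\cat B \in \Spc^h(\cat T^c)$. Both conditions will be transferred from the $\cat S_i$ to $\cat T$ using the hypothesis $\cat T = \Loco{(f_i)_*(\unit_{\cat S_i})}$ together with standard formal tools (the projection formula, cocontinuity of each $(f_i)_*$, and a base-change compatibility for the pure-injectives $E_{\cat B}$).

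The formal backbone is the following. The projection formula $(f_i)_*(f_i^*(t) \otimes y) \simeq t \otimes (f_i)_*(y)$, combined with the tensor-ideal structure of $\Loco{-}$ and weak descendability, yields
\[
    t \in \Loco{(f_i)_*(f_i^*(t)) \mid i \in I}
\]
for every $t \in \cat T$; in particular $(f_i^*)_{i \in I}$ is jointly conservative on $\cat T$. Moreover, each $(f_i)_*$ admits a right adjoint $f_i^!$ and therefore preserves colimits, so it sends a localizing ideal generated by a family in $\cat S_i$ to the localizing ideal generated by its $(f_i)_*$-image in $\cat T$. The remaining ingredient is a base-change compatibility for the homological residue objects: each $f_i^*$ induces a map $\Spc^h(f_i^*)\colon \Spc^h(\cat S_i^c) \to \Spc^h(\cat T^c)$; for any $\cat C \in \Spc^h(\cat S_i^c)$ with image $\cat B$ one has $(f_i)_*(E_{\cat C}) \in \Loco{E_{\cat B}}$; and every $\cat B \in \Spc^h(\cat T^c)$ with $E_{\cat B} \neq 0$ is hit by some such $\cat C$, via joint conservativity applied to $E_{\cat B}$. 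I expect these compatibilities to be recorded in the foregoing sections.

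Granted all of the above, the local-to-global principle for $\cat T$ follows by combining the basic decomposition with the local-to-global principle in each $\cat S_i$ applied to $f_i^*(t)$: push forward by $(f_i)_*$, rewrite $(f_i)_*(f_i^*(t) \otimes E_{\cat C}) \simeq t \otimes (f_i)_*(E_{\cat C})$ via the projection formula, and invoke the base-change lemma to place each summand in $\Loco{t \otimes E_{\cat B}}$ with $\cat B \in \Supph(t)$ (using functoriality of homological support). For minimality, given $0 \neq s \in \Loco{E_{\cat B}}$, joint conservativity produces an index $i$ with $f_i^*(s) \neq 0$; the containment $\Supph(s) \subseteq \{\cat B\}$ places $f_i^*(s)$ in the localizing ideal generated by the fiber $\Spc^h(f_i^*)^{-1}(\cat B)$, and minimality in $\cat S_i$ together with the pushforward-projection-formula routine then forces $E_{\cat B} \in \Loco{s}$, as required.

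The overall strategy mirrors the descent arguments for tt-stratification in the preceding work, so the main obstacle is not the organization of the proof but rather the base-change lemma for the pure-injectives $E_{\cat B}$ and the surjectivity statement for $\Spc^h$; these are precisely the points where the homological framework is expected to behave more cleanly than the tensor-triangular one, which is also why the resulting descent statement is so permissive.
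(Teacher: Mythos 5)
There is a genuine gap, and it is exactly in the ``base-change lemma for the pure-injectives $E_{\cat B}$'' that you flagged as a load-bearing ingredient. You assert that for $\cat C \in \Spc^h(\cat S_i^c)$ with $\varphi_i^h(\cat C)=\cat B$ one has $(f_i)_*(E_{\cat C}) \in \Loco{E_{\cat B}}$. This is backwards. What the paper proves (\cref{lem:direct-summand}, following Balmer) is that $E_{\cat B}$ is a direct summand of $(f_i)_*(E_{\cat C})$, i.e.\ $E_{\cat B} \in \Loco{(f_i)_*(E_{\cat C})}$. The inclusion you want, $\Loco{(f_i)_*(E_{\cat C})} \subseteq \Loco{E_{\cat B}}$, is not available \emph{a priori}: that would require already knowing something like h-stratification of $\cat T$ (namely, that an object with singleton h-support generates $\Loco{E_{\cat B}}$), which is what you are trying to prove. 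This matters asymmetrically in your two steps. Your minimality argument is in fact sound, because when you trace it through you only ever use the \emph{correct} direction: given $0\neq s\in\Loco{E_{\cat B}}$, you find $\cat C\in\Supph(f_i^*(s))$ over $\cat B$, get $E_{\cat C}\in\Loco{f_i^*(s)}$ from minimality in $\cat S_i$, push forward to get $(f_i)_*(E_{\cat C})\in\Loco{s}$, and then the direct-summand statement hands you $E_{\cat B}\in\Loco{s}$. But your h-LGP argument genuinely needs the wrong direction: to conclude that $t\otimes (f_i)_*(E_{\cat C})$ lies in $\Loco{t\otimes E_{\cat B}}$ you must put $(f_i)_*(E_{\cat C})$ into $\Loco{E_{\cat B}}$, and that is the unproved (and generally unprovable at this stage) containment.

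The paper sidesteps this directionality problem entirely by verifying condition $(b)$ of \cref{thm:hstratfundamental} -- the single equation $\Loco{t}=\Loco{E_{\cat B}\mid\cat B\in\Supph(t)}$ -- rather than separating it into LGP plus minimality. The key technical input is the Avrunin--Scott identity $\Supph(f_i^*(t))=(\varphi_i^h)^{-1}(\Supph(t))$ of \cref{prop:AV-S-h-strat} (valid because each $\cat S_i$ has h-minimality at all primes), applied twice, to both $t$ and to the family $\{E_{\cat B}\}_{\cat B\in\Supph(t)}$. This reduces the statement in $\cat T$ to the corresponding statement in $\cat S_i$, which is exactly $(b)$ of \cref{thm:hstratfundamental} for $\cat S_i$. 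One then pushes forward with $(f_i)_*$ and the projection formula -- where only $(f_i)_*f_i^*(x)\simeq x\otimes(f_i)_*(\unit)$ is used, never any containment of $(f_i)_*(E_{\cat C})$ in $\Loco{E_{\cat B}}$ -- and varies over $i$ using weak descendability. (The paper also records a second proof via the cosupport criterion of \cref{thm:hstratcosupp}, using \cref{prop:AV-coS-h-strat} and joint conservativity of the $f_i^!$.) If you want to keep your two-step structure, the repair is: prove minimality first (your argument works), then deduce that $\Supph=\Supphnaive$ on $\cat T$ by \cref{lem:minimalityhsuppidentification}, and establish h-codetection using \cref{prop:AV-coS-h-strat} and \cref{lem:weakdescentcharacterization} in place of your direct LGP argument -- but this is really the paper's alternative proof in disguise, not an independent route.
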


This is in marked contrast to tt-stratification, where  one has so far only been able to prove a patchwork of different descent theorems. With a satisfactory descent  theorem for homological stratification in hand, one may then ask for a comparison with  tt-stratification:

\begin{thmx}[\cref{thm:tt=h+NS}]\label{thmx:tt=h+ns}
    If $\cat T$ is a rigidly-compactly generated tt-category with $\Spc(\cat T^c)$ weakly noetherian, then the following are equivalent:
        \begin{enumerate}
            \item $\cat T$ is tt-stratified;
            \item $\cat T$ is h-stratified and the Nerves of Steel conjecture holds for $\cat T$.
        \end{enumerate}
\end{thmx}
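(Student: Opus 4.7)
The plan rests on two structural inputs I expect to have available from earlier in the paper: the compatibility $\Supp(t) = \pi(\Supph(t))$ for every $t \in \cat T$, and the singleton normalization $\Supph(E_{\cat B}) = \{\cat B\}$ for every $\cat B \in \Spc^h(\cat T^c)$. Granting these, both implications will reduce to formal manipulations once a key identification between the Balmer--Favi and homological generators is in place.

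For $(b) \Rightarrow (a)$, I would argue directly: if $\pi$ is a bijection (NoS) and $\cat T$ is h-stratified, then the induced map $\pi_*$ on power sets is a bijection and the compatibility $\Supp = \pi_* \circ \Supph$ presents $\Supp$ as a composite of two bijections,
\[
    \{\text{localizing ideals of }\cat T\} \xra[\cong]{\Supph} \{\text{subsets of }\Spc^h(\cat T^c)\} \xra[\cong]{\pi_*} \{\text{subsets of }\Spc(\cat T^c)\},
\]
so $\cat T$ is tt-stratified. This direction is essentially formal.

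For $(a) \Rightarrow (b)$ I would first establish h-stratification and then deduce NoS by comparing the two classifications. The first step is to prove the identification
\[
    \Loco{E_{\cat B}} = \Loco{g(\pi(\cat B))} \qquad \text{for every } \cat B \in \Spc^h(\cat T^c),
\]
both sides of which have tt-support the singleton $\{\pi(\cat B)\}$---the left by the two inputs recalled above, the right by construction of $g(\frakp)$---so they must coincide by tt-stratification. From this, h-minimality of $\Loco{E_{\cat B}}$ is immediate, since the right side is a minimal localizing ideal by tt-stratification. For the homological local-to-global principle I would refine the tt local-to-global decomposition generator-by-generator: for each $\frakp \in \Supp(t) = \pi(\Supph(t))$, pick a preimage $\cat B \in \Supph(t)$ and use that $t \otimes g(\frakp) \in \Loco{t \otimes E_{\cat B}}$, obtained by tensoring the key identification with $t$. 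The fundamental characterization of h-stratification stated earlier in the paper then closes this step.

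Finally, with both stratifications in hand, each classification restricts to a canonical bijection between the minimal localizing ideals of $\cat T$ and the points of $\Spc(\cat T^c)$, respectively $\Spc^h(\cat T^c)$; the key identification above shows these bijections are intertwined by $\pi$. Since $\pi$ is already known to be surjective, it must therefore be bijective, which is NoS. The only step with any real friction is the key identification of localizing ideals; everything else is clean bookkeeping with the support theories, and the weakly noetherian hypothesis enters only implicitly through the availability of the Balmer--Favi generators $g(\frakp)$ and the characterizations of tt-stratification recalled earlier in the paper.
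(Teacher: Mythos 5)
Your proposal for $(b) \Rightarrow (a)$ mirrors the paper's argument: both present $\Supp$ as the composite of the bijections $\Supph$ (h-stratification) and $\pi$ (Nerves of Steel), using the comparison $\Supp = \pi \circ \Supph$, which is licensed by h-detection (a consequence of h-stratification) via the paper's \cref{prop:support-relations}.

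Your treatment of $(a) \Rightarrow (b)$ is genuinely different and in some ways cleaner: the paper imports both ``tt-stratification implies Nerves of Steel'' and ``$\pi^{-1}(\Supp(t)) = \Supph(t)$ under tt-stratification'' as external results, whereas you aim to recover Nerves of Steel internally at the end by comparing the two classifications of minimal localizing ideals via the identification $\Loco{E_{\cat B}} = \Loco{g_{\pi(\cat B)}}$. That final step is correct and instructive.

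There is, however, a genuine gap in how you set up the inputs. You list ``$\Supp(t) = \pi(\Supph(t))$ for every $t$'' as an unconditional fact expected from earlier in the paper. It is not: the inclusion $\pi(\Supph(t)) \subseteq \Supp(t)$ always holds, but equality requires h-detection (\cref{prop:support-relations}), which in direction $(a) \Rightarrow (b)$ you do not yet have, since you are in the middle of proving h-stratification. You invoke the equality twice before it is available: once to compute $\Supp(E_{\cat B}) = \{\pi(\cat B)\}$, and once in the h-LGP step to write $\Supp(t) = \pi(\Supph(t))$. The first use is easily repaired: $E_{\cat B}$ is a weak ring, so $\pi(\Supph(E_{\cat B})) = \Supp(E_{\cat B})$ holds unconditionally by \cref{prop:supphweakrings} (the paper instead cites an external lemma for this). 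The second use needs a genuine workaround: after you have secured h-minimality, argue instead that for each $\frakp \in \Supp(t)$ and any $\cat B$ over $\frakp$, the nonvanishing $t \otimes g_{\frakp} \neq 0$ together with $\Loco{E_{\cat B}} = \Loco{g_{\frakp}}$ forces $t \otimes E_{\cat B} \neq 0$, so $\cat B \in \Supphnaive(t)$, and then $\cat B \in \Supph(t)$ by \cref{lem:minimalityhsuppidentification}. With these two patches your argument closes and constitutes a valid alternative route to the paper's $(a) \Rightarrow (b)$; without them it is circular.
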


Thus, whenever the Nerves of Steel conjecture is known or can be established through different  methods, it suffices to consider the theory of homological stratification. This enables us to deduce strong novel descent results for  tt-stratification itself. For instance, we prove:

\begin{thmx}[\cref{prop:ttstratdescent}$(c)$ and \cref{rem:ttdescent-finite}]
    Let $(f_i^*\colon \cat T \to \cat S_i)_{i \in I}$ be a weakly descendable geometric family with each $\cat S_i$ tt-stratified. Assume additionally that $I$ is finite, that each right adjoint $(f_i)_*$ preserves compact objects, and that each $\Spc(\cat S_i^c)$ is noetherian. Then $\cat T$ is tt-stratified. 
\end{thmx}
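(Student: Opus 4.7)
The plan is to verify the three hypotheses of \cref{thmx:tt=h+ns} for $\cat T$: that $\Spc(\cat T^c)$ is weakly noetherian, that $\cat T$ is h-stratified, and that the Nerves of Steel conjecture holds for $\cat T$.

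First I upgrade each $\cat S_i$ from tt-stratified to h-stratified: since $\Spc(\cat S_i^c)$ is noetherian (hence weakly noetherian), \cref{thmx:tt=h+ns} applied to $\cat S_i$ furnishes h-stratification of $\cat S_i$ together with NS for $\cat S_i$ (the latter is crucial below). The family $(f_i^*)_{i\in I}$ remains weakly descendable, so \cref{thm:h-descent} yields h-stratification of $\cat T$. Moreover the geometric functors $f_i^*$ between rigidly-compactly generated categories preserve compacts, and $(f_i)_*$ preserves compacts by hypothesis, so each $(f_i)_*(\unit_{\cat S_i})$ lies in $\cat T^c$; weak descendability then implies that this finite collection generates $\cat T^c$ as a thick tensor ideal, whence $\Spc(\cat T^c)=\bigcup_{i\in I}\Supp((f_i)_*(\unit_{\cat S_i}))$ is a finite union of continuous images of the noetherian spectral spaces $\Spc(\cat S_i^c)$ along $\Spc(f_i^*)$, and is thus itself noetherian, a fortiori weakly noetherian.

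The main obstacle is the Nerves of Steel conjecture for $\cat T$, i.e., injectivity of the surjection $\pi\colon\Spc^h(\cat T^c)\onto\Spc(\cat T^c)$. The hypothesis that $f_i^*$ preserves compacts produces a map $\Spc^h(f_i^*)$ compatible with $\Spc(f_i^*)$ via the $\pi$'s, and NS for each $\cat S_i$ makes the $\cat S_i$-level $\pi$ a bijection. Given $\cat B_1,\cat B_2\in\Spc^h(\cat T^c)$ with $\pi(\cat B_1)=\pi(\cat B_2)=\frakp$, my plan is to lift $\frakp$ to some $\frakq\in\Spc(\cat S_i^c)$ using the joint surjectivity established above, pull $\frakq$ back uniquely to $\widetilde\frakq\in\Spc^h(\cat S_i^c)$ via NS for $\cat S_i$, and then leverage h-stratification of $\cat T$ by comparing the minimal localizing ideals $\Loco{E_{\cat B_1}}$, $\Loco{E_{\cat B_2}}$, and $\Loco{E_{\Spc^h(f_i^*)(\widetilde\frakq)}}$: applying $f_i^*$ and invoking tt-stratification of $\cat S_i$ should force these ideals to coincide, whence $\cat B_1=\cat B_2$. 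With all three hypotheses verified, \cref{thmx:tt=h+ns} delivers tt-stratification of $\cat T$.
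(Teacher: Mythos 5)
Your overall architecture---establish weakly noetherian spectrum, h-stratification, and the Nerves of Steel conjecture for $\cat T$ and then invoke \cref{thmx:tt=h+ns}---matches the paper's proof (which is \cref{cor:BHttstratificationdescent} combined with \cref{prop:NSCdescent}). Your first two steps are correct and aligned with the paper: \cref{thmx:tt=h+ns} upgrades each $\cat S_i$ to h-stratified plus NS, and \cref{thm:h-descent} propagates h-stratification to $\cat T$. Your argument that $\Spc(\cat T^c)$ is actually noetherian under the finiteness and strong-closedness hypotheses is also correct and is precisely what \cref{rem:ttdescent-finite} records.

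The gap is in your third step, descent of the Nerves of Steel conjecture. The plan---lift $\frakp=\pi(\cat B_1)=\pi(\cat B_2)$ to some $\frakq\in\Spc(\cat S_i^c)$, pull back uniquely to $\widetilde\frakq$ via NS for $\cat S_i$, and then ``compare minimal localizing ideals after applying $f_i^*$''---does not go through. The map $\varphi_i$ need not be injective, so the fiber over $\frakp$ may contain several primes of $\Spc(\cat S_i^c)$ with no canonical choice of $\frakq$. Worse, $\cat B_1$ and $\cat B_2$ need not lie in the image of $\varphi_i^h$ for the same index $i$: if $\cat B_j\notin\im(\varphi_i^h)$ then, by \cref{exa:AV-scott-EB}, $\Supph(f_i^*E_{\cat B_j})=(\varphi_i^h)^{-1}(\{\cat B_j\})=\emptyset$, so $f_i^*E_{\cat B_j}=0$ by h-detection in $\cat S_i$, and applying $f_i^*$ tells you nothing. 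The assertion that applying $f_i^*$ ``should force these ideals to coincide'' is exactly what needs proof and it does not follow from what you have assembled. Note that in your sketch of the NS step the hypotheses that $\Spc(\cat S_i^c)$ is noetherian and that $(f_i)_*$ preserves compacts play no role at all, yet they are indispensable here; the paper's \cref{prop:NSCdescent}$(c)$ reduces to the local case via \cref{prop:nervesofsteel_Zariskilocal} and \cref{lem:corestrict-strongly-closed}, uses noetherianity to find a compact weak ring $z\in\cat S_i^c$ with $\supp(z)$ equal to the closed point over $\frakm$, and uses strong closedness to make $f_*(z)$ compact in $\cat T$, so that $\Supph(f_*z)=\pi^{-1}(\supp f_*z)$ yields a contradiction if the fiber of $\pi_{\cat T}$ over $\frakm$ has two points. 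None of this is present in, or recoverable from, your sketch.
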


This result generalizes the \'etale descent theorem from \cite[Theorem 6.4]{bhs1} and \cite[Section 2.2.2]{barthel2022rep2} by removing the separability condition present there.  As a special case, we obtain the following result, which generalizes the descent theorem of \cite[Example 12.18]{BCHNPS_descent}:

\begin{thmx}[\cref{cor:ttstratfinitemonodescent}]
    Let $\cat C$ be a rigidly-compactly generated symmetric monoidal stable $\infty$-category and let $A \in \CAlg(\cat C^c)$ be a descendable dualizable commutative algebra in $\cat C$. If $\Mod_{\cat C}(A)$  is tt-stratified and has a noetherian spectrum, then $\cat C$ is also tt-stratified and also has a noetherian spectrum.
\end{thmx}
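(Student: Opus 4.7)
The plan is to apply the preceding theorem to the singleton family consisting of the geometric functor $f^* = A \otimes (-)\colon \cat C \to \Mod_{\cat C}(A)$, whose right adjoint $f_*$ is the forgetful functor; note that $f_*(\unit_{\Mod_{\cat C}(A)}) = A$.

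First I would verify weak descendability of $\{f^*\}$. By Mathew's definition, descendability of $A$ amounts to the assertion that $\unit_{\cat C} \in \Thick_{\otimes}(A)$, whence
\[
\cat C = \Loc_{\otimes}(A) = \Loco{f_*(\unit_{\Mod_{\cat C}(A)})},
\]
which is precisely weak descendability of the singleton family.

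Next, I would verify that $f_*$ preserves compact objects. The compact objects of $\Mod_{\cat C}(A)$ are generated under finite colimits, shifts, and retracts by the unit $A$, and $f_*(A) = A$ is compact in $\cat C$ by hypothesis; since $f_*$ is exact and preserves retracts, it follows that $f_*$ carries compact $A$-modules to compact objects of $\cat C$. The remaining hypotheses of the previous theorem are supplied directly by the corollary's assumptions: $\Mod_{\cat C}(A)$ is tt-stratified and $\Spc(\Mod_{\cat C}(A)^c)$ is noetherian, while $I = \{*\}$ is trivially finite. The previous theorem therefore yields that $\cat C$ is tt-stratified.

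For noetherianity of $\Spc(\cat C^c)$, I would use the induced continuous map $\varphi = \Spc(f^*)\colon \Spc(\Mod_{\cat C}(A)^c) \to \Spc(\cat C^c)$. By Balmer's functoriality, the image of $\varphi$ equals $\supp(f_*(\unit_{\Mod_{\cat C}(A)})) = \supp(A)$, and descendability forces $\Thick_{\otimes}(A) = \cat C$, hence $\supp(A) = \Spc(\cat C^c)$. Thus $\varphi$ is a continuous surjection from a noetherian space, and so its target is noetherian. The argument presents no genuine obstacle; it is essentially an unpacking of the descent theorem above, with the two key inputs---generation of $\cat C$ as a localizing ideal by $A$ and surjectivity of $\Spc(f^*)$---both being standard consequences of Mathew descendability.
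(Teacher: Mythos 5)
Your overall route is the same as the paper's: apply the singleton case of the finite-family descent result (\cref{prop:ttstratdescent}$(c)$ together with \cref{rem:ttdescent-finite}) to $f^* = A\otimes-$, checking weak descendability, compactness-preservation of $f_*$, and surjectivity of $\varphi$ on spectra. The weak descendability and noetherianity steps are fine. However, there is a genuine error in your verification that $f_*$ preserves compact objects.

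You assert that ``the compact objects of $\Mod_{\cat C}(A)$ are generated under finite colimits, shifts, and retracts by the unit $A$.'' This is false in general. Since $\cat C$ is rigidly-compactly generated by $\cat C^c$, the module category $\Mod_{\cat C}(A)$ is compactly generated by $\{A\otimes c \mid c\in\cat C^c\}$, so $\Mod_{\cat C}(A)^c = \Thick(\{A\otimes c \mid c\in\cat C^c\})$; this is typically strictly larger than $\Thick(A)$ unless $\cat C^c = \Thick(\unit_{\cat C})$. A concrete counterexample: take $\cat C=\Sp_G$ for a nontrivial finite group $G$ and $A=S^0_G$. Then $\Mod_{\cat C}(A)=\Sp_G$ has compacts $\Thick(\{G/H_+\})$, which is not contained in $\Thick(S^0_G)$. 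The conclusion you want is nonetheless true, but the correct argument is: for $c\in\cat C^c$ the projection formula gives $f_*(A\otimes c) = f_*f^*(c) \simeq f_*(\unit)\otimes c = A\otimes c$, which is compact in $\cat C$ because $A$ and $c$ both are (and $\cat C^c$ is a $\otimes$-subcategory). Since $f_*$ is exact and commutes with retracts, it sends $\Thick(\{A\otimes c\})$ into $\cat C^c$. This repairs the step and the rest of your argument goes through.

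One small stylistic note: you write $\Thick_\otimes(A)=\cat C$; what descendability gives is $\unit\in\Thick_\otimes(A)$ in $\cat C$, from which $\cat C = \Loco{A}$ and $\supp(A)=\Spc(\cat C^c)$ follow. The paper's own proof is more laconic, simply observing that base change along a descendable algebra induces a surjection on spectra (so $\Spc(\cat C^c)$ is noetherian), and then invoking \cref{prop:ttstratdescent}$(c)$ without spelling out the strongly-closed condition; your attempt to spell it out is in the right spirit, but the description of $\Mod_{\cat C}(A)^c$ needs the correction above.
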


We also generalize nil-descent and quasi-finite descent established in \cite{BCHS1} to families of geometric functors:

\begin{thmx}[\cref{prop:ttstratdescent}$(a)(b)$]
   Let $(f_i^*\colon\cat T \to \cat S_i)_{i \in I}$ be a weakly descendable family of geometric functors with $\cat S_i$ tt-stratified for all $i \in I$ and let
    \[
    \varphi \coloneqq \sqcup \varphi_i\colon \bigsqcup_{i \in I}\Spc(\cat S_i^c) \to \Spc(\cat T^c)
    \]
    be the induced map on tt-spectra. Assume that $\Spc(\cat T^c)$ is weakly noetherian. Then:
   \begin{enumerate}
       \item (Nil-descent) If $\varphi$ is injective then $\cat T$ is tt-stratified.
       \item (Quasi-finite descent) If each right adjoint $(f_i)_*$ preserves compact objects and $\varphi$ has discrete fibers then $\cat T$ is tt-stratified.
   \end{enumerate}
\end{thmx}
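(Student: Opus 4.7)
The plan is to reduce both (a) and (b) to \cref{thmx:tt=h+ns}, which says that for a rigidly-compactly generated tt-category with weakly noetherian spectrum, tt-stratification is equivalent to the conjunction of h-stratification and the Nerves of Steel conjecture. Since $\Spc(\cat T^c)$ is weakly noetherian by hypothesis, the task thus splits into verifying h-stratification and NoS for~$\cat T$ under the respective hypotheses.

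\emph{h-stratification of $\cat T$.} Since each $\cat S_i$ is tt-stratified, its spectrum is weakly noetherian and by \cref{thmx:tt=h+ns} both NoS and h-stratification hold in $\cat S_i$. The assumption that $(f_i^*)_{i \in I}$ is weakly descendable then feeds directly into the general h-descent theorem (\cref{thm:h-descent}) to conclude that $\cat T$ is h-stratified. Crucially, this step uses neither the finer hypotheses on $\varphi$ nor any compactness of the right adjoints, so it covers (a) and (b) uniformly.

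\emph{NoS for $\cat T$.} This is where the hypotheses on $\varphi$ enter. Naturality of the homological spectrum produces a commutative square
\[
\begin{tikzcd}
\bigsqcup_{i} \Spc^h(\cat S_i^c) \ar[r, "\Psi"] \ar[d, "\cong"'] & \Spc^h(\cat T^c) \ar[d, two heads, "\pi_{\cat T}"] \\
\bigsqcup_{i} \Spc(\cat S_i^c) \ar[r, "\varphi"'] & \Spc(\cat T^c)
\end{tikzcd}
\]
whose left vertical is a bijection by NoS for each $\cat S_i$ and whose right vertical is always surjective. Weak descendability, via the homological avatar of joint conservativity, ensures that $\Psi$ is also surjective. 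For part (a), a direct diagram chase then shows that injectivity of $\varphi$ forces injectivity of $\pi_{\cat T}$, yielding NoS for $\cat T$. For part (b), injectivity of $\varphi$ can genuinely fail, and one instead adapts the quasi-finite descent argument of \cite{BCHS1}: the discrete-fibers hypothesis reduces the comparison of homological primes above a fixed $\frakp \in \Spc(\cat T^c)$ to a finite problem, and the compactness of each $(f_i)_*$ supplies the tensor-idempotent separation needed to distinguish them, all within a small Thomason neighborhood afforded by weak noetherianity.

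Combining h-stratification and NoS and invoking \cref{thmx:tt=h+ns} then yields tt-stratification of $\cat T$ in both cases. The main obstacle is part (b): whereas (a) is essentially formal once the surjection $\Psi$ is in hand, the quasi-finite argument for NoS requires combining the compact-preservation and discrete-fibers hypotheses with the weak noetherianity of $\Spc(\cat T^c)$ to witness injectivity of $\pi_{\cat T}$ locally, and this is where the machinery of \cite{BCHS1} genuinely has to be invoked rather than merely cited as a black box.
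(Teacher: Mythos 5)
Your overall reduction matches the paper's: split into (i) h-stratification of $\cat T$, which follows from \cref{thm:h-descent} applied to the weakly descendable family, and (ii) the Nerves of Steel conjecture for $\cat T$, then combine via \cref{thm:tt=h+NS}. Step (i) is correct and identical to the paper. For (ii), your treatment of part (a) is also essentially the paper's: nil-conservativity (equivalently, joint surjectivity of $\varphi^h$; \cref{prop:surjectivity}) together with bijectivity of each $\pi_{\cat S_i}$ and injectivity of $\varphi$ forces $\pi_{\cat T}$ to be a bijection by chasing the commutative square. This is precisely \cref{prop:NSCdescent}(a).

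However, for part (b) there is a genuine gap. You assert that Nerves-of-Steel descent in the quasi-finite case follows by ``adapting the quasi-finite descent argument of \cite{BCHS1},'' with the discrete-fibers hypothesis supposedly reducing the comparison of homological primes over a fixed $\frakp$ to ``a finite problem'' and strong closedness supplying ``tensor-idempotent separation.'' This misdescribes what is actually needed and what the paper does. The relevant argument (\cref{prop:NSCdescent}(b), which is new to this paper, not an adaptation of \cite{BCHS1}) proceeds differently: one first reduces to the case that $\cat T$ is local, using \cref{cor:cartesian}, \cref{cor:cartesian2}, and \cref{lem:corestrict-strongly-closed} to verify that strongly closed and nil-conservative are local in the target; one then picks a closed point $\cat M \in \Spc(\cat S_i^c)$ over the unique closed point $\mathfrak m$; the discrete-fibers hypothesis (not a ``finite problem'' but rather the Thomason-closedness of $\{\cat M\}$) yields a compact weak ring $z \in \cat S_i^c$ with $\supp(z) = \{\cat M\}$; finally, one uses the detection property of $\Supph$ on weak rings (\cref{rem:hsuppdetectsweakrings}) together with compactness of $(f_i)_*(z)$ (strong closedness) and the identity $\Supph(f_*(z)) = \pi_{\cat T}^{-1}(\supp(f_*(z)))$ for compacts to derive a contradiction if $\pi_{\cat T}^{-1}(\mathfrak m)$ has more than one point. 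Note also that weak noetherianity of $\Spc(\cat T^c)$ plays no role in this Nerves-of-Steel descent argument (it is only needed to make tt-stratification meaningful and to invoke \cref{thm:tt=h+NS}); it does not ``afford the small Thomason neighborhood'' you allude to. To complete your proposal you would need to supply this local reduction and the weak-ring detection argument, since the ``black box'' you point to does not exist in the cited reference in the form required.
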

As a novel application of our results, we prove the following:
\begin{thmx}[\cref{thm:infleq_stratification}]\label{thm:intro-equivariant}
    Let $G$ be a compact Lie group and write $R_G$ for the inflation of a commutative ring spectrum $R\in \CAlg(\Sp)$. If $\Mod(R)$ is h-stratified and satisfies the Nerves of Steel conjecture, then $\pi\circ\Supph$ induces a bijection
        \[
            \begin{Bmatrix}
                \text{localizing ideals} \\
                of \Mod_G(R_G)
            \end{Bmatrix}
                \xrightarrow{\sim}
            \begin{Bmatrix}
                \text{subsets of } \\
                \bigsqcup_{H \in \Sub(G)/G} \Spc(\Mod(R)^c)
            \end{Bmatrix}.
        \]
\end{thmx}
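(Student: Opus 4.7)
The plan is to apply the homological descent theorem (\cref{thm:h-descent}) to the family of geometric fixed-point functors
\[
\phigeom{H}\colon \Mod_G(R_G) \to \Mod(R), \qquad H \in \Sub(G)/G,
\]
deduce that $\Mod_G(R_G)$ is h-stratified, and then translate this into the stated bijection using Nerves of Steel for $\Mod(R)$ together with an identification of the equivariant spectrum as a disjoint union indexed by conjugacy classes of closed subgroups.

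First I would verify that $\{\phigeom{H}\}_{H \in \Sub(G)/G}$ is a weakly descendable family of geometric functors (\cref{def:weaklydescendable}). Each $\phigeom{H}$ is geometric because the geometric fixed points on $\SpG$ are symmetric monoidal and colimit-preserving, and base changing along the inflation $R_G$---which satisfies $\phigeom{H}(R_G) \simeq R$---produces a geometric functor into $\Mod(R)$. Joint conservativity is the classical fact that the geometric fixed points detect zero on $\SpG$ for a compact Lie group $G$. The generation statement
\[
\Mod_G(R_G) = \Loco{(\phigeom{H})_*(R) \mid H \in \Sub(G)/G}
\]
required for weak descendability then follows from the compact Lie isotropy-separation filtration built from the universal spaces associated to families of closed subgroups, together with the observation that base change along the inflation preserves localizing-ideal generation. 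Given this, \cref{thm:h-descent} immediately yields h-stratification of $\Mod_G(R_G)$, so $\Supph$ classifies the localizing ideals by subsets of $\Spc^h(\Mod_G(R_G)^c)$.

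To conclude, I would establish the identification
\[
\Spc^h(\Mod_G(R_G)^c) \;\cong\; \bigsqcup_{H \in \Sub(G)/G} \Spc^h(\Mod(R)^c),
\]
with surjectivity coming from joint conservativity via homological residue fields and injectivity from the functors $\phigeom{H}$ separating different conjugacy classes of subgroups (a compact Lie analogue of the finite-group inflation decomposition). The hypothesis that Nerves of Steel holds for $\Mod(R)$ makes $\pi$ a bijection on each summand, and composing $\Supph$ with this chain of bijections produces the desired correspondence with subsets of $\bigsqcup_{H}\Spc(\Mod(R)^c)$. The main obstacle is verifying the generation statement: because $\Sub(G)/G$ is in general an uncountable compact Hausdorff space rather than a finite set, one must carefully control the limiting behaviour as $H$ varies using an isotropy filtration indexed by families of closed subgroups, and check that this filtration transfers faithfully from $\SpG$ to the inflated module category $\Mod_G(R_G)$.
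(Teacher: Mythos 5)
Your overall strategy matches the paper's: apply the homological descent theorem (\cref{thm:h-descent}) to the family of geometric fixed points, identify the equivariant spectrum as a disjoint union over conjugacy classes, and use Nerves of Steel descent to pass from h-stratification to the stated bijection. These are exactly the ingredients the paper assembles (\cref{lem:eqwdescent}, \cref{lem:eqspc}, \cref{prop:NSCdescent}(a), and \cref{prop:eqhstratdescent}). So there is no divergence in route.

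The gap is in the weak descendability step, which you yourself flag as ``the main obstacle'' but do not actually resolve. Your proposed resolution --- an isotropy-separation filtration, ``together with the observation that base change along the inflation preserves localizing-ideal generation'' --- is not a proof. First, reducing to $\Sp_G$ via base change presupposes that weak descendability of geometric fixed points is already established for $\Sp_G$ itself, which for a general compact Lie group is not an off-the-shelf fact: it is proved in the paper (as the $R=S^0_G$ case of \cref{lem:eqwdescent}) by a double induction on $(\dim G, |\pi_0 G|)$. Second, even granting the $\Sp_G$ case, the passage from ``$\Sp_G$ is generated by the $\Psi^K(\unit)$'' to ``$\Mod_G(R_G)$ is generated by the corresponding right adjoints at the module level'' requires identifying $\Psi^K_{\Sp_G}(\unit)\otimes R_G$ with the right-adjoint idempotent in the module category, which is not spelled out. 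Third, your sketch does not mention the use of Illman's $G$-triangulation theorem, which is what the paper uses to know that $E\cP_+ \in \Loco{G/H_+\mid H\in\cP}$ --- without this, the isotropy separation sequence does not close the induction. Finally, in your identification $\Spc^h(\Mod_G(R_G)^c) \cong \bigsqcup_H \Spc^h(\Mod(R)^c)$ you invoke ``separation of conjugacy classes'' for injectivity, but you should make explicit that each $\varphi^h_H$ is itself injective (indeed a homeomorphism onto its image) because inflation splits geometric fixed points --- this is the reason the summand is literally $\Spc^h(\Mod(R)^c)$ rather than merely the image of $\varphi^h_H$.
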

This extends \cite[Theorem 15.1]{bhs1} from finite groups to compact Lie groups. Indeed, there we proved the finite analog of \Cref{thm:intro-equivariant} under the assumption that $\Mod(R)$ was tt-stratified, which implies both the Nerves of Steel conjecture and h-stratification by \Cref{thmx:tt=h+ns}. 

\subsection*{Outline of the document.}\label{ssec:outline}
In \cref{sec:hsupport}, we recall the definition and basic  features of  homological support and introduce the homological detection property. We also discuss the ``naive'' definition of homological support and establish situations in which it coincides with the  homological support. Finally, we  introduce  homological cosupport and the homological codetection property. In \cref{sec:hLGP}, we introduce the homological local-to-global principle and discuss its relationship with the \mbox{h-codetection} property. This leads to the definition of h-stratification in \cref{sec:hstrat} and the fundamental characterization of h-stratification in terms of the homological local-to-global principle and homological minimality. We discuss examples and counterexamples arising in commutative algebra and explain that  h-stratification is equivalent to the  homological local-to-global principle when there are enough tt-fields (\cref{cor:ttfields1}). We also show  that a classification of Bousfield classes is implied by (and almost equivalent to) the h-detection property (\cref{prop:bousfield-classes}). This surprising result has no tt-analogue. Finally, in \cref{sec:hstrat-cosupp}, we provide a further characterization of h-stratification in terms of h-cosupport (\cref{thm:hstratcosupp}).

We compare the homological support with the Balmer--Favi support in \cref{sec:comparison} and establish several base change results for the homological support in \cref{sec:base-change}. Of particular note is \cref{cor:AV-scott}, which establishes that the homological support satisfies the Avrunin--Scott identity whenever the h-detection property holds. In \cref{sec:weakly-descendable}, we introduce the notion of a weakly descendable family of functors and argue that this is the appropriate  context  in which to study descent. This culminates in \cref{thm:h-descent}. We then turn to the comparison between h-stratification and tt-stratification in \cref{sec:comp-strat} yielding \cref{thm:tt=h+NS} and its corollaries. Then in \cref{sec:descendingNSC} we turn to the task of descending the Nerves of Steel conjecture, yielding \cref{prop:NSCdescent} and \cref{prop:nil-cons-NSC}. We also show that the Nerves of Steel conjecture is equivalent to a purely support-theoretic statement (\cref{prop:tensorformulaweakrings}). Our descent results are then applied in \cref{sec:tt-applications} to obtain several specific descent results for tt-stratification, including \cref{cor:ttstratfinitemonodescent} among others. Applications to equivariant spectra are then given in \cref{sec:applications}, including \cref{thm:infleq_stratification}. Finally, we conclude the paper in \cref{sec:open-questions} with a list of open problems.

\subsection*{Notation and conventions.}\label{ssec:conventions}
Throughout $\cat T$ and $\cat S$ will denote rigidly-compactly generated tt-categories and $f^*\colon \cat T\to \cat S$ will denote a geometric functor.
\begin{itemize}
	\item We write $\varphi\colon\Spc(\cat S^c)\to\Spc(\cat T^c)$ for the induced map on tt-spectra and write $\varphi^h\colon\Spc^h(\cat S^c) \to \Spc^h(\cat T^c)$ for the induced map on homological spectra.
	\item We write $\pi_{\cat T}\colon \Spc^h(\cat T^c) \to \Spc(\cat T^c)$ for the map between the two spectra.
	\item We thus have a commutative diagram
		\[\begin{tikzcd}
			\Spc^h(\cat S^c) \ar[d,"\pi_{\cat S}"] \ar[r,"\varphi^h"] & \Spc^h(\cat T^c) \ar[d,"\pi_{\cat T}"] \\
			\Spc(\cat S^c) \ar[r,"\varphi"] & \Spc(\cat T^c).
		\end{tikzcd}\]
	\item We denote the right adjoint of $f^*$ by $f_*$ and the right adjoint of $f_*$ by~$f^!$;  in symbols, $f^* \dashv f_* \dashv f^!$.  We will use the standard isomorphisms relating these functors established in \cite{BalmerDellAmbrogioSanders16} without further comment. 
    \item We write $\Loco{\cat E}$ and $\Coloco{\cat E}$ for the localizing ideal and the colocalizing coideal generated by a collection of objects $\cat E \subseteq \cat T$, respectively. 
    \item Although not necessary to understand the logic of the results,  some familiarity with the tt-theory of stratification and costratification developed in \cite{bhs1} and \cite{BCHS1} and the descent results of \cite{BCHNPS_descent} will be helpful to contextualize the results of this paper.
    \item We also take for granted some familiarity with the homological spectrum; see \cite{Balmer20_nilpotence,Balmer20_bigsupport}.
\end{itemize}

\subsection*{Acknowledgments}\label{ssec:thanks}

We thank Scott Balchin and Paul Balmer for helpful discussions, as well as Jordan Williamson and the anonymous referee for useful comments on an earlier version. TB is supported by the European Research Council (ERC) under Horizon Europe (grant No.~101042990) and would like to thank the Max Planck Institute for its hospitality. DH is supported by grant number TMS2020TMT02 from the Trond Mohn Foundation.

\section{Homological support and cosupport}\label{sec:hsupport}

We begin with a discussion on the various notions of homological (co)support considered in this paper. 

\begin{Rem}\label{rem:support-and-cosupports}
	Recall from \cite[Construction 2.11]{Balmer20_bigsupport} that there is a pure-injective object $E_{\cat B} \in \cat T$ associated to each homological prime $\cat B \in \Spc^h(\cat T^c)$ and that~$E_{\cat B}$ has the structure of a weak ring in $\cat T$. For each object $t \in \cat T$, we define
    \begin{itemize}
	\item The \emph{naive homological support}
		\[\Supphnaive(t) \coloneqq \SET{\cat B \in \Spc^h(\cat T^c)}{E_{\cat B} \otimes t\neq 0}.\]
	\item The \emph{(genuine) homological support} 
		\[\Supph(t) \coloneqq \SET{\cat B \in \Spc^h(\cat T^c)}{\ihom{t,E_{\cat B}}\neq 0}.\]
	\item The \emph{homological cosupport}
		\[ \Cosupph(t) \coloneqq \SET{\cat B \in \Spc^h(\cat T^c)}{\ihom{E_{\cat B},t} \neq 0}.\]
    \end{itemize}
    The homological cosupport does not appear to have been previously studied in this level of generality, but it will play its own role in the story, as in \cite{BCHS1}.
\end{Rem}

\begin{Exa}\label{exa:suppEb}
    For each $\cat B \in \Spc^h(\cat T)$, we have
        \[
            \Supph(E_{\cat B}) = \Supphnaive(E_{\cat B}) = \Cosupph(E_{\cat B}) = \{\cat B\}
        \]
    by \cite[Example 4.8 and Corollary 4.9]{Balmer20_bigsupport}.
\end{Exa}

\begin{Rem}
	The homological support was introduced by Balmer \cite{Balmer20_bigsupport} and in that paper he establishes a number of significant properties for it. For example, \cite[Theorem 4.5]{Balmer20_bigsupport} shows that the (genuine) homological support satisfies the \emph{tensor product property}:
	\begin{equation}\label{eq:tensor-product}
        \Supph(t_1 \otimes t_2) = \Supph(t_1) \cap \Supph(t_2)
	\end{equation}
    for all $t_1,t_2\in \cat T$. He also proves the following:
\end{Rem}

\begin{Prop}\label{prop:supph-in-suppn}
	For any $t \in \cat T$, we have
    \[
        \Supph(t) \subseteq \Supphnaive(t).
    \]
\end{Prop}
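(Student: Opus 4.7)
The plan is to prove the contrapositive: if $E_{\cat B} \otimes t = 0$, then $\ihom{t, E_{\cat B}} = 0$. The whole argument rests on exploiting the weak-ring structure of $E_{\cat B}$ from \cite[Construction 2.11]{Balmer20_bigsupport}, which provides a unit $\eta\colon \unit \to E_{\cat B}$ and a multiplication $\mu\colon E_{\cat B} \otimes E_{\cat B} \to E_{\cat B}$ satisfying $\mu \circ (\eta \otimes E_{\cat B}) = \id_{E_{\cat B}}$, so that $\eta \otimes E_{\cat B}$ is a split monomorphism.

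The first step is to transport this split monomorphism across the tensor--hom adjunction. Let $\alpha\colon E_{\cat B} \to \ihom{E_{\cat B}, E_{\cat B}}$ denote the map adjoint to $\mu$. The natural candidate for a retraction is the map $\ihom{\eta, E_{\cat B}}\colon \ihom{E_{\cat B}, E_{\cat B}} \to \ihom{\unit, E_{\cat B}} \cong E_{\cat B}$ induced by precomposition with $\eta$. A direct adjunction computation (chasing $\mu$ through $\otimes \dashv \ihom{-,-}$) shows that the composite
\[
E_{\cat B} \xrightarrow{\alpha} \ihom{E_{\cat B}, E_{\cat B}} \xrightarrow{\ihom{\eta, E_{\cat B}}} E_{\cat B}
\]
corresponds under adjunction to $\mu \circ (\eta \otimes E_{\cat B})$, which is the identity. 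Hence $\alpha$ is a split monomorphism in $\cat T$.

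Applying the exact functor $\ihom{t, -}$ preserves this splitting, yielding a split monomorphism
\[
\ihom{t, E_{\cat B}} \hookrightarrow \ihom{t, \ihom{E_{\cat B}, E_{\cat B}}} \cong \ihom{t \otimes E_{\cat B}, E_{\cat B}},
\]
where the last isomorphism is the standard tensor--hom identity. If $t \otimes E_{\cat B} = 0$, the right-hand side vanishes, and so does $\ihom{t, E_{\cat B}}$ as a retract of it. Taking contrapositives gives $\Supph(t) \subseteq \Supphnaive(t)$.

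I do not expect a serious obstacle: the statement is essentially a formal consequence of the weak-ring axiom together with the $\otimes$-$\ihom{-,-}$ adjunction, and does not require any finiteness or structural hypothesis beyond what is available in any rigidly-compactly generated tt-category. The only mildly delicate point is verifying that the candidate retraction of $\alpha$ really is a retraction, which reduces entirely to the defining identity of a weak ring.
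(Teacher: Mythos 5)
Your proof is correct. The paper gives two proofs of this proposition: the primary one invokes the tensor product property $\Supph(t_1\otimes t_2)=\Supph(t_1)\cap\Supph(t_2)$ together with the computation $\Supph(E_{\cat B})=\{\cat B\}$ (if $\cat B\in\Supph(t)$ then $\Supph(t\otimes E_{\cat B})=\{\cat B\}\neq\emptyset$, so $t\otimes E_{\cat B}\neq0$), while the alternative is the ``formal'' argument in Lemma~\ref{lem:formal1}$(b)$, which builds an explicit identity composite
$\ihom{t,R}\to\ihom{t,R}\otimes\ihom{R,R}\to\ihom{t\otimes R,R\otimes R}\to\ihom{t,R}$
and checks it is the identity via dinaturality of coevaluation. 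Your argument is really a repackaging of that formal route: instead of exhibiting an identity composite on $\ihom{t,E_{\cat B}}$, you observe once and for all that the weak-ring structure makes the adjoint $\alpha\colon E_{\cat B}\to\ihom{E_{\cat B},E_{\cat B}}$ of $\mu$ a split monomorphism (retracted by $\ihom{\eta,E_{\cat B}}$), and then apply the additive functor $\ihom{t,-}$ and the currying isomorphism $\ihom{t,\ihom{E_{\cat B},E_{\cat B}}}\cong\ihom{t\otimes E_{\cat B},E_{\cat B}}$. This is somewhat cleaner: it sidesteps the explicit dinaturality check and isolates the split mono as a reusable statement about weak rings. One small discrepancy: you state the weak-ring axiom as $\mu\circ(\eta\otimes E_{\cat B})=\id$, whereas the paper (and Balmer's construction) uses $\mu\circ(E_{\cat B}\otimes\eta)=\id$; in a symmetric monoidal category these are interchangeable via the braiding, and the choice just determines which form of the $\otimes$-$\ihom$ adjunction you use in the retraction check, so this has no bearing on correctness.
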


\begin{proof}
    As pointed out in \cite{Balmer20_bigsupport}, this follows from the tensor product property and the fact that $\Supph(E_{\cat B})=\{\cat B\}$. On the other hand, part $(b)$ of \Cref{lem:formal1} below (applied to the weak ring $E_{\cat B}$) provides a more ``formal'' proof of the result.
\end{proof}

\begin{Lem}\label{lem:formal1} The following hold:
	\begin{enumerate}
		\item If $C$ is a weak coring then
			\[
                C \otimes t = 0 \Longrightarrow \ihom{C,t} = 0.
            \]
		\item If $R$ is a weak ring then
			\[
                t \otimes R = 0 \Longrightarrow \ihom{t,R} = 0.
            \]
	\end{enumerate}
\end{Lem}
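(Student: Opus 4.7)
The plan is to reduce both statements, via the tensor-hom adjunction, to the vanishing of all morphisms $x \otimes t \to R$ (resp.\ $x \otimes C \to t$), and then to use the weak (co)ring structure to factor any such morphism through an arrow that already vanishes by hypothesis.

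For part $(b)$, I would begin by observing that $\ihom{t,R} = 0$ if and only if $\Hom(x, \ihom{t,R}) = 0$ for every $x \in \cat T$, which by tensor-hom adjunction is equivalent to the statement that every morphism $f\colon x \otimes t \to R$ is zero. Recalling that a weak ring structure on $R$ consists of a unit $\eta\colon \unit \to R$ together with a retraction $\mu\colon R \otimes R \to R$ of $\eta \otimes R$, I would then factor $f$ as
\[
    f \;=\; \mu \circ (\eta \otimes R) \circ f \;=\; \mu \circ (R \otimes f) \circ (\eta \otimes x \otimes t).
\]
But $R \otimes f$ has source $R \otimes x \otimes t \cong x \otimes (t \otimes R) = 0$ by hypothesis, so $R \otimes f = 0$, and hence $f = 0$.

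Part $(a)$ is formally dual. A weak coring structure on $C$ consists of a counit $\epsilon\colon C \to \unit$ together with a section $\delta\colon C \to C \otimes C$ of $C \otimes \epsilon$. As before, it suffices to show that every $g\colon x \otimes C \to t$ is zero. Using the identity $(C \otimes \epsilon) \circ \delta = \id_C$ together with the bifunctoriality equality $g \circ (x \otimes C \otimes \epsilon) = (t \otimes \epsilon) \circ (g \otimes C)$, I would factor
\[
    g \;=\; g \circ \bigl(x \otimes (C \otimes \epsilon) \circ \delta\bigr) \;=\; (t \otimes \epsilon) \circ (g \otimes C) \circ (x \otimes \delta).
\]
Since $g \otimes C$ lands in $t \otimes C = 0$, this composite vanishes.

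There is no serious obstacle here: the retraction $\mu$ (resp.\ section $\delta$) provides precisely the factorization needed to invoke the vanishing hypothesis. The only mild subtlety is the tensor-hom reduction at the start --- one must pass from the internal-hom statement to an external-hom statement about maps out of a tensor product before the weak (co)ring structure can be brought to bear.
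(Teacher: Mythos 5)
Your proof is correct. The underlying idea is the same as the paper's — use the retraction/section provided by the weak (co)ring structure to factor something through an object that vanishes by hypothesis — but your execution runs through a different (and arguably cleaner) route. The paper works \emph{internally}: it produces, via dinaturality of coevaluation, an explicit factorization of $\id_{\ihom{C,t}}$ (resp.\ $\id_{\ihom{t,R}}$) through $\ihom{C\otimes C, t\otimes C}$ (resp.\ $\ihom{t\otimes R, R\otimes R}$), which vanishes because its target (resp.\ source) is zero. You instead \emph{externalize} via the tensor-hom adjunction: it is enough that every map $x\otimes t\to R$ (resp.\ $x\otimes C\to t$) vanish, and then the weak (co)ring structure gives a direct factorization of any such map through $R\otimes x\otimes t \cong 0$ (resp.\ $t\otimes C = 0$), using only bifunctoriality of $\otimes$ and no dinaturality. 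This makes the verification more elementary at the cost of a small amount of Yoneda bookkeeping at the start. One cosmetic point: the paper's definition of weak ring asks for a retraction of $1\otimes\eta\colon R\to R\otimes R$, while you use a retraction of $\eta\otimes 1$; these are interchangeable via the symmetry of $\otimes$, but it is worth saying so, since the lemma is used for the objects $E_{\cat B}$ whose weak-ring structure is defined one specific way.
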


\begin{proof}
	$(a)$: Let $\epsilon\colon C \to \unit$ be the counit of the weak coring structure and let $\Delta\colon C \to C\otimes C$ be a choice of ``comultiplication'',  that is,  a section of the  map $1\otimes \epsilon\colon {C \otimes C \to C}$. One can check using dinaturality of coevaluation  (with respect to $\Delta$) that the composite
	\[
        \ihom{C,t} \xrightarrow{1 \otimes \mathrm{coev}} \ihom{C,t}\otimes \ihom{C,C} \xrightarrow{} \ihom{C\otimes C, t\otimes C} \xrightarrow{\ihom{\Delta,1\otimes\epsilon}} \ihom{C,t}
    \]
	is the identity.

	$(b)$: Let $\eta\colon\unit \to R$ be the unit of the weak ring structure  and let $\mu:R\otimes R \to R$ be a choice of ``multiplication'',  that is, a retraction of the map $1\otimes \eta\colon R \to R \otimes R$. One can check using dinaturality of coevaluation  (with respect to $1\otimes \eta\colon t \to t\otimes R$) that the composite
	\[
        \ihom{t,R} \xrightarrow{1 \otimes \mathrm{coev}} \ihom{t,R} \otimes \ihom{R,R} \xrightarrow{} \ihom{t\otimes R,R\otimes R} \xrightarrow{\ihom{1\otimes\eta,\mu}} \ihom{t,R}
    \]
	is the identity.
\end{proof}

\begin{Lem}\label{lem:formal2}
	If $C$ is a weak coring then 
	\[
        \ihom{C,t} = 0 \Longrightarrow C \otimes t = 0.
    \]
\end{Lem}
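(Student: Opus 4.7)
The strategy dualizes that of \cref{lem:formal1}(b): I will factor the identity on $C \otimes t$ through an object in which $\ihom{C,t}$ appears as a tensor factor, so that the vanishing of $\ihom{C,t}$ immediately forces $C \otimes t = 0$.

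The key ingredient will be the morphism $\bar\eta\colon t \to \ihom{C,t}$ adjoint to the map $1_t \otimes \epsilon\colon t \otimes C \to t \otimes \unit \cong t$ under the tensor--hom adjunction $(-) \otimes C \dashv \ihom{C,-}$. Equivalently, $\bar\eta$ is characterized by the identity $\mathrm{ev} \circ (\bar\eta \otimes 1_C) = 1_t \otimes \epsilon$, where $\mathrm{ev}\colon \ihom{C,t} \otimes C \to t$ is the counit of the adjunction. I will then consider the composite
\[
	C \otimes t \xrightarrow{\Delta \otimes \bar\eta} C \otimes C \otimes \ihom{C,t} \xrightarrow{1_C \otimes \tau} C \otimes \ihom{C,t} \otimes C \xrightarrow{1_C \otimes \mathrm{ev}} C \otimes t,
\]
where $\tau$ is the symmetry swapping $C$ and $\ihom{C,t}$, and verify that it equals the identity on $C \otimes t$.

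To check the identity, naturality of $\tau$ combined with the defining property of $\bar\eta$ shows that the last three arrows of the composite realize the morphism $1_C \otimes \epsilon \otimes 1_t\colon C \otimes C \otimes t \to C \otimes t$. Precomposing with $\Delta \otimes 1_t$ and applying the counit axiom $(1_C \otimes \epsilon) \circ \Delta = 1_C$ for the weak coring then yields the identity on $C \otimes t$. Once this is established, the hypothesis $\ihom{C,t} = 0$ forces the middle object $C \otimes C \otimes \ihom{C,t}$ to be zero, and the factorization compels $C \otimes t = 0$. The only delicate point is guessing the right factorization; the subsequent verification is a routine diagram chase requiring no dualizability or finiteness assumption on $C$.
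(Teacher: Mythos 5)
Your proof is correct and uses essentially the same ingredients as the paper's: your $\bar\eta\colon t\to\ihom{C,t}$ is the adjoint of $1_t\otimes\epsilon$, which is precisely the map $\ihom{\epsilon,1}$ (modulo the unitor $\ihom{\unit,t}\cong t$) appearing in the paper, and the verification that your composite is the identity unfolds to the same two facts the paper uses, namely that $1_t\otimes\epsilon$ factors through $\ihom{C,t}\otimes C$ and that the counit axiom $(1_C\otimes\epsilon)\circ\Delta=1_C$ forces the identity on $C\otimes t$ to factor through $1_C\otimes\epsilon\otimes 1_t$. The paper splits these into two steps (first concluding $t\otimes\epsilon=0$, then factoring the identity), whereas you inline them into a single composite in the style of the paper's proof of the companion lemma; this is a presentational rather than a mathematical difference.
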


\begin{proof}
	Let $\epsilon\colon C\to \unit$ be the counit. One can readily check using dinaturality of evaluation (with respect to $\epsilon$) that the composite
    \[
        t \otimes C \simeq \ihom{\unit,t}\otimes C \xrightarrow{\ihom{\epsilon,1}\otimes 1} \ihom{C,t}\otimes C \xrightarrow{\mathrm{ev}} t \simeq t \otimes \unit
    \]
    is $1 \otimes \epsilon\colon t \otimes C \to t \otimes \unit$. Thus, $\ihom{C,t}=0$ implies $t \otimes \epsilon = 0$. Since the identity of $t\otimes C$ factors through $t \otimes C \otimes C \xrightarrow{1 \otimes \epsilon \otimes 1} t \otimes C$, we conclude that $t\otimes C=0$.
\end{proof}

\begin{Prop}\label{prop:supp-coincide-weak-coring}
    We have the equality 
	\[
        \Supph(t) = \Supphnaive(t)
    \]
    whenever $t$ is a weak ring or a weak coring.
\end{Prop}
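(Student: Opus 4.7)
The plan is to exploit Proposition~\ref{prop:supph-in-suppn}, which unconditionally gives $\Supph(t) \subseteq \Supphnaive(t)$, so that only the reverse inclusion remains. Unwinding the definitions, what must be established is the implication
\[
    \ihom{t, E_{\cat B}} = 0 \;\Longrightarrow\; t \otimes E_{\cat B} = 0
\]
for every homological prime $\cat B \in \Spc^h(\cat T^c)$. I would split into the two hypotheses.

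When $t$ is a weak coring, this implication is exactly \cref{lem:formal2} with the coring taken to be $t$ itself and the auxiliary object being $E_{\cat B}$; the counit $\epsilon \colon t \to \unit$ and a comultiplication splitting $1 \otimes \epsilon$ provide the factorization of $1_{t \otimes E_{\cat B}}$ through the ``$\ihom{t,E_{\cat B}} \otimes (-)$'' term appearing in that proof, and so the hypothesis $\ihom{t, E_{\cat B}} = 0$ forces $1_{t \otimes E_{\cat B}}=0$.

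When $t$ is a weak ring, I would run a formally dual argument. Here one uses the unit $\eta \colon \unit \to t$ and a multiplication $\mu \colon t \otimes t \to t$ with $\mu \circ (1 \otimes \eta) = 1_t$, together with the fact that $E_{\cat B}$ is itself a weak ring in the sense of \cite[Construction 2.11]{Balmer20_bigsupport}. The strategy is to construct, via dinaturality of coevaluation and evaluation applied to the structural maps of $t$ and of $E_{\cat B}$, a factorization of the identity of $t \otimes E_{\cat B}$ as a composite one of whose arrows factors through $\ihom{t, E_{\cat B}} \otimes (-)$. Once such a factorization is in place, the hypothesis $\ihom{t, E_{\cat B}} = 0$ again forces $1_{t \otimes E_{\cat B}} = 0$, and we conclude $t \otimes E_{\cat B} = 0$.

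The main obstacle I expect is in the weak-ring case. In \cref{lem:formal2} the counit $\epsilon$ plays an essential role by ``stripping off'' the coring factor, and a weak ring carries only a unit $\eta$ pointing in the opposite direction. The key balancing act will therefore be to combine $\eta$ and $\mu$ on the $t$-side with the ring structure of $E_{\cat B}$ on the other side so as to reconstruct an analogue of the composite built in \cref{lem:formal2}. Given the closely parallel proofs of \cref{lem:formal1}(a)/(b) and of \cref{lem:formal2}, I expect this symmetric treatment to go through essentially by reversing all relevant arrows and swapping the roles of unit and counit.
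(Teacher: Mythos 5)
Your treatment of the weak coring case is correct and matches the paper: apply \cref{lem:formal2} with $C = t$ and the auxiliary object $E_{\cat B}$ to obtain $\ihom{t,E_{\cat B}}=0 \Rightarrow t\otimes E_{\cat B}=0$, and combine with the unconditional inclusion $\Supph\subseteq\Supphnaive$ (the paper instead uses \cref{lem:formal1}(a) directly, which is the same formal argument).

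The weak ring case is where your proposal breaks down, and it is a genuine gap, not merely a missing detail. There is no formally dual argument. The polarity is wrong: a weak coring structure on $C$ controls the object $\ihom{C,-}$ with $C$ in the \emph{source} slot, because dinaturality of evaluation with respect to the counit $\epsilon\colon C\to\unit$ exhibits $t\otimes\epsilon$ as factoring through $\ihom{C,t}\otimes C\xrightarrow{\mathrm{ev}} t$, and the comultiplication then shows $1_{t\otimes C}$ factors through $t\otimes\epsilon$. A weak ring structure on $t$ controls $\ihom{-,t}$ with $t$ in the \emph{target} slot (this is exactly \cref{lem:formal1}(b)), not $\ihom{t,-}$. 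If you try the dual trick, dinaturality of coevaluation with respect to $\eta\colon\unit\to t$ exhibits $\eta\otimes 1_R\colon R\to t\otimes R$ as factoring through $\ihom{t,\,t\otimes R}$ --- but not through $\ihom{t,R}$ --- so the hypothesis $\ihom{t,R}=0$ gives no leverage unless $t$ were dualizable, which a general weak ring is not. Mixing in the weak ring structure of $E_{\cat B}$ does not rescue this: a multiplication $E_{\cat B}\otimes E_{\cat B}\to E_{\cat B}$ would let you ``strip'' a copy of $E_{\cat B}$, but you would still need a map $t\to E_{\cat B}$ to feed into it, and those are exactly what $\ihom{t,E_{\cat B}}=0$ rules out.

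The paper does not claim a formal proof here. It cites Balmer's Theorem 4.7 of \cite{Balmer20_bigsupport}, which is precisely the weak ring case of the proposition and is a non-formal result: it relies on the detection of weak rings by homological residue fields, which in turn uses the pure-injectivity of the objects $E_{\cat B}$ and the Grothendieck-categorical structure of $\mathrm{Mod}\text{-}\cat T^c$. Schematically, the argument is: if $t$ is a weak ring and $E_{\cat B}\otimes t\neq 0$, then $E_{\cat B}\otimes t$ is again a weak ring, so by Balmer's detection theorem $\Supph(E_{\cat B}\otimes t)\neq\emptyset$; the tensor-product property then gives $\{\cat B\}\cap\Supph(t)\neq\emptyset$, i.e., $\cat B\in\Supph(t)$. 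Note that this argument is exactly the one used in \cref{lem:supp-coincide-h-detection} under the \emph{global} h-detection hypothesis; for weak rings, Balmer provides the needed detection unconditionally, but that input is not something your purely diagrammatic strategy can reconstruct.
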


\begin{proof}
    The case when $t$ is a weak ring is \cite[Theorem 4.7]{Balmer20_bigsupport} while the case when~$t$ is a weak coring  follows from \cref{lem:formal2} and part $(a)$ of \cref{lem:formal1}.
\end{proof}

\begin{Rem}\label{rem:hsuppdetectsweakrings}
	Balmer also establishes that if $t \in \cat T$ is a weak ring then ${\Supph(t)=\emptyset}$ implies $t=0$. However, since $\Supph$ satisfies the tensor product property, the existence of categories which have nonzero tensor-nilpotent objects shows that in general it is possible for $\Supph(t) = \emptyset$ even with $t \neq 0$. This leads to:
\end{Rem}

\begin{Def}\label{def:hdetection}
    Let $\cat T$ be a rigidly-compactly generated tt-category.
	\begin{enumerate}
		\item  We say that the \emph{h-detection property} holds for $\cat T$ (short for \emph{homological detection property}) if $\Supph(t) = \emptyset$ implies $t=0$.
        \item  We say that the \emph{h-codetection property} holds for $\cat T$ (short for \emph{homological codetection property})
	if $\Cosupph(t) = \emptyset$ implies $t=0$.
    \end{enumerate}
\end{Def}

\begin{Lem}\label{lem:supp-coincide-h-detection}
	If $\cat T$ has the h-detection property then  $\Supph(t) = \Supphnaive(t)$ for all $t \in \cat T$.
\end{Lem}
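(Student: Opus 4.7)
The plan is to combine \cref{prop:supph-in-suppn}, which already yields the inclusion $\Supph(t)\subseteq\Supphnaive(t)$, with the tensor product property of $\Supph$ recalled in \eqref{eq:tensor-product} and the computation $\Supph(E_{\cat B})=\{\cat B\}$ from \cref{exa:suppEb}. So the only thing I need to establish is the reverse inclusion $\Supphnaive(t)\subseteq \Supph(t)$, and this is where the h-detection hypothesis enters.

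Concretely, I would take $\cat B\in\Supphnaive(t)$, which by definition means $E_{\cat B}\otimes t\neq 0$. Applying the h-detection property to this nonzero object gives $\Supph(E_{\cat B}\otimes t)\neq\emptyset$. On the other hand, by the tensor product property \eqref{eq:tensor-product} and \cref{exa:suppEb},
\[
\Supph(E_{\cat B}\otimes t)=\Supph(E_{\cat B})\cap\Supph(t)=\{\cat B\}\cap\Supph(t).
\]
Non-emptiness of the left-hand side therefore forces $\cat B\in\Supph(t)$, as desired.

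There is no real obstacle here: the argument is entirely formal once one has the tensor product property for $\Supph$ in hand, and the role of h-detection is simply to convert the non-vanishing of $E_{\cat B}\otimes t$ (which defines the naive support) into the non-vanishing of the homological support of that tensor product. This explains why the naive and genuine homological supports agree as soon as enough objects are detected homologically.
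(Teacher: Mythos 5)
Your argument is correct and is essentially identical to the paper's own proof: both sides use \cref{prop:supph-in-suppn} for the forward inclusion and then combine h-detection with the tensor product property \eqref{eq:tensor-product} and $\Supph(E_{\cat B})=\{\cat B\}$ from \cref{exa:suppEb} to obtain the reverse inclusion.
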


\begin{proof}
	We always have the $\subseteq$ inclusion by \cref{prop:supph-in-suppn}. On the other hand, if $\cat B \in \Supphnaive(t)$ then $E_{\cat B} \otimes t \neq 0$ by definition. Hence the h-detection property implies that $\Supph(E_{\cat B} \otimes t) \neq \emptyset$. By the tensor product property this implies $\emptyset \neq \Supph(E_{\cat B}) \cap \Supph(t) = \{\cat B\} \cap \Supph(t)$ so that $\cat B \in \Supph(t)$.
\end{proof}

\begin{Rem}
	As previously mentioned, the h-detection property does not always hold (\cref{rem:hsuppdetectsweakrings}). Nevertheless, we can prove that the two notions of homological support coincide for many additional examples of interest, as follows.
\end{Rem}

\begin{Def}\label{def:enough-tt-fields}
	\hspace{-0.2ex}A rigidly-compactly generated tt-category $\cat T$
	\emph{admits enough \mbox{tt-fields}}
	if for each $\cat B \in \Spc^h(\cat T^c)$
	there exists a geometric functor $\cat T \to \cat F$ to a tt-field, in the sense of \cite[Definition 1.1]{BalmerKrauseStevenson19},
	which maps the unique point in $\Spc^h(\cat F^c)$ to~$\cat B$.
	Note that $\Spc^h(\cat F^c)$ consists of a single point by \cite[Theorem~5.17(a)]{BalmerKrauseStevenson19}.
\end{Def}

\begin{Exa}\label{exa:DR-tt-fields}
	The derived category $\Der(R)$ of a commutative ring $R$ admits enough tt-fields. Indeed, $\Spc^h(\Der(R)^c) \cong \Spec(R)$ and tt-fields are  provided by the usual residue fields $\Der(R) \to \Der(\kappa(\mathfrak p))$. For the homological prime $\cat B\in \Spc^h(\Der(R)^c)$ corresponding to $\mathfrak p \in \Spec(R)$, we have $E_{\cat B} \simeq \kappa(\mathfrak p)$, considered as a complex concentrated in degree~0; see \cite[Corollary 3.3]{BalmerCameron2021}. 
\end{Exa}

\begin{Lem}\label{lem:naive-tt-field}
	Let $f^*\colon \cat T \to \cat F$ be a geometric functor to a tt-field $\cat F$. Let $\cat B \in \Spc^h(\cat T^c)$  be the image of the unique  point in $\Spc^h(\cat F^c)$. Then $\cat B \in \Supphnaive(t)$ if and only if $\cat B \in \Supph(t)$.
\end{Lem}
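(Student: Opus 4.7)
\emph{Proof plan.} By \cref{prop:supph-in-suppn} we already have $\Supph(t) \subseteq \Supphnaive(t)$ in general, so the only content is the reverse implication at $\cat B$: assuming $E_{\cat B} \otimes t \neq 0$, deduce that $\ihom{t, E_{\cat B}} \neq 0$. My plan is to reduce both conditions to the single statement ``$f^*(t) \neq 0$ in $\cat F$'', which does not distinguish the two notions of homological support.

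Inside the tt-field $\cat F$ the two supports trivially coincide. By \cite[Theorem~5.17]{BalmerKrauseStevenson19} every object of $\cat F$ is a coproduct of shifts of $\unit_{\cat F}$. In particular $\unit_{\cat F}$ is a compact generator of $\cat F$ and shares its Bousfield class with the nonzero pure-injective $E_{\cat B_0}$, yielding for every $x \in \cat F$
\[
x \neq 0 \iff E_{\cat B_0} \otimes x \neq 0 \iff \ihom{x, E_{\cat B_0}} \neq 0.
\]

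For the base-change step, since $\unit_{\cat F} = f^*(\unit)$ is a compact generator of $\cat F$, the right adjoint $f_*$ is conservative. Combined with the projection formula $f_* f^*(t) \simeq f_*(\unit_{\cat F}) \otimes t$ and the adjoint identity $\ihom{t, f_*(\unit_{\cat F})} \simeq f_*(\ihom{f^*(t), \unit_{\cat F}})$, this gives
\[
f^*(t) \neq 0 \iff f_*(\unit_{\cat F}) \otimes t \neq 0 \iff \ihom{t, f_*(\unit_{\cat F})} \neq 0.
\]
The lemma then follows once we show that $E_{\cat B}$ and $f_*(\unit_{\cat F})$ have the same Bousfield class in $\cat T$; by \cref{lem:formal1,lem:formal2} it suffices to check this for tensoring or for internal hom individually.

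The main obstacle is precisely this last Bousfield-class comparison. Both $E_{\cat B}$ and $f_*(\unit_{\cat F})$ are weak rings with homological support $\{\cat B\}$, so their vanishing behaviour is governed by the homological residue field at $\cat B$. The expected identification should come from the universal property of $E_{\cat B}$ as the pure-injective representing this residue field, together with the observation that $[-, f_*(\unit_{\cat F})]|_{\cat T^c} \simeq [f^*(-), \unit_{\cat F}]|_{\cat F^c}$ factors through the same residue field (using $\cat B = \varphi^h(\cat B_0)$ and that $\cat F$ is a tt-field).
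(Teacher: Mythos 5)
The proposal has a genuine gap, and in fact the student acknowledges it: the concluding paragraph explicitly defers ``the main obstacle,'' which is precisely the technical heart of the argument. Let me be concrete about what is missing and what is wrong.

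First, two stated facts about tt-fields are off. By \cite[Definition~1.1]{BalmerKrauseStevenson19}, a tt-field has every object a coproduct of \emph{compact-rigid} objects, not a coproduct of shifts of $\unit_{\cat F}$; there is no requirement that $\unit_{\cat F}$ be a compact generator. Consequently, the claim that $f_*$ is conservative because ``$\unit_{\cat F}$ is a compact generator'' is unjustified, and with it the middle chain $f^*(t) \neq 0 \iff f_*(\unit_{\cat F})\otimes t \neq 0 \iff \ihom{t,f_*(\unit_{\cat F})} \neq 0$ lacks the backward direction of its first equivalence. (The paper's proof notably routes through $f^!$ precisely to avoid needing conservativity of $f_*$.)

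Second, and more importantly, the reduction to a Bousfield-class comparison between $E_{\cat B}$ and $f_*(\unit_{\cat F})$ is not carried out. Even granting the key input you don't cite — that $E_{\cat B}$ is a direct summand of $f_*(\unit_{\cat F})$, which is \cite[Theorem~3.1]{BalmerCameron2021} — a summand relation only gives one direction of the Bousfield comparison: $E_{\cat B}\otimes t = 0$ is \emph{implied by} $f_*(\unit_{\cat F})\otimes t = 0$, not the converse, and similarly for $\ihom{t,-}$. The genuinely nontrivial implication you need is $\ihom{t,E_{\cat B}} = 0 \Rightarrow f^*(t) = 0$. The paper gets this by first showing $f^!E_{\cat B}\neq 0$ (using the summand relation together with $\ihom{f_*(\unit),E_{\cat B}}\simeq f_*f^!E_{\cat B}$ and $\ihom{E_{\cat B},E_{\cat B}}\neq 0$), then applying the base-change identity $f^!\ihom{t,E_{\cat B}}\simeq\ihom{f^*t,f^!E_{\cat B}}$, and finally invoking the hom-detection property of tt-fields (\cite[Theorem~5.21]{BalmerKrauseStevenson19}) to conclude $f^*t = 0$. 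None of these three steps appears in the proposal, and the vague appeal to ``the universal property of $E_{\cat B}$'' does not substitute for them. In short: the high-level plan (reduce both sides to $f^*(t)\neq 0$) matches the paper's strategy, but the proof as written has not actually bridged either direction of that reduction.
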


\begin{proof}
    The inclusion $\Supph(t) \subseteq \Supphnaive(t)$ is \cref{prop:supph-in-suppn}, so it remains to prove the ``only if'' direction. First we claim that $f^! E_{\cat B} \neq 0$. Otherwise, we would have $\ihom{f_*(\unit),E_{\cat B}}=f_*f^! E_{\cat B} =0$. But $E_{\cat B}$ is a direct summand of $f_*(\unit)$ by \cite[Theorem 3.1]{BalmerCameron2021}.  Thus we would have $\ihom{E_{\cat B},E_{\cat B}} =0$ which contradicts $E_{\cat B} \neq 0$.

    If $\ihom{t,E_{\cat B}} = 0$, then $\ihom{f^* t,f^! E_{\cat B}} = f^!\ihom{t,E_{\cat B}} =0$. Since $\cat F$ is a tt-field, it follows that $f^* t = 0$ by \cite[Theorem 5.21]{BalmerKrauseStevenson19}. Hence $f_*(\unit) \otimes t = 0$ by the projection formula. Therefore $E_{\cat B} \otimes t = 0$ since $E_{\cat B}$ is a direct summand of~$f_*(\unit)$.
\end{proof}

\begin{Prop}\label{prop:naive-tt-fields}
	Suppose $\cat T$ admits enough tt-fields (\cref{def:enough-tt-fields}). Then 
    \[
        \Supph(t) = \Supphnaive(t)
    \]
    for all $t \in \cat T$.
\end{Prop}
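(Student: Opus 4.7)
The plan is essentially to reduce the statement pointwise to \cref{lem:naive-tt-field} using the hypothesis. The inclusion $\Supph(t)\subseteq\Supphnaive(t)$ is automatic from \cref{prop:supph-in-suppn}, so only the reverse inclusion needs argument.

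For the reverse inclusion, I would fix $t\in\cat T$ and $\cat B\in\Supphnaive(t)$. By the assumption that $\cat T$ admits enough tt-fields (\cref{def:enough-tt-fields}), there exists a geometric functor $f^*\colon\cat T\to\cat F$ to a tt-field $\cat F$ such that the image of the unique point of $\Spc^h(\cat F^c)$ under the induced map on homological spectra is precisely $\cat B$. Now \cref{lem:naive-tt-field} applies to this very functor $f^*$ and this very prime $\cat B$, yielding the biconditional
\[
\cat B\in\Supph(t)\iff\cat B\in\Supphnaive(t).
\]
Since we chose $\cat B\in\Supphnaive(t)$, we conclude $\cat B\in\Supph(t)$, as required.

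There is no real obstacle: the work has already been done in \cref{lem:naive-tt-field}, whose proof exploits that $E_{\cat B}$ is a direct summand of $f_*(\unit_{\cat F})$ and that in a tt-field vanishing of internal homs detects vanishing of objects. The proposition is essentially a packaging statement asserting that if enough tt-fields are available, then \emph{every} homological prime can witness the equivalence of the two notions of support, so the two supports agree globally.
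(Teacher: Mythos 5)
Your argument is exactly the paper's: the proposition is an immediate pointwise consequence of \cref{lem:naive-tt-field}, using the "enough tt-fields" hypothesis to supply, for each $\cat B\in\Supphnaive(t)$, a tt-field functor witnessing $\cat B$. The paper simply states "This follows immediately from \cref{lem:naive-tt-field}," and you have correctly unpacked that reduction.
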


\begin{proof}
	This follows immediately from \cref{lem:naive-tt-field}. 
\end{proof}

\begin{Par}
	We conclude this section with one further result concerning the relationship between the naive and genuine notions of homological support.
\end{Par}

\begin{Lem}\label{lem:minimalityhsuppidentification}
    Let $\cat B \in \Spc^h(\cat T^c)$ and suppose $\Loco{E_{\cat B}}$ is a minimal localizing ideal in $\cat T$. Then $\cat B \in \Supphnaive(t)$ if and only if $\cat B \in \Supph(t)$.
\end{Lem}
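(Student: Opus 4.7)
The inclusion $\Supph(t) \subseteq \Supphnaive(t)$ is always true by \cref{prop:supph-in-suppn}, so the content is in the converse. Contrapositively, I would assume that $\cat B \notin \Supph(t)$, i.e., $\ihom{t,E_{\cat B}} = 0$, and show that $E_{\cat B} \otimes t = 0$.

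The key observation is that the full subcategory
\[
    \mathscr{L} \coloneqq \SET{x \in \cat T}{\ihom{x,E_{\cat B}} = 0}
\]
is a localizing ideal of $\cat T$. It is closed under coproducts because $\ihom{-,E_{\cat B}}$ sends coproducts in the first variable to products, closed under triangles and suspensions for the usual reason, and is an ideal because for $x \in \mathscr{L}$ and any $y \in \cat T$ the adjunction gives
\[
    \ihom{x \otimes y, E_{\cat B}} \simeq \ihom{y,\ihom{x,E_{\cat B}}} = 0.
\]

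Now the assumption $\ihom{t,E_{\cat B}} = 0$ means $t \in \mathscr{L}$, and since $\mathscr{L}$ is a localizing ideal we conclude $E_{\cat B} \otimes t \in \mathscr{L}$. Hence $\Loco{E_{\cat B} \otimes t} \subseteq \mathscr{L}$. On the other hand, $E_{\cat B} \otimes t \in \Loco{E_{\cat B}}$ (because $\Loco{E_{\cat B}}$ is an ideal) and so $\Loco{E_{\cat B} \otimes t} \subseteq \Loco{E_{\cat B}}$. Note that $\Loco{E_{\cat B}} \not\subseteq \mathscr{L}$ since $E_{\cat B}$ is a nonzero weak ring, so $\ihom{E_{\cat B},E_{\cat B}} \neq 0$ (as it admits the identity as a nonzero element via the unit of the weak ring structure, or simply because $E_{\cat B} \neq 0$ and $E_{\cat B}$ is a weak ring so $\Supph(E_{\cat B}) = \{\cat B\} \neq \emptyset$).

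By minimality of $\Loco{E_{\cat B}}$ among nonzero localizing ideals, the localizing ideal $\Loco{E_{\cat B} \otimes t}$ is either zero or equal to $\Loco{E_{\cat B}}$. Since $\Loco{E_{\cat B} \otimes t} \subseteq \mathscr{L}$ but $\Loco{E_{\cat B}} \not\subseteq \mathscr{L}$, it must be zero, hence $E_{\cat B} \otimes t = 0$, i.e., $\cat B \notin \Supphnaive(t)$, as required. The only subtle step is verifying that $\mathscr{L}$ is an ideal, which I expect to be straightforward via the adjunction; the rest is a direct application of minimality.
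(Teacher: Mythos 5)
Your proof is correct and is essentially the same argument as the paper's, organized contrapositively: the paper assumes $t\otimes E_{\cat B}\neq 0$, uses minimality to get $E_{\cat B}\in\Loco{t\otimes E_{\cat B}}$, and then notes that $\ihom{t\otimes E_{\cat B},E_{\cat B}}=0$ would force $\ihom{E_{\cat B},E_{\cat B}}=0$ (implicitly using the same fact that the right orthogonal of $E_{\cat B}$ under $\ihom{-,E_{\cat B}}$ is a localizing ideal, which you make explicit with your $\mathscr{L}$), whereas you assume $\ihom{t,E_{\cat B}}=0$ and run the minimality dichotomy on $\Loco{E_{\cat B}\otimes t}$. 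Both hinge on the same two ingredients—minimality of $\Loco{E_{\cat B}}$ and the localizing-ideal structure of $\{x:\ihom{x,E_{\cat B}}=0\}$—so there is no substantive difference.
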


\begin{proof}
	The $(\Rightarrow)$ direction holds unconditionally by \cref{prop:supph-in-suppn}, so it suffices to prove the  $(\Leftarrow)$ implication. Let $t \in \cat T$ with $t \otimes E_{\cat B} \neq 0$. By our minimality assumption, this implies that $E_{\cat B} \in \Loco{t \otimes E_{\cat B}}$. If we had $\ihom{t \otimes E_{\cat B}, E_{\cat B}} = 0$, then $\ihom{E_{\cat B}, E_{\cat B}} = 0$ which contradicts  $E_{\cat B} \neq 0$. Therefore, we see
    \[
        0 \neq \ihom{t \otimes E_{\cat B}, E_{\cat B}} \simeq \ihom{E_{\cat B},\ihom{t,E_{\cat B}}}.
    \]
    In particular, this implies that $\ihom{t,E_{\cat B}} \neq 0$.
\end{proof}

\section{The homological local-to-global principle}\label{sec:hLGP}

\begin{Def}\label{def:hlocaltoglobal}
    We say that $\cat T$ satisfies the \emph{homological local-to-global principle} (or the \emph{h-LGP}, for short) if we have the equality
    \[
        \Loco{t} = \Loco{t \otimes E_{\cat B} \mid \cat B \in \Supph(t)}
    \]
    for all $t \in \cat T$.
\end{Def}

\begin{Prop}\label{prop:hLGP}
	The following are equivalent:
	\begin{enumerate}
		\item $\unit \in \Loco{E_{\cat B} \mid \cat B \in \Spc^h(\cat T^c)}$;
		\item $t \in \Loco{t\otimes E_{\cat B} \mid \cat B \in \Spc^h(\cat T^c)}$ for all $t \in \cat T$;
		\item $t \in \Loco{t\otimes E_{\cat B} \mid \cat B \in \Supphnaive(t)}$ for all $t \in \cat T$;
		\item The h-codetection property holds: $\Cosupph(t) = \emptyset$ implies $t=0$.
	\end{enumerate}
\end{Prop}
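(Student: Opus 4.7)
The plan is to establish the cycle of implications $(a) \Rightarrow (b) \Rightarrow (c) \Rightarrow (d) \Rightarrow (a)$. The first two implications are essentially formal. For $(a) \Rightarrow (b)$, I would tensor the containment $\unit \in \Loco{E_{\cat B} \mid \cat B \in \Spc^h(\cat T^c)}$ with $t$; since localizing ideals are stable under the tensor action with any object of $\cat T$, this gives $t \simeq \unit \otimes t \in \Loco{t \otimes E_{\cat B} \mid \cat B \in \Spc^h(\cat T^c)}$. For $(b) \Rightarrow (c)$, observe that the terms indexed by $\cat B \notin \Supphnaive(t)$ contribute zero generators by definition, so they may be discarded without changing the localizing ideal on the right-hand side.

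The content is concentrated in the last two implications. For $(c) \Rightarrow (d)$, the key trick is to consider, for a fixed $t \in \cat T$, the class
\[
\cat L := \SET{X \in \cat T}{\ihom{X, t} = 0}.
\]
I would verify that $\cat L$ is a localizing ideal: closure under triangles is immediate because $\ihom{-, t}$ is a cohomological functor, closure under coproducts holds because $\ihom{\coprod X_i, t} \cong \prod \ihom{X_i, t}$, and closure under tensoring by any $Y \in \cat T$ follows from the natural isomorphism $\ihom{Y \otimes X, t} \cong \ihom{Y, \ihom{X, t}}$. Now if $\Cosupph(t) = \emptyset$, each $E_{\cat B}$ lies in $\cat L$, hence so does every $t \otimes E_{\cat B}$; applying $(c)$ yields $t \in \Loco{t \otimes E_{\cat B} \mid \cat B \in \Supphnaive(t)} \subseteq \cat L$, whence $\ihom{t, t} = 0$ and $t = 0$.

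For $(d) \Rightarrow (a)$, I would invoke Bousfield localization at the localizing ideal
\[
\cat M := \Loco{E_{\cat B} \mid \cat B \in \Spc^h(\cat T^c)}.
\]
Since $\cat T^c$ is essentially small, the collection $\{E_{\cat B}\}$ is a set and $\cat M$ is generated by a set inside the well-generated category $\cat T$, so the Bousfield localization functor $L\colon \cat T \to \cat T$ with kernel $\cat M$ exists. Let $\unit \to L\unit$ be the localization unit; the object $L\unit$ is $\cat M$-local, i.e., $\Hom_{\cat T}(\cat M, L\unit) = 0$. Because $\cat M$ is a tensor ideal, for every $Y \in \cat T$ and every $\cat B$ we have $Y \otimes E_{\cat B} \in \cat M$, so adjunction gives $\Hom_{\cat T}(Y, \ihom{E_{\cat B}, L\unit}) = 0$ for all $Y$, and hence $\ihom{E_{\cat B}, L\unit} = 0$. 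Consequently $\Cosupph(L\unit) = \emptyset$, and $(d)$ forces $L\unit = 0$, i.e., $\unit \in \cat M$.

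The main obstacle is conceptual rather than computational: recognizing in $(c) \Rightarrow (d)$ that $\cat L$ is a localizing \emph{ideal} (the closure under the tensor action is what makes this a nontrivial observation) and setting up a Bousfield localization in $(d) \Rightarrow (a)$ that witnesses the h-codetection hypothesis. Both steps rest on the internal-hom adjunction formula $\ihom{Y \otimes X, t} \cong \ihom{Y, \ihom{X, t}}$ applied to different arguments.
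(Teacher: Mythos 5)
Your proof is correct. The paper itself gives no direct argument here: it defers entirely to citations, asserting that $(a)\Leftrightarrow(d)$ follows verbatim from \cite[Theorem~6.4]{BCHS1}, that $(a)\Leftrightarrow(b)$ is \cite[(2.6)]{BCHS1}, and that $(b)\Leftrightarrow(c)$ is definitional. Your self-contained argument reproduces the expected content behind those citations: the Bousfield localization at $\Loco{E_{\cat B}\mid\cat B}$ (which exists because $\Spc^h(\cat T^c)$ is a set and $\cat T$ is compactly generated) is exactly how the LGP/codetection equivalence is proved in the tensor-triangular setting, and your observation that $\SET{X}{\ihom{X,t}=0}$ is a localizing ideal via the internal-hom adjunction is the standard mechanism for $(c)\Rightarrow(d)$. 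One small point of style: your chain is $(a)\Rightarrow(b)\Rightarrow(c)\Rightarrow(d)\Rightarrow(a)$, whereas the paper organizes it as $(a)\Leftrightarrow(d)$, $(a)\Leftrightarrow(b)$, $(b)\Leftrightarrow(c)$; your cyclic version is cleaner and the only step with real content, $(d)\Rightarrow(a)$, is handled properly. In short, you have written out in full the argument the paper outsources, rather than found a genuinely different route.
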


\begin{proof}
	The equivalence of $(a)$ and $(d)$ follows verbatim from the argument in \cite[Theorem 6.4]{BCHS1} which establishes the equivalence of the classical LGP property and the classical codetection property. On the other hand, part $(a)$ is equivalent to part $(b)$  by \mbox{\cite[(2.6)]{BCHS1}}. Finally, the equivalence of $(b)$ and $(c)$ is immediate from the definition of the naive h-support.
\end{proof}

\begin{Rem}\label{rem:hLGPannoying}
	The h-LGP property  of \cref{def:hlocaltoglobal} implies the equivalent conditions of \cref{prop:hLGP}. However, \emph{a priori} the latter conditions are weaker since we do not know whether $\Supph(t)=\Supphnaive(t)$ in general. In particular, it is not immediate from the above whether the h-codetection property implies the h-detection property. This is in contrast to the situation in \cite{BCHS1}. However, we have the following conditional result:
\end{Rem}

\begin{Cor}\label{cor:hLGP}
    Suppose that $\cat T$ has the h-codection property. Then the following are equivalent:
        \begin{enumerate}
            \item $\cat T$ satisfies the h-LGP;
            \item $\Supph(t)=\Supphnaive(t)$ for all $t \in \cat T$.
        \end{enumerate}
\end{Cor}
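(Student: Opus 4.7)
The plan is to exploit the asymmetry in \cref{prop:hLGP}: that proposition's condition (c) gives a local-to-global statement using the \emph{naive} h-support under the assumption of h-codetection (our standing hypothesis), while the h-LGP of \cref{def:hlocaltoglobal} uses the genuine h-support. Both directions will therefore reduce, essentially formally, to bridging the naive and genuine supports.

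For the implication $(b) \Rightarrow (a)$, I would proceed as follows. By hypothesis, $\cat T$ satisfies h-codetection, so \cref{prop:hLGP}(c) yields
\[
    t \in \Loco{t \otimes E_{\cat B} \mid \cat B \in \Supphnaive(t)}
\]
for every $t \in \cat T$. Replacing $\Supphnaive(t)$ by $\Supph(t)$ using the assumed equality (b) gives the nontrivial containment of the h-LGP; the reverse containment $\Loco{t \otimes E_{\cat B} \mid \cat B \in \Supph(t)} \subseteq \Loco{t}$ is automatic since each $t \otimes E_{\cat B}$ lies in $\Loco{t}$.

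For the converse $(a) \Rightarrow (b)$, the inclusion $\Supph(t) \subseteq \Supphnaive(t)$ is \cref{prop:supph-in-suppn} and holds unconditionally, so only the reverse inclusion needs argument. Fix $\cat B \in \Supphnaive(t)$, meaning $E_{\cat B} \otimes t \neq 0$, and apply h-LGP to the object $E_{\cat B} \otimes t$:
\[
    \Loco{E_{\cat B} \otimes t} = \Loco{E_{\cat B} \otimes t \otimes E_{\cat B'} \mid \cat B' \in \Supph(E_{\cat B} \otimes t)}.
\]
The tensor product property \eqref{eq:tensor-product} for the genuine h-support, together with $\Supph(E_{\cat B}) = \{\cat B\}$ from \cref{exa:suppEb}, gives
\[
    \Supph(E_{\cat B} \otimes t) = \{\cat B\} \cap \Supph(t).
\]
If $\cat B$ were not in $\Supph(t)$, the right-hand set would be empty, forcing $\Loco{E_{\cat B} \otimes t} = 0$ and hence $E_{\cat B} \otimes t = 0$, a contradiction. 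Therefore $\cat B \in \Supph(t)$, as desired.

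There is no serious obstacle: the argument is essentially a bookkeeping exercise built on the tensor product property for $\Supph$ and on the ``naive'' reformulation of the local-to-global principle recorded in \cref{prop:hLGP}. Notably, the h-codetection hypothesis is only needed for the $(b) \Rightarrow (a)$ direction; the converse goes through using only the h-LGP and the tensor product property.
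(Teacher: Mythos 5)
Your proof is correct and takes essentially the same route as the paper's. For $(b)\Rightarrow(a)$ you substitute $\Supph = \Supphnaive$ into condition (c) of \cref{prop:hLGP}, exactly as the paper does; for $(a)\Rightarrow(b)$ you in effect re-derive the h-detection property from h-LGP inline and then run the tensor-product argument, which is precisely the content of \cref{rem:hlgphdetection} combined with \cref{lem:supp-coincide-h-detection} that the paper cites.
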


\begin{proof}
    Suppose that $(a)$ holds, so that in particular h-detection holds. Indeed, $\Supph(t) = \varnothing$ implies $\Loco{t} = 0$ by the h-LGP, hence $t = 0$. It then follows from \Cref{lem:supp-coincide-h-detection} that $(b)$ holds. Conversely, the equality in $(b)$ implies that 
        \[
            \Loco{t\otimes E_{\cat B} \mid \cat B \in \Supph(t)} = \Loco{t\otimes E_{\cat B} \mid \cat B \in \Supphnaive(t)},
        \]
    so the h-LGP follows from h-codetection by \cref{prop:hLGP}.
\end{proof}

\begin{Exa}\label{exa:DR-h-codetection}
    In light of \cref{exa:DR-tt-fields} and \cref{prop:naive-tt-fields}, we see that for the derived category $\Der(R)$ of a commutative ring $R$, the naive and genuine notions of homological support coincide. The previous corollary together with \cref{rem:hLGPannoying} then imply that $\Der(R)$ satisfies the  h-LGP if and only if it has the h-codetection property if and only if $R \in \Loco{\kappa(\mathfrak p)\mid \mathfrak p \in \Spec(R)}$. 
\end{Exa}

\begin{Rem}\label{rem:hlgphdetection}
	We remind the reader that the homological local-to-global principle implies the homological detection property (\cref{def:hdetection}). The following example shows that the converse does not hold.
\end{Rem}

\begin{Exa}\label{exa:absflat}
	Let $\Der(R)$ be the derived category of an absolutely flat ring $R$. It follows from \cite[Lemma 4.1]{Stevenson14} and \cref{prop:naive-tt-fields} that the h-detection property holds. However, \cite[Theorem 4.8]{Stevenson14} and \cite[Theorem~6.3]{Stevenson17} establish that the \mbox{h-LGP} holds if and only if $R$ is semi-artinian, in light of \cref{exa:DR-h-codetection}.
\end{Exa}

\section{Homological stratification}\label{sec:hstrat}

We now introduce  homological stratification and establish its basic properties. We begin with the following homological variant of \cite[Theorem~4.1]{bhs1}. 

\begin{Thm}\label{thm:hstratfundamental}
    Let $\cat T$ be a rigidly-compactly generated tt-category. The following conditions are equivalent:
    \begin{enumerate}
        \item the homological local-to-global principle holds for $\cat T$ and $\Loco{E_{\cat B}}$ is a minimal localizing ideal for all $\cat B \in \Spc^h(\cat T^c)$;
        \item for any $t \in \cat T$, we have $\Loco{t} = \Loco{E_{\cat B} \mid\cat B \in \Supph(t)}$;
        \item homological support induces a bijection
            \[ 
                \Supph\colon \big\{ \text{localizing ideals of $\cat T$} \big\} \xra{\cong} \big\{ \text{subsets of $\Spc^h(\cat T^c)$}\big\}.
            \]
    \end{enumerate}
\end{Thm}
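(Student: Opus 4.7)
The plan is to prove the theorem via the cycle of implications (a) $\Rightarrow$ (b) $\Rightarrow$ (a) and (b) $\Leftrightarrow$ (c), following the template of \cite[Theorem~4.1]{bhs1} adapted to the homological setting. The direction (a) $\Rightarrow$ (b) is the easier one: the h-LGP writes $\Loco{t} = \Loco{t \otimes E_{\cat B} \mid \cat B \in \Supph(t)}$, and for each $\cat B \in \Supph(t)$ the object $t \otimes E_{\cat B}$ is nonzero by \cref{prop:supph-in-suppn} and lies in the (minimal) localizing ideal $\Loco{E_{\cat B}}$; minimality then upgrades each $\Loco{t \otimes E_{\cat B}}$ to $\Loco{E_{\cat B}}$, yielding (b).

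For (b) $\Rightarrow$ (a), I would first establish a general support-containment lemma: for any set $\cat E \subseteq \cat T$, the tensor product property \eqref{eq:tensor-product} together with compatibility of $\Supph$ with triangles and coproducts implies that
\[
    \SET{t \in \cat T}{\Supph(t) \subseteq \textstyle\bigcup_{e \in \cat E}\Supph(e)}
\]
is a localizing ideal containing $\cat E$, and hence contains $\Loco{\cat E}$. Granting this, h-detection is immediate from (b) (take $\Supph(t) = \emptyset$), and minimality of $\Loco{E_{\cat B}}$ follows since any nonzero $s \in \Loco{E_{\cat B}}$ has $\Supph(s) \subseteq \{\cat B\}$ by the lemma and then $\Supph(s) = \{\cat B\}$ by h-detection, so that $\Loco{s} = \Loco{E_{\cat B}}$ by (b). Finally, the h-LGP formula is recovered by combining (b) with the fact that $t \otimes E_{\cat B} \neq 0$ for each $\cat B \in \Supph(t)$ (via \cref{prop:supph-in-suppn}) and the minimality just established.

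The equivalence (b) $\Leftrightarrow$ (c) is a formal manipulation once one sees that the inverse bijection should be $\Theta(W) := \Loco{E_{\cat B} \mid \cat B \in W}$. The identity $\Theta \circ \Supph = \id$ on localizing ideals follows by unwinding (b) pointwise and using that $\Supph(\cat L) = \bigcup_{t \in \cat L} \Supph(t)$, while $\Supph \circ \Theta = \id$ on subsets combines \cref{exa:suppEb} (for the $\supseteq$ inclusion) with the support-containment lemma above (for the $\subseteq$ inclusion). The direction (c) $\Rightarrow$ (b) then observes that any bijection $\Supph$ is forced to have inverse given by this same $\Theta$, since $E_{\cat B}$ has support exactly $\{\cat B\}$; applying the bijection to $\Loco{t}$ recovers the formula in (b).

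\textbf{Main obstacle.} The technical heart of the argument is the support-containment lemma above, which is the key mechanism for translating inclusions of supports into inclusions of localizing ideals; once it is in place, everything else reduces to careful bookkeeping with the h-LGP, minimality, and the tensor product property. A subtle point that must be handled with care is the distinction between $\Supph$ and $\Supphnaive$: the step extracting the h-LGP formula from (b) requires the nonvanishing $t \otimes E_{\cat B} \neq 0$ rather than $\ihom{t, E_{\cat B}} \neq 0$, so the inclusion $\Supph \subseteq \Supphnaive$ of \cref{prop:supph-in-suppn} must be invoked to bridge the two notions.
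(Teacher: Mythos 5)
Your proof is correct and covers the same mathematical ground as the paper's, with only a cosmetic difference in how the implications are cycled: you argue $(a)\Rightarrow(b)\Rightarrow(a)$ and $(b)\Leftrightarrow(c)$, whereas the paper does $(a)\Rightarrow(b)\Rightarrow(c)\Rightarrow(a)$. The support-containment lemma you isolate --- that $\SET{t}{\Supph(t)\subseteq\bigcup_{e\in\cat E}\Supph(e)}$ is a localizing ideal containing $\cat E$, via compatibility of $\Supph$ with coproducts, triangles and the tensor product property --- is exactly the fact the paper uses silently in its $(c)\Rightarrow(a)$ step when it writes $\Supph(\Loco{\cdots})=\bigcup\Supph(\cdots)$ and when it deduces $\Supph(\cat L)\subseteq\Supph(E_{\cat B})$ from $\cat L\subseteq\Loco{E_{\cat B}}$; making it explicit is a genuine improvement in transparency, not a different method. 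Your attention to the $\Supph$ versus $\Supphnaive$ distinction when recovering the h-LGP (invoking \cref{prop:supph-in-suppn} to get $t\otimes E_{\cat B}\neq 0$ from $\cat B\in\Supph(t)$) is exactly the subtlety the paper handles in the same way in its $(a)\Rightarrow(b)$ direction, and your $(b)\Leftrightarrow(c)$ argument via the explicit inverse $\Theta(W)=\Loco{E_{\cat B}\mid\cat B\in W}$ is sound.
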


\begin{proof}
    $(a) \Rightarrow (b)$: Consider $t \in \cat T$. The homological local-to-global principle implies
    \begin{align*}
        \Loco{t} & = \Loco{t \otimes E_{\cat B} \mid\cat B \in \Supph(t)} \\
        & \subseteq \Loco{E_{\cat B} \mid\cat B \in \Supph(t)}.
    \end{align*}
    On the other hand, minimality implies $\Loco{E_{\cat B}} = \Loco{t\otimes E_{\cat B}}$ for all $\cat B \in \Supph(t)$, which establishes the reverse inclusion.

    $(b) \Rightarrow (c)$: On the one hand, since $\Supph(E_{\cat B}) = \{\cat B\}$ for any $\cat B \in \Spc^h(\cat T^c)$, the map is always surjective. On the other hand, injectivity of the map is equivalent to the following statement:
    \[
        \forall t_1,t_2 \in \cat T\colon \Supph(t_1) = \Supph(t_2) \implies \Loco{t_1} = \Loco{t_2}.
    \]
    This is a direct consequence of $(b)$.

    $(c) \Rightarrow (a)$: We first check the minimality condition. To this end, let $\cat B$ be some homological prime and consider a nonzero $\cat L \subseteq \Loco{E_{\cat B}}$. It follows that $\emptyset \subseteq \Supph(\cat L) \subseteq \Supph(E_{\cat B})$. Injectivity of the map in $(c)$ guarantees that $\Supph(\cat L)$ is non-empty, for otherwise $\cat L = (0)$. Therefore, $\Supph(\cat L) = \{\cat B\}$, so applying $(c)$ again gives $\cat L = \Loco{E_{\cat B}}$. To prove the homological local-to-global principle, we compute
    \begin{align*}
        \Supph(\Loco{t \otimes E_{\cat B}\mid \cat B \in \Supph(t)}) & = \bigcup_{\cat B \in \Supph(t)}\Supph(t) \cap \Supph(E_{\cat B}) \\
        & = \Supph(t) \\
        & = \Supph(\Loco{t}).
    \end{align*}
    Using the injectivity in $(c)$ once more, we see that $\Loco{t \otimes E_{\cat B}\mid \cat B \in \Supph(t)} = \Loco{t}$, as desired.
\end{proof}

\begin{Def}\label{def:hstratification}
    A rigidly-compactly generated tt-category $\cat T$ is said to be \emph{homologically stratified} (or \emph{h-stratified}, for short) if it satisfies the equivalent conditions of \cref{thm:hstratfundamental}.
\end{Def}

\begin{Rem}\label{rem:hstratfundamental}
	It follows from \cref{lem:minimalityhsuppidentification} that $\Supph=\Supphnaive$ when  h-minimality  holds at all homological primes $\cat B$. Thus, in part $(a)$ of \cref{thm:hstratfundamental}, the homological local-to-global principle could be replaced with the h-codetection property; see \cref{prop:hLGP} and \cref{rem:hLGPannoying}.
\end{Rem}

\begin{Exa}\label{exa:ttfieldhstrat}
    Any tt-field $\cat F$ is h-stratified. Indeed, first note that $\cat F$ admits a unique homological prime by \cite[Theorem~5.17(a)]{BalmerKrauseStevenson19}. It then suffices to show that the only localizing ideals of $\cat F$ are the zero ideal and $\cat F$ itself, that is, $\unit\in\Loco{t}$ for any nonzero $t\in \cat F$. By the definition of tt-field, any nonzero object $t$ admits a nonzero rigid-compact object $c$ as a retract, hence $\Loco{c}\subseteq\Loco{t}$. Moreover, $\unit\in\Thickid{c}\subseteq\Loco{c}$ since $\Spc(\cat F^c)$ is a singleton by \cite[Proposition~5.15]{BalmerKrauseStevenson19}. Therefore, $\unit\in\Loco{t}$.
\end{Exa}

\begin{Prop}\label{prop:forced-minimal}
	Let $f^*\colon \cat T \to \cat F$ be a geometric functor to a tt-field and let $\cat B \in \Spc^h(\cat T^c)$ be the unique point in the image of $\varphi^h$. Assume that $\cat T$ has no nontrivial tensor-nilpotent objects. Then $\Loco{E_{\cat B}} = \Loco{f_*(\unit)}$ is a minimal localizing ideal.
\end{Prop}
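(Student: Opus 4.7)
The plan is to split the proof into two parts. First, I would show that $\Loco{f_*(\unit)}$ is a minimal nonzero localizing ideal of $\cat T$; the equality with $\Loco{E_{\cat B}}$ then follows because $E_{\cat B}$ is a nonzero direct summand of $f_*(\unit)$ by \cite[Theorem~3.1]{BalmerCameron2021} and is in particular a nonzero element of $\Loco{f_*(\unit)}$, so minimality forces $\Loco{E_{\cat B}}=\Loco{f_*(\unit)}$.

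To prove minimality, I would fix a nonzero $t\in\Loco{f_*(\unit)}$ and aim to show $f_*(\unit)\in\Loco{t}$. The first step is to verify that $f^*(t)\neq 0$, which is where the no-nilpotent hypothesis enters. Indeed, if $f^*(t)=0$ then the projection formula gives $t\otimes f_*(\unit)\simeq f_*f^*(t)=0$, so the localizing ideal $\{s\in\cat T\mid t\otimes s=0\}$ contains $f_*(\unit)$ and hence all of $\Loco{f_*(\unit)}$. Since $t$ itself lies in $\Loco{f_*(\unit)}$, this forces $t\otimes t=0$, whence $t=0$---a contradiction. With $f^*(t)\neq 0$ in hand, \cref{exa:ttfieldhstrat} gives that $\cat F$ is h-stratified, so $\Loco{f^*(t)}=\cat F$ and in particular $\unit_{\cat F}\in\Loco{f^*(t)}$.

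The main obstacle is the descent step: transferring the membership $\unit_{\cat F}\in\Loco{f^*(t)}$ back to $\cat T$ to conclude $f_*(\unit)\in\Loco{t}$. The key observation is that $\Loco{f^*(t)}$, being a localizing \emph{ideal} of $\cat F$, is generated as a mere localizing subcategory by the collection $\{f^*(t)\otimes y\mid y\in\cat F\}$. Applying $f_*$---which preserves colimits, shifts, and retracts since it has both adjoints---and invoking the projection formula $f_*(f^*(t)\otimes y)\simeq t\otimes f_*(y)$, each such generator lands in $\Loco{t}$ because $t\in\Loco{t}$ and $\Loco{t}$ is a tensor ideal. It follows that $f_*(\unit_{\cat F})=f_*(\unit)$ itself belongs to $\Loco{t}$, completing the minimality argument and hence the proof.
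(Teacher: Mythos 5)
Your proof is correct and follows essentially the same route as the paper's: reduce to minimality of $\Loco{f_*(\unit)}$ via the Balmer--Cameron direct summand fact, use the no-nilpotents hypothesis to show $f^*(t)\neq 0$, invoke \cref{exa:ttfieldhstrat} to get $\unit\in\Loco{f^*(t)}$, and push forward. The only difference is that where the paper cites \cite[(13.4)]{BCHS1} for the inclusion $f_*\Loco{f^*(t)}\subseteq\Loco{t}$, you supply the short direct argument: $\Loco{f^*(t)}$ is generated as a localizing subcategory by $\{f^*(t)\otimes y\}$, and $f_*$ (being exact and coproduct-preserving since it has a right adjoint $f^!$) carries these generators into $\Loco{t}$ via the projection formula. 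That unpacking is correct and arguably makes the proof more self-contained, but it is the same underlying mechanism the citation encapsulates.
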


\begin{proof}
	Balmer and Cameron \cite[Theorem 3.1]{BalmerCameron2021} establish that $E_{\cat B}$ is a direct summand of $f_*(\unit)$. Thus it suffices to prove that $\Loco{f_*(\unit)}$ is a minimal localizing ideal. Consider $0 \neq t \in \Loco{f_*(\unit)}$. If $f^*(t) = 0$ then $f_*(\unit) \otimes t = 0$ so that ${t \otimes t = 0}$. This contradicts $t \neq 0$ since we have assumed that there are no tensor-nilpotent objects in $\cat T$. Thus, $f^*(t) \neq 0$ and therefore $\unit \in \Loco{f^*(t)}$ by \cref{exa:ttfieldhstrat}.  Hence
	\[
        f_*(\unit) \in f_*\Loco{f^*(t)} \subseteq \Loco{t}
    \]
	by \cite[(13.4)]{BCHS1}. This proves that $\Loco{f_*(\unit)}$ is minimal.
\end{proof}

\begin{Cor}\label{cor:tt-fields-implies-h-minimality}
	If $\cat T$ has no nontrivial tensor-nilpotent objects and admits enough tt-fields, then it has h-minimality at all homological primes.
\end{Cor}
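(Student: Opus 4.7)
The plan is to reduce the statement to a direct application of \cref{prop:forced-minimal}, using the enough-tt-fields hypothesis to supply, for each homological prime, a geometric functor to a tt-field that sees that prime.

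First I would fix an arbitrary homological prime $\cat B \in \Spc^h(\cat T^c)$. By the hypothesis that $\cat T$ admits enough tt-fields (\cref{def:enough-tt-fields}), there exists a geometric functor $f^*_{\cat B}\colon \cat T \to \cat F_{\cat B}$ to a tt-field $\cat F_{\cat B}$ such that the induced map $\varphi^h\colon \Spc^h(\cat F_{\cat B}^c) \to \Spc^h(\cat T^c)$ sends the unique point of $\Spc^h(\cat F_{\cat B}^c)$ (which is a singleton by \cite[Theorem~5.17(a)]{BalmerKrauseStevenson19}) to $\cat B$. Thus the chosen functor $f^*_{\cat B}$ satisfies exactly the hypothesis on the pair $(f^*,\cat B)$ demanded by \cref{prop:forced-minimal}.

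Since we are also assuming that $\cat T$ contains no nontrivial tensor-nilpotent objects, the other hypothesis of \cref{prop:forced-minimal} is satisfied as well. Applying that proposition to $f^*_{\cat B}$ yields that $\Loco{E_{\cat B}} = \Loco{(f_{\cat B})_*(\unit_{\cat F_{\cat B}})}$ is a minimal localizing ideal. As $\cat B$ was an arbitrary homological prime, this establishes h-minimality at every point of $\Spc^h(\cat T^c)$, as desired.

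The ``hard part'' is really packaged into \cref{prop:forced-minimal}: there one uses that $E_{\cat B}$ is a retract of $f_*(\unit)$ by \cite[Theorem 3.1]{BalmerCameron2021}, combined with the rigidity/simplicity of tt-fields (\cref{exa:ttfieldhstrat}) and the projection formula, to promote any nonzero $t \in \Loco{f_*(\unit)}$ back up to $f_*(\unit)$. Given that proposition, the corollary is then just a matter of saying ``do this at every homological prime.''
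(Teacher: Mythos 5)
Your proof is correct and is exactly the intended argument: the paper states this as an immediate corollary of \cref{prop:forced-minimal}, and you supply the routine step of applying that proposition to the tt-field provided at each homological prime by the enough-tt-fields hypothesis. Nothing to add.
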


\begin{Exa}
    The corollary applies whenever $\cat T$ has the h-detection property and admits enough tt-fields.
\end{Exa}

\begin{Rem}
	Note that if $f^*\colon \cat T \to \cat F$ is a geometric functor to a tt-field  then $f^*(t)=0$ for any tensor-nilpotent object $t \in \cat T$. Hence, the argument in the proof of \cref{prop:forced-minimal} shows that if $t \in \Loco{f_*(\unit)}$ is tensor-nilpotent then $t^{\otimes 2} = 0$. In other words, this localizing ideal can only contain tensor-nilpotent objects of order at most 2.
\end{Rem}

\begin{Cor}\label{cor:ttfields1} 
    If $\cat T$ admits enough tt-fields then the following are equivalent:
	\begin{enumerate}
		\item $\cat T$ has the h-LGP property;
		\item $\cat T$ is h-stratified.
	\end{enumerate}
\end{Cor}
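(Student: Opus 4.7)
The proof plan splits into the two implications, of which only $(a) \Rightarrow (b)$ requires work.

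For $(b) \Rightarrow (a)$, I would simply observe that this is immediate: if $\cat T$ is h-stratified then condition $(a)$ of \cref{thm:hstratfundamental} holds, which in particular asserts the homological local-to-global principle.

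For $(a) \Rightarrow (b)$, the strategy is to invoke \cref{thm:hstratfundamental} again. Since we are assuming the h-LGP, it suffices to verify homological minimality, that is, that $\Loco{E_{\cat B}}$ is minimal for every $\cat B \in \Spc^h(\cat T^c)$. Here is where \cref{cor:tt-fields-implies-h-minimality} enters: combined with the hypothesis of enough tt-fields, h-minimality at every homological prime will follow once I establish that $\cat T$ contains no nontrivial tensor-nilpotent objects.

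To verify this last point, I would use \cref{rem:hlgphdetection}, which says that the h-LGP implies the h-detection property. Given a nonzero object $t \in \cat T$, h-detection yields $\Supph(t) \neq \emptyset$; but the tensor product property \eqref{eq:tensor-product} gives $\Supph(t^{\otimes n}) = \Supph(t) \neq \emptyset$ for every $n \geq 1$, so $t^{\otimes n} \neq 0$ by h-detection (applied to $t^{\otimes n}$). Hence no nonzero object of $\cat T$ can be tensor-nilpotent, and \cref{cor:tt-fields-implies-h-minimality} applies to finish the argument.

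The main (and essentially only) obstacle is recognizing that the missing ingredient — h-minimality — is already packaged for us in \cref{cor:tt-fields-implies-h-minimality}, and that the hypothesis of that corollary (absence of tensor-nilpotents) is a formal consequence of h-detection via the tensor product property. Once these pieces are in place, the proof is a short assembly of the preceding results.
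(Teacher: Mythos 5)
Your proof is correct and follows the same route as the paper: you reduce $(a)\Rightarrow(b)$ to h-minimality via \cref{thm:hstratfundamental}, invoke \cref{cor:tt-fields-implies-h-minimality}, and bridge to its hypothesis by using \cref{rem:hlgphdetection} and the tensor product property to rule out nonzero tensor-nilpotent objects. The paper states this last step a bit more tersely (leaving the no-tensor-nilpotents observation implicit, as noted in the example following \cref{cor:tt-fields-implies-h-minimality}), but the argument is identical; the only tiny quibble is that your final sentence appeals to h-detection where really the trivial direction (zero objects have empty support) is what is needed once $\Supph(t^{\otimes n})=\Supph(t)\neq\emptyset$ is established.
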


\begin{proof}
	Certainly $(b)$ implies $(a)$ by \cref{thm:hstratfundamental}. Conversely, h-LGP implies the \mbox{h-detection} property so, since we have enough tt-fields by assumption, we automatically get h-minimality by \cref{cor:tt-fields-implies-h-minimality}.
\end{proof}

\begin{Exa}\label{exa:DR-h-strat}
	The corollary applies to the derived category $\cat T=\Der(R)$ of any commutative ring since it admits enough tt-fields (\cref{exa:DR-tt-fields}). Therefore, $\Der(R)$ is h-stratified if and only if $R \in \Loco{\kappa(\mathfrak p)\mid \mathfrak p \in \Spec(R)}$, in light of \cref{exa:DR-h-codetection}. 
\end{Exa}

\begin{Exa}\label{exa:absflat-h-strat}
    In particular, for $R$ an absolutely flat ring, $\Der(R)$ is h-stratified if and only if $R$ is semi-artinian (\cref{exa:absflat}).
\end{Exa}

\begin{Par}
	We give two examples where homological minimality fails:
\end{Par}

\begin{Exa}
	Let $\cat T = \SH$ be the stable homotopy category. By \cite[Corollary~5.10]{Balmer20_nilpotence}, which relies on the nilpotence theorem of Devinatz--Hopkins--Smith \cite{DevinatzHopkinsSmith88}, the canonical map $\Spc^h(\SH^c)\to\Spc(\SH^c)$ is a bijection; cf.~\cref{def:nerves-of-steel} below. In particular, for a given prime number $p$, there is a homological prime~$\cat B_{p,\infty}$ corresponding to the tt-prime of all finite $p$-torsion spectra. The associated weak ring identifies with the mod $p$ Eilenberg--Mac Lane spectrum $H\Fp$, i.e., $E_{\cat B_{p,\infty}} = H\Fp$; see \cite[Corollary 3.6]{BalmerCameron2021}. However, it follows from Ravenel's work \cite{Ravenel84} that there are strict inclusions  
        \[
            0 \subsetneq \Loco{I} \subsetneq \Loco{H\Fp}
        \]
    where $I$ denotes the $p$-local Brown--Comenetz dual of the sphere spectrum. Indeed, on the one hand, we have $I \in \Loco{H\Fp}$ because the homotopy groups of $I$ are \mbox{$p$-torsion} and concentrated in non-negative degrees.  On the other hand, we have $H\Fp \not\in \Loco{I}$ because $H\Fp \otimes H\Fp \neq 0$ while $H\Fp \otimes I = 0$.
\end{Exa}

\begin{Exa}\label{exa:truncatedpolyring}
    Let $A$ be the truncated polynomial ring over a field $k$ considered by~\cite{Neeman00}. We grade $A$ as in \cite{DwyerPalmieri08}. The derived category $\Der(A)$ admits a unique tt-field $\Der(A) \to \Der(k)$, induced by the quotient $A\to k$ by the unique prime ideal of~$A$. The pure-injective object $E_{\cat B}$ associated to the unique  homological prime $\cat B$ is just the ordinary residue field $k$. Let $I=\Hom_k^*(A,k)$ be the graded $k$-dual of $A$, concentrated in non-positive degrees. We claim that $\Loco{I}$ is strictly contained in $\Loco{k}$. Indeed, we have $I\in \Loco{k}$ by \cite[Lemma~4.9]{DwyerPalmieri08}.  On the other hand, $k \not\in \Loco{I}$ since  $I\otimes k=0$ by \cite[Proposition~4.11]{DwyerPalmieri08} while $k\otimes k\neq0$.
\end{Exa}

\begin{Rem}\label{rem:h-LGP-fail}
    Recall from \cref{cor:ttfields1} that the homological local-to-global principle implies h-minimality in the presence of enough tt-fields. Hence the homological local-to-global principle fails in the previous two examples. Indeed, we have ${\Supph(I)=\emptyset}$ in both examples above.
\end{Rem}

\begin{Rem}
    The spectrum in \cref{exa:truncatedpolyring} is a single point, which shows that there is no obvious condition on the space $\Spc^h(\cat T^c)$ that ensures that the homological local-to-global principle holds. This should be contrasted with the classical  local-to-global principle, which holds whenever $\Spc(\cat T^c$) is noetherian \cite[Theorem~3.22]{bhs1}. 
\end{Rem}

\begin{Rem}
    Recall that the Bousfield class $A(t)$  of  an object $t \in \cat T$ is the localizing ideal $A(t) \coloneqq \ker(-\otimes t)$ and that the Bousfield lattice of $\cat T$ is the lattice of Bousfield classes ordered by reverse inclusion; see \cite[Section~8]{bhs1}.  Surprisingly, the following proposition shows that the classification of Bousfield classes is equivalent to the h-detection property whenever  the homological support agrees with the naive homological support. Indeed, it is immediate from the definition of naive homological support that  $\Supphnaive(t)\subseteq\Supphnaive(s)$ whenever $A(t)\le A(s)$.  Therefore, there is a well-defined order-preserving map
    \[
        \big\{\text{Bousfield classes of }\cat T\big\} \rightarrow \big\{\text{subsets of }\Spc^h(\cat T^c)\big\}
    \]
    sending $A(t)$ to $\Supphnaive(t)$.
\end{Rem}

\begin{Prop}\label{prop:bousfield-classes}
    The following are equivalent:
    \begin{enumerate}
        \item The h-detection property holds for $\cat T$.
        \item $\Supph(t)=\Supphnaive(t)$ for all $t \in \cat T$ and the Bousfield lattice of $\cat T$ is isomorphic to the lattice of subsets of $\Spc^h(\cat T^c)$ via the map sending $A(t)$ to $\Supph(t)$.
    \end{enumerate}
\end{Prop}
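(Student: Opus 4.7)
The plan is to prove both implications by unpacking the hypothesis in terms of the formula $A(t) = \{s : s\otimes t = 0\}$ and the tensor product property \eqref{eq:tensor-product}.

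For $(a) \Rightarrow (b)$, I first invoke \cref{lem:supp-coincide-h-detection} to identify $\Supph = \Supphnaive$, so that the map $A(t) \mapsto \Supph(t)$ in the statement coincides with the visibly well-defined assignment $A(t) \mapsto \Supphnaive(t)$ discussed in the preamble. The h-detection hypothesis combined with the tensor product property upgrades the tautology $s \in A(t) \Leftrightarrow s \otimes t = 0$ to the key equivalence
\[
    s \in A(t) \iff \Supph(s) \cap \Supph(t) = \emptyset.
\]
Injectivity is then immediate: $\Supph(t_1) = \Supph(t_2)$ forces $A(t_1) = A(t_2)$. The same equivalence makes the order-preservation intrinsic: $A(t_2) \subseteq A(t_1)$ if and only if $\Supph(t_1) \subseteq \Supph(t_2)$, where the nontrivial direction is obtained by testing with $s = E_{\cat B}$ for $\cat B \in \Supph(t_1) \setminus \Supph(t_2)$.

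For surjectivity, I would set $t_S \coloneqq \bigoplus_{\cat B \in S} E_{\cat B}$ for an arbitrary $S \subseteq \Spc^h(\cat T^c)$. The defining formula for $\Supph$, together with the fact that internal hom sends coproducts in the first variable to products, and the observation that a product in $\cat T$ vanishes if and only if each factor does (checked by applying $\Hom(Y,-)$), ensures that $\Supph$ preserves arbitrary coproducts. Combined with $\Supph(E_{\cat B}) = \{\cat B\}$ from \cref{exa:suppEb}, this gives $\Supph(t_S) = S$ as required.

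For $(b) \Rightarrow (a)$, suppose $\Supph(t) = \emptyset$. Then $\Supph(0) = \emptyset$ as well, so the injectivity of the assignment $A(-) \mapsto \Supph(-)$ afforded by $(b)$ forces $A(t) = A(0) = \cat T$. In particular, $\unit \in A(t)$, whence $t = \unit \otimes t = 0$, establishing h-detection. The only non-formal ingredient is the construction of $t_S$ in the surjectivity step; everything else follows by combining h-detection with the tensor product property.
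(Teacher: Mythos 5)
Your proof is correct and takes essentially the same approach as the paper: both establish $\Supph = \Supphnaive$ via \cref{lem:supp-coincide-h-detection}, derive the key equivalence $s \in A(t) \iff \Supph(s) \cap \Supph(t) = \emptyset$ from h-detection and the tensor product property, realize an arbitrary subset $S$ as $\Supph\bigl(\coprod_{\cat B \in S} E_{\cat B}\bigr)$ for surjectivity, and deduce $(b) \Rightarrow (a)$ from $A(t) = A(0)$. The only cosmetic difference is that you supply a self-contained argument for $\Supph$ commuting with coproducts (via $\ihom{-,E_{\cat B}}$ turning coproducts into products), whereas the paper cites \cite[Proposition~4.3(b)]{Balmer20_bigsupport}.
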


\begin{proof}
    $(a) \Rightarrow (b)$: The first statement follows from \cref{lem:supp-coincide-h-detection}. For the second, note that the well-defined order-preserving map $A(t)\mapsto\Supphnaive(t)=\Supph(t)$ is a surjection since every subset $S$ of $\Spc^h(\cat T^c)$ can be realized as the h-support of some object, say $\bigsqcup_{\cat B \in S}E_{\cat B}$, thanks to \cite[Proposition~4.3(b)]{Balmer20_bigsupport} and \cref{exa:suppEb}. We claim that
    \[
        A(t) \le A(s) \iff \Supph(t)\subseteq\Supph(s).
    \]
    Given the claim, the map sending $A(t)$ to $\Supph(t)$ is an order-preserving bijection with order-preserving inverse $\Supph(t)\mapsto A(t)$ and therefore an isomorphism of lattices. To prove the claim, it suffices to show that
    \[
        A(t) \le A(s) \impliedby \Supph(t)\subseteq\Supph(s).
    \]
    This follows immediately from the equality
    \[
        A(t)=\SET{s\in\cat T}{\Supph(s)\cap\Supph(t)=\emptyset},
    \]
    which holds by h-detection and the tensor product formula.
 
    $(b) \Rightarrow (a)$: If $\Supph(t)=\emptyset$ then $A(t)=A(0)$ and hence $t=0$.
\end{proof}

\begin{Rem}
    For any $\cat T$ satisfying the h-detection property, we obtain a description of the meet operation on its Bousfield lattice: $A(s)\wedge A(t)=A(s\otimes t)$; cf.~\cite[Theorem~8.8]{bhs1}.
\end{Rem}

\begin{Rem}
    If $\cat T$ is h-stratified then every localizing ideal is a Bousfield class and the lattice of localizing ideals of $\cat T$ is isomorphic to the Bousfield lattice of $\cat T$; cf.~\cite[Theorem 8.8]{bhs1}.
\end{Rem}

\begin{Exa}
    Let $R$ be an absolutely flat ring which is not semi-artinian. Then $\Der(R)$ is not h-stratified (\cref{exa:absflat-h-strat}), so the localizing subcategories of $\Der(R)$  are not classified by the subsets of $\Spec(R)$ via homological support. However, the h-detection property holds and hence the Bousfield lattice of $\Der(R)$ is isomorphic to the lattice of subsets of $\Spec(R)$.
\end{Exa}

\section{Homological stratification via cosupport}\label{sec:hstrat-cosupp}

We now establish  characterizations of homological stratification in terms of the behaviour of homological cosupport. The proof of the next proposition closely follows its tensor-triangular counterpart (\cite[Theorem 7.15]{BCHS1}):

\begin{Prop}\label{prop:hstratcosupp}
    Consider the following statements for a rigidly-compactly generated tt-category $\cat T$:
        \begin{enumerate}
            \item $\Loco{E_{\cat B}}$ is a minimal localizing ideal for all $\cat B \in \Spc^h(\cat T^c)$;
            \item $\Cosupph(\ihom{t_1,t_2}) = \Supph(t_1) \cap \Cosupph(t_2)$ for all $t_1,t_2 \in \cat T$;
            \item $\ihom{t_1,t_2} = 0$ implies $\Supph(t_1) \cap \Cosupph(t_2) = \emptyset$ for all $t_1,t_2 \in \cat T$.
        \end{enumerate}
    Then $(a) \Rightarrow (b) \Rightarrow (c)$. If $\cat T$ has the  h-detection property, then $(c) \Rightarrow (a)$, so all conditions are equivalent in this case. 
\end{Prop}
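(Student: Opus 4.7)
The plan is to prove the cycle $(a) \Rightarrow (b) \Rightarrow (c)$ unconditionally, and then close the loop with $(c) \Rightarrow (a)$ under the h-detection hypothesis, essentially mirroring the structure of the tt-counterpart in \cite[Theorem 7.15]{BCHS1}.

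For $(a) \Rightarrow (b)$, fix a homological prime $\cat B$ and use the tensor-hom adjunction
\[
    \ihom{E_{\cat B}, \ihom{t_1, t_2}} \simeq \ihom{E_{\cat B} \otimes t_1, t_2}.
\]
If $\cat B \notin \Supph(t_1)$, then \cref{lem:minimalityhsuppidentification} (which applies under the minimality hypothesis $(a)$) yields $E_{\cat B} \otimes t_1 = 0$, so both sides of the claimed equality exclude $\cat B$. Otherwise, $E_{\cat B} \otimes t_1$ is a nonzero element of $\Loco{E_{\cat B}}$ and hence generates this localizing ideal by minimality. Since the collection $\{x \in \cat T \mid \ihom{x, t_2} = 0\}$ is itself a localizing ideal---it is closed under coproducts because $\ihom{-, t_2}$ converts them to products, and closed under tensor via $\ihom{x \otimes s, t_2} \simeq \ihom{s, \ihom{x, t_2}}$---vanishing on $E_{\cat B} \otimes t_1$ is equivalent to vanishing on $E_{\cat B}$, which precisely identifies membership in $\Cosupph(\ihom{t_1,t_2})$ with membership in $\Supph(t_1) \cap \Cosupph(t_2)$.

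The implication $(b) \Rightarrow (c)$ is immediate since $\Cosupph(0) = \emptyset$.

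For $(c) \Rightarrow (a)$ under h-detection, fix $\cat B$ and let $0 \neq t \in \Loco{E_{\cat B}}$. The set $\{x \in \cat T \mid \cat B \notin \Supph(x)\}$ is a localizing ideal by the same adjunction argument applied to $E_{\cat B}$, so the containment $t \in \Loco{E_{\cat B}}$ forces $\Supph(t) \subseteq \Supph(E_{\cat B}) = \{\cat B\}$; combined with h-detection, this gives $\Supph(t) = \{\cat B\}$. For any $y \in \cat T$ with $\ihom{t, y} = 0$, hypothesis $(c)$ then produces $\{\cat B\} \cap \Cosupph(y) = \emptyset$, that is, $\ihom{E_{\cat B}, y} = 0$. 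Since $\cat T$ is rigidly-compactly generated, $\Loco{t}$ is generated as a localizing subcategory by $\{c \otimes t \mid c \in \cat T^c\}$, which allows one to identify the right orthogonal $\Loco{t}^{\perp}$ with $\{y \in \cat T \mid \ihom{t, y} = 0\}$. Invoking the Bousfield double-orthogonal formula $\Loco{t} = {}^{\perp}(\Loco{t}^{\perp})$ for set-generated localizing subcategories in a compactly generated triangulated category then yields $E_{\cat B} \in \Loco{t}$, proving minimality. The main obstacle is this final double-orthogonal step: it requires both the identification of $\Loco{t}^{\perp}$ via the internal hom (which uses rigidity of compacts) and an appeal to the existence of Bousfield localization; the remaining implications reduce to formal manipulations with the tensor-hom adjunction together with \cref{lem:minimalityhsuppidentification}.
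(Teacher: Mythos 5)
Your proof is correct and follows essentially the same path as the paper's. The only notable divergence is in $(c)\Rightarrow(a)$: the paper shows that $\ihom{t_1,t_2}\neq 0$ for all nonzero $t_1,t_2\in\Loco{E_{\cat B}}$ and then invokes the minimality criterion \cite[Lemma 7.12]{BCHS1} as a black box, whereas you directly establish $E_{\cat B}\in\Loco{t}$ by unpacking the Bousfield double-orthogonal argument (identifying $\Loco{t}^{\perp}$ via the internal hom and using existence of Bousfield localization for set-generated localizing subcategories). These are the same underlying mechanism; the cited lemma is proved by exactly this kind of orthogonality argument, so you have simply inlined it. In $(a)\Rightarrow(b)$ your phrasing in terms of the localizing ideal $\{x \mid \ihom{x,t_2}=0\}$ is the same observation the paper expresses with $\Coloco{-}$ notation. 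No gaps.
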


\begin{proof}
    $(a) \Rightarrow (b)$: The inclusion $\Cosupph(\ihom{t_1,t_2}) \subseteq \Supphnaive(t_1) \cap \Cosupph(t_2)$ holds unconditionally. Since $(a)$ guarantees  $\Supph(t_1) = \Supphnaive(t_1)$ by \Cref{lem:minimalityhsuppidentification}, we obtain $\Cosupph(\ihom{t_1,t_2}) \subseteq \Supph(t_1) \cap \Cosupph(t_2)$ and it remains to verify the reverse inclusion. Consider some $\cat B \in \Supph(t_1) \cap \Cosupph(t_2)$. In particular, $E_{\cat B} \otimes t_1 \neq 0$, hence $E_{\cat B} \in \Loco{E_{\cat B}\otimes t_1}$ since $\Loco{E_{\cat B}}$ is minimal by assumption. 
    Since $\cat B \in \Cosupph(t_2)$, we have 
        \[
            0 \neq \ihom{E_{\cat B},t_2} \in \Coloco{\ihom{E_{\cat B}\otimes t_1,t_2}}.
        \]
    By adjunction, we get $0 \neq\ihom{E_{\cat B}, \ihom{t_1,t_2}}$, so $\cat B \in \Cosupph(\ihom{t_1,t_2})$.

    $(b) \Rightarrow (c)$: This is a direct consequence of $\Cosupph(0) = \emptyset$.

    $(c) \Rightarrow (a)$: Consider two non-zero $t_1,t_2 \in \Loco{E_{\cat B}}$. We first claim that $\cat B \in \Cosupph(t_2)$. Indeed, if $\cat B \notin \Cosupph(t_2)$ then $\ihom{E_{\cat B},t_2}=0$, which would imply $\ihom{t_2,t_2}=0$, i.e., $t_2 = 0$, giving the desired contradiction. Moreover, $\cat B \in \Supph(t_1)$ by the h-detection property. It then follows that 
        \[
            \cat B \in \Supph(t_1) \cap \Cosupph(t_2),
        \]
    so $\ihom{t_1,t_2} \neq 0$ by $(c)$. The minimality criterion \cite[Lemma 7.12]{BCHS1} then shows that $\Loco{E_{\cat B}}$ is minimal. 
\end{proof}

\begin{Cor}\label{cor:hstratcosupp}
    A rigidly-compactly generated tt-category $\cat T$ is h-stratified if and only if the h-LGP holds for $\cat T$ and 
        \[
            \Cosupph(\ihom{t_1,t_2}) = \Supph(t_1) \cap \Cosupph(t_2)
        \]
    for all $t_1,t_2 \in \cat T$.
\end{Cor}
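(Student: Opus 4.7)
The plan is to derive this corollary as a direct combination of \cref{thm:hstratfundamental} and \cref{prop:hstratcosupp}. Both directions reduce to keeping track of which hypotheses (h-LGP, h-minimality, h-detection, cosupport formula) imply which, so the main work is organizing the logical flow rather than any substantive new argument.

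For the forward implication, suppose $\cat T$ is h-stratified. Then by condition $(a)$ of \cref{thm:hstratfundamental}, both the h-LGP and the minimality of $\Loco{E_{\cat B}}$ for all $\cat B \in \Spc^h(\cat T^c)$ hold. The minimality hypothesis is precisely condition $(a)$ of \cref{prop:hstratcosupp}, which implies the cosupport formula $\Cosupph(\ihom{t_1,t_2}) = \Supph(t_1) \cap \Cosupph(t_2)$ via $(a) \Rightarrow (b)$ of that proposition.

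For the reverse implication, suppose h-LGP and the cosupport formula both hold. By \cref{thm:hstratfundamental}, it suffices to establish h-minimality, and for this I plan to invoke the implication $(c) \Rightarrow (a)$ of \cref{prop:hstratcosupp}. The cosupport formula yields condition $(c)$ of that proposition immediately (since $\Cosupph(0)=\emptyset$), so the only remaining step is to verify that the h-detection property holds in $\cat T$. The key observation here, which makes the whole argument frictionless, is that the h-LGP implies h-detection directly from the definition: if $\Supph(t) = \emptyset$, then
\[
    \Loco{t} = \Loco{t \otimes E_{\cat B} \mid \cat B \in \Supph(t)} = \Loco{\emptyset} = 0,
\]
forcing $t = 0$. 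With h-detection in hand, \cref{prop:hstratcosupp} supplies the minimality of each $\Loco{E_{\cat B}}$, and combining this with the assumed h-LGP gives h-stratification via \cref{thm:hstratfundamental}.

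No step in this argument is a serious obstacle; the only thing to be careful about is not to try to extract h-detection from the cosupport formula alone (which would be circular) and instead to read it off from the h-LGP as above. Once that observation is made, the proof is essentially a two-line citation of the preceding results.
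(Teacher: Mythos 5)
Your proof is correct and follows the same route as the paper's: the paper simply notes that h-LGP implies h-detection (its Remark~\ref{rem:hlgphdetection}, which you re-derive directly from the definition) and then cites \cref{thm:hstratfundamental} and \cref{prop:hstratcosupp}, which is exactly the chain of implications you've spelled out. You've just made explicit the bookkeeping the paper leaves implicit.
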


\begin{proof}
    The h-LGP implies the h-detection property (\cref{rem:hlgphdetection}), so the equivalence follows from \cref{thm:hstratfundamental} and \cref{prop:hstratcosupp}.
\end{proof}

\begin{Rem}
    \Cref{cor:hstratcosupp} is a homological analogue of the characterization of stratification in terms of cosupport \cite[Theorem 7.15]{BCHS1}. The goal of the rest of this section is to establish a variant of this result formulated purely in terms of support and cosupport. 
\end{Rem}

\begin{Rem}\label{rem:BCdual}
    The proof of the next theorem makes use of Brown--Comenetz duality. Recall from \cite[Definition 9.4]{BCHS1} that the Brown--Comenetz dual $I_c$ of a compact object $c \in \cat T$ is characterized by the formula
        \[
            \cat T(t,I_c) \cong \Hom_{\Z}(\cat T(c,t),\bbQ/\Z)
        \]
    for all $t \in \cat T$. Following the proof of \cite[Proposition 12.9]{BCHS1}, we compute
        \begin{equation}
            \Cosupph(I_c) = \Supphnaive(c) = \Supph(c)
        \end{equation}
    for any $c \in \cat T^c$, where the last equality comes from \cite[Proposition 4.4]{Balmer20_bigsupport}. 
\end{Rem}

\begin{Thm}\label{thm:hstratcosupp}
    For a rigidly-compactly generated tt-category $\cat T$, the following are equivalent:
    \begin{enumerate}
        \item $\cat T$ is homologically stratified;
        \item $\cat T$ satisfies the h-codetection property and, for all $t_1,t_2 \in \cat T$, we have 
            \begin{equation}\label{eq:cosuppformula}
                \Cosupph(\ihom{t_1,t_2}) = \Supph(t_1) \cap \Cosupph(t_2);
            \end{equation}
        \item $\ihom{t_1,t_2} = 0$ if and only if $\Supph(t_1) \cap \Cosupph(t_2) = \emptyset$ for all $t_1,t_2 \in \cat T$.
    \end{enumerate}
\end{Thm}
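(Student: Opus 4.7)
The plan is to close the cycle $(a) \Rightarrow (b) \Rightarrow (c) \Rightarrow (b) \Rightarrow (a)$, using \cref{cor:hstratcosupp} as the main bridge between h-stratification and cosupport. The implication $(a) \Rightarrow (b)$ is already essentially in hand: \cref{cor:hstratcosupp} produces both h-LGP and the cosupport formula from h-stratification, and \cref{prop:hLGP} promotes h-LGP to h-codetection.

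For the equivalence $(b) \Leftrightarrow (c)$, the direction $(b) \Rightarrow (c)$ is the two-step chain
\[
    \ihom{t_1,t_2}=0 \iff \Cosupph(\ihom{t_1,t_2})=\emptyset \iff \Supph(t_1)\cap\Cosupph(t_2)=\emptyset,
\]
where the first equivalence invokes h-codetection and the second uses the cosupport formula. Conversely, for $(c) \Rightarrow (b)$, h-codetection is obtained by specializing $t_1 = \unit$ in (c): since $\Supph(\unit) = \Spc^h(\cat T^c)$, the hypothesis $\Cosupph(t) = \emptyset$ makes $\Supph(\unit) \cap \Cosupph(t) = \emptyset$, forcing $t \simeq \ihom{\unit,t} = 0$. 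The cosupport formula is then a per-point computation: the adjunction $\ihom{E_{\cat B}, \ihom{t_1,t_2}} \simeq \ihom{E_{\cat B} \otimes t_1, t_2}$, combined with (c) applied to the pair $(E_{\cat B} \otimes t_1, t_2)$, the tensor product property \eqref{eq:tensor-product}, and $\Supph(E_{\cat B}) = \{\cat B\}$, together yield $\cat B \in \Cosupph(\ihom{t_1,t_2})$ if and only if $\{\cat B\} \cap \Supph(t_1) \cap \Cosupph(t_2) \neq \emptyset$, i.e., if and only if $\cat B \in \Supph(t_1) \cap \Cosupph(t_2)$.

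The remaining implication $(b) \Rightarrow (a)$ is where the main subtlety lies, since \cref{cor:hstratcosupp} requires the full h-LGP rather than just h-codetection. The key maneuver is to upgrade h-codetection to h-detection. For this, I would specialize the cosupport formula at $t_1 = t_2 = t$ to obtain $\Cosupph(\ihom{t,t}) = \Supph(t) \cap \Cosupph(t)$; if $\Supph(t) = \emptyset$ then this set is empty, so h-codetection forces $\ihom{t,t} = 0$ and hence $t = 0$. With h-detection now available, \cref{lem:supp-coincide-h-detection} gives $\Supph = \Supphnaive$, \cref{cor:hLGP} upgrades h-codetection to the full h-LGP, and \cref{cor:hstratcosupp} then delivers h-stratification.
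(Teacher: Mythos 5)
Your proposal is correct, and it takes a genuinely different route from the paper's through the two nontrivial implications. The paper closes the cycle $(a) \Rightarrow (b) \Rightarrow (c) \Rightarrow (a)$: for $(c) \Rightarrow (a)$ it invokes Brown--Comenetz duality (specializing $t_2 = I_{\unit}$ in (c) and appealing to \cref{rem:BCdual}) to establish the h-detection property, then feeds this into \cref{prop:hstratcosupp}$(c)\Rightarrow(a)$ to extract minimality, and separately establishes h-codetection (via $t_1=\unit$) and the h-LGP. You instead route $(c) \Rightarrow (b) \Rightarrow (a)$, avoiding Brown--Comenetz duality entirely. Your per-point derivation of the cosupport formula from (c) --- via the adjunction $\ihom{E_{\cat B}, \ihom{t_1,t_2}} \simeq \ihom{E_{\cat B}\otimes t_1, t_2}$, the tensor product property \eqref{eq:tensor-product}, and $\Supph(E_{\cat B})=\{\cat B\}$ --- is a clean argument not appearing in the paper, and your observation that h-detection falls out of the cosupport formula at $t_1=t_2=t$ together with h-codetection (since $\ihom{t,t}=0$ forces $\id_t=0$) is likewise a nice, elementary substitute for the Brown--Comenetz step. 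Both approaches are valid; yours has the advantage of being self-contained modulo \cref{cor:hstratcosupp} and its feeders, while the paper's gets h-detection and minimality in one shot via a tool (Brown--Comenetz duality) that it has already set up for \cref{rem:BCdual} anyway.
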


\begin{proof}
    Throughout this proof, we will refer to \eqref{eq:cosuppformula} as the \emph{cosupport formula}.
    
    $(a) \Rightarrow (b)$: By \cref{cor:hstratcosupp}, h-stratification implies the h-LGP for $\cat T$ as well as the cosupport formula. But the h-LGP implies the h-codetection property for $\cat T$ (see \cref{prop:hLGP} and \cref{rem:hLGPannoying}), so we get the statement of $(b)$.

    $(b) \Rightarrow (c)$: The \emph{only if} direction in $(c)$ follows directly from the cosupport formula. For the converse, suppose that $\Supph(t_1) \cap \Cosupph(t_2) = \emptyset$. The cosupport formula then shows that $\Cosupph(\ihom{t_1,t_2}) = \emptyset$, so h-codetection gives $\ihom{t_1,t_2} = 0$.

    $(c) \Rightarrow (a)$:  Taking $t_2 = I_{\unit}$ in $(c)$ and using \cref{rem:BCdual}, we have ${\ihom{t_1,I_{\unit}} = 0}$ if and only if $\Supph(t_1)=\emptyset$. Since $t_1 = 0$ if and only if $\ihom{t_1,I_{\unit}} = 0$, this establishes h-detection for $\cat T$.  We may thus appeal to the implication $(c) \Rightarrow (a)$ in \cref{prop:hstratcosupp} to deduce the minimality of $\Loco{E_{\cat B}}$ for all $\cat B \in \Spc^h(\cat T^c)$. 
    
    Specializing $(c)$ to $t_1 = \unit$ instead and using that $\Supph(\unit) = \Spc^h(\cat T^c)$, we get the h-codetection property for $\cat T$. \Cref{lem:minimalityhsuppidentification} together with \cref{cor:hLGP} then imply that $\cat T$ has the h-LGP. Finally, the characterization of h-stratification in \cref{thm:hstratfundamental} shows that $\cat T$ is h-stratified.
\end{proof}

\section{Comparison with the Balmer--Favi support}\label{sec:comparison}

In this section we will compare the homological support  with the Balmer--Favi support and, in particular, relate the h-detection and h-LGP properties with the usual detection and LGP properties. We assume some familiarity with \cite{bhs1}.

\begin{Not}
	We will write $\pi\colon \Spc^h(\cat T^c) \to \Spc(\cat T^c)$ for the surjective comparison map between the two spectra.
\end{Not}

\begin{Rem}\label{rem:comparisonforcompacts}
	For any compact object $x \in \cat T^c$, we have
    \[
        \Supph(x) = \Supphnaive(x) = \pi^{-1}(\supp(x))
    \]
	by \cite[Proposition 4.4]{Balmer20_bigsupport} where $\supp$ denotes the universal support theory for~$\cat T^c$.
\end{Rem}

\begin{Rem}
 A subset $W \subseteq \Spc(\cat T^c)$ is \emph{weakly visible} if it can be written as the intersection of a Thomason subset and the complement of a Thomason subset: $W= Y_1 \cap Y_2^c$.
 In this case, the object
$g_W \coloneqq e_{Y_1} \otimes f_{Y_2} \in \cat T$
only depends on~$W$ up to isomorphism.
A point $\cat P \in \Spc(\cat T^c)$ is said to be weakly visible if the singleton subset $\{\cat P\}$ is weakly visible,
and in this case we write $g_{\cat P} \coloneqq g_{\{\cat P\}}$.
Finally, the space $\Spc(\cat T^c)$ is \emph{weakly noetherian} if every point is weakly visible.
See \cite[Section~4]{BCHS1} for further details.
\end{Rem}

\begin{Lem}\label{lem:supph-gW}
	For any weakly visible subset $W \subseteq \Spc(\cat T^c)$, we have
	\[
        \Supph(g_W) = \Supphnaive(g_W) = \pi^{-1}(W).
    \]
\end{Lem}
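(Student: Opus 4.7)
The strategy is to write $W=Y_1\setminus Y_2$ for Thomason subsets $Y_1,Y_2\subseteq\Spc(\cat T^c)$, so that $g_W\cong e_{Y_1}\otimes f_{Y_2}$, and then to compute both types of homological support for the Balmer--Favi idempotents $e_Y$ and $f_Y$ attached to a Thomason subset $Y$. Since $e_Y$ is a weak coring (its counit $e_Y\to\unit$ provides the section needed in \cref{lem:formal1}$(a)$) and $f_Y$ is a weak ring (its unit $\unit\to f_Y$ similarly yields a retraction), \cref{prop:supp-coincide-weak-coring} already identifies the naive and genuine homological supports of these two objects, so it suffices to compute the naive versions.

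For $e_Y$ the plan is to use that $\Loco{e_Y}=\Loco{\cat T^c_Y}$, where $\cat T^c_Y\coloneqq\SET{x\in\cat T^c}{\supp(x)\subseteq Y}$; this holds because $e_Y$ lies in the essential image of the smashing colocalization $e_Y\otimes(-)$, whose essential image coincides with $\Loco{\cat T^c_Y}$. Since the Bousfield class $A(E_{\cat B})=\ker(-\otimes E_{\cat B})$ is a localizing ideal, this equality will give
\[
E_{\cat B}\otimes e_Y=0 \iff E_{\cat B}\otimes x=0 \text{ for every }x\in\cat T^c_Y.
\]
Combined with \cref{rem:comparisonforcompacts} and the equality $Y=\bigcup_{x\in\cat T^c_Y}\supp(x)$ (which holds because $Y$ is Thomason), this yields $\Supphnaive(e_Y)=\pi^{-1}(Y)$, and hence also $\Supph(e_Y)=\pi^{-1}(Y)$.

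For $f_Y$, tensoring the defining triangle $e_Y\to\unit\to f_Y$ with $E_{\cat B}$ and applying the previous paragraph shows that whenever $\pi(\cat B)\notin Y$ one has $E_{\cat B}\otimes f_Y\cong E_{\cat B}\neq 0$, giving $\pi^{-1}(Y^c)\subseteq\Supphnaive(f_Y)$. For the reverse inclusion, the plan is to invoke the tensor product property \eqref{eq:tensor-product} for the genuine support together with $e_Y\otimes f_Y=0$: these force $\Supph(f_Y)\cap\pi^{-1}(Y)=\Supph(f_Y)\cap\Supph(e_Y)=\emptyset$, and since $\Supph(f_Y)=\Supphnaive(f_Y)$ this closes out the equality $\Supph(f_Y)=\Supphnaive(f_Y)=\pi^{-1}(Y^c)$.

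The lemma then follows by combining these calculations: the tensor product property \eqref{eq:tensor-product} gives $\Supph(g_W)=\Supph(e_{Y_1})\cap\Supph(f_{Y_2})=\pi^{-1}(W)$, while the tautological sub-multiplicativity $\Supphnaive(t_1\otimes t_2)\subseteq\Supphnaive(t_1)\cap\Supphnaive(t_2)$ (together with the general inclusion $\Supph\subseteq\Supphnaive$) upgrades this to $\Supphnaive(g_W)=\pi^{-1}(W)$ as well. The main conceptual subtlety is the need to shuttle between $\Supph$ (which enjoys the full tensor product formula) and $\Supphnaive$ (which enjoys only one direction of it); the weak (co)ring structures on $e_Y$ and $f_Y$ are precisely what make this shuttling possible, and I do not anticipate any deeper obstacle.
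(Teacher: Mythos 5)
Your proof is correct. It follows the same overall structure as the paper's proof — decompose $g_W=e_{Y_1}\otimes f_{Y_2}$, use the weak (co)ring structures on $e_Y$, $f_Y$ to identify naive and genuine h-support via \cref{prop:supp-coincide-weak-coring}, compute $\Supph(e_Y)=\pi^{-1}(Y)$ and $\Supph(f_Y)=\pi^{-1}(Y^c)$, and transfer to $g_W$ by the tensor product formula for $\Supph$ combined with sub-multiplicativity for $\Supphnaive$. The one substantive difference is that the paper simply cites \cite[Lemma 3.8]{bhs2} for the two idempotent computations, whereas you derive them directly: for $e_Y$ you use $\Loco{e_Y}=\Loco{\cat T^c_Y}$, the fact that $\ker(-\otimes E_{\cat B})$ is a localizing ideal, \cref{rem:comparisonforcompacts}, and the identity $Y=\bigcup_{x\in\cat T^c_Y}\supp(x)$; for $f_Y$ you use one inclusion from the localization triangle and the other from $e_Y\otimes f_Y=0$ together with the tensor product property of $\Supph$. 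That self-contained derivation is clean and correct, and it buys you independence from the external reference at the price of a few extra lines; it is essentially what the cited lemma does anyway.
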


\begin{proof}
	For any Thomason subset $Y\subseteq \Spc(\cat T^c)$, \cite[Lemmma~3.8]{bhs2} establishes that $\Supph(e_Y)=\pi^{-1}(Y)$ and $\Supph(f_Y)=\pi^{-1}(Y^c)$. Moreover, we have $\Supph(e_Y)=\Supphnaive(e_Y)$ and $\Supph(f_Y)=\Supphnaive(f_Y)$ since $e_Y$ is a (weak) coring and $f_Y$ is a (weak) ring (\Cref{prop:supp-coincide-weak-coring}). Finally, we claim that if $a,b \in \cat T$ both have the property that their naive and genuine h-supports coincide, then so does their tensor product $a \otimes b$. Indeed,
    \[
        \Supphnaive(a\otimes b) \subseteq \Supphnaive(a) \cap \Supphnaive(b) = \Supph(a) \cap \Supph(b) = \Supph(a\otimes b),
    \]
    while the reverse inclusion holds unconditionally. It follows that naive and genuine h-support coincide for $g_W$ for any weakly visible subset $W$. 
\end{proof}

\begin{Exa}
	For any weakly visible point $\cat P \in \Spc(\cat T^c)$, we have
	\[
        \Supph(g_{\cat P}) = \Supphnaive(g_{\cat P}) = \pi^{-1}(\{\cat P\}).
    \]
\end{Exa}

\begin{Lem}\label{lem:half-hom}
	We have an equality
	\[
        \Cosupph(\ihom{a,t}) = \Supph(a) \cap \Cosupph(t)
    \]
	whenever the object $a$ is compact or $a=g_W$ for a weakly visible subset $W \subseteq \Spc(\cat T^c)$.
\end{Lem}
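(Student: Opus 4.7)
The plan is to establish both inclusions separately.

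For the containment $\subseteq$, I would observe first that in both hypotheses on $a$ one has $\Supph(a) = \Supphnaive(a)$: this is \cref{rem:comparisonforcompacts} for compact $a$ and \cref{lem:supph-gW} for $a = g_W$. Starting from $\cat B \in \Cosupph(\ihom{a,t})$, the tensor-hom identity $\ihom{E_{\cat B}, \ihom{a,t}} \simeq \ihom{E_{\cat B} \otimes a, t}$ shows this inner hom is nonzero, which immediately forces $E_{\cat B} \otimes a \neq 0$ and hence $\cat B \in \Supphnaive(a) = \Supph(a)$. Rewriting the same expression as $\ihom{a, \ihom{E_{\cat B}, t}}$ also yields $\ihom{E_{\cat B}, t} \neq 0$, so $\cat B \in \Cosupph(t)$.

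For the reverse containment in the case $a = g_W$, the crucial step is to prove the identification $E_{\cat B} \otimes g_W \simeq E_{\cat B}$ whenever $\cat B \in \pi^{-1}(W)$. Writing $g_W \simeq e_Y \otimes f_Z$ for appropriate Thomason subsets $Y, Z$ with $W$ their set-theoretic difference, and using the defining triangles $e_Y \to \unit \to f_Y$ and $e_Z \to \unit \to f_Z$, this reduces to verifying that $E_{\cat B} \otimes f_Y = 0$ when $\pi(\cat B) \in Y$ and $E_{\cat B} \otimes e_Z = 0$ when $\pi(\cat B) \notin Z$. In each case the object is the tensor of two weak rings (hence itself a weak ring), so its genuine and naive h-supports coincide by \cref{prop:supp-coincide-weak-coring}. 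Applying the tensor product formula for $\Supph$, together with \cref{lem:supph-gW} and $\Supph(E_{\cat B}) = \{\cat B\}$, one computes this common support to be empty, so the object vanishes by \cite[Theorem~4.7]{Balmer20_bigsupport}. With the identification in hand, for any $\cat B \in \pi^{-1}(W) \cap \Cosupph(t) = \Supph(g_W) \cap \Cosupph(t)$ we obtain
\[
    \ihom{E_{\cat B} \otimes g_W, t} \simeq \ihom{E_{\cat B}, t} \neq 0,
\]
as required.

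For the reverse containment in the case $a$ compact, I would use dualizability to rewrite $\ihom{a,t} \simeq a^\vee \otimes t$ and $\ihom{E_{\cat B}, a^\vee \otimes t} \simeq a^\vee \otimes \ihom{E_{\cat B}, t}$. Since $\Supph(a) = \Supph(a^\vee)$ by \cref{rem:comparisonforcompacts} and $\supp(a) = \supp(a^\vee)$, the task reduces to showing $a^\vee \otimes \ihom{E_{\cat B}, t} \neq 0$ given that $a^\vee \otimes E_{\cat B} \neq 0$ and $\ihom{E_{\cat B}, t} \neq 0$. By the tensor product formula for $\Supph$, it suffices to exhibit a single h-prime in $\Supph(a^\vee) \cap \Supph(\ihom{E_{\cat B}, t})$, and the natural candidate is $\cat B$ itself. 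The main obstacle will therefore be verifying $\cat B \in \Supph(\ihom{E_{\cat B}, t})$ whenever $\ihom{E_{\cat B}, t} \neq 0$, i.e.\ that $\ihom{\ihom{E_{\cat B}, t}, E_{\cat B}} \neq 0$. I expect this to follow from the pure-injectivity of $E_{\cat B}$ combined with the weak ring structure, exploiting the retraction of $\ihom{E_{\cat B}, t}$ onto $\ihom{E_{\cat B}, \ihom{E_{\cat B}, t}}$ induced by the multiplication $\mu\colon E_{\cat B} \otimes E_{\cat B} \to E_{\cat B}$.
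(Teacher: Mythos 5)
Your general strategy differs from the paper's in a genuinely interesting way: the paper proves the compact case first, then handles $a = e_Y$ and $a = f_Y$ separately by contradiction arguments (building on the compact case for $e_Y$), and finally gets $g_W$ by tensoring. You instead handle $g_W$ directly via the identification $E_{\cat B} \otimes g_W \simeq E_{\cat B}$ for $\cat B \in \pi^{-1}(W)$, which is slick and cleanly reduces the reverse inclusion to a one-line adjunction. That part of your argument is essentially correct, but it contains one slip: you assert that $E_{\cat B} \otimes e_Z$ is a tensor of two weak rings. It is not --- $e_Z$ is a weak \emph{coring}, not a weak ring, and the tensor of a weak ring with a weak coring need be neither, so \cref{prop:supp-coincide-weak-coring} and detection for weak rings (\cref{rem:hsuppdetectsweakrings}) do not directly apply to $E_{\cat B}\otimes e_Z$. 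The conclusion $E_{\cat B}\otimes e_Z = 0$ when $\pi(\cat B)\notin Z$ is true, but needs a different justification; for instance, $e_Z\in\Loco{x\in\cat T^c : \supp(x)\subseteq Z}$ and for each such $x$ we have $\cat B\notin\Supphnaive(x)=\pi^{-1}(\supp(x))\subseteq\pi^{-1}(Z)$, so $E_{\cat B}\otimes x=0$, hence $E_{\cat B}\otimes e_Z=0$. That fix is minor.

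The compact case, however, has a genuine gap. Your reduction to showing $\cat B\in\Supph(\ihom{E_{\cat B},t})$ whenever $\ihom{E_{\cat B},t}\neq 0$ is a real and nontrivial claim, not a routine check, and you explicitly flag it as the ``main obstacle'' without proving it. Worse, the sketch you offer does not point in the right direction: the retraction from the weak ring structure exhibits $\ihom{E_{\cat B},t}$ as a retract of $\ihom{E_{\cat B}\otimes E_{\cat B},t}\simeq\ihom{E_{\cat B},\ihom{E_{\cat B},t}}$, which shows $\cat B\in\Cosupph(\ihom{E_{\cat B},t})$ --- that is the wrong (and easy) thing. What you need is $\ihom{\ihom{E_{\cat B},t},E_{\cat B}}\neq 0$, i.e.\ $\cat B\in\Supph(\ihom{E_{\cat B},t})$, and no formal argument from the weak ring structure or \cref{lem:formal1}(b) produces that: Lemma~\ref{lem:formal1}(b) only gives $\Supph\subseteq\Supphnaive$, the wrong direction. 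I don't see how to close this gap along your route. The paper sidesteps it entirely with a contradiction argument: if $\ihom{E_{\cat B},\ihom{a,t}}=0$, then $a^\vee\otimes\ihom{E_{\cat B},t}=0$, hence $x\otimes\ihom{E_{\cat B},t}=0$ for all $x\in\Thickid{a}$, so $e_{\supp(a)}\otimes\ihom{E_{\cat B},t}=0$ and thus $\ihom{E_{\cat B},t}\simeq\ihom{f_{\supp(a)}\otimes E_{\cat B},t}$; since the left side is nonzero, $f_{\supp(a)}\otimes E_{\cat B}\neq 0$, i.e.\ $\cat B\in\pi^{-1}(\supp(a)^c)$, contradicting $\cat B\in\Supph(a)=\pi^{-1}(\supp(a))$. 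You should replace your compact-case argument with something along these lines.
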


\begin{proof}
	We always have the inclusion
	\[
        \Cosupph(\ihom{a,t})\subseteq \Supphnaive(a) \cap \Cosupph(t).
    \]
	For the objects $a$ under consideration, the naive and genuine h-support coincide by \cref{rem:comparisonforcompacts} and \cref{lem:supph-gW}, so we actually have
	\begin{equation}\label{eq:cosupphom} 
        \Cosupph(\ihom{a,t})\subseteq \Supph(a) \cap \Cosupph(t).
	\end{equation}
    Now suppose $a$ is compact and let $\cat B$ be in the right-hand side. We claim 
	\[
        \ihom{E_{\cat B},\ihom{a,t}} \neq 0.
    \]
	Otherwise, $\ihom{a,\ihom{E_{\cat B},t}} = 0$ so that $\ihom{e_{\supp(a)},\ihom{E_{\cat B},t}} = 0$. Hence $\ihom{E_{\cat B},t} \simeq \ihom{f_{\supp(a)},\ihom{E_{\cat B},t}}$. But $\ihom{E_{\cat B},t} \neq 0$ by hypothesis. It follows that $E_{\cat B}\otimes f_{\supp(a)}\neq 0$. That is, $\cat B \in \Supphnaive(f_{\supp(a)}) = \pi^{-1}(\supp(a)^c) = \pi^{-1}(\supp(a))^c = \Supph(a)^c$ by \cref{lem:supph-gW}, which contradicts the hypothesis. This proves the claim when $a$ is compact.

	Now suppose $a=e_Y$. Again, consider $\cat B$ in the right-hand side of \eqref{eq:cosupphom}. If it is not contained in the left-hand side then $\ihom{e_Y,\ihom{E_{\cat B},t}} =0$ so that $\ihom{x,\ihom{E_{\cat B},t}} = 0$ for all compact $x$ with $\supp(x) \subseteq Y$. That is, $\cat B \not\in \Cosupph(\ihom{x,t})$ so that $\cat B \not\in \Supph(x)$. Hence 
	\[
        \cat B \not\in \bigcup_{x \in \cat T^c_Y} \Supph(x) = \pi^{-1}(\bigcup_{x \in \cat T^c_Y} \supp(x)) = \pi^{-1}(Y) = \Supph(e_Y)
    \]
	which contradicts the hypothesis.

	Now suppose $a=f_Y$. Consider any $\cat B$ contained in the right-hand side of \eqref{eq:cosupphom}. Note that $\cat B \not\in \Supph(e_Y)$ since the latter is the complement of $\Supph(f_Y)$. Hence $\cat B \not\in \Cosupph(\ihom{e_Y,t})$. That is, $\ihom{E_{\cat B},\ihom{e_Y,t}} = 0$. Therefore, 
    \[
        0 \neq \ihom{E_{\cat B},t} \simeq \ihom{f_Y,\ihom{E_B,t}}.
    \]
    Hence $\cat B \in \Cosupph(\ihom{f_Y,t})$.

    It follows that we have the equality for a tensor product like $g_W = e_{Y_1} \otimes f_{Y_2}$.
\end{proof}

\begin{Prop}\label{prop:support-relations}
	Assume that $\Spc(\cat T^c)$ is weakly noetherian. Then 
	\begin{enumerate}
		\item $\pi(\Supph(t)) \subseteq \Supp(t)$ for all $t \in \cat T$ with equality when h-detection holds.
		\item $\pi(\Cosupph(t)) \subseteq \Cosupp(t)$ for all $t \in \cat T$ with equality when h-codetection holds.
	\end{enumerate}
\end{Prop}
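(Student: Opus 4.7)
The idea is to leverage the identifications of the homological (co)supports of the Balmer--Favi idempotents $g_{\cat P}$ provided by \cref{lem:supph-gW} and \cref{lem:half-hom}, together with the tensor product property of $\Supph$.

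For the inclusion in $(a)$, I would let $\cat B\in\Supph(t)$ and set $\cat P:=\pi(\cat B)$. Since $\Spc(\cat T^c)$ is weakly noetherian, $\cat P$ is weakly visible and $g_{\cat P}$ is defined. By \cref{lem:supph-gW} we have $\cat B\in\pi^{-1}(\{\cat P\})=\Supph(g_{\cat P})$, so the tensor product property \eqref{eq:tensor-product} gives $\cat B\in\Supph(g_{\cat P})\cap\Supph(t)=\Supph(g_{\cat P}\otimes t)$. In particular $g_{\cat P}\otimes t\neq 0$, i.e., $\cat P\in\Supp(t)$. For the reverse inclusion under h-detection, suppose $\cat P\in\Supp(t)$, so $g_{\cat P}\otimes t\neq 0$. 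By h-detection $\Supph(g_{\cat P}\otimes t)\neq\emptyset$, and the tensor product property identifies this with $\pi^{-1}(\{\cat P\})\cap\Supph(t)$. Any $\cat B$ in this intersection witnesses $\cat P\in\pi(\Supph(t))$.

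For $(b)$, the same strategy works with \cref{lem:half-hom} replacing the tensor product property. Given $\cat B\in\Cosupph(t)$, set $\cat P:=\pi(\cat B)$, so $\cat B\in\Supph(g_{\cat P})\cap\Cosupph(t)=\Cosupph(\ihom{g_{\cat P},t})$ by \cref{lem:half-hom}. Hence $\ihom{g_{\cat P},t}\neq 0$, i.e., $\cat P\in\Cosupp(t)$. Conversely, under h-codetection, if $\cat P\in\Cosupp(t)$ then $\ihom{g_{\cat P},t}\neq 0$ has nonempty h-cosupport, and \cref{lem:half-hom} realizes this nonempty set as $\pi^{-1}(\{\cat P\})\cap\Cosupph(t)$, yielding a preimage $\cat B$ of $\cat P$ in $\Cosupph(t)$.

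There is no substantive obstacle: weak noetherianity is used only to guarantee that the $g_{\cat P}$ exist, and once that is in hand both parts reduce to a one-line application of the appropriate support/cosupport identity combined with (co)detection in the ``equality'' direction.
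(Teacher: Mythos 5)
Your proof is correct and follows essentially the same strategy as the paper's: both rely on the identification $\Supph(g_{\cat P}) = \pi^{-1}(\{\cat P\})$ from \cref{lem:supph-gW}, the tensor product property for the support statements, and \cref{lem:half-hom} for the cosupport statements, with the (co)detection hypothesis supplying the nonemptiness in the equality directions. The only difference is cosmetic: for the unconditional inclusion in $(a)$ the paper simply cites \cite[Proposition 3.10]{bhs2}, whereas you reprove it directly (correctly) with the same toolbox, and your argument for the inclusion in $(b)$ is the contrapositive of the paper's.
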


\begin{proof}
	We proved in \cite[Proposition 3.10]{bhs2} that the inclusion $\pi(\Supph(t)) \subseteq \Supp(t)$ always holds. On the other hand, consider  the purported inclusion 
    	\[
            \pi(\Cosupph(t)) \subseteq \Cosupp(t).
        \]
	If $\cat P$ is not contained in the right-hand side, that is,  $\ihom{g_{\cat P},t} = 0$, then
    	\[
            \emptyset = \Cosupph(\ihom{g_{\cat P},t}) = \Supph(g_{\cat P})\cap \Cosupph(t)
    	= \pi^{-1}(\{ \cat P\}) \cap \Cosupph(t)
        \]
	by \cref{lem:half-hom} and \cref{lem:supph-gW}. Thus, $\cat P \not\in \pi(\Cosupph(t))$.

	Now for the equality in $(a)$: If $\cat P \in \Supp(t)$ then $g_{\cat P} \otimes t \neq 0$. Hence, the \mbox{h-detection} property implies
    	\[
            \emptyset \neq \Supph(g_{\cat P} \otimes t) = \Supph(g_{\cat P}) \cap \Supph(t) = \pi^{-1}(\{\cat P\})\cap\Supph(t)
        \]
	so that $\cat P \in \pi(\Supph(t))$.

	For the equality in $(b)$: If $\cat P \in \Cosupp(t)$ then $\ihom{g_{\cat P},t} \neq 0$. Hence, the h-codetection property implies
		\[
            \emptyset \neq \Cosupph(\ihom{g_{\cat P},t}) = \Supph(g_{\cat P})\cap \Cosupph(t) = \pi^{-1}(\{\cat P\}) \cap \Cosupph(t)
        \]
	by \cref{lem:half-hom} and \cref{lem:supph-gW} so that $\cat P \in \pi(\Cosupph(t))$.
\end{proof}

\begin{Cor}\label{cor:h-detect-implies-detect}
	Assume that $\Spc(\cat T^c)$ is weakly noetherian. Then
	\begin{enumerate}
		\item h-detection implies detection.
		\item h-codetection implies codetection.
	\end{enumerate}
\end{Cor}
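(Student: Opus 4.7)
The plan is to deduce both statements as immediate consequences of \cref{prop:support-relations}. Indeed, that proposition already packages the nontrivial content: under the standing weakly noetherian hypothesis, the h-detection property upgrades the containment $\pi(\Supph(t)) \subseteq \Supp(t)$ to an equality, and similarly h-codetection upgrades $\pi(\Cosupph(t)) \subseteq \Cosupp(t)$ to an equality. Once these equalities are in hand, the implications amount to the observation that a set whose image under $\pi$ is empty must itself be empty.

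For part $(a)$, I assume h-detection holds and suppose $t \in \cat T$ satisfies $\Supp(t) = \emptyset$. Applying \cref{prop:support-relations}$(a)$, we obtain
\[
    \pi(\Supph(t)) = \Supp(t) = \emptyset,
\]
which forces $\Supph(t) = \emptyset$. The h-detection hypothesis then yields $t = 0$, establishing detection.

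For part $(b)$, the argument is entirely analogous: if h-codetection holds and $\Cosupp(t) = \emptyset$, then \cref{prop:support-relations}$(b)$ gives $\pi(\Cosupph(t)) = \Cosupp(t) = \emptyset$, whence $\Cosupph(t) = \emptyset$, and h-codetection concludes that $t = 0$.

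There is no real obstacle here; the work has already been done in \cref{prop:support-relations}, whose proof in turn relied on the key tensor-product and adjunction identities for the pure-injectives $E_{\cat B}$ together with the calculation of h-(co)support on the Balmer--Favi idempotents $g_{\cat P}$ from \cref{lem:supph-gW} and \cref{lem:half-hom}. The only thing to watch is the direction of the implication: the upgrade from containment to equality in \cref{prop:support-relations} goes through precisely the h-(co)detection hypothesis we are assuming, so the corollary follows with no additional input.
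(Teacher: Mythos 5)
Your proof is correct and takes essentially the same approach as the paper, which proves the corollary by immediate reference to \cref{prop:support-relations}. One small observation: you invoke the equality cases of \cref{prop:support-relations}, but the unconditional inclusions $\pi(\Supph(t)) \subseteq \Supp(t)$ and $\pi(\Cosupph(t)) \subseteq \Cosupp(t)$ already suffice — if $\Supp(t) = \emptyset$ then $\pi(\Supph(t)) = \emptyset$ forces $\Supph(t) = \emptyset$, and then h-detection gives $t = 0$; the equality is not needed for this direction of the implication.
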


\begin{proof}
	This immediately follows from \cref{prop:support-relations}.
\end{proof}

\begin{Rem}
	The detection property does not in general imply the h-detection property and the codetection property does not in general imply the h-codetection property. Indeed, as explained in \cite[Example 5.5]{bhs2},  Neeman \cite{Neeman00}  provides an example of a non-noetherian commutative ring $R$ whose spectrum $\Spec(R)$ is a single point  with the property that $\Der(R)$ contains a nontrivial tensor-nilpotent object $I\in \Der(R)$; cf.~\cref{exa:truncatedpolyring}. The tensor product property forces ${\Supph(I) =\emptyset}$. Thus $\Der(R)$ does not have the h-detection property and hence it does not have the h-LGP property or h-codection property (\cref{exa:DR-h-codetection}), either. However, by \cite[Theorem 3.22]{bhs1}  it does have the LGP (which is equivalent to the codetection property by \cite[Theorem 6.4]{BCHS1}) since the spectrum $\Spc(\Der(R)^c)=\Spec(R)=*$ is a noetherian space.
\end{Rem}

\begin{Par}
    For weak rings, it is possible to prove an unconditional comparison result between the homological and Balmer--Favi support: 
\end{Par}

\begin{Prop}\label{prop:supphweakrings}
    Assume that $\Spc(\cat T^c)$ is weakly noetherian. For any weak ring $w \in \cat T$, we have $\pi(\Supph(w)) = \Supp(w)$.
\end{Prop}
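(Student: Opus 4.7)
My plan is the following. The easy direction $\pi(\Supph(w)) \subseteq \Supp(w)$ is supplied by \cref{prop:support-relations}(a). For the reverse inclusion, I would fix $\cat P \in \Supp(w)$, write $\{\cat P\} = Y_1 \cap Y_2^c$ with $Y_1, Y_2$ Thomason, so that $g_{\cat P} = e_{Y_1} \otimes f_{Y_2}$. By \cref{prop:supp-coincide-weak-coring} and the tensor-product property for $\Supph$, producing some $\cat B \in \pi^{-1}(\cat P) \cap \Supph(w)$ is equivalent to establishing $\Supph(w \otimes g_{\cat P}) \neq \emptyset$.

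The key step is to introduce the auxiliary object $R := \ihom{e_{Y_1}, w \otimes f_{Y_2}}$. Since $w \otimes f_{Y_2}$ is a weak ring (tensor of two weak rings) and $e_{Y_1}$ is a weak coring (with counit $\epsilon\colon e_{Y_1} \to \unit$ and idempotent comultiplication), I would check that the standard internal-hom construction endows $R$ with a canonical weak ring structure in $\cat T$. The hypothesis $w \otimes g_{\cat P} = (w \otimes f_{Y_2}) \otimes e_{Y_1} \neq 0$ combined with \cref{lem:formal2} forces $R \neq 0$, whereupon Balmer's detection theorem for weak rings \cite[Theorem~4.7]{Balmer20_bigsupport} gives $\Supph(R) \neq \emptyset$.

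Next I would prove $\Supph(R) = \Supph(w) \cap \pi^{-1}(\cat P)$. For the inclusion $\Supph(R) \subseteq \pi^{-1}(\cat P)$: first, $R$ is $f_{Y_2}$-local (internal hom into an $f_{Y_2}$-local object), so $\Supph(R) \subseteq \pi^{-1}(Y_2^c)$; second, using the smashing nature of $f_{Y_1}$ to identify $f_{Y_1} \otimes R \simeq \ihom{f_{Y_1}, R}$, one computes
\[
    f_{Y_1} \otimes R \simeq \ihom{f_{Y_1} \otimes e_{Y_1}, w \otimes f_{Y_2}} = 0
\]
(using $f_{Y_1} \otimes e_{Y_1} = 0$), which forces $\Supph(R) \subseteq \pi^{-1}(Y_1)$. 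For the reverse containment, I would tensor the triangle $\ihom{f_{Y_1}, w \otimes f_{Y_2}} \to w \otimes f_{Y_2} \to R$ with $e_{Y_1}$ and use the $f_{Y_1}$-locality of $\ihom{f_{Y_1}, -}$ to deduce $R \otimes e_{Y_1} \simeq w \otimes g_{\cat P}$, so that the tensor-product property yields
\[
    \Supph(R) = \Supph(R) \cap \pi^{-1}(\cat P) = \Supph(R \otimes g_{\cat P}) = \Supph(w \otimes g_{\cat P}) = \Supph(w) \cap \pi^{-1}(\cat P).
\]
Combined with $\Supph(R) \neq \emptyset$, the intersection on the right is nonempty, producing the desired $\cat B$.

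The principal technical obstacle is the identification $f_{Y_1} \otimes R \simeq \ihom{f_{Y_1}, R}$, which rests on the smashing property of the $f_{Y_1}$-localization and standard but finicky manipulations of internal-hom functors (verifying, for instance, that $\ihom{X, Y}$ is $f$-local whenever $Y$ is, for $f$ a smashing localization). Once these preparatory locality statements are in hand, the argument proceeds formally.
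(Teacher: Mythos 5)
The first step and the overall strategy (manufacture a nonzero weak ring whose h-support is forced to lie in $\Supph(w)\cap\pi^{-1}(\cat P)$, then invoke Balmer's detection for weak rings) match the spirit of the paper's argument. The use of \cref{lem:formal2} to get $R\neq 0$ from $w\otimes g_{\cat P}\neq 0$ is also fine.

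However, the step you flag as the ``principal technical obstacle'' is not merely finicky --- it is false. You cannot identify $f_{Y_1}\otimes R$ with $\ihom{f_{Y_1},R}$. What is true is that $\ihom{f_{Y_1},R}=\ihom{f_{Y_1}\otimes e_{Y_1},w\otimes f_{Y_2}}=0$; this says that $R=\ihom{e_{Y_1},w\otimes f_{Y_2}}$ is \emph{$e_{Y_1}$-colocal}, not that it is $f_{Y_1}$-acyclic. The smashing property tells you that $\ihom{f_{Y_1},Z}\simeq Z$ when $Z$ is \emph{$f_{Y_1}$-local}, but $R$ is at the opposite end of the recollement. Concretely, take $\cat T=\Der(\bbZ)$, $w=\unit$, $\cat P=(p)$, $Y_1=\{(p)\}$, $Y_2=\emptyset$. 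Then $R=\ihom{e_{\{(p)\}},\bbZ}\simeq\bbZ_p$, and one computes $\ihom{f_{Y_1},R}\simeq\RHom_{\bbZ}(\bbZ[1/p],\bbZ_p)=0$ while $f_{Y_1}\otimes R\simeq\bbZ_p[1/p]\simeq\bbQ_p\neq 0$. Correspondingly $\Supph(R)=\{(0),(p)\}\not\subseteq\pi^{-1}(Y_1)=\{(p)\}$, so the inclusion $\Supph(R)\subseteq\pi^{-1}(Y_1)$ and the proposed identity $\Supph(R)=\Supph(w)\cap\pi^{-1}(\cat P)$ both fail. The argument then cannot conclude.

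The paper sidesteps this entirely: since $\Spc(\cat T^c)$ is weakly noetherian, one writes $\{\cat P\}=\supp(x)\cap\gen(\cat P)$ with $x\in\cat T^c$, and replaces $x$ by $x\otimes x^{\vee}$ to make it a compact weak ring. Because $\Loco{e_{\supp(x)}}=\Loco{x}$, the hypothesis $w\otimes g_{\cat P}\neq 0$ forces $w\otimes x\otimes f_{\gen(\cat P)^c}\neq 0$, and this object is a \emph{tensor product} of weak rings, hence a weak ring, with
\[
\Supph(w\otimes x\otimes f_{\gen(\cat P)^c})=\Supph(w)\cap\pi^{-1}(\supp(x))\cap\pi^{-1}(\gen(\cat P))=\Supph(w)\cap\pi^{-1}(\{\cat P\})
\]
by the tensor product property, \cref{rem:comparisonforcompacts}, and \cref{lem:supph-gW}. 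Detection for weak rings then gives the claim with no internal homs and no colocality issues. If you want to keep an internal-hom construction, you would at minimum need a different argument to bound $\Supph(R)$ (and also to actually verify that $R$ carries a weak ring structure, which you only sketch), but the simpler route is the compact-replacement trick.
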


\begin{proof}
	The $\subseteq$ inclusion always holds (\cref{prop:support-relations}), so it suffices to establish $\pi(\Supph(w)) \supseteq\Supp(w)$. Let $\cat P \in \Supp(w)$. We have $\{\cat P\} = \supp(x) \cap \gen(\cat P)$ for some $x \in \cat T^c$  by \cite[Remark 2.8]{bhs1}. Moreover, replacing $x$ by  $x \otimes x^{\vee}$ we may assume that $x$ is a (weak) ring. Then $w \otimes g_{\cat P} \neq 0$ implies that $w \otimes x \otimes f_{\gen(\cat P)^c} \neq 0$ is a nonzero weak ring. The homological support has the detection property for weak rings by \cite[Theorem 4.7]{Balmer20_bigsupport}. Hence
	\begin{align*}
    	\emptyset \neq \Supph(w \otimes x \otimes f_{\gen(\cat P)^c}) &= \Supph(w)\cap \Supph(x) \cap \Supph(f_{\gen(\cat P)^c}) \\
     &= \Supph(w) \cap \pi^{-1}(\{\cat P\}) 
    \end{align*}
    by the tensor product property~\eqref{eq:tensor-product}, \cref{rem:comparisonforcompacts} and \cref{lem:supph-gW}. Therefore $\cat P \in \pi(\Supph(w))$, as desired.
\end{proof}

\section{Base change for homological support}\label{sec:base-change}

We now turn to studying the behaviour of the homological (co)support in a relative situation. Here we discover some surprises, as the behavior of the homological support deviates strongly from that of the Balmer--Favi support.

\begin{Par}
    The following result, due to Balmer, is crucial:
\end{Par}

\begin{Lem}\label{lem:direct-summand}
    Let $f^*:\cat T \to \cat S$ be a geometric functor.
	For any homological prime $\cat B \in \Spc^h(\cat S^c)$,	 $E_{\varphi^h(\cat B)} \in \cat T$ is a direct summand  of ${f_*(E_{\cat B})\in \cat T}$.
\end{Lem}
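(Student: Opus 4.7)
The plan is to transfer the statement to the associated abelian categories $\cat A_{\cat T}=\mathrm{mod}\text{-}\cat T^c$ and $\cat A_{\cat S}=\mathrm{mod}\text{-}\cat S^c$ via the restricted Yoneda functor $\hat h$, which by Krause's theorem identifies pure-injectives in $\cat T$ (resp.\ $\cat S$) with injectives in $\cat A_{\cat T}$ (resp.\ $\cat A_{\cat S}$). The restriction $f^*|_{\cat T^c}$ induces an exact, cocontinuous tensor functor $\bar f^*\colon\cat A_{\cat T}\to\cat A_{\cat S}$ whose right adjoint $\bar f_*$ is precomposition; a straightforward calculation gives $\hat h\circ f_*\cong\bar f_*\circ\hat h$, so it is enough to show that $\hat h(E_{\varphi^h(\cat B)})$ is a direct summand of $\bar f_*(\hat h(E_{\cat B}))$ in $\cat A_{\cat T}$.

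Next I would pass to the Serre quotients. Since $\varphi^h(\cat B)=(\bar f^*)^{-1}(\cat B)$ by definition, the universal property of Serre quotients produces an induced exact functor $\widetilde{\bar f^*}\colon\cat A_{\cat T}/\varphi^h(\cat B)\to\cat A_{\cat S}/\cat B$ sending the simple residue field $\kappa_{\varphi^h(\cat B)}$ to $\kappa_{\cat B}$; its right adjoint $\widetilde{\bar f_*}$ consequently preserves injectives. Taking right adjoints of the tautological identity $Q_{\cat B}\circ\bar f^*=\widetilde{\bar f^*}\circ Q_{\varphi^h(\cat B)}$ between the corresponding left adjoints yields the compatibility $\bar f_*\circ\iota_{\cat B}\cong\iota_{\varphi^h(\cat B)}\circ\widetilde{\bar f_*}$ with the fully faithful section functors $\iota$. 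By Balmer's construction, $\hat h(E_{\cat B})$ identifies with $\iota_{\cat B}(E_{\cat B}^\flat)$, where $E_{\cat B}^\flat$ is the injective hull of $\kappa_{\cat B}$ in $\cat A_{\cat S}/\cat B$, and similarly on the $\cat T$-side; thus it suffices to produce a splitting of the form $E_{\varphi^h(\cat B)}^\flat\hookrightarrow\widetilde{\bar f_*}(E_{\cat B}^\flat)$ in $\cat A_{\cat T}/\varphi^h(\cat B)$.

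For this, the adjunction isomorphism
\[
\Hom\bigl(\kappa_{\varphi^h(\cat B)},\,\widetilde{\bar f_*}(E_{\cat B}^\flat)\bigr)\;\cong\;\Hom\bigl(\kappa_{\cat B},\,E_{\cat B}^\flat\bigr)
\]
contains the canonical essential embedding $\kappa_{\cat B}\hookrightarrow E_{\cat B}^\flat$, yielding a nonzero---and hence monomorphic, by simplicity of $\kappa_{\varphi^h(\cat B)}$---morphism $\alpha$. Since $\widetilde{\bar f_*}(E_{\cat B}^\flat)$ is injective, one extends $\alpha$ along the essential monomorphism $\kappa_{\varphi^h(\cat B)}\hookrightarrow E_{\varphi^h(\cat B)}^\flat$ to some $\widetilde\alpha\colon E_{\varphi^h(\cat B)}^\flat\to\widetilde{\bar f_*}(E_{\cat B}^\flat)$; essentiality forces $\widetilde\alpha$ to be injective (its kernel meets $\kappa_{\varphi^h(\cat B)}$ trivially and must therefore vanish), and injectivity of the source forces the monomorphism to split. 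Applying $\iota_{\varphi^h(\cat B)}$ and invoking the two compatibilities above transports this splitting back to $\cat A_{\cat T}$, and then through $\hat h$ to $\cat T$.

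The main obstacle is purely bookkeeping: correctly tracking the two-level passage between $\cat A$ and its Serre quotient $\cat A/\cat B$, and verifying that both the left-adjoint square with $\bar f^*$ and the right-adjoint square with $\bar f_*$ commute. The only nontrivial categorical input is the standard principle that a morphism out of an injective hull is injective as soon as its restriction to the essentially embedded subobject is.
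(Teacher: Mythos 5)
The paper does not actually prove this lemma; it simply cites \cite[Lemma~5.6]{Balmer20_bigsupport}. Your proof reconstructs what is essentially Balmer's argument: transport through restricted Yoneda, compare right adjoints of the two decompositions of $Q_{\cat B}\circ\bar f^*$, and build a split monomorphism $\bar E_{\varphi^h(\cat B)}\hookrightarrow\widetilde{\bar f_*}(\bar E_{\cat B})$ by extending a monomorphism out of $Q_{\varphi^h(\cat B)}(\hat\unit_{\cat T})$ across its injective hull. The structure is sound.

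There is, however, one genuine gap. You assert that the morphism $\alpha\colon\kappa_{\varphi^h(\cat B)}\to\widetilde{\bar f_*}(\bar E_{\cat B})$ is a monomorphism ``by simplicity of $\kappa_{\varphi^h(\cat B)}$,'' where $\kappa_{\varphi^h(\cat B)}=Q_{\varphi^h(\cat B)}(\hat\unit_{\cat T})$. That this object is simple is neither a definition nor a theorem you can appeal to: a homological prime $\cat B$ is a \emph{maximal Serre $\otimes$-ideal}, which makes the quotient category simple as a tensor abelian category, but that does not force the image of the unit to be a simple object. Since the whole splitting argument hinges on $\alpha$ being a monomorphism (a nonzero $\alpha$ with kernel would produce an essential extension of a proper quotient of $\kappa$, not of $\kappa$ itself), this needs a correct justification. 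The fix is available from ingredients you already have: $\widetilde{\bar f^*}$ is not merely exact but also \emph{faithful}, being the functor induced on the Serre quotient $\cat A_{\cat T}/\varphi^h(\cat B)$ by an exact functor whose kernel is exactly $\varphi^h(\cat B)=(\bar f^*)^{-1}(\cat B)$. A faithful exact left adjoint has monic unit (its triangle identity shows $\widetilde{\bar f^*}(\eta_M)$ is split mono, and a faithful exact functor reflects monos), and $\widetilde{\bar f_*}$ preserves monos as a right adjoint between abelian categories. Writing $\alpha=\widetilde{\bar f_*}(j)\circ\eta_{\kappa_{\varphi^h(\cat B)}}$, where $j\colon\kappa_{\cat B}\hookrightarrow\bar E_{\cat B}$ is the essential embedding, exhibits $\alpha$ as a composite of monomorphisms. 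A secondary imprecision: you set $\cat A_{\cat T}=\mathrm{mod}\text{-}\cat T^c$ (the finitely presented level), but injective hulls live in the Grothendieck category $\Mod\text{-}\cat T^c$ and its quotient by the localizing subcategory generated by $\varphi^h(\cat B)$; the passage is routine but should be said.
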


\begin{proof}
	This is established in \cite[Lemma 5.6]{Balmer20_bigsupport}.
\end{proof}

\begin{Prop}\label{prop:base-inclusion}
	Let $f^*\colon\cat T \to \cat S$ be a geometric functor. For any $t \in \cat T$, we have
    \[
    (\varphi^h)^{-1}(\Supph(t)) \subseteq \Supph(f^*(t)) \subseteq (\varphi^h)^{-1}(\Supphnaive(t)) \subseteq \Supphnaive(f^*(t)).
    \]
\end{Prop}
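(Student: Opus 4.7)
Here's my plan. I will handle the three inclusions separately. The outer two follow a parallel pattern, both relying on Lemma 5.4 (Balmer's direct-summand result: $E_{\varphi^h(\cat B)}$ is a retract of $f_*(E_{\cat B})$) together with a standard adjunction identity; the middle inclusion is more subtle.

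For the first inclusion, suppose $\varphi^h(\cat B) \in \Supph(t)$, meaning $\ihom{t, E_{\varphi^h(\cat B)}} \neq 0$. Functoriality of $\ihom{t,-}$ turns this into a nonzero retract of $\ihom{t, f_*(E_{\cat B})}$, and the standard adjunction gives $\ihom{t, f_*(E_{\cat B})} \simeq f_*\ihom{f^*(t), E_{\cat B}}$. Non-vanishing of the retract forces $f_*\ihom{f^*(t), E_{\cat B}} \neq 0$, hence $\ihom{f^*(t), E_{\cat B}} \neq 0$, i.e., $\cat B \in \Supph(f^*(t))$. The argument for the third inclusion is entirely parallel, with the projection formula $t \otimes f_*(E_{\cat B}) \simeq f_*(f^*(t) \otimes E_{\cat B})$ replacing the internal-hom adjunction and tensoring replacing $\ihom{t,-}$: if $t \otimes E_{\varphi^h(\cat B)} \neq 0$, then as a nonzero retract of $f_*(f^*(t) \otimes E_{\cat B})$ it forces $f^*(t) \otimes E_{\cat B} \neq 0$.

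The middle inclusion is the main obstacle. My plan is to combine Proposition 2.4 applied in $\cat S$, yielding $\Supph(f^*(t)) \subseteq \Supphnaive(f^*(t))$, with the reverse of the third inclusion, namely $\Supphnaive(f^*(t)) \subseteq (\varphi^h)^{-1}(\Supphnaive(t))$. This reverse inclusion does not follow from the retract argument used above: the projection formula only exhibits $t \otimes E_{\varphi^h(\cat B)}$ as a summand of $f_*(f^*(t) \otimes E_{\cat B})$, and since $f_*$ can annihilate nonzero objects, the implication $t \otimes E_{\varphi^h(\cat B)} = 0 \Rightarrow f^*(t) \otimes E_{\cat B} = 0$ cannot be reversed this way. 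Instead I would invoke Balmer's base change result for the naive homological support from \cite{Balmer20_bigsupport}, which ultimately rests on the compatibility $\overline{h}_{\varphi^h(\cat B)} \simeq \overline{h}_{\cat B} \circ f^*$ of the associated homological residue functors.
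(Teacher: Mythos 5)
Your treatment of the first and third inclusions is correct and coincides with the paper's argument: both rest on the direct-summand fact that $E_{\varphi^h(\cat B)}$ is a retract of $f_*E_{\cat B}$ (\cref{lem:direct-summand}), combined with the internal adjunction $\ihom{t,f_*(-)}\simeq f_*\ihom{f^*t,-}$ for the first and the projection formula for the third.

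There is, however, a genuine gap in your argument for the middle inclusion. You reduce it to the claim $\Supphnaive(f^*(t)) \subseteq (\varphi^h)^{-1}(\Supphnaive(t))$, correctly observe that the retract argument cannot deliver it (since $f_*$ can kill nonzero objects), and then appeal to a ``base change result for the naive homological support'' from \cite{Balmer20_bigsupport} said to rest on an equivalence $\overline{\mathbb{h}}_{\varphi^h(\cat B)} \simeq \overline{\mathbb{h}}_{\cat B}\circ f^*$. No such result or equivalence is available there. What one actually has is only a factorization $\overline{\mathbb{h}}_{\cat B}\circ f^* = \overbar{F}\circ\overline{\mathbb{h}}_{\varphi^h(\cat B)}$ through a functor $\overbar{F}$ between the homological residue categories, and this $\overbar{F}$ is not known to be faithful for an arbitrary geometric functor $f^*$; faithfulness is established in the special case of tt-fields (see \cite[(2.3)]{BalmerCameron2021}, used in the paper precisely in \cref{prop:nil-cons-NSC}). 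Moreover, if your claimed inclusion held unconditionally, it would combine with the third inclusion of the Proposition to yield $\Supphnaive(f^*t) = (\varphi^h)^{-1}(\Supphnaive t)$ for all $t$, a much stronger statement than anything the paper asserts for the naive support: \cref{cor:AV-scott} proves the analogous identity for $\Supph$ only under extra hypotheses, and \cref{que:supph=suppn} leaves the comparison of $\Supph$ with $\Supphnaive$ open.

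The paper's proof of the middle inclusion is different and avoids these issues; it is worth internalizing. Given $\cat B \in \Supph(f^*t)$, apply the already-established first inclusion to the object $E_{\varphi^h(\cat B)}$, whose genuine homological support is the singleton $\{\varphi^h(\cat B)\}$ by \cref{exa:suppEb}; this yields $\cat B \in (\varphi^h)^{-1}(\{\varphi^h(\cat B)\}) \subseteq \Supph(f^*E_{\varphi^h(\cat B)})$. Then combine with the tensor-product property of $\Supph$ in $\cat S$ and monoidality of $f^*$ to get
\[
\cat B \in \Supph(f^*t) \cap \Supph(f^*E_{\varphi^h(\cat B)}) = \Supph\bigl(f^*(t\otimes E_{\varphi^h(\cat B)})\bigr),
\]
whence $f^*(t\otimes E_{\varphi^h(\cat B)})\neq 0$ and hence $t\otimes E_{\varphi^h(\cat B)}\neq 0$, i.e.\ $\varphi^h(\cat B)\in\Supphnaive(t)$. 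Nothing here requires any base-change statement for the naive support nor any coincidence $\Supph=\Supphnaive$.
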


\begin{proof}
    Let $\cat B \in \Spc^h(\cat S^c)$ be such that $\varphi^h(\cat B) \in \Supph(t)$. By definition, this means $\ihom{t,E_{\varphi^h(\cat B)}} \neq 0$. Hence $\ihom{t,f_*E_{\cat B}} \neq 0$ by \cref{lem:direct-summand}. That is, $f_*\ihom{f^*(t),E_{\cat B}} \neq 0$. Therefore, $\ihom{f^*(t),E_{\cat B}} \neq 0$, i.e., $\cat B \in \Supph(f^*(t))$. This establishes the first inequality.

    Now suppose $\cat B \in \Supph(f^*(t))$. Then by the first inequality applied to $E_{\varphi^h(\cat B)}$, we have
    \begin{equation}\label{eq:suppbasechangeEb}
        \cat B \in (\varphi^h)^{-1}(\{\varphi^h(\cat B)\}) \subseteq \Supph(f^*(E_{\varphi^h(\cat B)}))
    \end{equation}
    and thus $\cat B \in \Supph(f^*(t)) \cap \Supph(f^*(E_{\varphi^h(\cat B)}))=\Supph(f^*(t \otimes E_{\varphi^h(\cat B)}))$. Therefore, $f^*(t \otimes E_{\varphi^h(\cat B)}) \neq 0$. Hence, $t \otimes E_{\varphi^h(\cat B)} \neq 0$. That is, $\varphi^h(\cat B) \in \Supphnaive(t)$. This establishes the second inequality.

    Finally, suppose $\cat B \in (\varphi^h)^{-1}(\Supphnaive(t))$. That is, $E_{\varphi^h(\cat B)}\otimes t \neq 0$. \Cref{lem:direct-summand} then implies $f_*(E_{\cat B}) \otimes t \neq 0$. By the projection formula, this means $f_*(E_{\cat B} \otimes f^*(t)) \neq 0$ so $E_{\cat B} \otimes f^*(t) \neq 0$. Hence $\cat B \in \Supphnaive(f^*(t))$. This establishes the third inequality.
\end{proof}

\begin{Cor}\label{cor:AV-scott}
    Let $f^*\colon\cat T \to \cat S$ be a geometric functor. The Avrunin--Scott identity
        \begin{equation*}
            (\varphi^h)^{-1}(\Supph(t)) = \Supph(f^*(t))
        \end{equation*}
    holds 
        \begin{enumerate}
            \item for all weak rings $t \in \cat T$; and
            \item for all weak corings $t \in \cat T$; and
            \item for all objects $t \in \cat T$ if~$\cat T$ has the h-detection property.
        \end{enumerate}
\end{Cor}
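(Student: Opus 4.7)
The plan is to deduce the corollary directly from \cref{prop:base-inclusion} by showing that under each of the three hypotheses the naive and genuine homological supports on $\cat T$ coincide, which forces the chain of inclusions in \cref{prop:base-inclusion} to collapse.

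More precisely, \cref{prop:base-inclusion} yields unconditionally
\[
    (\varphi^h)^{-1}(\Supph(t)) \subseteq \Supph(f^*(t)) \subseteq (\varphi^h)^{-1}(\Supphnaive(t)).
\]
Thus it suffices in each case to verify that $\Supph(t) = \Supphnaive(t)$, since then the outer terms of the displayed chain agree and all three sets must be equal.

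For parts $(a)$ and $(b)$, the equality $\Supph(t) = \Supphnaive(t)$ is precisely the content of \cref{prop:supp-coincide-weak-coring}, which handles weak rings and weak corings simultaneously. For part $(c)$, the equality holds for every $t \in \cat T$ by \cref{lem:supp-coincide-h-detection}, which is exactly the consequence of h-detection we need.

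There is no significant obstacle here: the work has been done in \cref{prop:base-inclusion} and in the earlier identifications of $\Supph$ with $\Supphnaive$ in \cref{sec:hsupport}. The only subtlety worth flagging is that in part $(c)$ the equality $\Supph(f^*(t)) = \Supphnaive(f^*(t))$ need not a priori hold for $f^*(t)$ in $\cat S$ (since we do not assume h-detection for~$\cat S$), but this is not required: the collapse of the chain uses $\Supph(t) = \Supphnaive(t)$ in $\cat T$ only, which is enough to pin down $\Supph(f^*(t))$ to $(\varphi^h)^{-1}(\Supph(t))$.
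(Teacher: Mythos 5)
Your proof is correct and is essentially identical to the paper's: both deduce the result by noting that under each of the three hypotheses $\Supph(t) = \Supphnaive(t)$ (via \cref{prop:supp-coincide-weak-coring} for (a),(b) and \cref{lem:supp-coincide-h-detection} for (c)), so the first and third terms in the chain of \cref{prop:base-inclusion} coincide and squeeze $\Supph(f^*(t))$ to the desired value. Your remark at the end, that no hypothesis on $\cat S$ is needed, is a fair observation, though not strictly necessary for the argument.
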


\begin{proof}
    In cases $(a)$, $(b)$ and $(c)$, we have $\Supph(t)=\Supphnaive(t)$ by \cref{prop:supp-coincide-weak-coring} and \cref{lem:supp-coincide-h-detection}, hence the result follows from  \cref{prop:base-inclusion}.
\end{proof} 

\begin{Exa}\label{exa:AV-scott-EB}
	We always have $(\varphi^h)^{-1}(\{\cat B\}) = \Supph(f^*(E_{\cat B}))$.
\end{Exa}

\begin{Rem}
	The above corollary (\cref{cor:AV-scott})
    shows
    that the Avrunin--Scott identity holds unconditionally whenever $\cat T$ has the h-detection property. This contrasts significantly with the situation for the Balmer--Favi support; cf.~\cite[Corollary~14.19]{BCHS1} and \cite[Corollary~12.8]{BCHNPS_descent}.
\end{Rem}

\begin{Cor}\label{cor:surjectivitycriterion}
	Let $f^*\colon \cat T \to \cat S$ be a geometric functor. If $\cat T$ has the h-detection property then the following are equivalent:
	\begin{enumerate}
		\item $f^*$ is conservative;
		\item $f^*$ is nil-conservative, \ie $f^*$ is conservative  on weak rings;
		\item $\varphi^h$ is surjective.
	\end{enumerate}
\end{Cor}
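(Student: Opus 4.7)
The plan is to prove the cyclic chain $(a) \Rightarrow (b) \Rightarrow (c) \Rightarrow (a)$. The implication $(a) \Rightarrow (b)$ is immediate from the definition, since nil-conservativity is just conservativity restricted to the class of weak rings. The remaining two implications both proceed by leveraging the identification $\Supph(f^*(E_{\cat B})) = (\varphi^h)^{-1}(\{\cat B\})$ from \cref{exa:AV-scott-EB} together with the fact (\cref{rem:hsuppdetectsweakrings}) that homological support detects nonvanishing of weak rings unconditionally.

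For $(b) \Rightarrow (c)$, I would fix $\cat B \in \Spc^h(\cat T^c)$ and show that $(\varphi^h)^{-1}(\{\cat B\})$ is nonempty. Note that $E_{\cat B}$ is a nonzero weak ring in $\cat T$, and since $f^*$ is symmetric monoidal and exact it preserves the weak ring structure, so $f^*(E_{\cat B})$ is a weak ring in $\cat S$. Nil-conservativity then guarantees $f^*(E_{\cat B}) \neq 0$, and Balmer's detection result for weak rings (\cref{rem:hsuppdetectsweakrings}) gives $\Supph(f^*(E_{\cat B})) \neq \emptyset$. By \cref{exa:AV-scott-EB} this is exactly $(\varphi^h)^{-1}(\{\cat B\})$, so $\cat B$ lies in the image of $\varphi^h$.

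For $(c) \Rightarrow (a)$, suppose $f^*(t) = 0$. Then $\Supph(f^*(t)) = \emptyset$, and the first inclusion of \cref{prop:base-inclusion} gives $(\varphi^h)^{-1}(\Supph(t)) = \emptyset$. Surjectivity of $\varphi^h$ forces $\Supph(t) = \emptyset$, whereupon the h-detection hypothesis on $\cat T$ yields $t = 0$.

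None of the steps present a serious obstacle; the result is essentially a bookkeeping exercise assembling \cref{prop:base-inclusion}, \cref{exa:AV-scott-EB}, and the weak-ring detection property. The only subtle point is remembering in $(b) \Rightarrow (c)$ that nil-conservativity applies here precisely because $f^*(E_{\cat B})$ is a weak ring in the target category, which is what allows one to bypass the need for h-detection on $\cat S$.
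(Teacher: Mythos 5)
Your proof is correct. The paper itself handles the equivalence $(b) \Leftrightarrow (c)$ by invoking an external result (\cite[Theorem 1.8]{BarthelCastellanaHeardSanders22b}), and handles $(c) \Rightarrow (a)$ by citing \cref{cor:AV-scott}, which is the equality version (under h-detection) of the inclusion you use. Your route is more self-contained and slightly more elementary: for $(b) \Rightarrow (c)$ you reassemble the ingredients from scratch --- that $E_{\cat B}$ is a nonzero weak ring, that $f^*$ preserves weak rings, that homological support detects nonvanishing of weak rings unconditionally (\cref{rem:hsuppdetectsweakrings}), and the identity $\Supph(f^*(E_{\cat B})) = (\varphi^h)^{-1}(\{\cat B\})$ from \cref{exa:AV-scott-EB} --- thereby avoiding the external citation; and for $(c) \Rightarrow (a)$ you only need the unconditional inclusion $(\varphi^h)^{-1}(\Supph(t)) \subseteq \Supph(f^*(t))$ from \cref{prop:base-inclusion} rather than the full Avrunin--Scott equality. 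The trade-off is that the external citation in the paper establishes $(b) \Leftrightarrow (c)$ in greater generality (no h-detection hypothesis), whereas your argument only recovers the one direction $(b) \Rightarrow (c)$ needed to close the cyclic chain; both are perfectly adequate for the stated corollary.
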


\begin{proof}
	The equivalence of $(b)$ and $(c)$ is \cite[Theorem 1.8]{BarthelCastellanaHeardSanders22b} and $(a)$ certainly implies $(b)$. The implication $(c) \Rightarrow (a)$ follows from \cref{cor:AV-scott}.
\end{proof}

\begin{Prop}\label{prop:AV-S-h-strat}
	Let $f^*\colon\cat T \to \cat S$ be a geometric functor. Suppose $\cat S$  satisfies \mbox{h-minimality} for all homological primes. Then
        \[ 
            (\varphi^h)^{-1}(\Supph(t)) = \Supph(f^*(t))
        \]
	for all $t \in \cat T$.
\end{Prop}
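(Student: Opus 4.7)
The inclusion $(\varphi^h)^{-1}(\Supph(t))\subseteq\Supph(f^*(t))$ is already given as the first inclusion of \cref{prop:base-inclusion}, so the plan is to establish the reverse inclusion. Let $\cat B\in\Supph(f^*(t))$; the goal is to show that $\varphi^h(\cat B)\in\Supph(t)$. Since $\cat S$ has h-minimality at every homological prime, \cref{lem:minimalityhsuppidentification} gives $\Supph(f^*(t))=\Supphnaive(f^*(t))$, so $E_{\cat B}\otimes f^*(t)\neq 0$ in $\cat S$.

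Next, I would exploit the h-minimality of $\Loco{E_{\cat B}}$ in $\cat S$ directly: the nonzero object $E_{\cat B}\otimes f^*(t)$ lies in this minimal localizing ideal and therefore generates it, giving $E_{\cat B}\in\Loco{E_{\cat B}\otimes f^*(t)}\subseteq\Loco{f^*(t)}$. Applying $f_*$ and using the standard containment $f_*\Loco{f^*(t)}\subseteq\Loco{t}$ from \cite[(13.4)]{BCHS1}, I deduce $f_*(E_{\cat B})\in\Loco{t}$ in $\cat T$. Since $E_{\varphi^h(\cat B)}$ is a direct summand of $f_*(E_{\cat B})$ by \cref{lem:direct-summand}, this yields $E_{\varphi^h(\cat B)}\in\Loco{t}$.

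To convert this membership statement into the desired nonvanishing $\ihom{t,E_{\varphi^h(\cat B)}}\neq 0$, I would argue by contradiction. Consider the full subcategory
\[
\cat I\coloneqq\SET{x\in\cat T}{\ihom{x,E_{\varphi^h(\cat B)}}=0}.
\]
It is closed under coproducts (because $\ihom{-,E_{\varphi^h(\cat B)}}$ sends them to products), under triangles (by the long exact sequence in the first variable), and under the tensor action (by the adjunction $\ihom{y\otimes x,E_{\varphi^h(\cat B)}}\simeq\ihom{y,\ihom{x,E_{\varphi^h(\cat B)}}}$), so $\cat I$ is a localizing tensor ideal. If $\ihom{t,E_{\varphi^h(\cat B)}}=0$, then $t\in\cat I$, hence $\Loco{t}\subseteq\cat I$, and by the previous paragraph $E_{\varphi^h(\cat B)}\in\cat I$, forcing $\ihom{E_{\varphi^h(\cat B)},E_{\varphi^h(\cat B)}}=0$ and thus $E_{\varphi^h(\cat B)}=0$, a contradiction. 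The crux of the argument is the transfer of the minimality hypothesis from $\cat S$ to a membership statement in $\Loco{t}\subseteq\cat T$; everything else is formal manipulation of the closed monoidal structure.
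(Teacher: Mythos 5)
Your proof is correct, but it takes a genuinely different route from the paper's. The paper handles the reverse inclusion through the cosupport machinery: it first uses \cref{lem:direct-summand} to verify the claim $\cat B \in \Cosupph(f^!E_{\varphi^h(\cat B)})$, then invokes \cref{prop:hstratcosupp} (h-minimality implies the cosupport formula $\Cosupph(\ihom{t_1,t_2}) = \Supph(t_1) \cap \Cosupph(t_2)$ in $\cat S$) to deduce $\cat B \in \Cosupph(\ihom{f^*t, f^!E_{\varphi^h(\cat B)}})$, and concludes via the identification $\ihom{f^*t, f^!E_{\varphi^h(\cat B)}} \simeq f^!\ihom{t, E_{\varphi^h(\cat B)}}$. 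Your argument bypasses cosupport entirely: you convert minimality of $\Loco{E_{\cat B}}$ in $\cat S$ into the membership $E_{\cat B} \in \Loco{f^*(t)}$, push this through $f_*$ via \cite[(13.4)]{BCHS1} and the retraction of \cref{lem:direct-summand} to obtain $E_{\varphi^h(\cat B)} \in \Loco{t}$, and finish with a self-contained orthogonality argument (noting that $\ker\ihom{-,E_{\varphi^h(\cat B)}}$ is a localizing ideal). Your route is more elementary and avoids \cref{prop:hstratcosupp}, which is pleasant. What the paper's approach buys is uniformity: the same cosupport-formula mechanism immediately gives the dual statement \cref{prop:AV-coS-h-strat} for $\Cosupph$ and $f^!$, which your localizing-ideal chase does not transport to without a separate colocalizing argument. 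One small remark: the appeal to \cref{lem:minimalityhsuppidentification} to pass from $\cat B\in\Supph(f^*(t))$ to $E_{\cat B}\otimes f^*(t)\neq 0$ is harmless but unnecessary — the unconditional inclusion $\Supph\subseteq\Supphnaive$ of \cref{prop:supph-in-suppn} already yields exactly this.
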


\begin{proof}
    Let $\cat B \in \Spc^h(\cat S^c)$ be such that $\varphi^h(\cat B) \in \Supph(t)$, i.e., $0 \neq \ihom{t,E_{\varphi^h(\cat B)}}$. By adjunction and using \cref{lem:direct-summand}, this implies
        \[
            0 \neq \ihom{t,f_*E_{\cat B}} \simeq  f_*\ihom{f^*t, E_{\cat B}}.
        \]
    In particular, we see that $0\neq \ihom{f^*t, E_{\cat B}}$, that is $\cat B \in \Supph(f^*t)$. 

    For the reverse inclusion, take $\cat B \in \Supph(f^*t)$. We first claim that $\cat B \in \Cosupph(f^!E_{\varphi^h(\cat B)})$. Indeed, since $\ihom{E_{\varphi^h(\cat B)},E_{\varphi^h(\cat B)}}\neq 0$, \cref{lem:direct-summand} gives 
        \[
            0 \neq \ihom{f_*E_{\cat B},E_{\varphi^h(\cat B)}} \simeq f_*\ihom{E_{\cat B},f^!E_{\varphi^h(\cat B)}},
        \]
    so in particular $\cat B \in \Cosupph(f^!E_{\varphi^h(\cat B)})$. By assumption, $\cat S$ satisfies \mbox{h-minimality} at $\cat B$, so \cref{prop:hstratcosupp} implies 
        \[
            \cat B \in \Supph(f^*t) \cap \Cosupph(f^!E_{\varphi^h(\cat B)}) =\Cosupph(\ihom{f^*t,f^!E_{\varphi^h(\cat B)}}). 
        \]
    Hence $0 \neq \ihom{f^*t,f^!E_{\varphi^h(\cat B)}} \simeq f^!\ihom{t,E_{\varphi^h(\cat B)}}$, so $\varphi^h(\cat B) \in \Supph(t)$.
\end{proof}

\begin{Cor}
	Let $f^*\colon\cat T \to \cat S$ be a conservative geometric functor. Suppose $\cat S$ is h-stratified. Then $\cat T$ has the h-detection property.
\end{Cor}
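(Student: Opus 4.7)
The plan is to combine the Avrunin--Scott identity of \cref{prop:AV-S-h-strat} with the h-detection property for $\cat S$ and the conservativity of $f^*$. Suppose $t \in \cat T$ satisfies $\Supph(t) = \emptyset$; I will show that $t = 0$.

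Since $\cat S$ is h-stratified, it satisfies h-minimality at every homological prime $\cat B \in \Spc^h(\cat S^c)$. Therefore \cref{prop:AV-S-h-strat} applies and yields
\[
    \Supph(f^*(t)) = (\varphi^h)^{-1}(\Supph(t)) = (\varphi^h)^{-1}(\emptyset) = \emptyset.
\]
On the other hand, h-stratification of $\cat S$ implies the h-detection property for $\cat S$ (this is immediate from the bijection in \cref{thm:hstratfundamental}(c), or alternatively from the implication $(a)\Rightarrow(b)$ of \cref{thm:hstratcosupp}). Consequently, $\Supph(f^*(t)) = \emptyset$ forces $f^*(t) = 0$. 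Since $f^*$ is a conservative exact functor, this implies $t = 0$, establishing the h-detection property for $\cat T$.

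There is essentially no obstacle here: the result is a direct consequence of the base-change formula for h-support in the presence of h-minimality in the target, combined with the fact that h-stratification subsumes h-detection.
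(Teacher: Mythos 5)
Your proof is correct and follows essentially the same route as the paper's: apply the Avrunin--Scott identity of \cref{prop:AV-S-h-strat} (valid because h-stratification of $\cat S$ gives h-minimality), use h-detection for $\cat S$, and then invoke conservativity of $f^*$. You simply spell out a couple of intermediate justifications that the paper leaves implicit.
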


\begin{proof}
	If $\Supph(t)=\emptyset$ then $\Supph(f^*(t))=\emptyset$ by \cref{prop:AV-S-h-strat} and hence $f^*(t)=0$ since \mbox{h-stratified} categories have the h-detection property. Since $f^*$ is conservative, we conclude that $t=0$.
\end{proof}

\begin{Par}
    While this paper does not systematically develop the theory of homological cosupport, we do include the following cosupport-version of \cref{prop:AV-S-h-strat}, which  will be useful  in \cref{sec:weakly-descendable} below.
\end{Par}

\begin{Prop}\label{prop:AV-coS-h-strat}
    Let $f^*\colon\cat T \to \cat S$ be a geometric functor. Suppose $\cat S$ satisfies \mbox{h-minimality} for all homological primes. Then 
	\[ 
            (\varphi^h)^{-1}(\Cosupph(t)) = \Cosupph(f^!(t))
    \]
    for all $t \in \cat T$.
\end{Prop}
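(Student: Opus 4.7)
The plan is to mirror the proof of \cref{prop:AV-S-h-strat}, transposing support to cosupport and replacing the adjunction $f^* \dashv f_*$ with $f_* \dashv f^!$ throughout. As in that earlier argument, I expect only one of the two inclusions to actually require the h-minimality hypothesis on $\cat S$; the other should follow from a direct adjunction manipulation combined with \cref{lem:direct-summand}.

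For the inclusion $(\varphi^h)^{-1}(\Cosupph(t)) \subseteq \Cosupph(f^!(t))$, I would start from $\varphi^h(\cat B) \in \Cosupph(t)$, so that $\ihom{E_{\varphi^h(\cat B)},t} \neq 0$. Using that $E_{\varphi^h(\cat B)}$ is a direct summand of $f_*(E_{\cat B})$ (by \cref{lem:direct-summand}), I upgrade this to $\ihom{f_*(E_{\cat B}),t} \neq 0$, and then apply the standard adjunction isomorphism $\ihom{f_*(E_{\cat B}),t} \cong f_*\ihom{E_{\cat B},f^!(t)}$ coming from $f_* \dashv f^!$ to conclude $\ihom{E_{\cat B},f^!(t)} \neq 0$, i.e., $\cat B \in \Cosupph(f^!(t))$. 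For the reverse inclusion, I would take $\cat B \in \Cosupph(f^!(t))$ and observe that $\cat B \in \Supph(f^*(E_{\varphi^h(\cat B)}))$ by \cref{exa:AV-scott-EB}. The h-minimality of $\cat S$ then allows me to invoke the cosupport formula of \cref{prop:hstratcosupp} with $t_1 = f^*(E_{\varphi^h(\cat B)})$ and $t_2 = f^!(t)$ to deduce
\[
\cat B \in \Cosupph\bigl(\ihom{f^*(E_{\varphi^h(\cat B)}), f^!(t)}\bigr) = \Cosupph\bigl(f^!\ihom{E_{\varphi^h(\cat B)}, t}\bigr),
\]
where the second equality is the standard identification $\ihom{f^*(a), f^!(b)} \cong f^!\ihom{a,b}$. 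In particular $f^!\ihom{E_{\varphi^h(\cat B)}, t} \neq 0$, whence $\ihom{E_{\varphi^h(\cat B)}, t} \neq 0$, giving $\varphi^h(\cat B) \in \Cosupph(t)$.

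The main conceptual step --- and the only place where h-minimality of $\cat S$ enters --- is identifying $t_1 = f^*(E_{\varphi^h(\cat B)})$ as the right object whose homological support contains $\cat B$, so that the cosupport formula of \cref{prop:hstratcosupp} can be brought to bear. Everything else is a formal manipulation with the $f_* \dashv f^!$ adjunction and \cref{lem:direct-summand}, directly parallel to the support version.
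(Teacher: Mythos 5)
Your proof is correct and follows essentially the same route as the paper's: the first inclusion via \cref{lem:direct-summand} and the $f_*\dashv f^!$ adjunction, and the reverse inclusion by combining $\cat B \in \Supph(f^*E_{\varphi^h(\cat B)})$ with the cosupport formula of \cref{prop:hstratcosupp}(b), which holds thanks to h-minimality of $\cat S$. The only cosmetic difference is that you invoke \cref{exa:AV-scott-EB} where the paper cites the inline fact \eqref{eq:suppbasechangeEb}; these record the same observation.
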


\begin{proof}
    The proof is similar to the one of \cref{prop:AV-S-h-strat}. Let $\cat B \in \Spc^h(\cat S^c)$ be such that $\varphi^h(\cat B) \in \Cosupph(t)$, i.e., $0 \neq \ihom{E_{\varphi^h(\cat B)},t}$. By adjunction and using \cref{lem:direct-summand}, this implies
        \[
            0 \neq \ihom{f_*E_{\cat B},t} \simeq  f_*\ihom{E_{\cat B},f^!t}.
        \]
    In particular, we see that $0\neq \ihom{E_{\cat B},f^!t}$, that is $\cat B \in \Cosupph(f^!t)$. 

    To prove the reverse inclusion, consider some $\cat B \in \Cosupph(f^!t)$. Since $\cat S$ satisfies \mbox{h-minimality} at $\cat B$, \eqref{eq:suppbasechangeEb} and \cref{prop:hstratcosupp} give 
        \[
            \cat B \in \Supph(f^*E_{\varphi^h(\cat B)}) \cap \Cosupph(f^!t) =\Cosupph(\ihom{f^*E_{\varphi^h(\cat B)},f^!t}). 
        \]
    Hence $0 \neq \ihom{f^*E_{\varphi^h(\cat B)},f^!t} \simeq f^!\ihom{E_{\varphi^h(\cat B)}, t}$, so $\varphi^h(\cat B) \in \Cosupph(t)$.
\end{proof}

\begin{Prop}
	Let $f^*\colon\cat T \to \cat S$ be a geometric functor. Then
    \begin{enumerate}
        \item $\varphi^h(\Supph(R)) = \Supph(f_*(R))$ for any weak ring $R \in \cat S$.
        \item If $\cat S$ has the h-detection property then $\Supph(f_*(s)) \subseteq \varphi^h(\Supph(s))$ for all $s\in \cat S$.
    \end{enumerate}
\end{Prop}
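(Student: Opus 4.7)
The plan is to reduce both inclusions to statements about the naive homological support $\Supphnaive$, exploiting \cref{prop:supp-coincide-weak-coring} and \cref{lem:supp-coincide-h-detection} to pass between $\Supph$ and $\Supphnaive$ where useful. The core computational tool will be the projection formula $E_{\cat B'} \otimes f_*(s) \simeq f_*(f^*(E_{\cat B'}) \otimes s)$ together with the identification $\Supph(f^*(E_{\cat B'})) = (\varphi^h)^{-1}(\{\cat B'\})$, which is forced by the sandwich inclusions of \cref{prop:base-inclusion} applied to the weak ring $E_{\cat B'}$ (using $\Supph(E_{\cat B'})=\Supphnaive(E_{\cat B'})=\{\cat B'\}$).

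For part $(a)$, I will first observe that the lax monoidal structure on $f_*$ promotes $f_*(w)$ to a weak ring in $\cat T$, so $\Supph(f_*(w))=\Supphnaive(f_*(w))$ by \cref{prop:supp-coincide-weak-coring}. For $\cat B \in \Supph(w)$, the target of the inclusion then becomes $E_{\varphi^h(\cat B)} \otimes f_*(w) \neq 0$, which by the projection formula is $f_*(f^*(E_{\varphi^h(\cat B)}) \otimes w) \neq 0$. Since $f^*$ is strong monoidal, $f^*(E_{\varphi^h(\cat B)})$ is a weak ring in $\cat S$, hence so is $f^*(E_{\varphi^h(\cat B)}) \otimes w$. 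Its homological support computes as $(\varphi^h)^{-1}(\{\varphi^h(\cat B)\}) \cap \Supph(w)$ by the tensor product property, and this set contains $\cat B$, so the weak ring is non-zero by Balmer's detection theorem for weak rings \cite[Theorem~4.7]{Balmer20_bigsupport}. To conclude, I will use the elementary fact that $f_*$ sends non-zero weak rings to non-zero objects: for a non-zero weak ring $R \in \cat S$, the unit $\eta\colon \unit \to R$ is non-zero (else $1_R = \mu(1 \otimes \eta) = 0$), and by adjunction $\Hom_{\cat T}(\unit, f_*R) \cong \Hom_{\cat S}(\unit,R) \neq 0$.

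For part $(b)$, I will argue by contrapositive. Assume $\cat B' \notin \varphi^h(\Supph(s))$; under the h-detection hypothesis, $\Supph = \Supphnaive$ on $\cat S$ by \cref{lem:supp-coincide-h-detection}, so the assumption is equivalent to $(\varphi^h)^{-1}(\{\cat B'\}) \cap \Supph(s) = \emptyset$. Combining with the identification $\Supph(f^*(E_{\cat B'})) = (\varphi^h)^{-1}(\{\cat B'\})$ recalled above, the tensor product property yields $\Supph(f^*(E_{\cat B'}) \otimes s) = \emptyset$, and h-detection in $\cat S$ forces $f^*(E_{\cat B'}) \otimes s = 0$. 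Applying $f_*$ and the projection formula then produces $E_{\cat B'} \otimes f_*(s) = 0$, so $\cat B' \notin \Supphnaive(f_*(s))$; combined with the unconditional inclusion $\Supph(f_*(s)) \subseteq \Supphnaive(f_*(s))$ of \cref{prop:supph-in-suppn}, this yields $\cat B' \notin \Supph(f_*(s))$.

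The main delicacy is an asymmetry between the two parts: in part $(b)$ the object $f_*(s)$ is not a priori a weak ring, so \cref{prop:supp-coincide-weak-coring} cannot be invoked to identify $\Supph(f_*(s))$ with $\Supphnaive(f_*(s))$. The argument above circumvents this by passing through the \emph{containing} superset $\Supphnaive(f_*(s))$, which slots cleanly into the projection formula. A tempting alternative route through the $f_* \adj f^!$ adjunction --- rewriting $\ihom{f_*(s),E_{\cat B'}}$ as $f_*\ihom{s,f^!(E_{\cat B'})}$ --- would force us to analyze $f^!(E_{\cat B'})$, which is substantially less tractable than $f^*(E_{\cat B'})$ and would be the principal obstruction along that path.
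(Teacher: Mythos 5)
Your proof is correct and follows essentially the same route as the paper's: both rest on the identity $\Supph(f^*E_{\cat B}) = (\varphi^h)^{-1}(\{\cat B\})$, the tensor-product formula, the projection formula, and the fact that $f_*$ sends non-zero weak rings to non-zero objects. The only deviations are cosmetic: you argue part $(b)$ by contrapositive where the paper argues directly, and you spell out an elementary proof of the fact about $f_*$ and weak rings where the paper cites \cite[Remark~13.12]{BCHS1}.
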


\begin{proof}
    For the first statement, 
    observe that
    \begin{flalign*}
    \cat B \in \varphi^h(\Supph(R)) &\Longleftrightarrow (\varphi^h)^{-1}(\{\cat B\}) \cap \Supph(R) \neq \emptyset &&\\
      &\Longleftrightarrow \Supph(f^*(E_{\cat B})\otimes R) \neq \emptyset && \eqname{{\cref{cor:AV-scott}}}\\
      &\Longleftrightarrow f^*(E_{\cat B}) \otimes R \neq 0 && \eqname{{\cref{rem:hsuppdetectsweakrings}}} \\
     &\Longleftrightarrow f_*(f^*(E_{\cat B}) \otimes R)\neq 0 && \eqname{\cite[Remark~13.12]{BCHS1}}\\
     &\Longleftrightarrow E_{\cat B}\otimes f_*(R) \neq 0 && \\
     &\Longleftrightarrow \cat B \in \Supph(f_*(R)). &&
    \end{flalign*}
	For the second statement, suppose $\cat B \in \Supph(f_*(s))$. This implies $E_{\cat B} \otimes f_*(s) \neq 0$. That is, $f_*(f^*(E_{\cat B})\otimes t)\neq 0$. Hence $f^*(E_{\cat B}) \otimes s\neq 0$. The h-detection property for~$\cat S$ then implies $\Supph(f^*(E_{\cat B})\otimes s) \neq \emptyset$. It then follows that $(\varphi^h)^{-1}(\{\cat B\}) \cap \Supph(s) \neq \emptyset$ by using \cref{exa:AV-scott-EB}. Hence $\cat B \in \varphi^h(\Supph(s))$.
\end{proof}

\begin{Prop}\label{prop:image-of-finite}
	Let $f^*\colon\cat T \to \cat S$ be a finite localization. The induced map on homological spectra $\varphi^h\colon \Spc^h(\cat S^c) \to \Spc^h(\cat T^c)$ is an embedding with image $\im(\varphi^h) = \pi^{-1}(\im(\varphi))$.
\end{Prop}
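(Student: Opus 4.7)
The plan is to exploit that a finite localization realizes $\cat S$ as the Bousfield localization of $\cat T$ at $f_Y$ for some Thomason subset $Y \subseteq \Spc(\cat T^c)$, so that $\varphi \colon \Spc(\cat S^c) \to \Spc(\cat T^c)$ is a topological embedding onto $Y^c$ and, up to idempotent completion, the induced functor $f^*\colon\cat T^c\to \cat S^c$ is the Verdier quotient by the thick $\otimes$-ideal $\cat K_Y \coloneqq \{k \in \cat T^c \mid \supp(k) \subseteq Y\}$.

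First I would pin down the image of $\varphi^h$. The inclusion $\im(\varphi^h) \subseteq \pi^{-1}(\im(\varphi))$ follows from commutativity of the square relating $\pi$ and $\varphi$. For the converse, I would invoke the description of the homological spectrum as the space of (equivalence classes of) maximal Serre $\otimes$-ideals of the finitely presented module category $\mathrm{mod}(\cat T^c)$. Because $f^*$ is a Verdier quotient, the restriction functor $\mathrm{mod}(\cat S^c) \hookrightarrow \mathrm{mod}(\cat T^c)$ is fully faithful with essential image the modules vanishing on $\cat K_Y$. This induces a bijection between homological primes of $\cat S^c$ and homological primes $\cat B$ of $\cat T^c$ satisfying $E_{\cat B} \otimes k = 0$ for every $k \in \cat K_Y$. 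By \cref{rem:comparisonforcompacts}, the latter condition is equivalent to $\pi(\cat B) \notin \supp(k)$ for all $k \in \cat K_Y$, i.e.~to $\pi(\cat B) \notin Y$, i.e.~to $\pi(\cat B) \in \im(\varphi)$. This simultaneously yields the injectivity of $\varphi^h$ and the identification $\im(\varphi^h) = \pi^{-1}(\im(\varphi))$.

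For the topological embedding statement, I would observe that \cref{rem:comparisonforcompacts} implies that the topology on $\Spc^h(-)$ coincides with the pullback of the topology on $\Spc(-)$ via $\pi$ (since the closed sets $\Supph(c) = \pi^{-1}(\supp(c))$ for $c \in \cat T^c$ form a basis of closed sets). Since $\varphi$ is already a topological embedding onto $Y^c$, a direct diagram chase then shows that $\varphi^h$ is also a topological embedding, now onto $\pi^{-1}(Y^c)$.

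The main obstacle is making rigorous the Verdier quotient description of homological primes in terms of maximal Serre $\otimes$-ideals, together with the translation of the ``contains $\cat K_Y$'' condition into the condition on the weak rings $E_{\cat B}$. This is essentially embedded in Balmer's original construction of the homological spectrum, but requires some care due to the intervention of idempotent completion.
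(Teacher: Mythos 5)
Your approach is genuinely different from the paper's, and it is morally sound but carries the gap you yourself flag. Let me compare the two and pin down exactly where the work is hiding.

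The paper's proof stays entirely inside the tensor-triangular toolbox and avoids any appeal to the Freyd-envelope/Serre-quotient description of $\Spc^h$. For injectivity, it uses that a finite localization is smashing (so $f_*$ preserves $\otimes$), together with \cref{lem:direct-summand} ($E_{\varphi^h(\cat C)}$ is a summand of $f_*(E_{\cat C})$), to conclude that $\varphi^h(\cat C_1)=\varphi^h(\cat C_2)$ forces $E_{\cat C_1}\otimes E_{\cat C_2}\neq 0$ and hence $\cat C_1=\cat C_2$. For the image, it observes that $f_*(\unit_{\cat S})$ is the right idempotent $f_Y$, whence $\Supph(f_*\unit_{\cat S})=\pi^{-1}(\im\varphi)$ by \cref{lem:supph-gW}, while $\Supph(f_*\unit_{\cat S})=\im(\varphi^h)$ by Balmer's Theorem~5.12. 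For the embedding, it notes that the closed sets $\Supph(x)$, $x\in\cat S^c$, pull back via Avrunin--Scott for weak rings (\cref{cor:AV-scott}) to $\Supph(c)\cap\im(\varphi^h)$ where $x\oplus\Sigma x = f^*(c)$, and uses injectivity of $\varphi^h$ to preserve intersections. All of this is a couple of lines per step, with the key inputs being facts already established earlier in the paper.

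Your route instead passes through the definition of homological primes as maximal Serre $\otimes$-ideals of $\mathrm{mod}(-)$ and the fact that $\mathrm{mod}(f^*)\colon\mathrm{mod}(\cat T^c)\to\mathrm{mod}(\cat S^c)$ is a Gabriel quotient for a Verdier quotient $f^*$. The claim you state --- ``the restriction functor $\mathrm{mod}(\cat S^c)\hookrightarrow\mathrm{mod}(\cat T^c)$ is fully faithful with essential image the modules vanishing on $\cat K_Y$'' --- needs repair as written: precomposition with $f^*$ does not preserve finite presentation in general (it lands in $\mathrm{Mod}$, not $\mathrm{mod}$), and the essential image of the fully faithful right adjoint to a Gabriel quotient is the subcategory of torsion-free, divisible (i.e.\ local) objects, which is not literally ``modules vanishing on $\cat K_Y$.'' The clean statement is that $\mathrm{mod}(f^*)$ realizes $\mathrm{mod}(\cat S^c)$ as the Serre quotient of $\mathrm{mod}(\cat T^c)$ by the Serre $\otimes$-ideal generated by $\{\hat{\mathbb h}(k)\mid k\in\cat K_Y\}$, so that maximal Serre $\otimes$-ideals of $\mathrm{mod}(\cat S^c)$ correspond to maximal Serre $\otimes$-ideals of $\mathrm{mod}(\cat T^c)$ containing that kernel. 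Your translation of ``$\cat B\supseteq\langle\hat{\mathbb h}(\cat K_Y)\rangle$'' into ``$E_{\cat B}\otimes k=0$ for all $k\in\cat K_Y$,'' and then into ``$\pi(\cat B)\notin Y$,'' via \cref{rem:comparisonforcompacts}, is correct. But one must also check that Balmer's $\varphi^h$ (defined via big module categories) agrees with the Serre-quotient bijection, handle the idempotent-completion issue, and verify that maximality passes across the quotient in the $\otimes$-ideal rather than plain Serre-ideal lattice. None of this is false, but it is a nontrivial amount of bookkeeping that the paper simply sidesteps.

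On the plus side, your observation about the topology is both correct and arguably cleaner than the paper's: by \cref{rem:comparisonforcompacts} the basis of closed sets $\Supph(x)=\pi^{-1}(\supp(x))$ shows that $\Spc^h(\cat K)$ always carries exactly the topology pulled back from $\Spc(\cat K)$ along $\pi$. Given this, given injectivity of $\varphi^h$, and given that $\varphi$ is a topological embedding, the fact that $\varphi^h$ is an embedding does indeed follow by a pure diagram chase (no Avrunin--Scott needed). This is a nice structural remark. So the net assessment is: the embedding step of your proof is solid once injectivity is in hand; the injectivity and image steps rest on the Gabriel-quotient bijection, which is a genuine gap to fill, and the paper's direct smashing/idempotent argument is shorter.
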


\begin{proof}
	First we establish injectivity. Let $\cat C_1, \cat C_2 \in \Spc^h(\cat S^c)$ and suppose $\varphi^h(\cat C_1) = \varphi^h(\cat C_2)$. Then $E_{\varphi^h(\cat C_1)} \otimes E_{\varphi^h(\cat C_2)} \neq 0$. Hence $f_*(E_{\cat C_1}) \otimes f_*(E_{\cat C_2}) \neq 0$ by \cref{lem:direct-summand}. Moreover, since $f^*$ is a finite localization, it is smashing, so $f_*$ preserves the tensor product (but not necessarily the unit).  Therefore, $f_*(E_{\cat C_1}) \otimes f_*(E_{\cat C_2}) \simeq f_*(E_{\cat C_1} \otimes E_{\cat C_2})$ and hence we conclude that $E_{\cat C_1} \otimes E_{\cat C_2} \neq 0$. Thus $\cat C_1 = \cat C_2$.

    Recall from \cite[Remark 3.4]{Balmer20_nilpotence} and \cite[Example 2.8]{bhs2} that a basis of closed sets for the topology on $\Spc^h(\cat S^c)$  is given by the $\Supph(x)$ for $x \in \cat S^c$. Since~$f^*$ is a finite localization, $x\oplus \Sigma x = f^*(c)$ for some compact $c \in \cat T^c$ and (replacing $c$ with $c \otimes c^\vee$) we can assume $c$ is a (weak) ring. Then by \cref{cor:AV-scott} we have
    \[
        \varphi^h(\Supph(x))=\varphi^h(\Supph(f^*(c))) = \varphi^h((\varphi^h)^{-1}(\Supph(c))) = \Supph(c) \cap \im(\varphi^h).
    \]
    Moreover since $\varphi^h$ is injective, it preserves intersections. It follows that for an arbitrary closed subset $Z \subseteq \Spc^h(\cat S^c)$, we similarly have $\varphi^h(Z) = Z' \cap \im(\varphi^h)$ for some closed subset $Z' \subseteq \Spc^h(\cat T^c)$. This establishes that $\varphi^h$ is an embedding.

    To describe the image of $\varphi^h$, note that the object $f_*(\unitS)$ is the right idempotent of the finite localization so $\Supph(f_*(\unitS)) = \pi^{-1}(\im(\varphi))$ by \cref{lem:supph-gW}  and $\Supph(f_*(\unitS))=\im(\varphi^h)$ by \cite[Theorem 5.12]{Balmer20_bigsupport}.
\end{proof}

\begin{Cor}\label{cor:cartesian}
	Let $f^*\colon\cat T \to \cat S$ be a finite localization. Then the induced diagram
	\[
    \begin{tikzcd}
		\Spc^h(\cat S^c) \ar[d,"\pi"'] \ar[hook,r,"\varphi^h"] &\Spc^h(\cat T^c) \ar[d,"\pi"] \\
		\Spc(\cat S^c) \ar[hook,r,"\varphi"] &\Spc(\cat T^c)
	\end{tikzcd}
    \]
	is cartesian.
\end{Cor}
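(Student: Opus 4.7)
The plan is to combine \cref{prop:image-of-finite} with the standard fact that, for a finite localization $f^*$, the map $\varphi\colon \Spc(\cat S^c) \to \Spc(\cat T^c)$ is itself a topological embedding; indeed, the image of $\varphi$ is the open complement of $\supp(x)$ for any compact generator $x$ of $\ker(f^*)^c$, and $\varphi$ is a homeomorphism onto this open subset. Combined with \cref{prop:image-of-finite}, this tells us that both horizontal arrows in the square are embeddings and that $\im(\varphi^h) = \pi^{-1}(\im(\varphi))$.

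Setting $P := \Spc(\cat S^c) \times_{\Spc(\cat T^c)} \Spc^h(\cat T^c)$ and letting $\psi\colon \Spc^h(\cat S^c) \to P$ denote the canonical continuous map with components $\pi$ and $\varphi^h$, I would first verify bijectivity. Injectivity is immediate from injectivity of $\varphi^h$. For surjectivity, a point $(x,y) \in P$ satisfies $\pi(y) = \varphi(x) \in \im(\varphi)$, so $y \in \pi^{-1}(\im(\varphi)) = \im(\varphi^h)$ by \cref{prop:image-of-finite}; writing $y = \varphi^h(\cat B)$, the commutativity of the given square together with injectivity of $\varphi$ forces $x = \pi(\cat B)$, so $(x,y) = \psi(\cat B)$.

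For the topological statement, let $\iota\colon \pi^{-1}(\im(\varphi)) \hookrightarrow \Spc^h(\cat T^c)$ be the inclusion and factor both $\varphi^h = \iota\circ\overline{\varphi^h}$ and the second projection $p\colon P \to \Spc^h(\cat T^c)$ as $p = \iota\circ\overline{p}$. \Cref{prop:image-of-finite} shows that $\overline{\varphi^h}$ is a homeomorphism, while the standard fact that pullbacks of topological embeddings are embeddings (applied to $\varphi$ along $\pi$) shows that $\overline{p}$ is also a homeomorphism onto $\pi^{-1}(\im(\varphi))$. The relation $p\circ\psi = \varphi^h$ then yields $\psi = \overline{p}^{-1}\circ\overline{\varphi^h}$, exhibiting $\psi$ as a homeomorphism. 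The only mildly delicate ingredient is the ``pullback-of-embedding is embedding'' principle, but once this is in hand the argument is purely formal.
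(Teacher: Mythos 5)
Your proof is correct and uses the same key input as the paper, namely \cref{prop:image-of-finite} together with the standard fact that $\varphi$ is an open embedding for a finite localization. The paper's own proof is a single sentence: it observes that, since both horizontal maps are injective, cartesianness of the square (at the level of underlying sets) is literally the statement $\im(\varphi^h)=\pi^{-1}(\im(\varphi))$, which is the conclusion of \cref{prop:image-of-finite}. You unpack that one-liner into an explicit bijectivity check of the canonical map $\psi\colon\Spc^h(\cat S^c)\to P$, and then go further by also verifying that $\psi$ is a homeomorphism via the ``pullback of an embedding is an embedding'' principle. This last step is a genuine addition: the paper leaves the topological content implicit (it is routine once $\varphi$ and $\varphi^h$ are known to be embeddings), whereas you make it fully explicit. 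So the approach is the same, but your write-up is more careful about the distinction between cartesianness in $\mathrm{Set}$ and in $\mathrm{Top}$; the trade-off is length, as the paper's argument is essentially an immediate reformulation of \cref{prop:image-of-finite}.
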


\begin{proof}
	Let $\cat B \in \Spc^h(\cat T^c)$. We claim that $\cat B$ is in the image of $\varphi^h$ if and only if $\pi(\cat B)$ is  in the image of $\varphi$, but this is precisely what $\im(\varphi^h)=\pi^{-1}(\im(\varphi))$ says.
\end{proof}

\begin{Cor}\label{cor:cartesian2}
    Let $f^*\colon \cat T\to \cat S$ be a geometric functor and let $V \subseteq \Spc(\cat T^c)$ be the complement of a Thomason subset. The induced diagram
    \[
    \begin{tikzcd}
        \Spc^h(\cat S^c) \ar[r,"\varphi^h"] & \Spc^h(\cat T^c) \\
        \Spc^h(\cat S(\varphi^{-1}(V))^c) \ar[u,hook] \ar[r] & \Spc^h(\cat T(V)^c) \ar[u,hook]
    \end{tikzcd}
    \]
    is cartesian.
\end{Cor}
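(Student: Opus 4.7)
The plan is to apply \cref{cor:cartesian} to both vertical finite localizations and then match the images of the two vertical embeddings via the compatibility $\pi_{\cat T}\circ\varphi^h=\varphi\circ\pi_{\cat S}$ recorded in the Notation and Conventions.

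First I would check that the commutative square of homological spectra actually comes from a commutative square of tt-categories. Since $V=Y^c$ for a Thomason $Y\subseteq\Spc(\cat T^c)$ and the spectral map $\varphi$ pulls Thomason subsets back to Thomason subsets, $\varphi^{-1}(V)$ is the complement of the Thomason subset $\varphi^{-1}(Y)\subseteq\Spc(\cat S^c)$. In particular $\cat S(\varphi^{-1}(V))$ is a well-defined finite localization of $\cat S$. Using the identity $\supp(f^*x)=\varphi^{-1}(\supp x)$ for $x\in\cat T^c$, the functor $f^*$ sends the thick ideal corresponding to $Y$ into the thick ideal corresponding to $\varphi^{-1}(Y)$, so it descends to a geometric functor $\bar f^*\colon\cat T(V)\to\cat S(\varphi^{-1}(V))$ giving the required commutative square.

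Next I would invoke \cref{cor:cartesian} twice. It says that the right vertical arrow $\iota_{\cat T}\colon\Spc^h(\cat T(V)^c)\hookrightarrow\Spc^h(\cat T^c)$ is an embedding whose image is $\pi_{\cat T}^{-1}(V)$, and similarly the left vertical arrow $\iota_{\cat S}\colon\Spc^h(\cat S(\varphi^{-1}(V))^c)\hookrightarrow\Spc^h(\cat S^c)$ is an embedding with image $\pi_{\cat S}^{-1}(\varphi^{-1}(V))$. Form the topological pullback
\[
    P \coloneqq \Spc^h(\cat T(V)^c)\times_{\Spc^h(\cat T^c)}\Spc^h(\cat S^c).
\]
Since $\iota_{\cat T}$ is an embedding, pullback along $\varphi^h$ produces an embedding $P\hookrightarrow\Spc^h(\cat S^c)$ with image $(\varphi^h)^{-1}(\pi_{\cat T}^{-1}(V))$. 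The commutative $\pi$-square of the Notation and Conventions rewrites this as $\pi_{\cat S}^{-1}(\varphi^{-1}(V))$, which is exactly the image of $\iota_{\cat S}$.

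Finally, $\Spc^h(\cat S(\varphi^{-1}(V))^c)$ and $P$ now appear as two subspaces of $\Spc^h(\cat S^c)$ with the same underlying set, and the canonical map from the former to the latter is compatible with the respective inclusions into $\Spc^h(\cat S^c)$. It is therefore a continuous bijection of subspaces of a common ambient space, hence a homeomorphism, so the square is cartesian. The argument is essentially bookkeeping and I do not anticipate a real obstacle: the only non-formal ingredient is \cref{cor:cartesian}, which already identifies both vertical images, together with the fact that pullback of an embedding in topological spaces is an embedding onto the set-theoretic preimage.
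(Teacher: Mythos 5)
Your proof is correct and matches the paper's approach: the paper's proof is the one-line assertion "This follows from \cref{prop:image-of-finite} and the definitions," and your argument is precisely the unwinding of that remark, identifying the two vertical maps as embeddings with images $\pi_{\cat T}^{-1}(V)$ and $\pi_{\cat S}^{-1}(\varphi^{-1}(V))$ and checking via $\pi_{\cat T}\circ\varphi^h=\varphi\circ\pi_{\cat S}$ that these match the pullback. (One small notational remark: the image and embedding properties you attribute to \cref{cor:cartesian} are literally the content of \cref{prop:image-of-finite}; the corollary repackages this as a cartesian square, but the underlying input is the same.)
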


\begin{proof}
    This follows from  \cref{prop:image-of-finite} and the definitions.
\end{proof}

\section{Weakly descendable functors and descent}\label{sec:weakly-descendable}

\begin{Ter}
    Let $(f_i^*\colon \cat T \to \cat S_i)_{i \in I}$ be a family of geometric functors between rigidly-compactly generated tt-categories, indexed on a (not necessarily finite) set~$I$. We refer to $(f_i^*)_{i \in I}$ as a \emph{geometric family}.
\end{Ter}

\begin{Def}\label{def:weaklydescendable}
    The geometric family $(f_i^*)_{i \in I}$ is said to be \emph{weakly descendable} if 
    \begin{equation}\label{eq:weakdescentcondition}
        \unit \in \Loco{(f_i)_*\unit \mid i \in I}
    \end{equation}
    in $\cat T$.
\end{Def}

\begin{Rem}\label{rem:descendable}
	The terminology of \cref{def:weaklydescendable} is inspired by the notion of a descendable algebra, introduced by \cite{Mathew2016Galois}. More generally, we say that a geometric family $(f_i^*)_{i \in I}$ is \emph{descendable} if 
    \begin{equation}\label{eq:descentcondition}
        \unit \in \Thickid{(f_i)_*\unit \mid i \in I}
    \end{equation}
    in $\cat T$. The situation studied in loc.~cit.~is the special case when $I$ is a singleton and~$f^*$ is given by base-change along a commutative algebra.
\end{Rem}

\begin{Exa}\label{ex:strictweaklydescendable}
    Base-change along $\bbZ_{(p)} \to \bbQ \times \bbZ/p$ provides a geometric functor on derived categories which is weakly descendable but not descendable. 
\end{Exa}

\begin{Lem}\label{lem:weakdescentcharacterization}
    For a geometric family $(f_i^*)_{i \in I}$, the following 
    are equivalent:
    \begin{enumerate}
        \item $(f_i^*)_{i \in I}$ is weakly descendable;
        \item $(f_i^!)_{i \in I}$ is jointly conservative;
        \item for any $t_1,t_2 \in \cat T$, we have
            \[
                t_1 \in \Loco{t_2} \iff \forall i \in I\colon f_i^*(t_1) \in \Loco{f_i^*(t_2)}.
            \]
    \end{enumerate}
\end{Lem}

\begin{proof}
    We first prove $(a) \Leftrightarrow (b)$. Write $\cat L = \Loco{(f_i)_*\unit \mid i \in I}$ and consider its right orthogonal $\cat L^{\perp}$ in $\cat T$. Note that 
    \begin{align*}
        t \in \cat L^{\perp} & \iff \ihomT{(f_i)_*\unit,t} = 0 \text{ for all } i \in I \\ 
        & \iff (f_i)_*f_i^!t = 0  \text{ for all } i \in I \\
        & \iff f_i^!t = 0  \text{ for all } i \in I.
    \end{align*}
	The first equivalence follows from \cite[Definition 2.9]{BCHS1}, the second equivalence follows from the internal adjunction  \cite[(2.18)]{BalmerDellAmbrogioSanders16}, and the final equivalence follows from the observation that a left adjoint is conservative on the essential image of its right adjoint, by the unit-counit identity.

    $(a) \Rightarrow (c)$: If $t_1 \in \Loco{t_2}$ then $f_i^*(t_1) \in \Loco{f_i^*(t_2)}$ for all $i \in I$, since $f_i^*$ are tt-functors, so it remains to prove the reverse implication in $(c)$. Tensoring \eqref{eq:weakdescentcondition} with $t_1$ and using the projection formula gives
        \[
            t_1 \in \Loco{t_1\otimes (f_i)_*f_i^*(\unit)\mid i \in I} = \Loco{(f_i)_*f_i^*(t_1)\mid i \in I}.
        \]
    Assuming $f_i^*(t_1) \in \Loco{f_i^*(t_2)}$ and using the projection formula again, we deduce $(f_i)_*f_i^*(t_1) \in \Loco{(f_i)_*f_i^*(t_2)}$. Now varying over $i \in I$ yields 
        \begin{align*}
            t_1 \in \Loco{(f_i)_*f_i^*(t_1)\mid i \in I} 
                & \subseteq \Loco{(f_i)_*f_i^*(t_2)\mid i \in I} \\
                & = \Loco{t_2 \otimes (f_i)_*f_i^*(\unit)\mid i \in I} \\
                & \subseteq \Loco{t_2},
        \end{align*}
    as claimed. 

    $(c) \Rightarrow (a)$: By the triangle identity for the 
    $f_i^* \dashv (f_i)_*$
    adjunction, $f_i^*(\unit)$ is a retract of $f_i^*(f_i)_*f_i^*(\unit) \simeq f_i^*(f_i)_*(\unit)$, hence $f_i^*(\unit) \in \Loco{f_i^*(f_i)_*(\unit))}$ for all $i \in I$. Therefore, condition $(c)$ implies $\unit \in \Loco{(f_i)_*(\unit)\mid i \in I}$, so $(f_i^*)_{i \in I}$ is weakly descendable.
\end{proof}

\begin{Exa}
    A geometric functor $f^*\colon \cat T \to \cat S$ is weakly descendable if and only if its double-right adjoint $f^!\colon \cat T\to \cat S$ is conservative.
\end{Exa}

\begin{Rem}
    Condition $(c)$ of \cref{lem:weakdescentcharacterization} has been used in \cite{BBIKP_stratification} to establish a fiberwise criterion for stratification. Its tt-geometric incarnation is studied further in work of Juan Omar G\'omez \cite{Gomez_fibrewise2025}.
\end{Rem}

\begin{Def}\label{def:weaklystronglyclosed}
    A geometric family $(f_i^*)_{i \in I}$ is said to be \emph{weakly closed} if ${(f_i)_*(\unit)\in \cat T}$ is compact for all $i\in I$. It is said to be \emph{strongly closed} if $(f_i)_*:\cat S_i \to \cat T$ preserves compact objects for all $i\in I$.
\end{Def}

\begin{Prop}\label{prop:surjectivity}
    Let $(f_i^*\colon \cat T \to \cat S_i)_{i\in I}$ be a geometric family. Consider the following statements:
    \begin{enumerate}
        \item $(f_i^*)_{i\in I}$ is weakly descendable;
        \item $(f_i^*)_{i \in I}$ is jointly conservative;
        \item $(f_i^*)_{i \in I}$ is jointly nil-conservative;
        \item the induced map on homological spectra \[
        \varphi^h = \sqcup \varphi_i^h\colon \bigsqcup_{i \in I}\Spc^h(\cat S_i^c) \to \Spc^h(\cat T^c) \] is surjective;
        \item the induced map on tt-spectra \[
        \varphi = \sqcup \varphi_i\colon \bigsqcup_{i \in I}\Spc(\cat S_i^c) \to \Spc(\cat T^c) \] is surjective.
    \end{enumerate}
    Then $(a) \Rightarrow (b) \Rightarrow (c) \Leftrightarrow (d) \Rightarrow (e)$. Moreover, if $(f_i^*)_{i \in I}$ is weakly closed then $(e) \Rightarrow (a)$ and hence all the statements are equivalent.
\end{Prop}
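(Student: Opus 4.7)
The plan is to establish the chain of implications in the stated order; only the final implication $(e) \Rightarrow (a)$, which uses the weakly closed hypothesis, presents any real difficulty. For $(a) \Rightarrow (b)$, the projection formula gives $(f_i)_*\unit \otimes t \simeq (f_i)_*f_i^*(t)$, so if $f_i^*(t) = 0$ for all $i$, then the class $\SET{t' \in \cat T}{t' \otimes t = 0}$ is a localizing ideal containing every generator $(f_i)_*\unit$ and hence contains $\unit$, forcing $t = 0$. The step $(b) \Rightarrow (c)$ is immediate. For $(d) \Rightarrow (e)$, it suffices to recall that $\pi_{\cat T}$ is surjective and use the commutative square relating the two flavors of spectra given in the notation section.

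The equivalence $(c) \Leftrightarrow (d)$ is the family version of \cite[Theorem~1.8]{BarthelCastellanaHeardSanders22b} (compare \cref{cor:surjectivitycriterion}). For $(c) \Rightarrow (d)$, fix $\cat B \in \Spc^h(\cat T^c)$: the nonzero weak ring $E_{\cat B}$ cannot be annihilated by every $f_i^*$, so some $f_i^*(E_{\cat B})$ is a nonzero weak ring, whose h-support is then nonempty by detection for weak rings (\cref{rem:hsuppdetectsweakrings}). By \cref{exa:AV-scott-EB}, this support equals $(\varphi_i^h)^{-1}(\{\cat B\})$, so $\cat B \in \im(\varphi^h)$. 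Conversely, for $(d) \Rightarrow (c)$, if $w$ is a weak ring with $f_i^*(w) = 0$ for all $i$, then \cref{cor:AV-scott}$(a)$ gives $(\varphi_i^h)^{-1}(\Supph(w)) = \emptyset$ for every $i$; surjectivity of $\varphi^h$ then forces $\Supph(w) = \emptyset$, and detection for weak rings yields $w = 0$.

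The hard part is $(e) \Rightarrow (a)$ under weak closedness. The strategy is to establish the inclusion $\im(\varphi_i) \subseteq \supp((f_i)_*\unit_{\cat S_i})$ in $\Spc(\cat T^c)$ for every $i \in I$. Granting this, surjectivity of $\varphi$ produces
\[
    \Spc(\cat T^c) = \bigcup_{i \in I} \im(\varphi_i) \subseteq \bigcup_{i \in I} \supp((f_i)_*\unit_{\cat S_i}),
\]
whereupon Balmer's classification of thick $\otimes$-ideals of $\cat T^c$ by Thomason subsets yields $\unit \in \Thickid{(f_i)_*\unit_{\cat S_i}\mid i \in I} \subseteq \Loco{(f_i)_*\unit_{\cat S_i}\mid i \in I}$, establishing weak descendability. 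The key input for the claim is the triangle identity $(\epsilon f_i^*) \circ (f_i^* \eta) = \id_{f_i^*}$ evaluated at $\unit_{\cat T}$: it exhibits $\unit_{\cat S_i}$ as a retract of $f_i^*(f_i)_*\unit_{\cat S_i}$ in $\cat S_i^c$, so the latter has full support on $\Spc(\cat S_i^c)$. For any prime $\cat P = \varphi_i(\cat Q) = (f_i^*)^{-1}(\cat Q)$ this means $f_i^*(f_i)_*\unit_{\cat S_i} \notin \cat Q$, and hence $(f_i)_*\unit_{\cat S_i} \notin \cat P$, as required.
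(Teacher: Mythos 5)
Your proof is correct and follows the same chain of implications as the paper, but you replace two of the paper's citations with self-contained arguments, which is worth noting. For $(a)\Rightarrow(b)$, the paper goes through \cref{lem:weakdescentcharacterization} and a cited result (jointly conservative $f_i^!$ implies jointly conservative $f_i^*$); you instead argue directly that $\SET{t'}{t'\otimes t=0}$ is a localizing ideal containing all $(f_i)_*\unit$ via the projection formula, hence containing $\unit$, which is more elementary. For $(e)\Rightarrow(a)$, the paper invokes an external corollary giving the \emph{equality} $\im\varphi_i=\Supp((f_i)_*\unit)$; you prove only the inclusion $\im\varphi_i\subseteq\supp((f_i)_*\unit)$ from scratch using the triangle identity (exhibiting $\unit_{\cat S_i}$ as a retract of $f_i^*(f_i)_*\unit_{\cat S_i}$), and correctly observe that this one inclusion suffices for the classification-of-thick-ideals argument. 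Your spelled-out argument for $(c)\Leftrightarrow(d)$ (via detection for weak rings, \cref{exa:AV-scott-EB}, and \cref{cor:AV-scott}) is also sound and is essentially what underlies the cited theorem. Overall: same skeleton, but you buy self-containedness and a slightly weaker input for the last step, at the cost of a somewhat longer writeup.
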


\begin{proof}
    $(a)\Rightarrow(b)$: This follows from \cref{lem:weakdescentcharacterization} and the proof of \cite[Proposition 13.21]{BCHS1}.
    $(b)\Rightarrow(c)$: This is immediate from the definitions.
    $(c)\Leftrightarrow(d)$: This is \cite[Theorem 1.8]{BarthelCastellanaHeardSanders22b}. $(d) \Rightarrow (e)$: This holds because $\pi$ is surjective.
    $(e)\Rightarrow(a)$: Since $\varphi$ is surjective, we have
    \[
        \Spc(\cat T^c)=\im\varphi=\bigcup_{i\in I}\im\varphi_i.
    \]
    If the $f_i^*$ are weakly closed then, by definition, 
    each $(f_i)_*(\unit)$ is compact. 
    We then have $\im\varphi_i \subseteq \supp((f_i)_*(\unit))$ by the argument given at the beginning of the proof of \cite[Theorem~1.7]{Balmer18}.
    Hence
    \[
        \Spc(\cat T^c)=\bigcup_{i\in I}\im\varphi_i\subseteq\bigcup_{i \in I}\supp((f_i)_*(\unit))
    \]
    so that 
$\cat T^c=\Thickid{(f_i)_*\unit \mid i \in I}$ by the classification of thick ideals.
\end{proof}

\begin{Rem}
    In general, $(f_i^*)_{i \in I}$ being jointly conservative does not imply that the family is weakly descendable. Indeed, let $k$ be a field and $I$ an infinite set. By \cite{Stevenson14}, the projection functors
    \[
        (\pi_i^*\colon \Der(\textstyle\prod_{i \in I}k) \to \Der(k))
    \]
    are jointly conservative. However,  the localizing ideal $\Loco{(\pi_i)_*k}$ is a proper subcategory of $\Der(\prod_{i \in I}k)$ since $\prod_{i \in I}k$ is not semi-artinian (\cref{exa:absflat}).
\end{Rem}

\begin{Rem}
    The next result is a minor variation on  \cref{prop:surjectivity} which improves on \cite[Corollary 14.24]{BCHS1} by extending the latter to families of geometric functors and weakening the tt-stratification assumption to h-stratification.
\end{Rem}

\begin{Cor}\label{cor:hstrat_surjectivity}
    Let $(f_i^*\colon \cat T \to \cat S_i)_{i\in I}$ be a geometric family and assume that $\cat T$ is h-stratified. Then the following are equivalent:
        \begin{enumerate}
            \item $(f_i^*)_{i\in I}$ is weakly descendable;
            \item $(f_i^*)_{i \in I}$ is jointly conservative;
            \item $(f_i^*)_{i \in I}$ is jointly nil-conservative;
            \item the induced map on homological spectra 
                \[
                    \varphi^h = \sqcup \varphi_i^h\colon \bigsqcup_{i \in I}\Spc^h(\cat S_i^c) \to \Spc^h(\cat T^c) 
                \] 
                is surjective.
    \end{enumerate}
\end{Cor}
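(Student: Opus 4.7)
The plan is the following: since \Cref{prop:surjectivity} already establishes the chain $(a)\Rightarrow(b)\Rightarrow(c)\Leftrightarrow(d)$ without any stratification hypothesis, it suffices to deduce $(a)$ from $(d)$ using only that $\cat T$ is h-stratified.

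Set $\cat L \coloneqq \Loco{(f_i)_*(\unit) \mid i \in I}$. Condition $(a)$ amounts to the equality $\cat L = \cat T$. Since $\cat T$ is h-stratified, homological support induces a bijection between localizing ideals of $\cat T$ and subsets of $\Spc^h(\cat T^c)$ by \Cref{thm:hstratfundamental}, so I would reduce the problem to showing $\Supph(\cat L) = \Spc^h(\cat T^c) = \Supph(\cat T)$. Since $\cat L$ contains each $(f_i)_*(\unit)$, one immediately has $\Supph(\cat L) \supseteq \bigcup_{i \in I}\Supph((f_i)_*(\unit))$. Invoking \cite[Theorem~5.12]{Balmer20_bigsupport} to identify $\Supph((f_i)_*(\unit))$ with $\im(\varphi_i^h)$ (this identification is already used in the proof of \Cref{prop:image-of-finite}), the inclusion reduces to $\Supph(\cat L) \supseteq \im(\varphi^h)$, which equals $\Spc^h(\cat T^c)$ by assumption~$(d)$.

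The argument is essentially bookkeeping, and I do not anticipate any serious obstacle; the only substantive ingredient beyond \Cref{prop:surjectivity} is the injectivity half of the h-stratification bijection together with Balmer's identification of the homological support of $f_*(\unit)$. It is worth emphasizing that, in contrast with the analogous tt-stratification statement \cite[Corollary~14.24]{BCHS1}, this proof requires neither the Nerves of Steel conjecture nor weak noetherianness of $\Spc(\cat T^c)$, which exemplifies the general theme of this paper that h-stratification enjoys cleaner descent-type properties than its tensor-triangular counterpart.
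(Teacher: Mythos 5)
Your proof is correct and takes essentially the same route as the paper: reduce to $(d)\Rightarrow(a)$ via \Cref{prop:surjectivity}, identify $\Supph((f_i)_*\unit)$ with $\im\varphi_i^h$ using \cite[Theorem~5.12]{Balmer20_bigsupport}, and conclude $\cat T = \Loco{(f_i)_*\unit \mid i\in I}$ from the injectivity of the h-support bijection. The paper phrases the final step slightly more compactly, but the argument is the same.
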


\begin{proof}
    The implications $(a) \Rightarrow (b) \Rightarrow (c) \Rightarrow (d)$ are part of \cref{prop:surjectivity}, so it remains to show $(d) \Rightarrow (a)$. Indeed, if $\varphi^h$ is surjective, then 
        \[
            \Spc^h(\cat T^c) = \bigcup_{i \in I} \im \varphi_i^h = \bigcup_{i \in I} \Supph((f_i)_*\unit_{\cat S_i}),
        \]
    where the second equality uses \cite[Theorem 5.12]{Balmer20_bigsupport}. Since $\cat T$ is h-stratified, this implies that $\cat T = \Loco{ (f_i)_*\unit_{\cat S_i} \mid i \in I}$, so $(f_i^*)_{i\in I}$ is weakly descendable.
\end{proof}

\begin{Rem}\label{rem:weak-descendable-best}
    In light of the above corollary and its analogue \cite[Corollary~14.24]{BCHS1}, we regard weak descendability as the appropriate condition for studying descent properties in rigidly-compactly generated tt-geometry.
\end{Rem}

\begin{Exa}
	Let $\cat T$ be a rigidly-compactly generated tt-category. By definition, the family of localizations  $\{ f_{\cat P}^*\colon \cat T \to \cat T_{\cat P}\}_{\cat P \in \Spc(\cat T^c)}$ is weakly descendable if and only if
	\[
        \unit \in \Loco{ f_{\gen(\cat P)^c} \mid \cat P \in \Spc(\cat T^c)}.
    \]
	The following example demonstrates that this need not always be the case:
\end{Exa}

\begin{Exa}\label{ex:absolutelyflat}
	Let $\cat T=\Der(R)$ be the derived category of an absolutely flat ring $R$. Then the local categories are precisely the derived categories of the residue fields  and $f_{\gen(\cat P)^c} \simeq \kappa(\mathfrak p)$ for $\cat P$ corresponding to $\mathfrak p$.  Thus the family of localizations is weakly descendable if and only if 
	\[
        R \in \Loco{ \kappa(\mathfrak p) \mid \mathfrak p \in \Spec(R)}.
    \]
	That is, if and only if $\Der(R)$ has the homological local-to-global principle (recall \cref{exa:DR-h-codetection}). This is the case if and only if $R$ is semi-artinian; see \cref{exa:absflat}. 
\end{Exa}

\begin{Prop}\label{prop:h-LGP-weakly-descendable}
	Let $\cat T$ be a rigidly-compactly generated tt-category. If the homological local-to-global principle holds then the family of localizations
	\[
        \{f_{\cat P}^* \colon \cat T \to \cat T_{\cat P}\}_{\cat P \in \Spc(\cat T^c)}
    \]
	is weakly descendable.
\end{Prop}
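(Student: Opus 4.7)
The strategy is to combine the h-LGP applied to the unit with a locality property of the pure-injectives $E_{\cat B}$. First, applying \cref{def:hlocaltoglobal} to $t = \unit$ and using $\Supph(\unit) = \Spc^h(\cat T^c)$ yields
\[
    \unit \in \Loco{E_{\cat B} \mid \cat B \in \Spc^h(\cat T^c)}.
\]
By \cref{def:weaklydescendable}, the family $\{f_{\cat P}^*\}$ is weakly descendable precisely when $\unit \in \Loco{F_{\cat P} \mid \cat P \in \Spc(\cat T^c)}$, where $F_{\cat P} := (f_{\cat P})_*\unit_{\cat T_{\cat P}}$. It therefore suffices to show that for each $\cat B \in \Spc^h(\cat T^c)$ with $\cat P := \pi(\cat B)$ one has $E_{\cat B} \in \Loco{F_{\cat P}}$. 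I would in fact prove the stronger assertion $E_{\cat B} \simeq E_{\cat B} \otimes F_{\cat P}$.

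To this end, consider the fiber sequence $C_{\cat P} \to \unit \to F_{\cat P}$. Since $(f_{\cat P})_*$ is fully faithful (as the right adjoint of a localization), $f_{\cat P}^*(F_{\cat P}) \simeq \unit_{\cat T_{\cat P}}$ and hence $f_{\cat P}^*(C_{\cat P}) = 0$. The projection formula for the geometric functor $f_{\cat P}^*$ then yields
\[
    C_{\cat P} \otimes F_{\cat P} \simeq (f_{\cat P})_* f_{\cat P}^*(C_{\cat P}) = 0,
\]
and by the tensor product property \eqref{eq:tensor-product} this forces $\Supph(C_{\cat P}) \cap \Supph(F_{\cat P}) = \emptyset$. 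Granting for the moment that $\cat B \in \Supph(F_{\cat P})$ whenever $\cat P = \pi(\cat B)$, we then have $\cat B \notin \Supph(C_{\cat P})$, and so $\Supph(C_{\cat P} \otimes E_{\cat B}) = \emptyset$ by \eqref{eq:tensor-product} together with \cref{exa:suppEb}. Since the h-LGP implies the h-detection property (\cref{rem:hlgphdetection}), this gives $C_{\cat P} \otimes E_{\cat B} = 0$, whence $E_{\cat B} \simeq E_{\cat B} \otimes F_{\cat P} \in \Loco{F_{\cat P}}$, as desired.

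The main obstacle is verifying the support identification $\cat B \in \Supph(F_{\cat P})$ in full generality. By \cite[Theorem~5.12]{Balmer20_bigsupport}, $\Supph(F_{\cat P})$ coincides with $\im(\varphi_{\cat P}^h)$, so the task amounts to showing that $\cat B$ lies in the image of the induced map $\varphi_{\cat P}^h\colon\Spc^h(\cat T_{\cat P}^c)\to\Spc^h(\cat T^c)$. When $\cat P$ is weakly visible, $f_{\cat P}^*$ is a finite localization and the conclusion follows from \cref{cor:cartesian} combined with the trivial observation $\cat P\in\gen(\cat P)$. For general $\cat P$, however, $f_{\cat P}^*$ need not be a finite localization, and this step requires a more careful analysis of the local category $\cat T_{\cat P}$ at a non-weakly-visible point, for instance by realizing it as a filtered colimit of finite localizations and invoking the compatibility of $\Spc^h$ with such colimits.
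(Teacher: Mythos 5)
Your overall strategy matches the paper's: both proofs note that the h-LGP (applied to $\unit$) gives $\unit \in \Loco{E_{\cat B} \mid \cat B \in \Spc^h(\cat T^c)}$ and then reduce to showing $E_{\cat B} \in \Loco{(f_{\cat P})_*\unit}$ for $\cat P = \pi(\cat B)$. At that point the paper simply cites \cite[Lemma~8.6]{Zou23bpp}, which asserts $E_{\cat B} \otimes (f_{\cat P})_*(\unit) \cong E_{\cat B}$ outright, whereas you derive this from the support-theoretic fact $\cat B \in \Supph\bigl((f_{\cat P})_*\unit\bigr)$ together with the tensor-product formula and h-detection.

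The worry you raise at the end is, however, unfounded: $f_{\cat P}^*\colon \cat T \to \cat T_{\cat P}$ \emph{is} a finite localization for every $\cat P \in \Spc(\cat T^c)$, not only for weakly visible $\cat P$. Indeed, $\gen(\cat P)^c = \bigcup_{c \in \cat P}\supp(c)$ is always a Thomason subset (being a union of Thomason closed sets), and $\cat T_{\cat P}$ is precisely the finite localization of $\cat T$ onto its complement $\gen(\cat P)$, with right idempotent $(f_{\cat P})_*\unit = f_{\gen(\cat P)^c}$; weak visibility of $\cat P$ concerns only whether $\{\cat P\}$ itself is weakly visible, which is irrelevant here. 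Hence \cref{prop:image-of-finite} and \cref{cor:cartesian} apply to $f_{\cat P}^*$ unconditionally and give
\[
\im(\varphi_{\cat P}^h) \;=\; \pi^{-1}\bigl(\im(\varphi_{\cat P})\bigr) \;=\; \pi^{-1}\bigl(\gen(\cat P)\bigr) \;\ni\; \cat B ,
\]
which is exactly the support identification you wanted, with no need for a filtered-colimit argument. So modulo this misconception your proof is complete and correct; it is slightly longer than the paper's (which outsources the key identity to an external lemma) but has the virtue of being self-contained within the results already established in \cref{sec:base-change}. Note also that your derivation uses h-detection, which is available here since h-LGP implies it (\cref{rem:hlgphdetection}); the cited lemma in the paper holds with no hypotheses on $\cat T$ at all, so the paper's proof would also yield the conclusions of \cref{prop:hLGP}$(a)$ alone, but this distinction is immaterial for the statement being proved.
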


\begin{proof}
    By hypothesis and \cref{rem:hLGPannoying}, $\unit \in \Loco{E_{\cat B} \mid \cat B \in \Spc^h(\cat T^c)}$. Hence it suffices to prove that $E_{\cat B} \in \Loco{ (f_{\cat P})_*(\unit)}$ for $\cat P \coloneqq \pi(\cat B)$. This follows from the fact that $E_{\cat B}\otimes(f_{\cat P})_*(\unit)\cong E_{\cat B}$; see \cite[Lemma~8.6]{Zou23bpp}.
\end{proof}

\begin{Rem}
    The converse of \cref{prop:h-LGP-weakly-descendable} does not hold. For example, the homological local-to-global principle does not hold for the derived category of the truncated polynomial ring in \cref{exa:truncatedpolyring} (see \cref{rem:h-LGP-fail}), but the family of localizations is  weakly descendable for the trivial reason that the category is local.
\end{Rem}

\begin{Prop}\label{prop:LGP-weakly-descendable}
	Let $\cat T$ be a rigidly-compactly generated tt-category whose spectrum is weakly noetherian. If the local-to-global principle holds then the family of localizations
	\[
        \{f_{\cat P}^* \colon \cat T \to \cat T_{\cat P}\}_{\cat P \in \Spc(\cat T^c)}
    \]
	is weakly descendable.
\end{Prop}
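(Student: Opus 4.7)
The plan is to specialize the local-to-global principle to the unit object of $\cat T$ and then identify $g_{\cat P}$ with $(f_{\cat P})_*(\unit_{\cat T_{\cat P}})$, mirroring the argument for the preceding \cref{prop:h-LGP-weakly-descendable}.

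First, I would invoke the local-to-global principle in the standard form
\[
    \Loco{t} = \Loco{t \otimes g_{\cat P} \mid \cat P \in \Supp(t)}
\]
for every $t \in \cat T$, as established in \cite{bhs1}. The weak noetherianness of $\Spc(\cat T^c)$ ensures that every point is weakly visible, so that $g_{\cat P}$ exists for each $\cat P$. Specializing to $t = \unit$ and using that $\Supp(\unit) = \Spc(\cat T^c)$ immediately yields
\[
    \unit \in \Loco{g_{\cat P} \mid \cat P \in \Spc(\cat T^c)}.
\]

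To conclude, I would use the identification $g_{\cat P} \simeq (f_{\cat P})_*(\unit_{\cat T_{\cat P}})$ in $\cat T$, which is built into the construction of the Balmer--Favi local category at a weakly visible point $\cat P$ (cf.~\cite{bhs1}; this identification is also implicit in the proof of \cref{prop:h-LGP-weakly-descendable} just above). Substituting into the preceding containment gives
\[
    \unit \in \Loco{(f_{\cat P})_*(\unit_{\cat T_{\cat P}}) \mid \cat P \in \Spc(\cat T^c)},
\]
which by \cref{def:weaklydescendable} is precisely the statement that the family $\{f_{\cat P}^*\}_{\cat P \in \Spc(\cat T^c)}$ is weakly descendable. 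I do not anticipate any serious obstacle: the argument is essentially an unwinding of definitions, with the only delicate point being the identification $(f_{\cat P})_*(\unit_{\cat T_{\cat P}}) \simeq g_{\cat P}$, a well-known feature of the local category construction. Consequently the proof should be as short as (indeed, shorter than) its homological counterpart, since here one does not even need a separate tensor-absorption argument: the identification of the unit's image under $(f_{\cat P})_*$ with $g_{\cat P}$ delivers the conclusion at once.
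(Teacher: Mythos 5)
Your overall strategy---specialize the LGP at $t=\unit$, then relate $g_{\cat P}$ to $(f_{\cat P})_*(\unit)$---is the same as the paper's, but the crucial step contains a genuine error. You assert that $g_{\cat P} \simeq (f_{\cat P})_*(\unit_{\cat T_{\cat P}})$ ``is built into the construction of the Balmer--Favi local category.'' This is not correct. The local category $\cat T_{\cat P}$ is the finite localization of $\cat T$ away from the Thomason subset $\gen(\cat P)^c$, so $(f_{\cat P})_*(\unit_{\cat T_{\cat P}}) \cong f_{\gen(\cat P)^c}$, whose Balmer--Favi support is the \emph{entire} generalization closure $\gen(\cat P)$, not the singleton $\{\cat P\}$. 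By contrast, the Balmer--Favi idempotent $g_{\cat P}$ is the tensor product $e_Y \otimes f_{\gen(\cat P)^c}$ for a suitable Thomason subset $Y$ with $Y \cap \gen(\cat P) = \{\cat P\}$, and $\Supp(g_{\cat P})=\{\cat P\}$. These two objects are genuinely different whenever $\cat P$ is not a generic point.

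What is true, and what the paper uses, is the weaker (and in fact unavoidable) fact that $g_{\cat P} \in \Loco{(f_{\cat P})_*(\unit)}$ because $g_{\cat P} = e_Y \otimes (f_{\cat P})_*(\unit)$ and localizing ideals are tensor-closed; this yields $\Loco{g_{\cat P} \mid \cat P} \subseteq \Loco{(f_{\cat P})_*(\unit) \mid \cat P}$, from which the conclusion follows immediately once one knows $\unit$ lies in the left-hand side by the LGP. So your final remark that ``one does not even need a separate tensor-absorption argument'' is exactly backwards: the tensor step is precisely what is needed, and it is what replaces the incorrect isomorphism you invoked. The rest of your argument (specializing LGP to $\unit$, invoking weak noetherianity so that $g_{\cat P}$ exists) is fine, and the fix amounts to replacing ``$g_{\cat P} \simeq (f_{\cat P})_*(\unit)$'' with ``$g_{\cat P} \in \Loco{(f_{\cat P})_*(\unit)}$ since $g_{\cat P} = e_Y \otimes f_{\gen(\cat P)^c}$.''
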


\begin{proof}
	Recall that $g_{\cat P} = e_{Y} \otimes f_{\gen(\cat P)^c}$ for some Thomason subset $Y$  and $f_{\gen(\cat P)^c} = (f_{\cat P})_*(\unit)$. Thus
	\[
        \Loco{g_{\cat P} \mid \cat P \in \Spc(\cat T^c)} \subseteq \Loco{ (f_{\cat P})_*(\unit) \mid \cat P \in \Spc(\cat T^c)}.
    \]
	The local-to-global principle states that $\unit$ is contained in the left-hand side, while the family is weakly descendable if $\unit$ is contained in the right-hand side.
\end{proof}

\begin{Prop}\label{prop:ttfields2}
    Suppose $\cat T$ admits enough tt-fields, say $\{f_{\cat B}^*\colon \cat T \to \cat F_{\cat B}\}_{\cat B}$. The following are equivalent:
    \begin{enumerate}
    	\item The family $\{f_{\cat B}^*\colon \cat T \to \cat F_{\cat B}\}$ is weakly descendable.
    	\item The homological local-to-global principle holds.
    \end{enumerate}
\end{Prop}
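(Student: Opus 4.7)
The plan is to leverage the hypothesis of enough tt-fields (\cref{def:enough-tt-fields}) together with \cref{prop:naive-tt-fields} (which forces $\Supph=\Supphnaive$) to reduce the h-LGP of \cref{def:hlocaltoglobal} to condition (a) of \cref{prop:hLGP}, namely $\unit \in \Loco{E_{\cat B} \mid \cat B \in \Spc^h(\cat T^c)}$. The second key input is the Balmer--Cameron observation that $E_{\cat B}$ is a direct summand of $(f_{\cat B})_*\unit_{\cat F_{\cat B}}$ (\cite[Theorem 3.1]{BalmerCameron2021}, as already invoked in the proofs of \cref{lem:naive-tt-field} and \cref{prop:forced-minimal}).

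For $(b) \Rightarrow (a)$, I would argue that this is essentially immediate. The h-LGP yields $\unit \in \Loco{E_{\cat B} \mid \cat B}$ by \cref{prop:hLGP}, and since each $E_{\cat B}$ is a retract of $(f_{\cat B})_*\unit$, we obtain $\unit \in \Loco{(f_{\cat B})_*\unit_{\cat F_{\cat B}} \mid \cat B}$, which is weak descendability.

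For $(a) \Rightarrow (b)$, it suffices to show that $(f_{\cat B})_*\unit_{\cat F_{\cat B}} \in \Loco{E_{\cat B}}$ for every $\cat B \in \Spc^h(\cat T^c)$; combining this with weak descendability would then give $\unit \in \Loco{E_{\cat B} \mid \cat B}$, and hence the h-LGP as above. Fix a homological prime $\cat B$. First I would verify that $(f_{\cat B})^*E_{\cat B} \neq 0$: otherwise the projection formula would force $E_{\cat B} \otimes (f_{\cat B})_*\unit = 0$, and since $E_{\cat B}$ is a summand of $(f_{\cat B})_*\unit$, also $E_{\cat B} \otimes E_{\cat B} = 0$, contradicting $\Supph(E_{\cat B}\otimes E_{\cat B}) = \{\cat B\}$ (by \cref{exa:suppEb} and the tensor product property). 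Because $\cat F_{\cat B}$ is a tt-field, any nonzero object generates the entire category as a localizing ideal (\cref{exa:ttfieldhstrat}), so $\unit_{\cat F_{\cat B}} \in \Loco{(f_{\cat B})^*E_{\cat B}}$. Applying $(f_{\cat B})_*$ and invoking the standard containment $(f_{\cat B})_*\Loco{(f_{\cat B})^*t} \subseteq \Loco{t}$ from \cite[(13.4)]{BCHS1} (exactly as in the proof of \cref{prop:forced-minimal}) yields $(f_{\cat B})_*\unit_{\cat F_{\cat B}} \in \Loco{E_{\cat B}}$, completing the argument.

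There is no serious obstacle here: once the Balmer--Cameron summand observation and the triviality of the localizing-ideal lattice of a tt-field are in hand, the proof is a short formal manipulation of the adjunction $(f_{\cat B})^* \dashv (f_{\cat B})_*$. The only mildly subtle point is the non-vanishing of $(f_{\cat B})^*E_{\cat B}$, which is disposed of via the tensor product formula as above.
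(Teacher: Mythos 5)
Your proof is correct, but it follows a genuinely different route than the paper's. The paper deduces $\Loco{E_{\cat B}}=\Loco{(f_{\cat B})_*\unit}$ from \cref{prop:forced-minimal}, whose minimality statement requires that $\cat T$ have no nontrivial tensor-nilpotents (equivalently, h-detection). Since neither hypothesis $(a)$ nor $(b)$ gives h-detection for free, the paper must first derive it: from $(b)$ trivially, and from $(a)$ via \cref{prop:surjectivity} (weak descendability $\Rightarrow$ joint conservativity) together with the Avrunin--Scott identity of \cref{prop:AV-S-h-strat} applied to the h-stratified tt-fields. Your argument bypasses all of this by establishing the equality $\Loco{E_{\cat B}}=\Loco{(f_{\cat B})_*\unit_{\cat F_{\cat B}}}$ unconditionally: the inclusion $\supseteq$ is the Balmer--Cameron summand observation, and for $\subseteq$ you prove $(f_{\cat B})^*E_{\cat B}\neq 0$ directly from $\Supph(E_{\cat B}\otimes E_{\cat B})=\{\cat B\}$ (tensor-product property plus \cref{exa:suppEb}), then feed this into the tt-field generation statement of \cref{exa:ttfieldhstrat} and push forward via $(f_{\cat B})_*\Loco{(f_{\cat B})^*E_{\cat B}}\subseteq\Loco{E_{\cat B}}$. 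What your approach buys is self-containment and the removal of the tensor-nilpotence/h-detection detour --- you never invoke \cref{prop:forced-minimal}, \cref{prop:surjectivity}, or \cref{prop:AV-S-h-strat}, and in fact you establish the slightly stronger unconditional statement that enough tt-fields already forces $\Loco{E_{\cat B}}=\Loco{(f_{\cat B})_*\unit}$ for every $\cat B$ (though not its minimality, which is what \cref{prop:forced-minimal} adds). Both proofs correctly reduce h-LGP to $\unit\in\Loco{E_{\cat B}\mid\cat B}$ using \cref{prop:naive-tt-fields} to identify $\Supph$ with $\Supphnaive$, so the translation between \cref{def:hlocaltoglobal} and \cref{prop:hLGP}$(a)$ is handled properly on your side.
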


\begin{proof}
    If $\cat T$ has the h-detection property then $\Loco{E_{\cat B}}=\Loco{(f_{\cat B})_*\unit}$ for every~$\cat B$ by \cref{prop:forced-minimal}, which implies the desired equivalence. Certainly h-LGP implies h-detection. To finish the proof, we claim that $(a)$ also implies h-detection. Indeed, since tt-fields are h-stratified (\cref{exa:ttfieldhstrat}), we have $(\varphi^h_{\cat B})^{-1}(\Supph(t)) = \Supph(f_{\cat B}^*(t))$ for every $\cat B$ and every object $t \in \cat T$ by \cref{prop:AV-S-h-strat}. Therefore, the h-detection property is equivalent to the family $\{f_{\cat B}^*\colon \cat T \to \cat F_{\cat B}\}_{\cat B}$ being jointly conservative. The claim thus follows from \cref{prop:surjectivity}.
\end{proof}

\begin{Exa}
    The derived category $\Der(R)$ of a commutative ring $R$ satisfies the h-LGP if and only if the family $\{\Der(R) \to \Der(\kappa(\mathfrak p))\}_{\mathfrak p\in\Spec(R)}$ of residue fields is weakly descendable. 
\end{Exa}

\begin{Rem}
    The following theorem shows that h-stratification \emph{always} descends along a weakly descendable family of geometric functors. Although the proof is not difficult given our preparation, it is one of the key insights of the paper and will have various consequences below.
\end{Rem}

\begin{Thm}\label{thm:h-descent}
	Let $(f_i^*\colon \cat T \to \cat S_i)_{i \in I}$ be a weakly descendable geometric family. Assume that $\cat S_i$ is h-stratified for each $i \in I$. Then $\cat T$ is h-stratified.
\end{Thm}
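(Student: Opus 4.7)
The plan is to establish h-stratification of $\cat T$ by verifying the two conditions of \cref{thm:hstratfundamental}(a): the homological local-to-global principle and homological minimality at every prime. Weak descendability of $(f_i^*)_{i\in I}$ supplies three key inputs via \cref{prop:surjectivity} and \cref{lem:weakdescentcharacterization}: the families $(f_i^*)_{i\in I}$ and $(f_i^!)_{i\in I}$ are each jointly conservative, and the induced map $\varphi^h=\sqcup\varphi_i^h$ is surjective on homological spectra. Since each $\cat S_i$ is h-stratified it satisfies h-minimality at every prime, so the Avrunin--Scott identities for support and cosupport (\cref{prop:AV-S-h-strat} and \cref{prop:AV-coS-h-strat}) are available for each $f_i^*$.

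First I would deduce h-detection and h-codetection for $\cat T$. If $\Supph(t)=\emptyset$, then Avrunin--Scott gives $\Supph(f_i^*t)=(\varphi_i^h)^{-1}(\Supph(t))=\emptyset$, so $f_i^*t=0$ for each $i$ (by h-detection in the h-stratified category $\cat S_i$), whence $t=0$ by joint conservativity of $(f_i^*)$. The analogous argument using $\Cosupph$, $f_i^!$, and \cref{prop:AV-coS-h-strat} yields h-codetection for $\cat T$.

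Next I would prove h-minimality at every $\cat B\in\Spc^h(\cat T^c)$. By surjectivity of $\varphi^h$, fix $i$ and $\cat C\in\Spc^h(\cat S_i^c)$ with $\varphi_i^h(\cat C)=\cat B$. Take $0\neq t\in\Loco{E_{\cat B}}$. Since $\{s\in\cat T\mid \Supph(s)\subseteq\{\cat B\}\}$ is a localizing ideal containing $E_{\cat B}$ (by the tensor product property and preservation of support under coproducts), $\Supph(t)\subseteq\{\cat B\}$; h-detection then forces $\Supph(t)=\{\cat B\}$. Applying Avrunin--Scott again, $\cat C\in(\varphi_i^h)^{-1}(\{\cat B\})=\Supph(f_i^*t)$. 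Using h-stratification of $\cat S_i$ in the form of \cref{thm:hstratfundamental}(b), this gives $E_{\cat C}\in\Loco{f_i^*t}_{\cat S_i}$. Pushing forward by $(f_i)_*$, the standard inclusion $(f_i)_*\Loco{f_i^*t}_{\cat S_i}\subseteq\Loco{t}_{\cat T}$ (a consequence of the projection formula, as in \cite[(13.4)]{BCHS1}) places $(f_i)_*E_{\cat C}$ in $\Loco{t}_{\cat T}$. Finally \cref{lem:direct-summand} realizes $E_{\cat B}$ as a direct summand of $(f_i)_*E_{\cat C}$, so $E_{\cat B}\in\Loco{t}_{\cat T}$. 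Hence $\Loco{E_{\cat B}}=\Loco{t}$, proving minimality.

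To conclude, h-minimality at every prime forces $\Supph=\Supphnaive$ by \cref{lem:minimalityhsuppidentification}, and then h-codetection upgrades to h-LGP by \cref{cor:hLGP}; the conditions of \cref{thm:hstratfundamental}(a) are therefore met. The main obstacle is the minimality step, whose success rests critically on \cref{lem:direct-summand}: realizing $E_{\cat B}$ as a direct summand of $(f_i)_*E_{\cat C}$ is precisely what lets minimality in $\cat S_i$ descend along $(f_i)_*$ to $\cat T$. Everything else is a fairly mechanical combination of the Avrunin--Scott formulas with joint (co)conservativity.
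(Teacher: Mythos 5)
Your proof is correct, but it takes a genuinely different route from both of the paper's arguments. The paper's main proof verifies condition~(b) of \cref{thm:hstratfundamental} directly: it uses the Avrunin--Scott identity (\cref{prop:AV-S-h-strat}) twice to equate $\Supph(f_i^*t)$ with $\Supph(\{f_i^*E_{\cat B}\mid \cat B\in\Supph(t)\})$, applies the stratification of $\cat S_i$ in form~(b), pushes forward with $(f_i)_*$ via the projection formula to land on $\Loco{t\otimes (f_i)_*\unit} = \Loco{E_{\cat B}\otimes(f_i)_*\unit \mid \cat B\in\Supph(t)}$, and then varies $i$ using weak descendability --- never touching $f_i^!$, cosupport, or the direct-summand lemma explicitly. (The paper also gives an alternative proof via cosupport and \cref{thm:hstratcosupp}(c).) You instead verify condition~(a), proving h-detection and h-codetection via joint conservativity of $(f_i^*)$ and $(f_i^!)$ plus the two Avrunin--Scott identities, then establishing minimality at each $\cat B$ by descending minimality from $\cat S_i$ using \cref{lem:direct-summand}, and finally assembling h-LGP from h-codetection and \cref{lem:minimalityhsuppidentification}/\cref{cor:hLGP}. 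Your minimality argument is essentially a relative version of \cref{prop:forced-minimal} with $E_{\cat C}$ playing the role of the unit of the tt-field. The paper's route is more economical and showcases the power of criterion~(b); yours is longer but makes transparent exactly \emph{which} constituent property descends via which mechanism --- in particular, that \cref{lem:direct-summand} is what transports minimality, while joint conservativity of $f_i^!$ is what transports codetection. Both routes ultimately depend on \cref{lem:direct-summand} (yours directly, the paper's through \cref{prop:AV-S-h-strat}).
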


\begin{proof}
    We will verify condition $(b)$ of \cref{thm:hstratfundamental}. To this end, let $t \in \cat T$ and first fix some $i \in I$. Using \cref{prop:AV-S-h-strat} twice, we obtain equalities
    \[
        \Supph(f_i^*(t)) = (\varphi_i^h)^{-1}(\Supph(t)) = \Supph(\SET{f_i^*(E_{\cat B})}{\cat B \in \Supph(t)}).
    \]
    \Cref{thm:hstratfundamental}$(b)$ applied to $\cat S_i$ then shows that
    \[
        \Loco{f_i^*(t)} = \Loco{f_i^*(E_{\cat B}) \mid \cat B \in \Supph(t)}.
    \]
   Keeping in mind that \eqref{eq:weakdescentcondition} holds by assumption, \cref{lem:weakdescentcharacterization} implies that $\Loco{t} = \Loco{E_{\cat B}\mid\cat B \in \Supph(t)}$, as desired.
\end{proof}

\begin{Par}
    We also include an alternative proof of \cref{thm:h-descent} based on cosupport.
\end{Par}

\begin{proof}[Alternative proof of \cref{thm:h-descent}]
    We will verify condition (c) of \cref{thm:hstratcosupp}. Note that the collection of maps $(\varphi_i^h)$ is jointly surjective by \cref{prop:surjectivity}. Since $(f_i^*)$ is weakly descendable and $\cat S_i$ is h-stratified for all $i$, for any objects $t_1,t_2 \in \cat T$ we have equivalences:
        \begin{align*}
            0 = \ihom{t_1,t_2} & \iff \forall i \in I\colon 0 = f_i^!\ihom{t_1,t_2} \simeq \ihom{f_i^*t_1,f_i^!t_2} & & (\ref{lem:weakdescentcharacterization}) \\
            & \iff \forall i \in I\colon \emptyset = \Supph(f_i^*t_1) \cap \Cosupph(f_i^!t_2) & & (\ref{thm:hstratcosupp}) \\
            & \iff \emptyset = \bigcup_{i \in I} (\varphi_i^h)^{-1}(\Supph(t_1) \cap \Cosupph(t_2)) && (\ref{prop:AV-S-h-strat},\ref{prop:AV-coS-h-strat}) \\
            & \iff \emptyset = \Supph(t_1) \cap \Cosupph(t_2) & & (\ref{prop:surjectivity}),
        \end{align*}
    as desired. 
\end{proof}

\section{Comparison between h-stratification and tt-stratification}\label{sec:comp-strat}

A theory of stratification for tt-categories is developed in \cite{bhs1} under the assumption that the Balmer spectrum is weakly noetherian. Our next goal is to relate this notion of stratification (based on the Balmer spectrum and the Balmer--Favi support) with the homological stratification developed in this paper (based on the homological spectrum and the homological support).

\begin{Def}\label{def:nerves-of-steel}
	We will say that a rigidly-compactly generated tt-category $\cat T$ satisfies the \emph{Nerves of Steel conjecture} (\cite[Remark 5.15]{Balmer20_nilpotence}) if the surjective comparison map
	\[
        \pi\colon \Spc^h(\cat T^c) \to \Spc(\cat T^c)
    \]
	is a bijection.
\end{Def}

\begin{Rem}
    The question of whether every rigidly-compactly generated tt-category satisfies the Nerves of Steel conjecture is an open problem, but it is known to be true in many examples of interest; see \cite[Section 5]{Balmer20_nilpotence}, for example. For the state of the art, see \cite{Hyslop2024}.
\end{Rem}

\begin{Prop}\label{prop:nervesofsteel_Zariskilocal}
    Suppose $\Spc(\cat T^c)=\bigcup_{i\in I}V_i$ is a cover by complements  $V_i$ of Thomason subsets. Then the Nerves of Steel conjecture holds for $\cat T$ if and only if it holds for each $\cat T(V_i)$.
\end{Prop}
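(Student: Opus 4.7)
The plan is to reduce the statement to an elementary set-theoretic observation about bijections and their restrictions, using the cartesian square from \cref{cor:cartesian}. Each $V_i$ is the complement of a Thomason subset, so the Verdier quotient $f_i^*\colon \cat T \to \cat T(V_i)$ is a finite localization. By \cref{cor:cartesian} applied to $f_i^*$, we obtain a cartesian square
\[
    \begin{tikzcd}
        \Spc^h(\cat T(V_i)^c) \ar[d,"\pi_i"'] \ar[hook,r,"\varphi_i^h"] & \Spc^h(\cat T^c) \ar[d,"\pi"] \\
        \Spc(\cat T(V_i)^c) \ar[hook,r,"\varphi_i"] & \Spc(\cat T^c)
    \end{tikzcd}
\]
in which (by \cref{prop:image-of-finite}) the bottom horizontal map identifies $\Spc(\cat T(V_i)^c)$ with $V_i \subseteq \Spc(\cat T^c)$ and the top horizontal map identifies $\Spc^h(\cat T(V_i)^c)$ with $\pi^{-1}(V_i)\subseteq \Spc^h(\cat T^c)$. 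Under these identifications, $\pi_i$ is simply the restriction of $\pi$ to $\pi^{-1}(V_i) \to V_i$.

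Given this, the forward direction is immediate: if $\pi$ is a bijection, then so is its restriction to the preimage of any subset, in particular to $\pi^{-1}(V_i) \to V_i$. For the converse, assume each $\pi_i$ is a bijection. Since $\pi$ is always surjective, it suffices to verify injectivity. Suppose $\cat B_1, \cat B_2 \in \Spc^h(\cat T^c)$ satisfy $\pi(\cat B_1) = \pi(\cat B_2) = \cat P$. Since $\{V_i\}_{i \in I}$ covers $\Spc(\cat T^c)$, there is some $i$ with $\cat P \in V_i$. Then $\cat B_1, \cat B_2 \in \pi^{-1}(V_i)$, and the bijectivity of $\pi_i$ forces $\cat B_1 = \cat B_2$.

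There is no genuine obstacle here: the entire content of the proposition is packaged in the cartesian square of \cref{cor:cartesian}, which was the substantive earlier work. The proof is therefore just a short formal deduction. The only thing to double-check is that the identification of $\Spc^h(\cat T(V_i)^c)$ with $\pi^{-1}(V_i)$ provided by \cref{prop:image-of-finite} is compatible with the two comparison maps $\pi_i$ and $\pi$, which is precisely the content of the cartesian square.
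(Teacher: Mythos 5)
Your proof is correct and follows essentially the same route as the paper: both directions hinge on \cref{prop:image-of-finite} and the resulting cartesian square of \cref{cor:cartesian}, with the converse reducing to checking that each fiber $\pi^{-1}(\{\cat P\})$ is a singleton by picking a $V_i$ containing $\cat P$. The only (correct) elaboration you add is making explicit that the restriction of $\pi$ to $\pi^{-1}(V_i) \to V_i$ is identified with $\pi_i$, which the paper leaves implicit in its commutative diagram.
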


\begin{proof}
    The only if part follows immediately from the diagram in \cref{cor:cartesian}. Now suppose each $\cat T(V_i)$ satisfies the Nerves of Steel conjecture. For any $\cat P \in \Spc(\cat T^c)$ there exists some $V_i$ containing $\cat P$. Applying \cref{prop:image-of-finite} to the finite localization $\cat T \to \cat T(V_i)$ we obtain the commutative diagram
	\[\begin{tikzcd}
		\Spc^h(\cat T(V_i)^c) \ar[d,"\simeq"] \ar[hook,r] &\Spc^h(\cat T^c) \ar[d,"\pi"] \\
		\Spc(\cat T(V_i)^c) \ar[hook,r] &\Spc(\cat T^c)
	\end{tikzcd}\]
    which implies that $\pi^{-1}(\{\cat P\})$ consists of exactly one point.
\end{proof}

\begin{Rem}
    In particular, $\cat T$ satisfies the Nerves of Steel conjecture if and only if each local category $\cat T_{\cat P}$ does.
\end{Rem}

\begin{Def}\label{def:tt-stratified}
	We will say that a rigidly-compactly generated tt-category whose spectrum $\Spc(\cat T^c)$ is weakly noetherian is \emph{tt-stratified} if it is stratified in the sense of \cite{bhs1}.
\end{Def}

\begin{Thm}\label{thm:tt=h+NS}
	Let $\cat T$ be a rigidly-compactly generated tt-category with  $\Spc(\cat T^c)$ weakly noetherian. The following are equivalent:
	\begin{enumerate}
		\item $\cat T$ is tt-stratified;
		\item $\cat T$ is h-stratified and the Nerves of Steel conjecture holds for $\cat T$.
	\end{enumerate}
\end{Thm}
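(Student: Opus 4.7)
My plan is to handle the two directions separately, using minimality of $\Loco{E_{\cat B}}$ as the bridge between them. The key input throughout is \cref{prop:supphweakrings}, which gives $\Supp(E_{\cat B}) = \{\pi(\cat B)\}$, together with the fact that each $E_{\cat B}$ is a nonzero weak ring.

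For $(a) \Rightarrow (b)$, I would first use tt-stratification and \cref{prop:supphweakrings} to identify $\Loco{E_{\cat B}}$ with $\Loco{g_{\pi(\cat B)}}$, the minimal localizing ideal at $\pi(\cat B)$; this immediately gives h-minimality at every homological prime. From there, I would derive the Nerves of Steel conjecture as follows: if $\pi(\cat B_1) = \pi(\cat B_2)$ but $\cat B_1 \neq \cat B_2$, then $\Loco{E_{\cat B_1}} = \Loco{E_{\cat B_2}}$ while $E_{\cat B_1} \otimes E_{\cat B_2} = 0$ (since $\Supphnaive(E_{\cat B_2}) = \{\cat B_2\}$ by \cref{exa:suppEb}). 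The kernel of $E_{\cat B_1} \otimes -$ is a localizing ideal containing $E_{\cat B_2}$, hence all of $\Loco{E_{\cat B_1}}$, forcing $E_{\cat B_1} \otimes E_{\cat B_1} = 0$ and contradicting that $E_{\cat B_1}$ is a nonzero weak ring.

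To conclude h-stratification, I would verify condition (b) of \cref{thm:hstratfundamental}. Write $\cat B_{\cat P}$ for the unique preimage of $\cat P$ under $\pi$. h-minimality together with \cref{lem:minimalityhsuppidentification} gives $\Supphnaive = \Supph$, and the identification $\Loco{E_{\cat B_{\cat P}}} = \Loco{g_{\cat P}}$ combined with minimality yields the equivalence $g_{\cat P} \otimes t = 0 \iff E_{\cat B_{\cat P}} \otimes t = 0$, so $\Supp(t) = \pi(\Supph(t))$. Combining these observations with the classical tt-stratification formula $\Loco{t} = \Loco{g_{\cat P} \mid \cat P \in \Supp(t)}$ yields
\[
    \Loco{t} = \Loco{E_{\cat B_{\cat P}} \mid \cat P \in \Supp(t)} = \Loco{E_{\cat B} \mid \cat B \in \Supph(t)},
\]
which is exactly the desired condition.

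For $(b) \Rightarrow (a)$, the argument is more direct: h-stratification automatically forces the h-detection property, since a nonzero object generates a nonzero localizing ideal, which must have nonempty h-support under the bijection. Combined with the hypothesis that $\Spc(\cat T^c)$ is weakly noetherian, \cref{prop:support-relations}(a) then gives $\Supp(t) = \pi(\Supph(t))$ for every $t$, and this formula extends to arbitrary localizing ideals by taking unions. Under the Nerves of Steel conjecture, the induced map on power sets $\pi_{*}$ is a bijection, and since h-stratification makes $\Supph$ a bijection on localizing ideals, the composite $\Supp = \pi_{*} \circ \Supph$ is the bijection required for tt-stratification. I expect the main obstacle to be the injectivity step for the Nerves of Steel conjecture in $(a) \Rightarrow (b)$, where the weak-ring structure of each $E_{\cat B}$ must be used essentially; once the Nerves of Steel conjecture and h-minimality are secured, the remaining steps reduce to careful translation between the three support theories ($\Supp$, $\Supph$, $\Supphnaive$) via $\pi$.
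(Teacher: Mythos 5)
Your proof is correct, and the $(b)\Rightarrow(a)$ direction is essentially identical to the paper's: h-detection plus \cref{prop:support-relations} gives $\pi(\Supph(t))=\Supp(t)$, and one then composes bijections. The genuine divergence is in $(a)\Rightarrow(b)$. The paper outsources both the Nerves of Steel conjecture and the identity $\pi^{-1}(\Supp(t))=\Supph(t)$ to an external reference (Theorem~4.7 of the companion paper \emph{bhs2}), whereas you give a self-contained derivation using only tools internal to this paper. In particular, your proof that Nerves of Steel follows from tt-stratification is a clean new argument: if $\pi(\cat B_1)=\pi(\cat B_2)$ with $\cat B_1\neq\cat B_2$ then $\Supp(E_{\cat B_1})=\Supp(E_{\cat B_2})$ by \cref{prop:supphweakrings} (which replaces the citation to Lemma~3.7 of \emph{bhs2}), so tt-stratification forces $\Loco{E_{\cat B_1}}=\Loco{E_{\cat B_2}}$; on the other hand $E_{\cat B_1}\otimes E_{\cat B_2}=0$ by \cref{exa:suppEb}, and since $\ker(E_{\cat B_1}\otimes-)$ is a localizing ideal containing $E_{\cat B_2}$ it contains $E_{\cat B_1}$, forcing the nonzero weak ring $E_{\cat B_1}$ to be tensor-nilpotent, a contradiction. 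You then obtain $\Supph=\Supphnaive$ from h-minimality and \cref{lem:minimalityhsuppidentification}, and conclude via the identification $g_{\cat P}\otimes t=0\iff E_{\cat B_{\cat P}}\otimes t=0$ (since $\Loco{g_{\cat P}}=\Loco{E_{\cat B_{\cat P}}}$). The trade-off is length versus self-containment: the paper's citation is faster, but your route removes a dependency on \emph{bhs2} and exposes the role of the weak-ring structure of $E_{\cat B}$ more explicitly, which is arguably clarifying.
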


\begin{proof}
	$(a)\Rightarrow (b)$: By \cite[Theorem 4.7]{bhs2} if $\cat T$ is tt-stratified, then the Nerves of Steel conjecture holds for $\cat T$, so it remains to show that $\cat T$ is h-stratified. It was proved in \cite[Lemma 3.7]{bhs2} that $\Supp(E_{\cat B})=\{ \pi(\cat B)\}$ for any $\cat B \in \Spc^h(\cat T^c)$. Hence $\Loco{E_{\cat B}} = \Loco{g_{\pi(\cat B)}}$ when $\cat T$ is tt-stratified. Therefore,
	\[
        \Loco{t} = \Loco{g_{\cat P} \mid \cat P \in \Supp(t)} = \Loco{E_{\cat B} \mid \cat B \in \pi^{-1}(\Supp(t))}
    \]
	for any $t \in \cat T$. This coincides with $\Loco{E_{\cat B} \mid \cat B \in \Supph(t)}$ since \cite[Theorem 4.7]{bhs2} establishes that $\pi^{-1}(\Supp(t))=\Supph(t)$ when $\cat T$ is tt-stratified. This establishes that $\cat T$ is h-stratified by \cref{thm:hstratfundamental}.

	$(b)\Rightarrow (a)$: If $\cat T$ is h-stratified then the h-detection property holds. Thus, \cref{prop:support-relations} implies that $\pi(\Supph(t))=\Supp(t))$ for each $t \in \cat T$. It follows that the diagram
    \begin{equation}\label{eq:supportcomparison}
    \begin{tikzcd}
    		\big\{ \text{localizing $\otimes$-ideals of $\cat T$} \big\} \ar[r,"\Supph"] \ar[rd,"\Supp"',end anchor={west}] & \big\{ \text{subsets of $\Spc^h(\cat T^c)$}\big\} \ar[d,"\pi"] \\
    & \big\{ \text{subsets of $\Spc(\cat T^c)$}\big\}
    \end{tikzcd}
    \end{equation}
    commutes. By hypothesis, the horizontal and vertical maps are injective (and thus bijective), hence so is the diagonal map; that is, $\cat T$ is tt-stratified.
\end{proof}

\begin{Cor}\label{cor:tt-stratified-iff-h-strified}
	If the Nerves of Steel conjecture holds for $\cat T$ and $\Spc(\cat T^c)$ is weakly noetherian then $\cat T$ is tt-stratified if and only if it is h-stratified.
\end{Cor}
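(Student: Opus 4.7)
The plan is to deduce this corollary as an immediate consequence of \cref{thm:tt=h+NS}. Under the additional assumption that the Nerves of Steel conjecture holds, condition $(b)$ of that theorem reduces to ``$\cat T$ is h-stratified,'' so the biconditional in the corollary follows by direct substitution.

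In more detail, for the forward direction I would invoke the implication $(a) \Rightarrow (b)$ of \cref{thm:tt=h+NS}, which yields h-stratification from tt-stratification (independently of NSC — in fact the theorem shows tt-stratification forces NSC on its own). For the reverse direction I would invoke $(b) \Rightarrow (a)$ of \cref{thm:tt=h+NS}: under h-stratification, the h-detection property holds, and hence \cref{prop:support-relations} gives $\pi(\Supph(t)) = \Supp(t)$ for every $t \in \cat T$. Combined with NSC, this guarantees that the commutative triangle \eqref{eq:supportcomparison} is well-defined, and its injective horizontal map $\Supph$ together with the bijective vertical map $\pi$ force the diagonal map $\Supp$ to also be a bijection, which is tt-stratification.

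There is no genuine obstacle here since the corollary is a specialization of \cref{thm:tt=h+NS}; the only point to verify is that the hypothesis ``$\Spc(\cat T^c)$ weakly noetherian'' in the corollary matches the standing hypothesis of \cref{thm:tt=h+NS}, which it does by assumption. Thus the proof is essentially a one-line reference, and no separate argument is required.
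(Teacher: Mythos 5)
Your proof is correct and follows the same route as the paper, which simply states that the corollary ``immediately follows from \cref{thm:tt=h+NS}.'' Your expanded account of both directions (including the observation that tt-stratification already forces the Nerves of Steel conjecture, and the sketch of how the support-comparison triangle yields the converse) accurately reproduces the internal logic of the theorem's proof and matches the paper's intent.
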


\begin{proof}
	This immediately follows from \cref{thm:tt=h+NS}.
\end{proof}

\begin{Cor}\label{cor:BHttstratificationdescent}
    Let $(f_i^*\colon \cat T \to \cat S_i)_{i \in I}$ be a weakly descendable geometric family. Assume that $\cat S_i$ is tt-stratified for each $i \in I$ and that $\Spc(\cat T^c)$ is weakly noetherian. If the Nerves of Steel conjecture holds for $\cat T$ then $\cat T$ is tt-stratified.
\end{Cor}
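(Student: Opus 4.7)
The proof is essentially a one-line assembly of results that have already been established in the excerpt, so I would proceed as follows.

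First, I would use the hypothesis that each $\cat S_i$ is tt-stratified to pass to the homological side: by \cref{thm:tt=h+NS} (the direction $(a) \Rightarrow (b)$), tt-stratification of $\cat S_i$ implies both that $\cat S_i$ is h-stratified and that the Nerves of Steel conjecture holds for $\cat S_i$ (note that tt-stratification of $\cat S_i$ presupposes its spectrum is weakly noetherian, but this is not needed on the target side for what follows). In particular, h-stratification of each $\cat S_i$ is the only input we extract here.

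Next, since $(f_i^*)_{i \in I}$ is weakly descendable and each $\cat S_i$ is h-stratified, I would invoke \cref{thm:h-descent} to deduce that $\cat T$ is h-stratified. This is the main content of the argument, but it is already done: all the work was carried out in \cref{sec:weakly-descendable}.

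Finally, to go from h-stratification back to tt-stratification for $\cat T$, I would apply \cref{cor:tt-stratified-iff-h-strified} (equivalently, the direction $(b) \Rightarrow (a)$ of \cref{thm:tt=h+NS}). This is legitimate since by hypothesis $\Spc(\cat T^c)$ is weakly noetherian and $\cat T$ satisfies the Nerves of Steel conjecture. The result follows.

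There is no real obstacle here: the corollary is simply the formal composition of h-descent (\cref{thm:h-descent}) with the two-way bridge (\cref{thm:tt=h+NS}) between h-stratification and tt-stratification in the presence of the Nerves of Steel conjecture. The non-trivial content was done earlier, namely establishing \cref{thm:h-descent} and the comparison \cref{thm:tt=h+NS}; the only thing worth noting is that the Nerves of Steel conjecture is assumed for $\cat T$ (not deduced), which is precisely what makes this descent statement for tt-stratification cleanly fall out.
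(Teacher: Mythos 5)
Your proof is correct and follows exactly the same route as the paper's: extract h-stratification of each $\cat S_i$ from tt-stratification via \cref{thm:tt=h+NS}, propagate h-stratification to $\cat T$ via \cref{thm:h-descent}, then convert back to tt-stratification for $\cat T$ using the hypotheses that the Nerves of Steel conjecture holds and $\Spc(\cat T^c)$ is weakly noetherian (again \cref{thm:tt=h+NS}). There is no meaningful difference from the paper's argument.
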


\begin{proof}
    For each $i \in I$, the tt-category $\cat S_i$ is tt-stratified by assumption, hence h-stratified by \cref{thm:tt=h+NS}. Descent for h-stratification as established in \cref{thm:h-descent} then implies that $\cat T$ is h-stratified. Therefore, \cref{thm:tt=h+NS} implies that $\cat T$ is tt-stratified, due to our assumption that $\cat T$ satisfies Nerves of Steel.
\end{proof}

\begin{Par}
    We conclude this section with a proof of \cref{thmx:a} from the introduction.
\end{Par}

\begin{proof}[Proof of \cref{thmx:a}]
    The equivalence between $(a)$ and $(b)$ is \cref{cor:tt-stratified-iff-h-strified}. \Cref{cor:hstrat_surjectivity} gives $(b) \Rightarrow (c)$, while the reverse implication is \cref{thm:h-descent}. 
\end{proof}

\section{Descent for the Nerves of Steel conjecture}\label{sec:descendingNSC}

In order to apply \cref{cor:BHttstratificationdescent} to descend tt-stratification, we desire methods for establishing the Nerves of Steel conjecture. This is the topic of the present section. 

\begin{Lem}\label{lem:corestrict-strongly-closed}
    Let $f^*\colon \cat T \to \cat S$ be a geometric functor and let $V \subseteq \Spc(\cat T^c)$ be the complement of a Thomason subset. If $f^*$ is strongly closed, then the canonical corestriction functor $\cat T(V) \to \cat S(\varphi^{-1}(V))$ is also strongly closed.
\end{Lem}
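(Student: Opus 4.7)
The plan is to exhibit the right adjoint $\bar{f}_*\colon \cat S(\varphi^{-1}(V)) \to \cat T(V)$ as essentially the corestriction of $f_*$ to the localized categories, and then reduce compact-preservation to that of $f_*$. Writing $Y\coloneqq\Spc(\cat T^c)\smallsetminus V$ and considering the commuting square
\[
\begin{tikzcd}
\cat T \ar[r, "f^*"] \ar[d, "q_{\cat T}^*"'] & \cat S \ar[d, "q_{\cat S}^*"] \\
\cat T(V) \ar[r, "\bar{f}^*"'] & \cat S(\varphi^{-1}(V))
\end{tikzcd}
\]
whose vertical arrows are the canonical finite localizations, I would denote $f_Y\coloneqq q_{\cat T*}(\unit)\in\cat T$ and $f_{\varphi^{-1}(Y)}\coloneqq q_{\cat S*}(\unit)\in\cat S$ the associated smashing idempotents, so that $q_{\cat T*}$ and $q_{\cat S*}$ are fully faithful and identify $\cat T(V)$, $\cat S(\varphi^{-1}(V))$ with $f_Y\otimes\cat T$, $f_{\varphi^{-1}(Y)}\otimes\cat S$ respectively. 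Since $f^*$ is geometric and preserves the defining filtered colimits, $f^*(f_Y)\simeq f_{\varphi^{-1}(Y)}$.

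The key step is to establish the base change identity
\[
\bar{f}_*\circ q_{\cat S}^* \simeq q_{\cat T}^*\circ f_*.
\]
By full faithfulness of $q_{\cat T*}$, this is equivalent to the corresponding identity after applying $q_{\cat T*}$. Using the evident isomorphism $q_{\cat T*}\bar{f}_*\simeq f_*q_{\cat S*}$ obtained as the right-adjoint mate of the commuting square, together with the projection formula and $f^*(f_Y)\simeq f_{\varphi^{-1}(Y)}$, one computes for any $s\in\cat S$
\[
q_{\cat T*}\bar{f}_*q_{\cat S}^*(s) \simeq f_*q_{\cat S*}q_{\cat S}^*(s) \simeq f_*(s\otimes f_{\varphi^{-1}(Y)}) \simeq f_*(s)\otimes f_Y \simeq q_{\cat T*}q_{\cat T}^*f_*(s),
\]
as required. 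This is where the bulk of the work lies; the rest is formal.

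With the identity in hand, I would conclude as follows. Any compact $\bar{c}\in\cat S(\varphi^{-1}(V))^c$ is a retract of $q_{\cat S}^*(c)$ for some $c\in\cat S^c$, since rigidly-compactly generated tt-categories have idempotent-complete compact parts. Applying $\bar{f}_*$ and the base change identity gives $\bar{f}_*q_{\cat S}^*(c)\simeq q_{\cat T}^*f_*(c)$. Now $f_*(c)\in\cat T^c$ by the strong closedness hypothesis, and $q_{\cat T}^*$ preserves compacts because its right adjoint $q_{\cat T*}$ preserves coproducts (the localization $q_{\cat T}^*$ being smashing). Hence $\bar{f}_*(\bar{c})$ is a retract of a compact object of $\cat T(V)$, and therefore itself compact. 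I expect the verification of the base change identity to be the only nontrivial point; everything else is unwinding definitions.
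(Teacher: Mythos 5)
Your proof is correct and takes essentially the same route as the paper's: you verify the Beck--Chevalley identity $\bar f_*\circ q_{\cat S}^*\simeq q_{\cat T}^*\circ f_*$ by checking after the fully faithful (hence conservative) $q_{\cat T*}$, using the same chain of isomorphisms via $q_{\cat S*}q_{\cat S}^*(-)\simeq (-)\otimes f_{\varphi^{-1}(Y)}$, the projection formula, and $f^*(f_Y)\simeq f_{\varphi^{-1}(Y)}$, and then conclude by expressing an arbitrary compact of $\cat S(\varphi^{-1}(V))$ as a retract of $q_{\cat S}^*(c)$ with $c\in\cat S^c$. The only cosmetic difference is that the paper uses $x\oplus\Sigma x\simeq k^*(c)$ rather than the retract formulation; both rest on the Neeman--Thomason localization theorem, not merely idempotent-completeness as your one-line justification suggests.
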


\begin{proof}
    Let $Y \coloneqq V^c$ and consider the commutative square
    \[
    \begin{tikzcd}
        \cat T \ar[r,"f^*"] \ar[d,"h^*"] & \cat S \ar[d,"k^*"] \\
        \cat T(V) \ar[r,"g^*"] & \cat S(\varphi^{-1}(V))
    \end{tikzcd}
    \]
    where the vertical arrows are the finite localizations and the bottom arrow is the induced  functor. This diagram satisfies the Beck--Chevalley condition $h^* f_* \simeq g_* k^*$; cf.~\cite[Lemma 5.5]{Sanders21pp}. Indeed, this may be checked after applying the conservative $h_*$. For any $s \in \cat S$, we have $h_*h^*f_*(s) \simeq {f_Y \otimes f_*(s)} \simeq {f_*(f^*(f_Y)\otimes s)}\simeq f_*(f_{\varphi^{-1}(Y)} \otimes s) \simeq f_*(k_*k^*(\unit) \otimes s) \simeq f_*k_*k^*(s)\simeq h_*g_*k^*(s)$ where we have invoked \cite[Proposition 5.11]{BalmerSanders17}. With this in hand, consider a compact object $x \in \cat S(\varphi^{-1}(V))^c$. Then $x \oplus \Sigma x = k^*(c)$ for some $c \in \cat S^c$ and it suffices to show that $g_* k^*(c)$ is compact. This follows from our hypothesis, since $g_* k^*(c) \simeq h^*f_*(c)$.
\end{proof}

\begin{Prop}\label{prop:NSCdescent}
    Suppose $(f_i^*\colon\cat T \to \cat S_i)_{i \in I}$ is a jointly nil-conservative family of geometric functors with $\cat S_i$ satisfying the Nerves of Steel conjecture for all $i\in I$. Assume additionally one of the following:
        \begin{enumerate}
            \item the induced map $\varphi\colon \bigsqcup_{i \in I}\Spc(\cat S_i^c) \to \Spc(\cat T^c)$ is injective; or
            \item $(f_i^*)_{i \in I}$ is strongly closed and $\varphi_i$ has discrete fibers for all $i \in I$; or
            \item $(f_i^*)_{i \in I}$ is strongly closed and $\Spc(\cat S_i^c)$ is noetherian for all $i \in I$.
        \end{enumerate}
    Then $\cat T$ satisfies the Nerves of Steel conjecture.
\end{Prop}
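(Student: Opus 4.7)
The plan is to show that $\pi_{\cat T}\colon\Spc^h(\cat T^c)\to\Spc(\cat T^c)$ is injective. My starting point is the commutative square
\[\begin{tikzcd}
    \bigsqcup_{i\in I}\Spc^h(\cat S_i^c) \ar[d,"\pi"'] \ar[r,"\varphi^h"] & \Spc^h(\cat T^c) \ar[d,"\pi_{\cat T}"] \\
    \bigsqcup_{i\in I}\Spc(\cat S_i^c) \ar[r,"\varphi"] & \Spc(\cat T^c)
\end{tikzcd}\]
in which the left vertical map is a bijection by the Nerves of Steel hypothesis on each $\cat S_i$, and the top horizontal map is surjective by \cref{prop:surjectivity} applied to the jointly nil-conservative family.

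Case (a) is a direct diagram chase. Suppose $\pi_{\cat T}(\cat B_1)=\pi_{\cat T}(\cat B_2)=\cat P$, and pick lifts $\cat C_j\in\Spc^h(\cat S_{i_j}^c)$ of $\cat B_j$ via surjectivity of $\varphi^h$. Commutativity places $\pi_{\cat S_{i_j}}(\cat C_j)$ in $\varphi^{-1}(\cat P)$, which is a singleton in $\bigsqcup_i\Spc(\cat S_i^c)$ by the injectivity hypothesis on $\varphi$. Thus $i_1=i_2=:i$ and $\pi_{\cat S_i}(\cat C_1)=\pi_{\cat S_i}(\cat C_2)$, and bijectivity of $\pi_{\cat S_i}$ yields $\cat C_1=\cat C_2$ and hence $\cat B_1=\cat B_2$.

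For cases (b) and (c), the plan is to reduce to case (a) via Zariski localization, using \cref{prop:nervesofsteel_Zariskilocal}, which reduces Nerves of Steel for $\cat T$ to Nerves of Steel on an open cover by complements of Thomason subsets. For each $\cat P\in\Spc(\cat T^c)$ I would construct such a Thomason complement $V\ni\cat P$ so that the corestricted family $\bigl(\bar f_i^*\colon\cat T(V)\to\cat S_i(\varphi_i^{-1}(V))\bigr)_{i\in I}$ satisfies case (a), i.e., so that the corresponding $\bar\varphi$ is injective. The hypotheses propagate cleanly under corestriction: strong closedness is preserved by \cref{lem:corestrict-strongly-closed}; Nerves of Steel for each $\cat S_i(\varphi_i^{-1}(V))$ descends via the cartesian square of \cref{cor:cartesian2}; noetherianness (in case (c)) passes to the open subspace $\varphi_i^{-1}(V)\subseteq\Spc(\cat S_i^c)$; and joint nil-conservativity of the corestricted family follows from that of the original via the projection-formula identity underlying \cref{lem:corestrict-strongly-closed}.

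The hard part will be producing the required Thomason complement $V$. In case (b), the discreteness of each $\varphi_i^{-1}(\cat P)$ should provide separating opens in each $\Spc(\cat S_i^c)$ that can be transported to Thomason subsets of $\Spc(\cat T^c)$ via strong closedness, for instance by taking supports of compact pushforwards $(f_i)_*$ of suitable idempotents or compact objects. Case (c) replaces discreteness by noetherianness of $\Spc(\cat S_i^c)$ and the consequent local-closedness of individual primes, and then proceeds along analogous lines. In both cases, further care is needed to control how many $i\in I$ can contribute to $\varphi^{-1}(\cat P)$ for sufficiently small $V$, which is precisely where joint nil-conservativity and the surjectivity of $\varphi^h$ re-enter nontrivially; and it is the combinatorics of this separation-and-gluing step that I anticipate to be the main technical obstacle.
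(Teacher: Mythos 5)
Your argument for case (a) is correct and is essentially the paper's proof, just written as an element-by-element diagram chase instead of the one-line observation that the composite $\varphi \circ \sqcup\pi_{\cat S_i}$ is bijective and factors as $\pi_{\cat T}\circ\varphi^h$ with $\varphi^h$ surjective.

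For cases (b) and (c), however, there is a genuine gap: your reduction to case (a) cannot work. You want to shrink $\Spc(\cat T^c)$ to a Thomason complement $V\ni\cat P$ so small that the corestricted map $\bar\varphi$ becomes injective. But for any such $V$ containing $\cat P$, the fiber of $\bar\varphi$ over $\cat P$ is still the full fiber $\varphi^{-1}(\cat P)$ --- corestricting to $V$ removes nothing lying over $\cat P$ itself. So if $\varphi^{-1}(\cat P)$ has more than one point (which is exactly what hypotheses (b) and (c) allow), no choice of $V$ can make $\bar\varphi$ injective, and the "separation-and-gluing" obstacle you anticipate is not a technical difficulty but an impossibility. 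What you should extract from hypotheses (b)/(c) is \emph{not} injectivity of some localized $\bar\varphi$; rather, the localization serves only to make $\cat T$ local with unique closed point $\mathfrak m$, and one then shows directly that $\pi_{\cat T}^{-1}(\mathfrak m)$ is a singleton. Concretely: pick a closed point $\cat M\in\Spc(\cat S_i^c)$ over $\mathfrak m$; under (b) or (c) the singleton $\{\cat M\}$ is Thomason closed, so there is a compact weak ring $z$ with $\supp(z)=\{\cat M\}$ and hence $\Supph(z)=\{\cat A\}$ where $\cat A$ is the unique homological prime over $\cat M$ (using Nerves of Steel for $\cat S_i$). If some $\cat B\in\pi_{\cat T}^{-1}(\mathfrak m)$ differed from $\varphi^h(\cat A)$, then $\Supph(z\otimes f_i^*(E_{\cat B}))=\emptyset$, so $z\otimes f_i^*(E_{\cat B})=0$ since weak rings are detected by $\Supph$, hence $E_{\cat B}\otimes(f_i)_*(z)=0$ by the projection formula, i.e.\ $\cat B\notin\Supph((f_i)_*(z))$; but strong closedness makes $(f_i)_*(z)$ compact, so $\Supph((f_i)_*(z))=\pi_{\cat T}^{-1}(\supp((f_i)_*(z)))=\pi_{\cat T}^{-1}(\varphi(\supp(z)))\ni\cat B$, a contradiction. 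This compact-weak-ring argument is the key idea you are missing; the Zariski-localization step is the same as yours but serves a different purpose.
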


\begin{proof}
    Consider the commutative diagram
    \[ 
    \begin{tikzcd}
		\bigsqcup_{i \in I}\Spc^h(\cat S_i^c) \ar[d,"\sqcup \pi_{\cat S_i}","\simeq"'] \ar[r,"\varphi^h"] & \Spc^h(\cat      T^c) \ar[d,"\pi_{\cat T}",two heads] \\
		\bigsqcup_{i \in I}\Spc(\cat S_i^c) \ar[r,"\varphi"] & \Spc(\cat T^c).
	\end{tikzcd}
    \]
    Recall from \cref{prop:surjectivity} that $(f_i^*)$ being jointly nil-conservative implies that~$\varphi$ is also surjective. If $\varphi$ is in addition injective as assumed in $(a)$, then it is bijective. Since $\sqcup\pi_{\cat S_i}$ is bijective, $\varphi^h$ is injective and hence bijective. Therefore $\pi_{\cat T}$ is bijective.

    Now assume either condition $(b)$ or $(c)$. By \cref{cor:cartesian}, the Nerves of Steel conjecture holds also for any finite localization of $\cat S_i$. Moreover, the property of being strongly closed is local in the target by \cref{lem:corestrict-strongly-closed}, while the property of being nil-conservative is local in the target by \cref{prop:surjectivity}[$(c) \Leftrightarrow (d)$] and \cref{cor:cartesian2}. Therefore, we can reduce to the case when $\cat T$ is local and check that the fiber of $\pi_{\cat T}\colon\Spc^h(\cat T^c)\to \Spc(\cat T^c)$ over the unique closed point $\mathfrak m \in \Spc(\cat T^c)$ is a singleton. Since $\varphi$ is surjective, we may choose $i \in I$ and a closed point $\cat M \in \Spc(\cat S_i^c)$ lying over $\mathfrak m$. For the remainder of this proof, we set $\cat S = \cat S_i$ and simplify the notation accordingly.
    
    Observe that $\{\cat M\}$ is Thomason closed if $\Spc(\cat S^c)$ is noetherian or $\varphi$ has discrete fibers. In either case, by \cite[Proposition 2.14]{Balmer05a} there exists some compact object $z \in \cat S^c$ with $\supp(z)=\{\cat M\}$. Replacing $z$ by $z \otimes z^{\vee}$ we may assume that~$z$ is a (weak) ring. Let $\cat A\in\Spc^h(\cat S^c)$ be the unique homological prime such that $\pi_{\cat S}(\cat A) = \cat M$, so that $\Supph(z) = \{\cat A\}$.

    Suppose for a contradiction that there exists a point $\cat B \in \Spc^h(\cat T^c)$ lying over $\mathfrak m$ such that $\cat B\neq\varphi^h(\cat A)$. Recall from \cref{exa:AV-scott-EB} that 
        \[
            \Supph(f^*(E_{\cat B})) = (\varphi^h)^{-1}(\{\cat B\}).
        \]
    It follows that 
        \[
            \Supph(z\otimes f^*(E_{\cat B})) = \Supph(z) \cap \Supph(f^*(E_{\cat B})) = \{\cat A\} \cap (\varphi^h)^{-1}(\{\cat B\}) = \emptyset
        \]
    and hence $f^*(E_{\cat B})\otimes z=0$ because homological support detects weak rings (\cref{rem:hsuppdetectsweakrings}). Therefore, $E_{\cat B} \otimes f_*(z)=0$ by the projection formula. That is $\cat B \not\in \Supph(f_*(z))$. By the hypothesis that $f$ is strongly closed, $f_*(z)$ is compact and hence $\Supph(f_*(z)) = \pi_{\cat T}^{-1}(\supp(f_*(z)))$ by \cref{rem:comparisonforcompacts}. Therefore we have $\cat B \not\in \pi_{\cat T}^{-1}(\supp(f_*(z)))$ so $\mathfrak m = \pi_{\cat T}(\cat B) \not\in \supp(f_*(z))=\varphi(\supp(z))$ where the last equality is by \cite[Theorem~13.13]{BCHS1} since $z$ is a compact weak ring. But $\varphi(\supp(z))$ contains $\varphi(\{\cat M\}) = \mathfrak m$ yielding the desired contradiction.
\end{proof}

\begin{Rem}
    The results of \cref{prop:NSCdescent} are not entirely satisfying; variations on the theme are possible and we expect a more complete statement to exist. Nevertheless, using \cref{prop:NSCdescent}$(a)$, we can provide a variant of the argument used in \cite{Balmer20_nilpotence} to deduce the Nerves of Steel conjecture from nilpotence-type theorems in several examples of interest:    
\end{Rem}

\begin{Prop}\label{prop:nil-cons-NSC}
    Let $\cat T$ be a rigidly-compactly generated tt-category.  Suppose there exists a tt-field  $f_{\cat P}^*\colon \cat T \to \cat F_{\cat P}$ detecting $\cat P$ for each prime $\cat P \in \Spc(\cat T^c)$, i.e., $\cat P$ is in the image of $\Spc(f_{\cat P}^*)$. The following are equivalent:
    \begin{enumerate}
        \item $\cat T$ satisfies the Nerves of Steel conjecture.
        \item The family $(f_{\cat P}^*)_{\cat P \in \Spc(\cat T^c)}$ jointly detects $\otimes$-nilpotence of morphisms with dualizable source.
        \item The family $(f_{\cat P}^*)_{\cat P \in \Spc(\cat T^c)}$ is jointly nil-conservative.
    \end{enumerate}
\end{Prop}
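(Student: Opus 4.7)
The plan is to prove the cyclic chain $(a) \Rightarrow (b) \Rightarrow (c) \Rightarrow (a)$. Of these, $(a) \Rightarrow (b)$ is the substantive step, relying on Balmer's nilpotence theorem; $(b) \Rightarrow (c)$ is a retraction argument on the unit of a weak ring; and $(c) \Rightarrow (a)$ follows formally from the equivalence in \cref{prop:surjectivity}. Throughout we use that tt-fields contain no nonzero tensor-nilpotent morphism between dualizable objects, so that $(b)$ may be restated as: a morphism $\alpha$ with dualizable source is $\otimes$-nilpotent if and only if $f_{\cat P}^*(\alpha) = 0$ for every $\cat P$.

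For $(a) \Rightarrow (b)$, assume the Nerves of Steel conjecture, so that each homological prime $\cat B \in \Spc^h(\cat T^c)$ corresponds uniquely to $\cat P = \pi(\cat B) \in \Spc(\cat T^c)$. Since $\Spc^h(\cat F_{\cat P}^c)$ is a singleton and $f_{\cat P}^*$ detects $\cat P$, the map $\varphi_{\cat P}^h$ must send it to $\cat B$; hence by \cite[Theorem~3.1]{BalmerCameron2021} the pure-injective $E_{\cat B}$ is a direct summand of $(f_{\cat P})_*(\unit_{\cat F_{\cat P}})$. Given $\alpha\colon x \to y$ with $x$ dualizable such that $f_{\cat P}^*(\alpha) = 0$ for all $\cat P$, the projection formula gives $(f_{\cat P})_*(\unit) \otimes \alpha \simeq (f_{\cat P})_*(f_{\cat P}^*(\alpha)) = 0$, and hence $E_{\cat B} \otimes \alpha = 0$ for every $\cat B$. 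Balmer's nilpotence theorem \cite{Balmer20_nilpotence} then forces $\alpha$ to be tensor-nilpotent.

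For $(b) \Rightarrow (c)$, let $w$ be a weak ring with unit $\eta\colon \unit \to w$ and multiplication $\mu$ satisfying $\mu \circ (1_w \otimes \eta) = 1_w$, and suppose $f_{\cat P}^*(w) = 0$ for all $\cat P$. Then $f_{\cat P}^*(\eta) = 0$ for every $\cat P$, so $(b)$ applied to $\eta$ yields $\eta^{\otimes n} = 0$ for some $n \geq 1$. The factorization $\eta^{\otimes n} = (1_{w^{\otimes(n-1)}} \otimes \eta) \circ \eta^{\otimes(n-1)}$ combined with the weak-ring identity gives
\[
    (1_{w^{\otimes(n-2)}} \otimes \mu) \circ \eta^{\otimes n} = \eta^{\otimes(n-1)},
\]
so $\eta^{\otimes n} = 0$ forces $\eta^{\otimes(n-1)} = 0$; iterating yields $\eta = 0$, whence $1_w = \mu \circ (1_w \otimes \eta) = 0$ and $w = 0$. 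For $(c) \Rightarrow (a)$, \cref{prop:surjectivity} identifies joint nil-conservativity of $(f_{\cat P}^*)_{\cat P \in \Spc(\cat T^c)}$ with the surjectivity of $\varphi^h = \sqcup_{\cat P}\varphi_{\cat P}^h\colon \bigsqcup_{\cat P} \Spc^h(\cat F_{\cat P}^c) \to \Spc^h(\cat T^c)$. Each $\Spc^h(\cat F_{\cat P}^c)$ is a singleton whose image lies in $\pi^{-1}(\{\cat P\})$; surjectivity then forces $\pi^{-1}(\{\cat P\})$ to be exactly this singleton for every $\cat P$, i.e., $\pi$ is a bijection.

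The main obstacle is invoking Balmer's nilpotence theorem in the form required for $(a) \Rightarrow (b)$, namely that a morphism with dualizable source is $\otimes$-nilpotent whenever $E_{\cat B} \otimes \alpha = 0$ for every homological prime $\cat B$; this is essentially the main result of \cite{Balmer20_nilpotence}, though some care is needed in unpacking its equivalence with the standard formulation involving the homological residue field functors.
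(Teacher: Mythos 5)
Your proof is correct modulo one flagged step, and it takes a genuinely different route from the paper. The paper proves $(a) \Leftrightarrow (b)$ in one stroke: combining the commutative diagram of Balmer--Cameron with the fact that the tt-field $\cat F_{\cat P}$ is phantomless and the comparison functor $\overbar{F}$ is faithful, one shows that a morphism is killed by $f_{\cat P}^*$ if and only if it is killed by the homological residue field $\overline{\mathbb{h}}_{\cat B}$; the equivalence with the Nerves of Steel conjecture is then exactly \cite[Corollary~4.7 and Theorem~5.4]{Balmer20_nilpotence}. You instead run a cyclic chain $(a)\Rightarrow(b)\Rightarrow(c)\Rightarrow(a)$, with $(a)\Rightarrow(b)$ routed through the pure-injectives $E_{\cat B}$ as retracts of $(f_{\cat P})_*(\unit)$ and the projection formula, a nice elementary weak-ring argument for $(b)\Rightarrow(c)$ that does not appear in the paper, and the same singleton-image argument for $(c)\Rightarrow(a)$ that underlies \cref{prop:NSCdescent}$(a)$.

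The gap you acknowledge in $(a)\Rightarrow(b)$ is real but fillable. Balmer's nilpotence theorem is stated in terms of the functors $\overline{\mathbb{h}}_{\cat B}$, i.e., the condition is $\overline{\mathbb{h}}_{\cat B}(\alpha)=0$ for all $\cat B$, which (for $\alpha$ with rigid source) is equivalent to $\ihom{\alpha, E_{\cat B}}=0$; it is not \emph{a priori} stated as $E_{\cat B}\otimes\alpha=0$. To bridge these you need a morphism-level version of \cref{lem:formal1}$(b)$: for a weak ring $R$, if $R\otimes\alpha=0$ then $\ihom{\alpha,R}=0$. This follows by the same dinaturality argument, since the retraction exhibited there is natural in the first variable, so $\ihom{\alpha,R}$ factors through $\ihom{\alpha\otimes R,R\otimes R}$. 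With that lemma in hand, $E_{\cat B}\otimes\alpha=0$ for all $\cat B$ gives $\ihom{\alpha,E_{\cat B}}=0$ for all $\cat B$, and Balmer's theorem applies. Once this is supplied, your argument is complete. The paper's route is shorter and yields the two-way equivalence $(a)\Leftrightarrow(b)$ directly, without passing through $E_{\cat B}$; your route is more formal in flavor and makes explicit how nilpotence-detection implies nil-conservativity $(b)\Rightarrow(c)$ via the weak-ring identity, which is a small but worthwhile observation.
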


\begin{proof}
    Recall from \cite[Lemma~2.2]{BalmerCameron2021} that every tt-field induces a homological residue field.
    More precisely, for each $\cat P \in \Spc(\cat T^c)$, there exists a homological prime $\cat B \in \Spc^h(\cat T^c)$ lying over $\cat P$ and a commutative diagram
    \[\begin{tikzcd}
        \cat T \ar[rr,bend left,"\overline{\mathbb{h}}_{\cat B}"]\ar[d,"f_{\cat P}^*"'] \ar[r,"\mathbb{h}"] & \text{Mod-}\cat T^c=\cat A \ar[d] \ar[r]
        & \overbar{\cat A}_{\cat B} \ar[dl,"\overbar{F}",end anchor=east]\\
        \cat F_{\cat P}  \ar[r,"\mathbb{h}"] & \text{Mod-}\cat F_{\cat P}^c 
    \end{tikzcd}\]
    where the functor $\overbar{F}$ is faithful; see \cite[(2.3)]{BalmerCameron2021}. It follows from this diagram and the fact that the tt-field $\cat F_{\cat P}$ is phantomless that a morphism in $\cat T$ is killed by $f_{\cat P}^*$ if and only if it is killed by the homological residue field $\overline{\mathbb{h}}_{\cat B}$. Thus, the family $(f_{\cat P}^*)_{\cat P \in \cat \Spc(\cat T^c)}$ jointly detects $\otimes$-nilpotence if and only if the corresponding family of homological residue fields (also indexed on $\Spc(\cat T^c)$) jointly detects $\otimes$-nilpotence. The latter is equivalent to the Nerves of Steel conjecture by \cite[Corollary~4.7 and Theorem~5.4]{Balmer20_nilpotence}. This establishes the equivalence of $(a)$ and $(b)$. On the other hand, contemplating the commutative diagram
    \[ 
        \begin{tikzcd}
		\bigsqcup_{\cat P \in \Spc(\cat T^c)}\Spc^h(\cat F_{\cat P}^c) \ar[d,"\sqcup \pi_{\cat F_{\cat P}}","\simeq"'] \ar[r,"\varphi^h"] & \Spc^h(\cat      T^c) \ar[d,"\pi_{\cat T}",two heads] \\
		\bigsqcup_{\cat P \in \Spc(\cat T^c)}\Spc(\cat F_{\cat P}^c) \ar[r,"\varphi","\simeq"'] & \Spc(\cat T^c),
	\end{tikzcd}
    \]
    we see that $(a)$ implies that the map $\varphi^h\colon\bigsqcup_{\cat P \in \Spc(\cat T^c)} \Spc^h(\cat F_{\cat P}^c)\to \Spc^h(\cat T^c)$ is surjective. This is equivalent to the family being nil-conservative by \cite[Theorem 1.9]{BarthelCastellanaHeardSanders22b}. Thus $(a)\Rightarrow (c)$. Finally, the implication $(c)\Rightarrow (a)$ follows from \cref{prop:NSCdescent}(a).
\end{proof}

\begin{Rem}
    The next result establishes an equivalent formulation for the Nerves of Steel conjecture entirely in terms of the tensor triangular support. It strengthens previous work \cite[Proposition 3.13]{bhs2} and could also be used to give an alternative deduction of \cref{thm:tt=h+NS}.
\end{Rem}

\begin{Prop}\label{prop:tensorformulaweakrings}
    Suppose that $\Spc(\cat T^c)$ is weakly noetherian. The Nerves of Steel conjecture holds for $\cat T$ if and only if $\cat T$ satisfies the tensor product formula for all weak rings, i.e., 
        \[
            \Supp(w_1 \otimes w_2) = \Supp(w_1) \cap \Supp(w_2)
        \]
    for any weak rings $w_1,w_2 \in \cat T$.
\end{Prop}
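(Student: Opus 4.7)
The plan is to prove both implications by exploiting the known tensor product property for the homological support together with \cref{prop:supphweakrings}, which identifies $\Supp(w) = \pi(\Supph(w))$ for weak rings $w$ (this is where the weakly noetherian hypothesis enters). The argument is symmetric in spirit but pulls in opposite directions.

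For the forward implication, suppose Nerves of Steel holds, so $\pi$ is a bijection. Given two weak rings $w_1, w_2 \in \cat T$, the product $w_1 \otimes w_2$ is itself a weak ring (with unit $\eta_1 \otimes \eta_2$ and multiplication assembled from $\mu_1, \mu_2$ and the symmetry). Then
\[
\Supp(w_1 \otimes w_2) = \pi(\Supph(w_1 \otimes w_2)) = \pi(\Supph(w_1) \cap \Supph(w_2))
\]
by \cref{prop:supphweakrings} and the tensor product property \eqref{eq:tensor-product}. Since $\pi$ is a bijection, it preserves intersections, so this equals $\pi(\Supph(w_1)) \cap \pi(\Supph(w_2)) = \Supp(w_1) \cap \Supp(w_2)$, again by \cref{prop:supphweakrings}.

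For the reverse implication, assume the tensor product formula for weak rings and aim to show $\pi$ is injective. Let $\cat B_1, \cat B_2 \in \Spc^h(\cat T^c)$ be distinct. Each $E_{\cat B_i}$ is a weak ring, so the tensor $E_{\cat B_1} \otimes E_{\cat B_2}$ is also a weak ring. By the tensor product property \eqref{eq:tensor-product} and \cref{exa:suppEb},
\[
\Supph(E_{\cat B_1} \otimes E_{\cat B_2}) = \{\cat B_1\} \cap \{\cat B_2\} = \emptyset,
\]
and since homological support detects weak rings (\cref{rem:hsuppdetectsweakrings}), this forces $E_{\cat B_1} \otimes E_{\cat B_2} = 0$. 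Thus $\Supp(E_{\cat B_1} \otimes E_{\cat B_2}) = \emptyset$, and the assumed tensor product formula yields $\Supp(E_{\cat B_1}) \cap \Supp(E_{\cat B_2}) = \emptyset$. On the other hand, \cref{prop:supphweakrings} gives $\Supp(E_{\cat B_i}) = \pi(\Supph(E_{\cat B_i})) = \{\pi(\cat B_i)\}$, so we conclude $\pi(\cat B_1) \neq \pi(\cat B_2)$.

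No serious obstacle is anticipated: the one subtlety is verifying that the tensor product of two weak rings carries a canonical weak ring structure, which is a direct diagram chase using the symmetry of $\otimes$. The rest is a straightforward manipulation using the already-established properties of $\Supph$ and the identity $\pi \circ \Supph = \Supp$ for weak rings.
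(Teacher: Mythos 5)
Your proof is correct and follows the paper's route. The forward computation is essentially identical, with the small extra observation that $w_1\otimes w_2$ is itself a weak ring so that \cref{prop:supphweakrings} may be applied to it directly (the paper only says it uses that proposition ``twice,'' but either version of the computation closes, and yours is clean). For the converse, where the paper simply defers to the argument of \cite[Proposition 3.13]{bhs2} and remarks that only the tensor formula for the objects $E_{\cat B}$ is needed, you unfold that argument explicitly: for distinct $\cat B_1\neq\cat B_2$ in $\Spc^h(\cat T^c)$, the weak ring $E_{\cat B_1}\otimes E_{\cat B_2}$ has empty homological support by \eqref{eq:tensor-product} and \cref{exa:suppEb}, hence vanishes by \cref{rem:hsuppdetectsweakrings}, and then the assumed tensor formula together with $\Supp(E_{\cat B_i})=\pi(\Supph(E_{\cat B_i}))=\{\pi(\cat B_i)\}$ forces $\pi(\cat B_1)\neq\pi(\cat B_2)$. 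This is the same idea as the cited argument but has the advantage of being self-contained; the only loose end is the (routine, symmetry-twisting) check that a tensor product of weak rings is a weak ring, which you correctly flag.
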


\begin{proof}
    Assume first that the Nerves of Steel conjecture holds for $\cat T$.  Using \cref{prop:supphweakrings} twice as well as the tensor product formula for homological support \eqref{eq:tensor-product}, we compute
    \begin{align*}
        \Supp(w_1 \otimes w_2) & = \pi \Supph(w_1 \otimes w_2) \\
        & = \pi(\Supph(w_1) \cap \Supph(w_2)) \\
        & = \pi\Supph(w_1) \cap \pi\Supph(w_2) \\
        & = \Supp(w_1) \cap \Supp(w_2),
    \end{align*}
    where the third equality uses the hypothesis that $\pi$ is a bijection.

    Conversely, suppose that the tensor product formula holds for all weak rings.  The argument in the proof of  \cite[Proposition 3.13]{bhs2} then implies the Nerves of Steel conjecture for $\cat T$. Note that the result there assumed the full tensor product formula, but it is only used for the weak rings $E_{\cat B}$.
\end{proof}

\section{Applications to tt-stratification}\label{sec:tt-applications}

We now state our main applications. We start with three descent results which extend those of  \cite[Section 17]{BarthelCastellanaHeardSanders22b} and \cite[Section 12]{BCHNPS_descent} to a family of functors.

\begin{Prop}\label{prop:ttstratdescent}
	Let $(f_i^*\colon \cat T \to \cat S_i)_{i \in I}$ be a weakly descendable geometric family with each $\cat S_i$ tt-stratified. Suppose that $\Spc(\cat T^c)$ is weakly noetherian. Assume additionally one of the following: 
    \begin{enumerate}
        \item the induced map $\varphi\colon \bigsqcup_{i \in I}\Spc(\cat S_i^c) \to \Spc(\cat T^c)$ is injective; or
        \item $(f_i^*)_{i \in I}$ is strongly closed and $\varphi_i$ has discrete fibers for all $i \in I$; or
        \item $(f_i^*)_{i \in I}$ is strongly closed and $\Spc(\cat S_i^c)$ is noetherian for all $i \in I$.
    \end{enumerate}
    Then $\cat T$ is tt-stratified.
\end{Prop}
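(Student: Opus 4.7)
The strategy is to exploit the main equivalence of \cref{thm:tt=h+NS}, which reduces tt-stratification to the conjunction of h-stratification and the Nerves of Steel conjecture. Accordingly, I would split the proof into two independent descent statements for these two properties and then recombine them under the weakly noetherian hypothesis.

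First, I would establish h-stratification for $\cat T$. Since each $\cat S_i$ is assumed tt-stratified, \cref{thm:tt=h+NS} already provides, in particular, that each $\cat S_i$ is h-stratified. The family $(f_i^*)_{i\in I}$ is weakly descendable by hypothesis, so \cref{thm:h-descent} applies directly and yields that $\cat T$ is h-stratified. Note that this half of the argument does not use the weak noetherian hypothesis on $\Spc(\cat T^c)$, nor any of the three alternative assumptions (a), (b), (c); those will only be needed for descent of Nerves of Steel.

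Next, I would descend the Nerves of Steel conjecture from the $\cat S_i$'s. Again by \cref{thm:tt=h+NS}, each tt-stratified $\cat S_i$ satisfies the Nerves of Steel conjecture. Moreover, weakly descendable implies jointly nil-conservative by \cref{prop:surjectivity}, so the family $(f_i^*)_{i\in I}$ meets the running hypothesis of \cref{prop:NSCdescent}. The three alternative assumptions (a), (b), (c) of the present proposition are precisely the three alternative hypotheses of \cref{prop:NSCdescent}, so in each case \cref{prop:NSCdescent} yields that $\cat T$ satisfies the Nerves of Steel conjecture.

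Finally, with $\cat T$ h-stratified, satisfying the Nerves of Steel conjecture, and having weakly noetherian spectrum $\Spc(\cat T^c)$, the implication $(b)\Rightarrow(a)$ in \cref{thm:tt=h+NS} delivers the conclusion that $\cat T$ is tt-stratified. The proof is therefore essentially an assembly argument, and the only real content lies inside the already-proved \cref{thm:h-descent} and \cref{prop:NSCdescent}; the latter is where the strong closedness and (discreteness or noetherianity) hypotheses are genuinely used, to ensure that compact weak rings in each $\cat S_i$ with singleton tt-support push forward to compact objects in $\cat T$ whose homological supports cover the fibres of $\pi_{\cat T}$. No additional obstacles arise at the level of the present proposition.
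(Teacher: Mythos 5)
Your proof is correct and follows essentially the same route as the paper: the paper's proof cites \cref{cor:BHttstratificationdescent} to handle the h-stratification half and invoke \cref{thm:tt=h+NS} at the end, whereas you have simply unrolled that corollary inline via \cref{thm:h-descent} and \cref{thm:tt=h+NS}; both apply \cref{prop:NSCdescent} for the Nerves of Steel descent. No substantive difference.
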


\begin{proof}
    By \Cref{thm:tt=h+NS} each $\cat S_i$ is h-stratified and satisfies the Nerves of Steel conjecture. The result then follows from \cref{prop:NSCdescent} and \cref{cor:BHttstratificationdescent} since $(f_i^*)_{i \in I}$ is jointly nil-conservative by \cref{prop:surjectivity}.
\end{proof}

\begin{Rem}\label{rem:ttdescent-finite}
    In the situation of \cref{prop:ttstratdescent}$(c)$, if the indexing set $I$ is finite then $\Spc(\cat T^c)$  is automatically noetherian and hence the hypothesis that $\Spc(\cat T^c)$ is weakly noetherian is superfluous.
\end{Rem}

\begin{Cor}\label{cor:ttstratfinitemonodescent}
    Let $\cat C$ be a rigidly-compactly generated symmetric monoidal stable $\infty$-category and let $A \in \CAlg(\cat C^c)$ be a descendable dualizable commutative algebra in $\cat C$. If $\Mod_{\cat C}(A)$  is tt-stratified and has a noetherian spectrum, then $\cat C$ is also tt-stratified and also has a noetherian spectrum.
\end{Cor}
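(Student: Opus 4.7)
The plan is to apply Proposition \ref{prop:ttstratdescent}(c) to the single geometric functor $f^* \colon \cat C \to \Mod_{\cat C}(A)$ given by $X \mapsto A \otimes X$, whose right adjoint $f_*$ is the forgetful functor. Since the indexing set has one element, Remark \ref{rem:ttdescent-finite} removes the \emph{a priori} hypothesis that $\Spc(\cat C^c)$ be weakly noetherian and in fact delivers the stronger conclusion that $\Spc(\cat C^c)$ is noetherian, which is precisely what the statement asks for.

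First I would verify weak descendability. The hypothesis that $A$ is descendable in $\cat C$ means exactly that $\unit_{\cat C} \in \Thickid{A}$ in $\cat C$ in the sense of Remark \ref{rem:descendable}. Since $f_*(\unit_{\Mod_{\cat C}(A)}) = A$, this gives $\unit_{\cat C} \in \Thickid{f_*(\unit)} \subseteq \Loco{f_*(\unit)}$, which is the condition of Definition \ref{def:weaklydescendable}.

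Next I would check that $f^*$ is strongly closed (Definition \ref{def:weaklystronglyclosed}), that is, that $f_*$ preserves compact objects. Because $A \in \cat C^c$ is dualizable, $\Mod_{\cat C}(A)$ is itself rigidly-compactly generated, with compact objects generated as a thick subcategory by the free modules $A \otimes c$ for $c \in \cat C^c$. The forgetful functor sends such a generator to $A \otimes c \in \cat C$, which is compact as the tensor product of two compact objects. Exactness of $f_*$ then propagates compactness from the generators to the full thick subcategory of compacts. Combined with the hypothesis that $\Spc(\Mod_{\cat C}(A)^c)$ is noetherian, all three conditions of Proposition \ref{prop:ttstratdescent}(c) are in place, and the corollary follows. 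The only point of potential subtlety is this preservation of compact objects, which technically relies on dualizability of $A$ rather than mere compactness; fortunately these notions coincide in a rigidly-compactly generated tt-category, so no additional work is needed.
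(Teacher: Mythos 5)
Your proof is correct and takes essentially the same approach as the paper: both reduce to \cref{prop:ttstratdescent}$(c)$ with the single geometric functor $A \otimes - \colon \cat C \to \Mod_{\cat C}(A)$, using descendability for weak descendability and dualizability of $A$ for strong closure, and both observe (you via \cref{rem:ttdescent-finite}, the paper via the induced surjection of spectra) that $\Spc(\cat C^c)$ is automatically noetherian. You simply spell out the verifications that the paper's two-sentence proof leaves implicit.
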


\begin{proof}
Since $A$ is descendable,
the base-change functor $A\otimes-\colon \cat C \to \Mod_{\cat C}(A)$ is conservative and hence induces a surjection $\Spc(\Mod_{\cat C}(A)^c) \to \Spc(\cat C^c)$; see \cite[Theorem~1.4]{BarthelCastellanaHeardSanders22b} or \cite[Proposition~7.8]{BCHNPS_descent}.
Hence $\Spc(\cat C^c)$ is also noetherian. We may then invoke \cref{prop:ttstratdescent}$(c)$.
\end{proof}

\begin{Rem}
    The previous corollary strengthens the results of \cite[Section~12]{BCHNPS_descent} by removing the hypothesis that the descendable algebra $A$ be separable of finite tt-degree.
\end{Rem}

\begin{Par}
	We now turn to a cohomological criterion for stratification.
\end{Par}

\begin{Def}
	A rigidly-compactly generated tt-category with weakly noetherian spectrum is said to be \emph{cohomologically stratified} if it is tt-stratified and the comparison map (\cite{Balmer10b})
    \[
    	\rho\colon \Spc(\cat T^c) \to \Spec^h(\End_{\cat T}^{\bullet}(\unit))
    \]
    is a bijection.
\end{Def}

\begin{Prop}\label{prop:descend-coho-strat}
	Let $(f_i^*\colon \cat T \to \cat S_i)_{i \in I}$ be a weakly descendable geometric family with $\Spc(\cat T^c)$ weakly noetherian. Assume that each $\cat S_i$ is cohomologically stratified and that 
    \[
        \bigsqcup_{i \in I} \Spec^h\End^\bullet_{\cat S_i}(\unit) \xra{\sim} \Spec^h(\End^\bullet_{\cat T}(\unit))
    \]
    is a bijection. Then $\cat T$ is cohomologically stratified.
\end{Prop}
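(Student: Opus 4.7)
The plan is to verify the two defining conditions of cohomological stratification for $\cat T$: tt-stratification and the bijectivity of $\rho_{\cat T}\colon\Spc(\cat T^c)\to\Spec^h(\End^\bullet_{\cat T}(\unit))$. Each $\cat S_i$ is cohomologically stratified by hypothesis, hence tt-stratified, hence h-stratified and satisfies the Nerves of Steel conjecture by \cref{thm:tt=h+NS}. Weak descendability then yields h-stratification of $\cat T$ via \cref{thm:h-descent}, and the surjectivity of both $\sqcup\varphi_i^h$ and $\sqcup\varphi_i$ via \cref{prop:surjectivity}.

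The main remaining task is to establish Nerves of Steel for~$\cat T$ and the bijectivity of $\rho_{\cat T}$ simultaneously. By functoriality of $\pi$, $\rho$, and the comparison maps, these assemble into the commutative diagram
\[
\begin{tikzcd}
\bigsqcup_i \Spc^h(\cat S_i^c) \ar[r, "\sqcup \varphi_i^h"] \ar[d, "\sqcup \pi_{\cat S_i}"'] & \Spc^h(\cat T^c) \ar[d, "\pi_{\cat T}"] \\
\bigsqcup_i \Spc(\cat S_i^c) \ar[r, "\sqcup \varphi_i"] \ar[d, "\sqcup \rho_{\cat S_i}"'] & \Spc(\cat T^c) \ar[d, "\rho_{\cat T}"] \\
\bigsqcup_i \Spec^h\End^\bullet_{\cat S_i}(\unit) \ar[r, "\sim"'] & \Spec^h\End^\bullet_{\cat T}(\unit),
\end{tikzcd}
\]
in which the entire left column is bijective (since each $\cat S_i$ is cohomologically stratified and satisfies Nerves of Steel) and the bottom row is bijective by hypothesis.

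A two-step diagram chase completes the argument. On the bottom square: the composite $\rho_{\cat T}\circ\sqcup\varphi_i$ equals $(\text{bottom row})\circ\sqcup\rho_{\cat S_i}$, which is a composite of two bijections and hence bijective; combined with the surjectivity of $\sqcup\varphi_i$, this forces both $\sqcup\varphi_i$ and $\rho_{\cat T}$ to be bijections. On the top square: the composite $\pi_{\cat T}\circ\sqcup\varphi_i^h$ equals $\sqcup\varphi_i\circ\sqcup\pi_{\cat S_i}$, which is now known to be a bijection; combined with the surjectivity of $\sqcup\varphi_i^h$, this forces $\pi_{\cat T}$ to be a bijection, i.e., Nerves of Steel holds for~$\cat T$. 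Combining h-stratification with Nerves of Steel via \cref{thm:tt=h+NS} yields tt-stratification of $\cat T$, and together with the established bijectivity of $\rho_{\cat T}$ this completes the proof.

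The main obstacle, such as it is, is orchestrating the two-step diagram chase so that the rigidity of the bottom bijection propagates upward through the surjections $\sqcup\varphi_i$ and $\sqcup\varphi_i^h$; once set up, each step is a direct application of the elementary fact that if $g\circ f$ is bijective and $f$ is surjective, then $g$ is bijective. Note that weak descendability is essential at the initial h-stratification step, whereas only the surjectivity consequences of \cref{prop:surjectivity} are used in the diagram chase itself.
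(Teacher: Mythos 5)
Your argument is correct and follows essentially the same route as the paper: build the three-level commutative diagram, observe that the left column is a bijection and the top and middle rows are surjections, and use the bottom bijection to propagate bijectivity up through the diagram. The paper phrases the chase more compactly ("all maps in this diagram are bijections") and cites the combined \cref{cor:BHttstratificationdescent} rather than invoking \cref{thm:h-descent} and \cref{thm:tt=h+NS} separately, but the logic is identical; your two-step chase through the bottom and then the top square is simply a more explicit unwinding of the same argument.
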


\begin{proof}
	The various comparison maps fit into a commutative diagram
    \[
        \begin{tikzcd}
		\bigsqcup_{i \in I}\Spc^h(\cat S_i^c) \ar[d,"\sqcup\pi_{\cat S_i}"',two heads] \ar[r,two heads] &\Spc^h(\cat T^c) \ar[d,two heads,"\pi_{\cat T}"] \\
		\bigsqcup_{i \in I}\Spc(\cat S_i^c) \ar[r,two heads] \ar[d,"\sqcup\rho_{\cat S_i}"',two heads] &\Spc(\cat T^c) \ar[d,"\rho_{\cat T}"] \\
            \bigsqcup_{i \in I}\Spec^h(\End^\bullet_{\cat S_i}(\unit)) \ar[r,"\simeq"] & \Spec^h(\End^\bullet_{\cat T}(\unit))
	\end{tikzcd}
    \]
    in which the three indicated vertical maps are surjections. The top and middle horizontal maps are also surjections by \cref{prop:surjectivity}. The bottom map is a bijection by assumption, hence $\rho_{\cat T}$ is surjective as well. Since the tt-categories $\cat S_i$ are cohomologically stratified, the left vertical composite is a bijection, which then implies that all maps in this diagram are bijections. The result thus follows from \cref{cor:BHttstratificationdescent}. 
\end{proof}

\section{Equivariant applications}\label{sec:applications}

Throughout this section, $G$ will  denote a compact Lie group.

\begin{Rem}
    For foundational material on $\infty$-categories of equivariant $G$-spectra, we refer the reader to \cite{MathewNaumannNoel17,bgh_balmer,BCHNP1}, which are based on the more classical references \cite{LewisMaySteinbergerMcClure86,Alaska96, MandellMay02}. We write $\Sp_G$ for this category. Recall that this is a presentably symmetric monoidal stable $\infty$-category which is rigidly-compactly generated by the orbits~$G/H_+$ associated to the (conjugacy classes) of closed subgroups $H \le G$. Here, we omit the suspension spectrum from our notation.
\end{Rem}

\begin{Rem}
    For any closed subgroup $H \le G$, we have a geometric fixed point functor
    \[
        \Phi_G^H \colon \Sp_G \to \Sp.
    \]
    Importantly, these functors are jointly conservative \cite[Proposition 3.3.10]{Schwede18_global}. Moreover, for any commutative algebra\footnote{i.e., $\mathbb{E}_{\infty}$-monoid in $G$-spectra, without any additional norm structures.} $R \in  \CAlg(\Sp_G)$, we write $\Mod_G(R)$ for $\Mod_{\Sp_G}(R)$, and we have induced functors
    \[
        \Phi_G^H \colon \Mod_G(R) \to \Mod(\Phi^HR)
    \]
    which are still jointly conservative. In fact, the geometric fixed points functors only depend on the conjugacy class of $H$ inside of $G$, and so it suffices to consider a single representative for each such conjugacy class. 
\end{Rem}

\begin{Not}
    We let $\Sub(G)$ denote the set of closed subgroups of $G$ and $\Sub(G)/G$ the set of closed subgroups of $G$ up to conjugation. 
\end{Not}

\begin{Rec}\label{rec:equivariant-facts}
    We will use a number of facts about the equivariant homotopy category in the following result. In particular, recall that for $H \le G$ we have a restriction functor $\mathrm{res}^G_H \colon \Sp_G \to \Sp_H$ that admits left and right adjoints $\ind_H^G$ and $\coind_H^G$, respectively. If $G$ is finite, then these left and right adjoints are naturally isomorphic. This is not true in general, but rather there is a natural equivalence
    \[
        \coind_H^G(Y) \simeq \ind_H^G(Y \otimes S^L),
    \]
    where $L$ is the tangent $H$-representation of $G/H$. Moreover, we have 
    \[
        \ind_H^G(S^0_H) \simeq G/H_+. 
    \]
    Finally, the projection formula holds in this context: there is a natural equivalence
    \[
        \coind_H^G(\mathrm{res}^G_H(X) \otimes Y) \simeq  X\otimes \coind_H^G(Y).
    \]
\end{Rec}

\begin{Lem}\label{lem:eqwdescent}
    Let $G$ be compact Lie group and let $R \in \CAlg(\Sp_G)$ be a commutative equivariant ring spectrum. Then the collection of geometric fixed points
        \[
            \xymatrix{\Phi_G=(\Phi_G^H)\colon \Mod_G(R) \ar[r] & \prod_{H \in \Sub(G)/G}\Mod(\Phi^HR)}
        \]
    is weakly descendable.
\end{Lem}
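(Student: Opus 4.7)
The plan is to proceed in two steps: first reduce to the universal case $R=\Sphere_G$ via base change, and then establish the containment $\Sphere_G \in \Loco{(\Phi_G^H)_*(\Sphere) \mid H \in \Sub(G)/G}$ inside $\Sp_G$ by induction on closed subgroups of $G$. For the reduction, the commutative square comparing $\Phi_G^H$ on $\Sp_G$ and on $\Mod_G(R)$ via the base-change functor $R \otimes -\colon \Sp_G \to \Mod_G(R)$ satisfies the Beck--Chevalley condition, so applying the projection formula to the mate of the right-adjoint pair yields the identification
\[
    R \otimes (\Phi_G^H)_*(\Sphere) \simeq (\Phi_G^H)_*(\unit_{\Mod(\Phi^H R)})
\]
in $\Mod_G(R)$. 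Since $R \otimes -$ preserves all colimits and sends $\Sphere_G$ to $R$, it transports any localizing-ideal build of $\Sphere_G$ from the $(\Phi_G^H)_*(\Sphere)$ in $\Sp_G$ into the desired build of $R$ in $\Mod_G(R)$.

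For the statement in $\Sp_G$, I would induct on closed subgroups of $G$, using a Cantor--Bendixson-style stratification of $\Sub(G)/G$ adapted to the compact Lie setting. The factorization $\Phi_G^H = \Phi^H \circ \res_H^G$ combined with \cref{rec:equivariant-facts} gives $(\Phi_G^H)_*(\Sphere) \simeq \coind_H^G\bigl((\Phi^H)_*(\Sphere)\bigr)$ in $\Sp_G$, and more generally $\coind_K^G \circ (\Phi_K^L)_* \simeq (\Phi_G^L)_*$ for $L\le K\le G$. The inductive step uses the isotropy-separation cofibre sequence $E\mathcal{P}_+ \to \Sphere_G \to \tilde{E}\mathcal{P}$ associated to the family $\mathcal{P}$ of proper closed subgroups of $G$: the piece $E\mathcal{P}_+$ is built as a filtered colimit of orbits $G/K_+$ with $K \subsetneq G$, and each such orbit equals $\coind_K^G(\Sphere_K)$ up to representation-sphere twist and so lies in the target localizing ideal by the inductive hypothesis applied inside $\Sp_K$ together with the displayed identity for $\coind_K^G$; meanwhile, the piece $\tilde{E}\mathcal{P}$ falls inside $\Loco{(\Phi_G^G)_*(\Sphere)}$ by means of the adjunction counit, the smashing-idempotent property $\tilde{E}\mathcal{P} \otimes \tilde{E}\mathcal{P} \simeq \tilde{E}\mathcal{P}$, and joint conservativity of geometric fixed points.

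The main obstacle is organizing the transfinite induction for compact Lie $G$, where $\Sub(G)/G$ is neither finite nor well-founded under inclusion. Arranging the Cantor--Bendixson stratification so that each successor step preserves the generating localizing-ideal containment requires careful interplay between the isotropy separation and the restriction--coinduction adjunctions; in particular, the precise membership $\tilde{E}\mathcal{P} \in \Loco{(\Phi_G^G)_*(\Sphere)}$ must be justified at each inductive stage. Once this is in place, the finite-group case is recovered at the base of the Cantor--Bendixson induction, and the argument runs in parallel with the corresponding finite-group statement in \cite[Section~12]{BCHNPS_descent}.
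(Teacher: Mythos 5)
Your overall skeleton — isotropy separation along the family $\mathcal{P}$ of proper closed subgroups, together with $\coind_H^G$ and the identity $\coind_H^G \circ (\Phi_H^K)_* \simeq (\Phi_G^K)_*$ to handle the $E\mathcal{P}_+$ part and the finite-localization description of $\Phi_G^G$ to handle the $\tilde{E}\mathcal{P}$ part — matches the paper. But there is a genuine gap, and you flag it yourself: you do not know how to organize the induction, and the Cantor--Bendixson idea you propose is the wrong tool. The paper does not stratify $\Sub(G)/G$. Instead it inducts on compact Lie groups themselves, ordered lexicographically by the pair $(\dim G, |\pi_0 G|)$. This ordering is well-founded (no infinite strictly decreasing chain), every proper closed subgroup $H \lneq G$ is strictly smaller, and the base case is $G=e$, not "the finite-group case". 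With this ordering, the inductive step is clean: $\Phi_G^K$ for $K$ proper factors through $\res_H^G$ and $\Phi_H^K$ with $K \le H \lneq G$, the inductive hypothesis applies to $\Mod_H(R)$, and coinduction transports it back. No transfinite or Cantor--Bendixson bookkeeping is needed, and the thing you identify as "the main obstacle" simply dissolves.

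Two further remarks. First, your preliminary reduction to $R = \Sphere_G$ via base change is not in the paper, and it is unnecessary: the paper runs the induction in $\Mod_G(R)$ for arbitrary $R$ directly, and the inductive hypothesis is then about $\Mod_H(R)$ rather than $\Sp_H$. Your reduction also introduces a subtlety you would need to address: coinduction does not obviously carry localizing \emph{ideals} to localizing ideals, only localizing \emph{subcategories}, and the paper sidesteps this by proving the stronger statement $\Mod_G(R) = \Loc\langle\Psi_G^K(\unit) \mid K\rangle$ as an equality of localizing subcategories (so the ideal condition is automatic). If you only prove the weaker $\unit \in \Loco{\Psi_G^H(\Sphere)}$ in $\Sp_G$, you would have to separately justify that $\coind_H^G$ preserves the relevant localizing-ideal membership. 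Second, the step $E\mathcal{P}_+ \in \Loc\langle G/H_+ \mid H \in \mathcal{P}\rangle$ is not immediate for compact Lie groups; the paper invokes Illman's theorem on $G$-triangulations (via \cite[Remark~3.6]{Sanders19}) to obtain $\Thicks{G/H_+ \mid H \in \cP} = \Thickid{G/H_+ \mid H \in \cP}$, and your sketch omits this input entirely.
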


\begin{proof}
    Let us write $\Psi_G^H$ for the right adjoint of $\Phi_G^H$ (which sometimes goes by the name of geometric inflation). We denote the collection of conjugacy classes of proper closed subgroups of $G$ by $\cP$. We will establish that
        \begin{equation}\label{eq:induction}
            \Mod_G(R) =  \Locs{\Psi_G^K(\unit) \mid K \in \Sub(G)/G}.
        \end{equation}
    Note that we can define an ordering on compact Lie groups by the pair $(\dim H,\pi_0H)$ ordered lexicographically. Specifically, $H \le G$ if and only if $\dim(H) \leq \dim (G)$ or $\dim(H) = \dim(G)$ and the number of connected components of $H$ is less than or equal to the number of connected components of $G$. We will prove \eqref{eq:induction} by induction, with the base case $G = e$ being tautological. We then assume that~\eqref{eq:induction} holds for all  $H \lneq G$, and let $M \in \Mod_G(R)$. 
    
    If $G = H$, then $\Phi^G_G$ is a finite localization away from $\Loco{G/H_+ \otimes R \mid H \in \cP}$, factoring as
        \[
            \Phi_G^G\colon \Mod_G(R) \to \Mod_G(R \otimes \tilde{E}\cP) \simeq \Mod(\Phi_G^GR),
        \]
    where the first map is base-change and the second map is the symmetric monoidal equivalence induced by taking categorical $G$-fixed points. This implies that
        \begin{equation}\label{eq:psi1}
            \Psi_G^G(\unit) \simeq  R \otimes \tilde{E}\cP.
        \end{equation}
    This also follows from \cite[Definition 4.1 and Proposition 4.4]{Hill2012equivariant}. We claim that 
        \begin{equation}\label{eq:psi2}
            M \otimes \tilde{E}\cP \in \Locs{R \otimes \tilde{E}\cP} = \Locs{\Psi_G^G(\unit)}.
        \end{equation} 
    It suffices to check this for the generators of $\Mod_G(R)$, i.e., for $M$ of the form $G/K_+ \otimes R$. The case $K=G$ is \eqref{eq:psi1}, while for $K \lneq G$ the claim holds since $G/K_+ \otimes \tilde{E}\cP = 0$. 
 
    Now let $H$ be a proper closed subgroup of $G$ and consider subgroups $K \le H$. The corresponding geometric fixed point functors then factor as
        \[
            \Phi_G^K\colon \Mod_G(R) \xrightarrow{\res} \Mod_H(R) \xrightarrow{\Phi_H^K}  \Mod(\Phi_H^KR) \simeq \Mod(\Phi_G^KR).
        \]
    By the inductive hypothesis
        \[
            \Mod_H(R) = \Locs{\Psi_H^K\unit\mid K \le H}
        \]
    in $\Mod_H(R)$. In particular, $\res_H(M) \otimes S^L \in \Locs{\Psi_H^K\unit\mid K \le H}$, where $L$ is the tangent $H$-representation of $G/H$. Applying coinduction, we get
        \[
            \coind_H^G(\res_H(M) \otimes S^L) \in \coind_H^G \Locs{\Psi_H^K\unit\mid K \le H} \subseteq  \Locs{\coind_H^G\Psi_H^K\unit \mid K \le H}.
        \]
    By \Cref{rec:equivariant-facts} we have
        \begin{align*}
            \coind_H^G(\res_H(M) \otimes S^L) & \simeq \coind_H^G(S^L) \otimes M \\
            & \simeq \ind_H^G(S_H^0) \otimes M \\
            & \simeq G/H_+ \otimes M.
        \end{align*}
    Since $\coind_H^G\Psi_H^K \simeq \Psi_G^K$, this implies that
        \begin{equation}\label{eq:psi3}
            G/H_+ \otimes M \in \Locs{\Psi_G^K(\unit) \mid K \le H}.
        \end{equation}
    The isotropy separation sequence takes the form
        \[
            M \otimes E\cP_+ \to M \to M \otimes \tilde{E}\cP.
        \]
    We claim that $M \otimes E\cP_+ \in \Locs{G/H_+\otimes M \mid H \in \cP}$. Indeed, as a consequence of work of Illman \cite{Illman83} on the existence of $G$-triangulations (see \cite[Remark~3.6]{Sanders19}), we have 
        \[
            \Thicks{G/H_+ \mid H \in \cP} = \Thickid{G/H_+ \mid H \in \cP}.
        \]
    It follows that 
        \[
            E\cP_+ \in \Loco{G/H_+ \mid H \in \cP} = \Locs{G/H_+ \mid H \in \cP},
        \]
    as desired. Consequently, from isotropy separation and the previous claim we get
        \[
            M \in \Locs{M \otimes \tilde{E}\cP, M \otimes E\cP_+} \subseteq \Locs{M \otimes \tilde{E}\cP , G/H_+\otimes M \mid H \in \cP}.
        \]
    Substituting \eqref{eq:psi2} and \eqref{eq:psi3} thus gives
        \[
            M \in \Locs{\Psi_G^K(\unit) \mid K \le G},
        \]
    which finishes the proof.
\end{proof}

\begin{Lem}\label{lem:eqspc}
    Let $R\in \CAlg(\Sp_G)$ be a commutative equivariant ring spectrum.
    The spectra of $\Mod_G(R)^c$ decompose set-theoretically as 
        \[
            \Spc(\Mod_G(R)^c) = \bigsqcup_{H \in \Sub(G)/G} \im \varphi_H,
            \quad
            \Spc^h(\Mod_G(R)^c) = \bigsqcup_{H \in \Sub(G)/G} \im \varphi^h_H,
        \]
    where $\varphi_H = \Spc(\Phi^H)$ and $\varphi^h_H = \Spc^h(\Phi^H)$ are the maps on spectra induced by $H$-geometric fixed points, respectively.
\end{Lem}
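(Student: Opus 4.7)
The plan is to separately establish surjectivity and pairwise disjointness of the displayed unions. Surjectivity is immediate from the previous lemma, while disjointness will be reduced to the known case of $\Sp_G$ via base-change.

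For surjectivity, \cref{lem:eqwdescent} asserts that the geometric family $(\Phi_G^H)_{H \in \Sub(G)/G}$ is weakly descendable. The implications $(a) \Rightarrow (d) \Rightarrow (e)$ of \cref{prop:surjectivity} then yield that both $\sqcup\varphi_H^h$ and $\sqcup\varphi_H$ are surjective, which shows that the right-hand sides cover the respective spectra.

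For disjointness on the tt-spectrum, I would use naturality of geometric fixed points along the base-change $R\otimes(-)\colon\Sp_G\to\Mod_G(R)$ to produce the commutative square of geometric functors
\[
\begin{tikzcd}
\Sp_G \ar[r,"\Phi^H"] \ar[d,"R\otimes(-)"'] & \Sp \ar[d,"\Phi^HR\otimes(-)"] \\
\Mod_G(R) \ar[r,"\Phi^H"'] & \Mod(\Phi^HR).
\end{tikzcd}
\]
Writing $\beta\colon\Spc(\Mod_G(R)^c) \to \Spc(\Sp_G^c)$ for the map induced by the left-hand vertical functor, the corresponding commutative square on Balmer spectra immediately gives $\beta(\im\varphi_H^{\Mod_G(R)}) \subseteq \im\varphi_H^{\Sp_G}$ for every closed subgroup $H$. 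Since the images $\im\varphi_H^{\Sp_G}$ for the compact Lie group $G$ are known to be pairwise disjoint across conjugacy classes by the tom Dieck--style decomposition of $\Spc(\Sp_G^c)$ (extending \cite{BalmerSanders17} from finite groups to the compact Lie setting), pulling back along~$\beta$ forces pairwise disjointness of $\{\im\varphi_H^{\Mod_G(R)}\}_{(H)}$.

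Finally, for the homological spectrum I would invoke the naturality identity $\pi\circ\varphi_H^h = \varphi_H\circ\pi$ (recorded in the introduction), which shows $\im\varphi_H^h \subseteq \pi^{-1}(\im\varphi_H)$. Hence disjointness of the $\im\varphi_H$ transfers to disjointness of the $\im\varphi_H^h$. The principal obstacle is to pin down a clean reference for the disjointness in $\Spc(\Sp_G^c)$ when $G$ is a general compact Lie group; as a self-contained fallback, one can argue directly by noting that for non-conjugate $(H),(K)$ one of $(H)\not\le(K)$ or $(K)\not\le(H)$ holds, and then produce a compact separator via an isotropy-separation/Euler-class idempotent distinguishing the two strata.
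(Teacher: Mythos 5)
Your proof is correct and follows essentially the same strategy as the paper's: surjectivity comes from \cref{lem:eqwdescent} combined with \cref{prop:surjectivity}, and disjointness is reduced to $\Sp_G$ via base-change naturality. The one genuine difference in route is the order of operations. The paper first handles $R = S_G^0$ by citing \cite[Theorem~3.14]{bgh_balmer} for the Balmer spectrum and then invoking the Nerves of Steel conjecture for $\Sp_G$ (\cite[Corollary~5.10]{Balmer20_nilpotence}) to get the homological version for free; it then transfers disjointness for general $R$ using a commutative square of \emph{homological} spectra. You instead establish disjointness in the \emph{tt-spectrum} first via the square of Balmer spectra, and transfer to the homological spectrum through the comparison identity $\pi_{\cat T} \circ \varphi_H^h = \varphi_H \circ \pi_{\cat S}$. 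Your route is slightly more economical in that it does not need Nerves of Steel for $\Sp_G$ at all. One correction: the reference for disjointness of $\{\im \varphi_H^{\Sp_G}\}$ when $G$ is a compact Lie group is precisely \cite[Theorem~3.14]{bgh_balmer}, not an ``extension'' of \cite{BalmerSanders17}; so no ``self-contained fallback'' is needed, and indeed your sketched fallback would amount to reproving that theorem rather than giving an easier argument.
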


\begin{proof}
    First we show that the claim is true for $R=S_G^0$. Indeed, by \cite[Theorem 3.14]{bgh_balmer}, the analogous result is true for the Balmer spectrum. Because the Nerves of Steel conjecture holds for $\Sp_G$ by \cite[Corollary 5.10]{Balmer20_nilpotence} (which in turn relies on \cite[Corollary 3.18]{bgh_balmer}), the claim follows. 
   
    For the general case, consider the commutative diagram
        \[
        \begin{tikzcd}
            \Spc^h(\Mod(\Phi^HR)^c) \ar[r,"\varphi_H^h"] \ar[d] & \Spc^h(\Mod_G(R)^c) \ar[d] \\
            \Spc^h(\Sp^c) \ar[r] & \Spc^h(\Sp_G^c)
        \end{tikzcd}
        \]
    induced by base-change $S_G^0\to R$ and the $H$-geometric fixed point functors for $\Mod_G(R)$ and $\Sp_G$. This diagram shows that the images of the maps $\varphi^h_H$ are disjoint in $\Spc^h(\Mod_G(R)^c)$ for non-conjugate subgroups, since this is true for the $R=S^0_G$ case just mentioned. As a consequence of \cref{lem:eqwdescent} and \cref{prop:surjectivity}, these images also cover the homological spectrum, which gives the desired decomposition of the homological spectrum. The analogous statement for the tt-spectrum is proved similarly. 
\end{proof}

\begin{Prop}\label{prop:eqhstratdescent}
    Let $G$ be a compact Lie group and $R \in \CAlg(\Sp_G)$ a commutative equivariant ring spectrum. If $\Mod(\Phi^HR)$ is h-stratified for every closed subgroup $H \le G$, then $\Mod_G(R)$ is h-stratified. In this case, h-support and geometric fixed points induce a bijection
        \[ 
            \big\{ \text{localizing ideals of $\Mod_G(R)$} \big\} \xra{\cong} \big\{ \text{subsets of $\bigsqcup_{H \in \Sub(G)/G} \im \varphi^h_H$}\big\}.
        \]
\end{Prop}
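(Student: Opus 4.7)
The plan is to apply the general descent theorem for homological stratification (\cref{thm:h-descent}) to the family of geometric fixed point functors
\[
    \bigl(\Phi_G^H\colon \Mod_G(R) \to \Mod(\Phi^HR)\bigr)_{H \in \Sub(G)/G}.
\]
This family is weakly descendable by \cref{lem:eqwdescent}, and each target $\Mod(\Phi^HR)$ is h-stratified by hypothesis. Invoking \cref{thm:h-descent} therefore establishes that $\Mod_G(R)$ is h-stratified, giving the first assertion.

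For the second assertion, by h-stratification of $\Mod_G(R)$ and part (c) of \cref{thm:hstratfundamental}, the homological support induces a bijection
\[
    \Supph\colon \big\{ \text{localizing ideals of $\Mod_G(R)$} \big\} \xra{\cong} \big\{ \text{subsets of $\Spc^h(\Mod_G(R)^c)$}\big\}.
\]
I would then identify the codomain using the set-theoretic decomposition
\[
    \Spc^h(\Mod_G(R)^c) = \bigsqcup_{H \in \Sub(G)/G} \im \varphi^h_H
\]
provided by \cref{lem:eqspc}, which yields the desired bijection as a composite. The identification of the map with ``homological support and geometric fixed points'' is then just a rephrasing: for a localizing ideal $\cat L$, its image is $\Supph(\cat L)$ partitioned according to which $\im\varphi^h_H$ each point belongs to.

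I do not expect a serious obstacle here: all of the genuine work has already been carried out in the preceding lemmas, namely \cref{lem:eqwdescent} (the inductive argument over closed subgroups of $G$ that produces the weak descendability of the geometric fixed point family) and \cref{lem:eqspc} (the decomposition of the homological spectrum, which ultimately rests on the Nerves of Steel conjecture for $\Sp_G$ via \cite{bgh_balmer, Balmer20_nilpotence}). The present proposition is thus a direct assembly of these ingredients with the general descent machinery of \cref{thm:h-descent}.
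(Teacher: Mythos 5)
Your proposal is correct and matches the paper's own proof: the first assertion is obtained by applying \cref{thm:h-descent} to the geometric fixed point family, using \cref{lem:eqwdescent} for weak descendability, and the bijection follows from \cref{thm:hstratfundamental} together with the spectral decomposition in \cref{lem:eqspc}.
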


\begin{proof}
    The first part follows from \cref{thm:h-descent}, which applies  due to \cref{lem:eqwdescent}. The claimed bijection is then \cref{thm:hstratfundamental} combined with \cref{lem:eqspc}.
\end{proof}

\begin{Rem}
    We next make the previous proposition more explicit in the special case of commutative equivariant ring spectra with trivial action, i.e., those that arise via inflation from (non-equivariant) commutative ring spectra.  This provides a generalization of \cite[Theorem 15.1]{bhs1}, extending it from finite groups to compact Lie groups and removing any topological assumptions on the spectrum.
\end{Rem}

\begin{Thm}\label{thm:infleq_stratification}
    Let $G$ be a compact Lie group and write $R_G = \infl_GR$ for the inflation of a commutative ring spectrum $R\in \CAlg(\Sp)$. If $\Mod(R)$ is h-stratified and satisfies the Nerves of Steel conjecture, then $\pi\circ \Supph$ induces a bijection
        \[
            \begin{Bmatrix}
                \text{localizing ideals} \\
                of \Mod_G(R_G)
            \end{Bmatrix}
                \xrightarrow{\sim}
            \begin{Bmatrix}
                \text{subsets of } \\
                \bigsqcup_{H \in \Sub(G)/G} \Spc(\Mod(R)^c)
            \end{Bmatrix}.
        \]
\end{Thm}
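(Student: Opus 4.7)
The strategy is to combine \cref{prop:eqhstratdescent} with the Nerves of Steel hypothesis, leveraging the fact that inflation is a section of geometric fixed points. For each closed subgroup $H\le G$, the composite $\Phi^H\circ\infl_G$ is naturally equivalent to the identity of $\Sp$ (both are cocontinuous symmetric monoidal endofunctors of $\Sp$ sending the unit to the unit). Since $\infl_G$ and $\Phi^H$ are both symmetric monoidal, we get $\Phi^HR_G\simeq R$ in $\CAlg(\Sp)$, and inflation extends to a symmetric monoidal functor $\infl_G\colon\Mod(R)\to\Mod_G(R_G)$ satisfying $\Phi^H\circ\infl_G\simeq\mathrm{id}_{\Mod(R)}$. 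Since $\Mod(R)$ is h-stratified by hypothesis, each $\Mod(\Phi^HR_G)\simeq\Mod(R)$ is h-stratified, and \cref{prop:eqhstratdescent} then gives that $\Mod_G(R_G)$ is h-stratified, with $\Supph$ providing a bijection onto subsets of $\Spc^h(\Mod_G(R_G)^c)=\bigsqcup_H\im\varphi_H^h$.

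Next I would upgrade this to the claimed bijection. Applying the functor $\Spc^h((-)^c)$ (respectively $\Spc((-)^c)$) to $\Phi^H\circ\infl_G\simeq\mathrm{id}$ produces a retraction of $\varphi_H^h$ (respectively $\varphi_H$), so both maps are injective; combined with the disjointness of images in \cref{lem:eqspc}, this yields bijections $\sqcup\varphi_H^h$ and $\sqcup\varphi_H$ onto $\Spc^h(\Mod_G(R_G)^c)$ and $\Spc(\Mod_G(R_G)^c)$ respectively. Inserting the Nerves of Steel bijection $\sqcup\pi_{\Mod(R)}$ into the commutative square
\[
\begin{tikzcd}
\bigsqcup_H\Spc^h(\Mod(R)^c) \ar[d,"\sqcup\pi","\sim"'] \ar[r,"\sqcup\varphi_H^h","\sim"'] & \Spc^h(\Mod_G(R_G)^c) \ar[d,"\pi"] \\
\bigsqcup_H\Spc(\Mod(R)^c) \ar[r,"\sqcup\varphi_H","\sim"'] & \Spc(\Mod_G(R_G)^c)
\end{tikzcd}
\]
then forces the right-hand vertical arrow to be a bijection, i.e., the Nerves of Steel conjecture holds for $\Mod_G(R_G)$ as well. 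Composing the bijection from \cref{prop:eqhstratdescent} with $\pi_{\Mod_G(R_G)}$ and with $(\sqcup\varphi_H)^{-1}$ then yields the desired bijection, realized by $\pi\circ\Supph$.

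The main obstacle is verifying the identification $\Phi^H\circ\infl_G\simeq\mathrm{id}$ and its lift to inflated module categories; this is a standard feature of equivariant homotopy theory, but requires some care for general compact Lie groups. Once this is handled, everything else is a formal consequence of the descent machinery (\cref{prop:eqhstratdescent} and \cref{lem:eqspc}) combined with the Nerves of Steel hypothesis on $\Mod(R)$.
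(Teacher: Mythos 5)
Your proposal is correct and follows essentially the same route as the paper: both use that inflation splits geometric fixed points to get injectivity of each $\varphi_H$ (and $\varphi_H^h$), combine this with \cref{lem:eqspc} to obtain the spectrum decomposition, deduce Nerves of Steel for $\Mod_G(R_G)$, and then invoke \cref{prop:eqhstratdescent}. The only cosmetic difference is that you verify Nerves of Steel descent by an explicit diagram chase, whereas the paper cites \cref{prop:NSCdescent}(a), which encapsulates the same argument.
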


\begin{proof}
    Since the geometric fixed points for $R_G$ are split by inflation, 
        \[
            \varphi_H\colon \Spc(\Mod(R)^c) \to \Spc(\Mod_G(R_G)^c)
        \]
    is a homeomorphism onto its image for any subgroup $H$ in $G$. In particular, using \cref{lem:eqspc}, these maps induce a bijection
        \[
            \Spc(\Mod_G(R_G)^c) \simeq \bigsqcup_{H \in \Sub(G)/G} \Spc(\Mod(R)^c).
        \]
    By \cref{prop:NSCdescent}(a), the Nerves of Steel conjecture descends to $\Mod_G(R_G)$, i.e., $\pi\colon \Spc^h(\Mod_G(R_G)^c) \to \Spc(\Mod_G(R_G)^c)$ is a bijection. \Cref{prop:eqhstratdescent} then implies that $\Mod_G(R_G)$ is h-stratified, as well as the claimed parametrization of localizing ideals.
\end{proof}

\begin{Rem}\label{rem:infleq_supportidentification}
    In fact, an elaboration on the proof of \cref{lem:eqwdescent} can be used to show that $\Spc(\Mod_G(R_G)^c)$ is weakly noetherian if $\Spc(\Mod(R)^c)$ is. Moreover, in this case the function $\pi\circ \Supph$ identifies with the Balmer--Favi notion of support. 
\end{Rem}

\section{Open questions}\label{sec:open-questions}

We end the paper with a few open questions.

\begin{Rem}
    We have defined h-stratification for any rigidly-compactly generated tt-category (\cref{def:hstratification}), but we only defined tt-stratification for those~$\cat T$ whose spectrum $\Spc(\cat T^c)$ is weakly noetherian (\cref{def:tt-stratified}). This is because the latter topological condition is  needed for the construction of the Balmer--Favi support.  However, W.~Sanders \cite{BillySanders2017pp} has introduced a generalization of the Balmer--Favi support which does not require the spectrum to be weakly noetherian. The caveat is that the Balmer--Favi--Sanders support of an object (or localizing ideal) is always closed with respect to the so-called localizing topology on $\Spc(\cat T^c)$; see \cite[Theorem~4.2]{BillySanders2017pp}.\footnote{A spectral space is weakly noetherian if and only if its localizing topology is discrete. See \cite[Section 2]{Zou23bpp} for further information about the localizing topology.} Using this theory of support, one can extend the definition of \mbox{tt-stratification} to arbitrary rigidly-compactly generated tt-categories as in \cite[Definition 8.1]{Zou23bpp}. However, it turns out that if $\cat T$ is tt-stratified in this \emph{a priori} more general sense, then the spectrum $\Spc(\cat T^c)$ is necessarily weakly noetherian; see \cite[Theorem 8.13]{Zou23bpp}. Thus, using the Balmer--Favi--Sanders support does not provide a more general notion of tt-stratification. With this and \cref{thm:tt=h+NS} in mind, it is natural to ask:
\end{Rem}

\begin{Que}\label{que:wN}
    Suppose $\cat T$ is a rigidly-compactly generated tt-category. If $\cat T$ is \mbox{h-stratified} and satisfies the Nerves of Steel conjecture, does it follow that $\Spc(\cat T^c)$ is weakly noetherian?
\end{Que}

\begin{Rem}
    We do not even know the  answer for the derived category of a commutative ring $\Der(R)$. Recall from \cref{exa:DR-h-strat} that in this case the Nerves of Steel conjecture always holds and  h-stratification is equivalent to
    \begin{equation}\label{eq:der-r-condition}
    R \in \Loco{\kappa(\mathfrak p)\mid \mathfrak p \in \Spec(R)} \tag{\textasteriskcentered}.  
    \end{equation}
    Thus, a negative answer to the following question would also provide a negative answer to \Cref{que:wN}. 
\end{Rem}

\begin{Que}
    If \eqref{eq:der-r-condition} holds, does it follow that $\Spec(R)$ is weakly noetherian?
\end{Que}

\begin{Exa}
    For a field $k$, the spectrum of the ring $k[x_1,x_2,\ldots]$ is not weakly noetherian; this follows, for example, from \cite[Proposition 4.13]{BCHS1}. Even for this relatively simple non-noetherian ring, we are unable to determine  whether \eqref{eq:der-r-condition} holds.
\end{Exa}

\begin{Rem}\label{remark:support-relations-non-noetherian}
    As noted above, for an object $t \in \cat T$, the Balmer--Favi--Sanders support $\Supp(t)$ is always closed in the localizing topology on $\Spc(\cat T^c)$. On the other hand, since $\Supph(\coprod_{\cat B \in S} E_{\cat B}) = S$ for any subset $S \subseteq \Spc(\cat T^c)$,  the homological support $\Supph(t)$ takes values in arbitrary subsets of $\Spc^h(\cat T^c)$.  This complicates the relationship between $\pi(\Supph(t))$ and $\Supp(t)$. Nonetheless, one has the following strengthening of \Cref{prop:support-relations}; for unfamiliar terminology, we refer the reader to \cite{Zou23bpp}. 
\end{Rem}
 
\begin{Prop}\label{prop:supph-loc-closure}
    Suppose that h-detection holds. Then the following holds for any $t \in \cat T$:
    \[
    \overline{\pi(\Supph(t))}^{\mathrm{loc}} = \Supp(t),
    \]
    where the left-hand side denotes the closure of $\pi(\Supph t)$ in the localizing topology on $\Spc(\cat T^c)$.
\end{Prop}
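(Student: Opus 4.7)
The plan is to establish the two inclusions separately, leveraging h-detection throughout. The easy containment $\overline{\pi(\Supph(t))}^{\mathrm{loc}}\subseteq\Supp(t)$ is handled by generalizing \cref{prop:support-relations}(a) to the non-weakly-noetherian setting: the unconditional inclusion $\pi(\Supph(t))\subseteq\Supp(t)$ follows by repeating the proof of \cite[Proposition 3.10]{bhs2}, which only uses the defining properties of the Balmer--Favi--Sanders idempotents and of $E_{\cat B}$, and never invokes weak noetherianity. Since $\Supp(t)$ is closed in the localizing topology by \cite[Theorem~4.2]{BillySanders2017pp}, taking the localizing closure of $\pi(\Supph(t))$ stays inside $\Supp(t)$.

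For the reverse inclusion, my plan is to argue in the contrapositive. Suppose $\cat P\notin\overline{\pi(\Supph(t))}^{\mathrm{loc}}$; then by definition there exists a localizing-closed subset $Z\subseteq\Spc(\cat T^c)$ with $\pi(\Supph(t))\subseteq Z$ and $\cat P\notin Z$. Using that the localizing-closed sets are (intersections of) supports of objects (cf.~\cite[Section~2]{Zou23bpp}), after passing to one element of the intersection we may write $Z=\Supp(X)$ for some $X\in\cat T$. It thus suffices to prove the following key implication: whenever $\pi(\Supph(t))\subseteq\Supp(X)$, we have $\Supp(t)\subseteq\Supp(X)$. This then forces $\cat P\notin\Supp(t)$, as required.

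To upgrade the containment of $\pi$-images to an honest containment of supports, I would proceed in two steps. First, prove that under h-detection the inclusion $\pi(\Supph(t))\subseteq\Supp(X)$ refines to $\Supph(t)\subseteq\Supph(X)$; the argument for this is to take a homological prime $\cat B\in\Supph(t)$, note that $\pi(\cat B)\in\Supp(X)$ means the Balmer--Favi--Sanders idempotent $\kappa_{\pi(\cat B)}$ does not annihilate $X$, and then invoke h-detection on the tensor $E_{\cat B}\otimes X$ using the interaction $E_{\cat B}\otimes\kappa_{\pi(\cat B)}\neq 0$ (which follows from $\Supph(E_{\cat B})=\{\cat B\}$, $\pi(\cat B)\in\Supp(E_{\cat B})$, and \cref{prop:supp-coincide-weak-coring}). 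Second, invoke \cref{prop:bousfield-classes}: under h-detection the Bousfield lattice is isomorphic to the power set of $\Spc^h(\cat T^c)$ via $A(t)\mapsto\Supph(t)$, so $\Supph(t)\subseteq\Supph(X)$ is equivalent to $A(t)\leq A(X)$; since $\Supp$ is a Bousfield-class invariant (the BFS idempotents being built from tensor-theoretic data), this yields $\Supp(t)\subseteq\Supp(X)$.

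The main obstacle is the bridging step $\pi(\cat B)\in\Supp(X)\Rightarrow \cat B\in\Supph(X)$, which is the essential content that distinguishes the present non-weakly-noetherian statement from \cref{prop:support-relations}. The hope is that one can run the standard argument by tensoring $X$ against a BFS idempotent at $\pi(\cat B)$, exploiting that such idempotents are weak corings (so naive and genuine h-support coincide by \cref{prop:supp-coincide-weak-coring}), and then using h-detection to pass from nonvanishing of $\kappa_{\pi(\cat B)}\otimes X$ to membership of $\cat B$ in $\Supph(X)$ via a compatibility of the form $E_{\cat B}\in\Loco{\kappa_{\pi(\cat B)}}$ established by localizing ideal manipulations analogous to \cite[Lemma~8.6]{Zou23bpp}.
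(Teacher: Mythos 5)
Your treatment of the easy inclusion is fine and matches the paper (the paper cites \cite[Lemma 8.5]{Zou23bpp} for $\pi(\Supph(t)) \subseteq \Supp(t)$ unconditionally, then uses that $\Supp(t)$ is localizing-closed).

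The reverse inclusion, however, has a genuine gap. Your bridging step asks for a pointwise implication ``$\pi(\cat B) \in \Supp(X) \Rightarrow \cat B \in \Supph(X)$'' and tries to run it by tensoring against a Balmer--Favi--Sanders idempotent $\kappa_{\pi(\cat B)}$ at the single point $\pi(\cat B)$. Such a pointwise idempotent does \emph{not} exist when $\Spc(\cat T^c)$ is not weakly noetherian — this is precisely the obstruction that the BFS theory is designed to get around. Membership of $\cat P$ in $\Supp(X)$ is detected by the \emph{net} of idempotents $g_W$ indexed by weakly visible subsets $W \ni \cat P$, not by a single $g_{\cat P}$. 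Independently of this, the desired implication $\pi^{-1}(\Supp(X)) \subseteq \Supph(X)$ cannot hold in general: applied to $X = E_{\cat B}$ it would force $\Supp(E_{\cat B}) = \{\pi(\cat B)\}$ and local injectivity of $\pi$, neither of which we are entitled to here. Your argument essentially tries to upgrade the claim from $\overline{\pi(\Supph(t))}^{\mathrm{loc}} = \Supp(t)$ to the stronger equality $\pi(\Supph(t)) = \Supp(t)$, which is exactly what one cannot expect beyond the weakly noetherian setting (cf.\ \cref{prop:support-relations}). The detour via writing $Z = \Supp(X)$ and through the Bousfield lattice of \cref{prop:bousfield-classes} also adds overhead that is not needed.

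The paper's actual proof is much more direct and avoids the pointwise idempotent entirely. If $\cat P \notin \overline{\pi(\Supph(t))}^{\mathrm{loc}}$, then since weakly visible subsets form a basis of opens for the localizing topology, there is a weakly visible $W \ni \cat P$ with $\pi(\Supph(t)) \cap W = \emptyset$, hence $\Supph(t) \cap \pi^{-1}(W) = \emptyset$. By \cref{lem:supph-gW} and the tensor product formula this gives $\Supph(t \otimes g_W) = \Supph(t) \cap \pi^{-1}(W) = \emptyset$, so h-detection forces $t \otimes g_W = 0$, and therefore $\cat P \notin \Supp(t)$. This is the crucial move: use the idempotent $g_W$ attached to an honest neighborhood $W$ of $\cat P$, rather than a nonexistent idempotent at $\cat P$ itself. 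Adapting your proof would essentially mean abandoning the bridging step and Bousfield-lattice detour and replacing them with this one-line computation.
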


\begin{proof}
    We always have $\pi(\Supph(t)) \subseteq \Supp(t)$ by \cite[Lemma 8.5]{Zou23bpp} and the latter is localizing closed, hence $ \overline{\pi(\Supph(t))}^{\mathrm{loc}} \subseteq \Supp(t)$. For the reverse inclusion, recall that weakly visible subsets form a basis of open subsets for the localizing topology. If $\cat P \notin \overline{\pi(\Supph(t))}^{\mathrm{loc}}$ then there exists a weakly visible subset $W \ni \cat P$ such that $\pi(\Supph (t))\cap W = \emptyset$ and hence $\pi^{-1}(W) \cap \Supph (t) = \emptyset$. By the tensor product formula and \cref{lem:supph-gW}, we obtain $\Supph(t \otimes g_W) = \emptyset$. It then follows from h-detection that $t \otimes g_W = 0$. Therefore, $\cat P \not\in \Supp(t)$ 
    by the definition of the Balmer--Favi--Sanders support; see \cite[Definition~5.17]{Zou23bpp}.
\end{proof}

\begin{Par}
We also have the following strengthening of
\cref{prop:supphweakrings}:
\end{Par}

\begin{Prop}\label{prop:supph-loc-closure-wring}
    For any weak ring $w \in \cat T$, we have
    \[
    \overline{\pi(\Supph(w))}^{\mathrm{loc}} = \Supp(w).
    \]
\end{Prop}

\begin{proof}
The $\subseteq$ inclusion always holds, as explained in the proof of \cref{prop:supph-loc-closure}. Moreover, as before, if $\cat P \not\in \overline{\pi(\Supph(w))}^{\mathrm{loc}}$ then there exists a weakly visible subset $W \ni \cat P$ such that $\pi^{-1}(W) \cap \Supph(w) = \emptyset$
and it suffices to establish that ${g_W \otimes w=0}$.
Write $W=Y_1 \cap Y_2^c$. Since $Y_1$ is a union of Thomason closed sets, we may assume without loss of generality that $Y_1$ is a Thomason closed set. Then $Y_1=\supp(x)$ for some $x\in \cat T^c$ and replacing $x$ by $x \otimes x^{\vee}$ we may assume that~$x$ is a (weak) ring. Observe then that 
    \[
        \pi^{-1}(W) = \pi^{-1}(\supp(x)) \cap \pi^{-1}(Y_2^c) = \Supph(x) \cap \Supph(f_{Y_2})
    \]
by \cref{rem:comparisonforcompacts} and \cref{lem:supph-gW}.
Hence 
\[\emptyset = \Supph(x) \cap \Supph(f_{Y_2}) \cap \Supph(w) = \Supph(x \otimes f_{Y_2} \otimes w).\]
This implies that $x \otimes f_{Y_2} \otimes w=0$ since the homological support has the detection property for weak rings (\cref{rem:hsuppdetectsweakrings}).
We conclude that $e_{\supp(x)} \otimes f_{Y_2} \otimes w =0$.
That is, $g_W \otimes x =0$, as desired.
\end{proof}

\begin{Rem}
    In general, we do not have a complete understanding of the relationship between the three notions of support considered in this paper (\cref{rem:support-and-cosupports}). We recalled in \Cref{prop:supph-in-suppn} that the inclusion
    \[
    \Supph(t) \subseteq \Supphnaive(t)
    \]
    always holds, and showed  that it is actually an equality under suitable conditions (for example h-detection). In \cref{cor:hLGP} we showed moreover that, assuming \mbox{h-codetection,} the h-LGP is equivalent to the equality of the two notions of support. Given an unconditional equality between $\Supph$ and $\Supphnaive$, some portion of this paper would simplify; hence, we are led to ask the following:
\end{Rem}

\begin{Que}\label{que:supph=suppn}
    Is there always an equality 
    \[
    \Supph(t) = \Supphnaive(t)?
    \]
\end{Que}

\begin{Exa}
    By \Cref{prop:naive-tt-fields} this holds whenever $\cat T$ has enough tt-fields. For example, this holds in the derived category $\Der(R)$ of a commutative ring. 
\end{Exa}

\begin{Rem}
    Note that we do not know whether $\Supphnaive(t)$ satisfies the tensor-product property, while the homological support always does, so in fact there is an inclusion
    \[
    \Supph(t) \subseteq \bigcap_{k \ge 1} \Supphnaive(t^{\otimes k}).
    \]
    One can therefore ask the following \emph{a priori} weaker variant of \Cref{que:supph=suppn}:
\end{Rem}

\begin{Que}
    Is there always an equality 
     \[
     \Supph(t) = \bigcap_{k \ge 1} \Supphnaive(t^{\otimes k})?
     \]
\end{Que}

\begin{Rem}
    We have introduced a notion of homological cosupport and proved in \Cref{lem:half-hom} that 
	\begin{equation}\label{eq:full-cosupp}
    \Cosupp^h(\ihom{t_1,t_2}) = \Supph(t_1) \cap \Cosupp^h(t_2) 
    \end{equation}
	if $t_1$ is compact or $t_1 = g_W$ for a weakly visible subset $W \subseteq \Spc(\cat T^c)$. Assuming that the h-LGP holds, asking that \eqref{eq:full-cosupp} holds for all $t_1,t_2 \in \cat T$ is equivalent to h-stratification; see \Cref{prop:hstratcosupp}.   On the other hand, we always have 
    \[
    \Supph(t_1 \otimes t_2) = \Supph(t_1) \cap \Supph(t_2)
    \]
    for \emph{any} $t_1,t_2 \in \cat T$. 
\end{Rem}

\begin{Que}
    Is there a variant of homological cosupport which always satisfies
    \[
    \Cosupp_{\mathrm{var}}^h(\ihom{t_1,t_2}) = \Supph(t_1) \cap \Cosupp_{\mathrm{var}}^h(t_2)? 
    \]
\end{Que}

\bibliographystyle{alpha}\bibliography{bibliography}

\end{document}